\theoremstyle{plain}
\newcommand{\plim}{\varprojlim}
\newcommand{\mcal}{\mathcal}
\newcommand{\mbf}{\mathbf}
\newcommand{\mfrak}{\mathfrak}
\newcommand{\mbb}{\mathbb}
\newcommand{\mrm}{\mathrm}
\newcommand{\mf}{\mathfrak}
\newcommand{\mfS}{\mathfrak{S}}
\newcommand{\vphi}{\varphi}
\newcommand{\mfM}{\mathfrak{M}}
\newcommand{\mfN}{\mathfrak{N}}
\newcommand{\mfK}{\mathfrak{K}}
\newcommand{\wh}{\widehat}
\newcommand{\whR}{\widehat{\mathcal{R}}}
\newcommand{\whRi}{\widehat{\mathcal{R}}_{\infty}}
\newcommand{\Max}{\mathrm{Max}}
\newcommand{\mMax}{\mathrm{max}}
\newcommand{\mfSur}{\mfS^{\mrm{ur}}}
\newcommand{\iotamax}{\iota_{\mrm{max}}}
\newcommand{\Min}{\mathrm{Min}}
\newcommand{\mMin}{\mathrm{min}}
\newcommand{\iotamin}{\iota_{\mrm{min}}}
\newtheorem{theorem}{Theorem}[section]
\newtheorem{corollary}[theorem]{Corollary}
\newtheorem{lemma}[theorem]{Lemma}
\newtheorem{question}[theorem]{Question}
\newtheorem{proposition}[theorem]{Proposition}
\theoremstyle{definition}
\newtheorem{definition}[theorem]{Definition}
\newtheorem{remark}[theorem]{Remark}
\newtheorem{example}[theorem]{Example}
\newcommand{\e}{\varepsilon}
\newcommand{\cO}{\mathcal{O}}
\newcommand{\E}{\mathcal{E}}
\newcommand{\Eur}{\mathcal{E}^{\mathrm{ur}}}
\newcommand{\cOdual}{\mathcal{O}^{\vee}}
\newcommand{\cOur}{\mathcal{O}^{\mathrm{ur}}}
\newcommand{\cOurdual}{\mathcal{O}^{\mathrm{ur},\vee}}
\newcommand{\cOG}{\mathcal{O}_{\hat{G}}}
\newcommand{\EG}{\mathcal{E}_{\hat{G}}}
\newtheorem*{acknowledgements}{Acknowledgements}
\title{Torsion representations arising from $(\vphi,\hat{G})$-modules}
\author{Yoshiyasu Ozeki\footnote{
Graduate School of Mathematics, 
Kyushu University, Fukuoka 819-0395, Japan.
\endgraf
e-mail: {\tt y-ozeki@math.kyushu-u.ac.jp}
\endgraf
Partly supported by the Grant-in-Aid for Research Activity Start-up, 
The Ministry of Education, Culture, Sports, Science and Technology, Japan.}}
\date{}
\begin{document}
\maketitle

\begin{abstract}

The notion of a $(\vphi,\hat{G})$-module is defined by Tong Liu in 2010
to classify lattices in semi-stable representations.
In this paper, 
we study torsion $(\vphi,\hat{G})$-modules, 
and torsion $p$-adic representations associated with them, 
including the case where $p=2$.
First we prove that the category of torsion $p$-adic representations 
arising from torsion $(\vphi,\hat{G})$-modules is an abelian category.
Secondly, we construct a maximal (minimal) theory for $(\vphi,\hat{G})$-modules
by using the theory of \'etale $(\vphi, \hat{G})$-modules,
essentially proved by Xavier Caruso, which is an analogue of 
Fontaine's theory of \'etale $(\vphi,\Gamma)$-modules.
Non-isomorphic two maximal (minimal) objects give non-isomorphic two torsion $p$-adic representations.
\end{abstract}

\tableofcontents

%%%%%%%%%%%%%%%%%%%%%%%%%%%%%%%%%%%%%%%%%%%%%%%%%%%%%%%%%%%%%%%%%%%%%%%%%%%%%%%%%%%%%%%%%%%%%%%%%%%%%%%%%%%
%%%%%%%%%%%%%%%%%%%%%%%%%%%%%%%%%%%%%%%%%%%%%%%%%%%%%%%%%%%%%%%%%%%%%%%%%%%%%%%%%%%%%%%%%%%%%%%%%%%%%%%%%%%
%                                1                         %%%%%%%%%%%%%%%%%%%%%%%%%%%%%%%%%%%%%%%%%%%%%%%%%
%%%%%%%%%%%%%%%%%%%%%%%%%%%%%%%%%%%%%%%%%%%%%%%%%%%%%%%%%%%%%%%%%%%%%%%%%%%%%%%%%%%%%%%%%%%%%%%%%%%%%%%%%%%
%%%%%%%%%%%%%%%%%%%%%%%%%%%%%%%%%%%%%%%%%%%%%%%%%%%%%%%%%%%%%%%%%%%%%%%%%%%%%%%%%%%%%%%%%%%%%%%%%%%%%%%%%%%

\section{Introduction}
The notion of 
a $(\vphi,\hat{G})$-module was introduced by T. Liu in \cite{Li3}
to classify 
lattices in semi-stable representations.
In this paper,
we give various properties of torsion $(\vphi,\hat{G})$-modules 
such as the Cartier duality theorem.
Furthermore,
we study the category of torsion representations arising from 
torsion $(\vphi,\hat{G})$-modules.
Let $G$ be the absolute Galois group of a complete discrete valuation field $K$
of mixed characteristic $(0,p)$ with perfect residue field. 
%Let $k$ be a perfect field of 
%haracteristic $p\ge 2$,
%$W(k)$ its ring of Witt vectors, 
%$K_0=W(k)[1/p]$, $K$ a finite totally 
%ramified extension of $K_0$,
%$\bar{K}$ a fixed algebraic closure of $K$
%and $G=\mrm{Gal}(\bar K/K)$.
Fix $r\in \{0, 1,2,\dots ,\infty \}$.
Our study is motivated by the following question:
\begin{quotation}
Is any torsion $\mbb{Z}_p$-representation of $G$
a torsion semi-stable representation with Hodge-Tate weights in $[0,r]$?
\end{quotation}
Here, a  torsion $\mbb{Z}_p$-representation of $G$ is said to be {\it torsion semi-stable 
with Hodge-Tate weights in $[0,r]$} 
if it can be written as a quotient of two lattices 
in a semi-stable $p$-adic representation  of $G$ with Hodge-Tate weights in $[0,r]$.
It is known that the above question does not have an affirmative answer 
if $r<\infty$ and thus  it makes sense only if $r=\infty$.
We propose 
an approach to this question by using $(\vphi,\hat{G})$-modules
which gives a description of 
a (torsion) semi-stable $p$-adic representation with Hodge-Tate weights in $[0,r]$.
The theory of Breuil modules also gives a description of 
these representations in terms of linear algebra
(cf.\ \cite{Li2}),
however, for technical reasons, we have to assume $r<p-1$ 
when we use this theory for integral or torsion representations. 
On the other hand, 
there is no restriction on $r$ in 
the theory of $(\vphi, \hat{G})$-modules.
This is the main reason why we focus on $(\vphi, \hat{G})$-modules.

Let $\mrm{Rep}_{\mrm{tor}}(G)$ be the category of 
finite torsion $\mathbb{Z}_p$-representations.
Let  $\mrm{Rep}^{\mrm{st}}_{\mrm{tor}}(G)$ 
be the category of 
torsion semi-stable representations.
We denote by $\mrm{Mod}^{r,\hat{G}}_{/\mfS_{\infty}}$ 
the category of torsion $(\vphi,\hat{G})$-modules 
of height $r$ and 
$\hat{T}\colon \mrm{Mod}^{r,\hat{G}}_{/\mfS_{\infty}} \to \mrm{Rep}_{\mrm{tor}}(G)$
the associated functor (see Section 2).
Let $\mrm{Rep}^{\hat{G}}_{\mrm{tor}}(G)$
be the category of 
torsion representations arising from
torsion $(\vphi,\hat{G})$-modules, that is, the essential image of 
$\hat{T}$ on $\mrm{Mod}^{\infty,\hat{G}}_{/\mfS_{\infty}}$.
Then the inclusions
\[
\mrm{Rep}^{\mrm{st}}_{\mrm{tor}}(G)\subset \mrm{Rep}^{\hat{G}}_{\mrm{tor}}(G) 
\subset \mrm{Rep}_{\mrm{tor}}(G)
\]
are known (cf.\ \cite{CL2}, Theorem 3.1.3). 
Since our interest is related with the equality of categories
$\mrm{Rep}^{\mrm{st}}_{\mrm{tor}}(G)$ and $\mrm{Rep}_{\mrm{tor}}(G)$,
we want to know a difference (if it exists) between above three categories.
The following is the first main result of this paper:
\begin{theorem}
\label{MThm1}
The category $\mrm{Rep}^{\hat{G}}_{\mrm{tor}}(G)$ 
is an abelian full subcategory of 
$\mrm{Rep}_{\mrm{tor}}(G)$ which is stable under taking a subquotient, $\oplus,\ \otimes$ and a dual.
\end{theorem}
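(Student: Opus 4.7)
The plan is to realise each categorical operation on $\mrm{Rep}_{\mrm{tor}}(G)$ by a corresponding operation on the category $\mrm{Mod}^{\infty,\hat{G}}_{/\mfS_{\infty}}$ of torsion $(\vphi,\hat{G})$-modules, so that closure of the essential image of $\hat{T}$ under these operations follows by functoriality. For the direct sum, tensor product and dual, I would equip $\hat{\mfM}\oplus\hat{\mfN}$, $\hat{\mfM}\otimes_{\mfS}\hat{\mfN}$ (with diagonal $\vphi$-action, and diagonal $\hat{G}$-action on the base change to $\whR$), and the Cartier dual $\hat{\mfM}^{\vee}$ with their natural structures, and verify by a direct computation that $\hat{T}$ commutes with all three operations. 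This part should be routine because $\hat{T}$ is defined via a $\mrm{Hom}$-construction into a fixed coefficient ring, and it is the place where the Cartier duality theorem for torsion $(\vphi,\hat{G})$-modules promised in the introduction enters.

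The substantive content is closure under subobjects and quotients; once this is established the category is automatically abelian, since kernels, images and cokernels of morphisms in $\mrm{Rep}_{\mrm{tor}}(G)$ are built from sub- and quotient objects of the source and target. Given $V=\hat{T}(\hat{\mfM})$ and a $G$-stable subrepresentation $V'\subset V$, I would argue as follows. By the theory of étale $\vphi$-modules — an equivalence of categories between torsion $\vphi$-modules over a suitable localisation of $\mfS$ and torsion $\mbb{Z}_p$-representations of $G_{\infty}$ — the restriction $V'|_{G_{\infty}}$ corresponds to a unique $\vphi$-stable submodule $\mfM'\subset\mfM$. The candidate lift $\hat{\mfM}'$ is obtained by restricting the $\hat{G}$-action on $\whR\otimes_{\mfS}\mfM$ to $\whR\otimes_{\mfS}\mfM'$; the latter is automatically $\hat{G}$-stable because, after applying $\hat{T}$, it realises the $G$-stable subspace $V'$ of $V$. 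The quotient case is entirely analogous, and both together give stability under subquotients.

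The main obstacle is verifying that the candidate $\hat{\mfM}'$ really defines an object of $\mrm{Mod}^{\infty,\hat{G}}_{/\mfS_{\infty}}$: in particular, that the restricted $\hat{G}$-action is trivial modulo the prescribed ideal of $\whR$, commutes with $\vphi$ in the required way, and that $\mfM'$ remains of height $\infty$ as a $\vphi$-module. For this I would invoke the étale $(\vphi,\hat{G})$-module formalism of Caruso cited in the abstract, which upgrades the equivalence between étale $\vphi$-modules and $G_{\infty}$-representations to a fully faithful embedding compatible with the $\hat{G}$-structure; then a sub-object of an étale $(\vphi,\hat{G})$-module automatically satisfies the compatibility axioms, and the integral $\whR$-submodule structure on $\hat{\mfM}'$ inherits them from that of $\hat{\mfM}$. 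Once this compatibility is in place, applying $\hat{T}$ to kernels, images and cokernels in $\mrm{Mod}^{\infty,\hat{G}}_{/\mfS_{\infty}}$ yields the corresponding kernels, images and cokernels in $\mrm{Rep}_{\mrm{tor}}(G)$, so $\mrm{Rep}^{\hat{G}}_{\mrm{tor}}(G)$ is an abelian full subcategory with all the stated stability properties.
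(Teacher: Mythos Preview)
Your overall strategy matches the paper's, but there are two concrete gaps where ``routine'' hides real content.

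\textbf{The dual.} Cartier duality for torsion $(\vphi,\hat{G})$-modules of height $r$ does \emph{not} give $\hat{T}(\hat{\mfM}^{\vee})\simeq \hat{T}(\hat{\mfM})^{\vee}$; it gives $\hat{T}(\hat{\mfM}^{\vee})\simeq \hat{T}(\hat{\mfM})^{\vee}(r)$ with a Tate twist (this is Proposition~\ref{Gal} in the paper). So $\hat{T}$ does not commute with the duality you wrote down, and a direct computation will not close the case. The paper fixes this by a further twist: choose $m>r$ with $(p-1)\mid m$, so that $\mbb{Z}/p^n(-r)\simeq \mbb{Z}/p^n(m-r)$, and tensor $\hat{\mfM}^{\vee}$ with the rank-one $(\vphi,\hat{G})$-module corresponding to $\mbb{Z}/p^n(m-r)$. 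This produces an object of height $r+(m-r)=m<\infty$ whose $\hat{T}$ is exactly $T^{\vee}$. Without this step the dual argument is incomplete.

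\textbf{The tensor product.} The naive tensor $\mfM\otimes_{\mfS}\mfN$ need not be $u$-torsion free, hence need not lie in $\mrm{Mod}^{r+r'}_{/\mfS_{\infty}}$ (the characterisation is Proposition~\ref{torKi}). One must pass to $\dfrac{\mfM\otimes_{\mfS}\mfN}{u\text{-tor}}$, check that the $\hat{G}$-action on $\whR\otimes_{\vphi,\mfS}(\mfM\otimes_{\mfS}\mfN)$ descends to this quotient (it does because the kernel is exactly the kernel of base change to $W(\mrm{Fr}R)$, which is $G$-equivariant), and then compare with $\hat{T}(\hat{\mfM})\otimes\hat{T}(\hat{\mfN})$ via the covariant functor $\hat{T}_{\ast}$. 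This is Lemma~\ref{tensorLi} in the paper; it is not hard, but it is not the bare diagonal action you wrote.

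\textbf{Subquotients.} Your sketch here is essentially the paper's argument, and your instinct to pass through \'etale $(\vphi,\hat{G})$-modules is exactly the ``canonical'' viewpoint recorded in Remark~\ref{E1.1}. Two points to tighten: (i) the \'etale $\vphi$-module equivalence produces a submodule $M'\subset M=\mfM[1/u]$, and the Kisin submodule is the scheme-theoretic closure $\mfM''=\mfM\cap M'$ (Proposition~\ref{stc}); (ii) the $\hat{G}$-stability of $\whR\otimes_{\vphi,\mfS}\mfM''$ is not automatic from functoriality alone---the paper proves it via the injection $\hat{\iota}$ of Lemma~\ref{rel2} and the identity
\[
\whR\otimes_{\vphi,\mfS}\mfM'' \;=\; (\whR\otimes_{\vphi,\mfS}\mfM)\cap (W(\mrm{Fr}R)\otimes_{\vphi,\mfS}\mfM'')
\]
inside $W(\mrm{Fr}R)\otimes_{\vphi,\mfS}\mfM$, each factor being $G$-stable for independent reasons. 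Your invocation of the \'etale $(\vphi,\hat{G})$-formalism can replace this, but you should say explicitly that the restriction of a $\hat{G}$-action from $\cOG\otimes_{\vphi,\cO}M$ to $\whR\otimes_{\vphi,\mfS}\mfM''$ is well defined because the latter sits inside the former compatibly with both actions.
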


\noindent
To show the category $\mrm{Rep}^{\hat{G}}_{\mrm{tor}}(G)$ 
is abelian,
we give two different proofs.
The first one uses a deep relation, proved by T. Liu, between $(\vphi,\hat{G})$-modules
and representations associated with them (cf.\ Lemma \ref{rel2}). 
The second proof is based on a result on {\it maximal $($minimal$)$ objects} of 
$(\vphi, \hat{G})$-modules.
In general, 
the category $\mrm{Mod}^{r,\hat{G}}_{/\mfS_{\infty}}$ 
is not abelian and 
$\hat{T}\colon \mrm{Mod}^{r,\hat{G}}_{/\mfS_{\infty}} \to \mrm{Rep}_{\mrm{tor}}(G)$
is not fully faithful.
The theory of maximal (minimal) objects allows us to avoid such a situation:
Denote by $\Max^{r,\hat{G}}_{/\mfS_{\infty}}$ 
the full subcategory of $\mrm{Mod}^{r,\hat{G}}_{/\mfS_{\infty}}$
whose objects are maximal.
Then we obtain a functor
$\Max^r\colon \mrm{Mod}^{r,\hat{G}}_{/\mfS_{\infty}}\to \Max^{r,\hat{G}}_{/\mfS_{\infty}}$
which is a retraction of a natural inclusion 
$\Max^{r,\hat{G}}_{/\mfS_{\infty}}\hookrightarrow \mrm{Mod}^{r,\hat{G}}_{/\mfS_{\infty}}$
and commutes with $\hat{T}$.
We prove 
\begin{theorem}
\label{MThm3}
The category $\Max^{r,\hat{G}}_{/\mfS_{\infty}}$
is abelian and artinian.
Furthermore,
the restriction of $\hat{T}$  on $\Max^{r,\hat{G}}_{/\mfS_{\infty}}$
is exact and fully faithful, and 
its essential image is stable under taking a subquotient.
\end{theorem}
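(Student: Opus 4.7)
The plan is to reduce the theorem to properties of étale $(\vphi,\hat{G})$-modules, where Caruso's analogue of Fontaine's equivalence provides a fully faithful embedding into $\mrm{Rep}_{\mrm{tor}}(G)$ whose essential image is abelian and stable under subquotients. Concretely, I would attach to each object $\mfM \in \mrm{Mod}^{r,\hat{G}}_{/\mfS_{\infty}}$ its associated étale $(\vphi,\hat{G})$-module $\mfM^{\mrm{\acute{e}t}}$, recall that $\hat{T}(\mfM)$ depends only on $\mfM^{\mrm{\acute{e}t}}$, and use the fact (whose definition is the heart of the maximal/minimal theory set up in the paper) that a maximal object is the largest $\mfM$ of height $\leq r$ mapping to a fixed étale object, so the assignment $\mfM \mapsto \mfM^{\mrm{\acute{e}t}}$ restricted to $\Max^{r,\hat{G}}_{/\mfS_{\infty}}$ is fully faithful.

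Next I would establish the abelian structure. Given a morphism $f:\mfM\to\mfN$ between maximal objects, I would take the naive kernel $\mfK$ in $\mrm{Mod}^{r,\hat{G}}_{/\mfS_{\infty}}$; because $\mfK \hookrightarrow \mfM$ and $\mfM$ is maximal, a standard argument shows $\mfK$ is already maximal (any enlargement of $\mfK$ mapping to the same étale module would, combined with $\mfM$, contradict maximality of $\mfM$ or be itself inside $\mfM$). For the cokernel I would form the naive cokernel $\mfC$ and then set the categorical cokernel to be $\Max^r(\mfC)$; the functoriality and retraction property of $\Max^r$ together with compatibility $\hat{T}\circ\Max^r=\hat{T}$ ensure this defines a cokernel in $\Max^{r,\hat{G}}_{/\mfS_{\infty}}$. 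The coincidence of image and coimage then follows from the corresponding fact in the abelian category of étale $(\vphi,\hat{G})$-modules, via the fully faithful embedding above. Artinian-ness is inherited from the fact that torsion étale $(\vphi,\hat{G})$-modules have finite length (they correspond to finite $\mbb{Z}_p$-representations of $G$).

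For exactness and full faithfulness of $\hat{T}|_{\Max^{r,\hat{G}}_{/\mfS_{\infty}}}$, I would factor it as the étale realization followed by Caruso's functor to $\mrm{Rep}_{\mrm{tor}}(G)$. The first is exact and fully faithful by the previous paragraph, the second by Caruso's theorem. Exactness on $\Max^{r,\hat{G}}_{/\mfS_{\infty}}$ then reduces to checking that the kernel and cokernel described above map to the correct kernel and cokernel on the representation side, which holds because both compare correctly with their étale counterparts. Stability of the essential image under subquotients would then be proved as follows: given $V\subset \hat{T}(\mfM)$ with $\mfM$ maximal, the corresponding sub-étale-module $\mfN^{\mrm{\acute{e}t}}\subset \mfM^{\mrm{\acute{e}t}}$ lifts by $\Max^r$ to an object of $\Max^{r,\hat{G}}_{/\mfS_{\infty}}$ realizing $V$; the quotient case is dual.

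The principal obstacle is verifying that $\Max^r$ applied to the naive cokernel really yields a categorical cokernel inside $\Max^{r,\hat{G}}_{/\mfS_{\infty}}$, and more generally that $\Max^r$ interacts correctly with short exact sequences. This requires the nontrivial input that forming maximal objects commutes with taking sub- and quotient-étale-modules of height $\leq r$, which in turn relies on the explicit construction of $\Max^r$ from the étale theory developed earlier. Once this compatibility is in hand, the rest of the proof is a formal consequence of the étale equivalence and the retraction property of $\Max^r$.
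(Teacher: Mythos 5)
Your proposal is correct and follows essentially the same route as the paper: maximal objects are characterized inside a fixed \'etale $(\vphi,\hat{G})$-module, kernels of maps of maximal objects are shown to be already maximal, cokernels are obtained by applying $\Max^r$ to the naive cokernel (the paper first kills its $u$-torsion so that it lies in $\mrm{Mod}^{r,\hat{G}}_{/\mfS_{\infty}}$), and exactness, full faithfulness and subquotient stability of $\hat{T}$ are deduced by factoring through the fully faithful \'etale realization $\hat{\mfM}\mapsto\wh{\mfM[1/u]}$ and the anti-equivalence $\hat{\mcal{T}}$. The compatibility you flag as the principal obstacle is exactly what the paper supplies via the left-adjoint/left-exactness properties of $\Max^r$ and the scheme-theoretic closure producing height-$r$ lattices in sub- and quotient \'etale modules.
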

\noindent
In particular,  we immediately  understand that 
the category $\mrm{Rep}^{\hat{G}}_{\mrm{tor}}(G)$ 
is abelian.
If $r<\infty$,
we can define 
the full subcategory $\Min^{r,\hat{G}}_{/\mfS_{\infty}}$ of $\mrm{Mod}^{r,\hat{G}}_{/\mfS_{\infty}}$
whose objects are minimal
and the functor 
$\Min^r\colon \mrm{Mod}^{r,\hat{G}}_{/\mfS_{\infty}}\to \Min^{r,\hat{G}}_{/\mfS_{\infty}}$;
they satisfy analogous properties as those stated
in Theorem \ref{MThm3}.
Furthermore, the Cartier duality theorem gives a connection between maximal objects and minimal objects
(cf.\ Proposition \ref{maxmin}).
Maximal (minimal) objects are first defined for finite flat group schemes by M. Raynaud \cite{Ra}.
X. Caruso and T. Liu generalized Raynaud's theory, with respect to finite flat group schemes
killed by a power of $p$, to torsion Kisin  modules \cite{CL1},  
whose representations are defined on $G_{\infty}$.
Here $G_{\infty}=\mrm{Gal}(\bar{K}/K_{\infty})$ and 
$K_{\infty}=\cup_{n\ge 0}K({\pi_n})$, $\pi_0=\pi$ 
a uniformizer of $K$, $\pi_{n+1}^p=\pi_n$. 
Furthermore,
a categorical interpretation of maximal (minimal) objects is given in \cite{Ca1}. 
Our theorem described as above is an extended result of \cite{CL1} in a certain sense.
In the case where $r=\infty$, 
we obtain the following:

\begin{corollary}
The functor $\hat{T}\colon \mrm{Mod}^{\infty,\hat{G}}_{/\mfS_{\infty}} \to \mrm{Rep}_{\mrm{tor}}(G)$
induces the equivalence of abelian categories
between  the category $\Max^{\infty,\hat{G}}_{/\mfS_{\infty}}$ of 
maximal torsion $(\vphi,\hat{G})$-modules of finite height
and the category $\mrm{Rep}^{\hat{G}}_{\mrm{tor}}(G)$
of torsion $\mbb{Z}_p$-representations of $G$ arising from $(\vphi,\hat{G})$-modules.
\end{corollary}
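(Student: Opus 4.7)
The plan is to show that the corollary is an essentially formal consequence of Theorems \ref{MThm1} and \ref{MThm3}, together with the existence of the retraction functor $\Max^{\infty}$, applied in the case $r = \infty$.

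First I would verify full faithfulness. Theorem \ref{MThm3}, applied with $r = \infty$, states that the restriction of $\hat{T}$ to $\Max^{\infty,\hat{G}}_{/\mfS_{\infty}}$ is fully faithful (and exact), so this part is immediate from what is already in hand.

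Next I would establish essential surjectivity onto $\mrm{Rep}^{\hat{G}}_{\mrm{tor}}(G)$. By definition, any object $V$ of $\mrm{Rep}^{\hat{G}}_{\mrm{tor}}(G)$ is of the form $V \cong \hat{T}(\hat{\mfM})$ for some $\hat{\mfM} \in \mrm{Mod}^{\infty,\hat{G}}_{/\mfS_{\infty}}$. The functor $\Max^{\infty}\colon \mrm{Mod}^{\infty,\hat{G}}_{/\mfS_{\infty}} \to \Max^{\infty,\hat{G}}_{/\mfS_{\infty}}$ introduced before Theorem \ref{MThm3} is, by construction, a retraction of the inclusion and commutes with $\hat{T}$. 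Hence $\hat{T}(\Max^{\infty}(\hat{\mfM})) \cong \hat{T}(\hat{\mfM}) \cong V$, which shows $V$ lies in the essential image of the restricted functor. Combined with full faithfulness, this gives the desired equivalence of categories.

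Finally, I would address the abelian structure. That $\Max^{\infty,\hat{G}}_{/\mfS_{\infty}}$ is abelian is part of Theorem \ref{MThm3}. That $\mrm{Rep}^{\hat{G}}_{\mrm{tor}}(G)$ is abelian (as a full subcategory of $\mrm{Rep}_{\mrm{tor}}(G)$) is the content of Theorem \ref{MThm1}; alternatively, since fully faithful exact functors between additive categories transport the abelian structure, one could deduce the abelianness of $\mrm{Rep}^{\hat{G}}_{\mrm{tor}}(G)$ directly from that of $\Max^{\infty,\hat{G}}_{/\mfS_{\infty}}$ via the equivalence established in the previous step.

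There is no real obstacle once $r = \infty$ is admitted in Theorem \ref{MThm3}: the corollary is purely formal. The only subtlety worth double-checking is that the retraction $\Max^{\infty}$ really is defined for $r = \infty$ (the text notes that $\Min^r$ is only defined for $r < \infty$, so one must confirm that the maximal construction has no such restriction), and that the isomorphism $\hat{T}(\Max^{\infty}(\hat{\mfM})) \cong \hat{T}(\hat{\mfM})$ is natural in $\hat{\mfM}$ so that the equivalence is witnessed by a genuine quasi-inverse $V \mapsto \Max^{\infty}(\hat{\mfM}_V)$ where $\hat{\mfM}_V$ is any preimage of $V$.
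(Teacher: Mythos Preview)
Your proposal is correct and matches the paper's intended argument: the corollary is stated in the introduction as an immediate consequence of Theorem \ref{MThm3} (with $r=\infty$) together with the retraction $\Max^{\infty}$ commuting with $\hat{T}$, and the paper gives no further proof beyond that. Your verification that full faithfulness and exactness come from Theorem \ref{MThm3}, and essential surjectivity from the definition of $\mrm{Rep}^{\hat{G}}_{\mrm{tor}}(G)$ plus $\hat{T}\circ\Max^{\infty}\simeq\hat{T}$, is exactly the formal deduction the paper has in mind.
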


\noindent
To define maximal (minimal) objects of torsion $(\vphi,\hat{G})$-modules,
we introduce an {\it \'etale $(\vphi,\hat{G})$-module}, which is an \'etale 
$\vphi$-module (in the sense of J.-M. Fontaine \cite{Fo}) equipped with certain Galois action.
Arguments of the theory of $(\vphi,\tau)$-modules of \cite{Ca} give us the fact that 
the category of torsion \'etale $(\vphi,\hat{G})$-modules
is equivalent to $\mrm{Rep}_{\mrm{tor}}(G)$.

Now denote by $e$ the absolute ramification index of $K$.
If $er<p-1$,
then all torsion $(\vphi,\hat{G})$-modules of height $r$ are automatically maximal and minimal. 
Therefore, we have
\begin{corollary}[= Corollary \ref{er}]
Suppose $er<p-1$. Then 
the category $\mrm{Mod}^{r,\hat{G}}_{/\mfS_{\infty}}$ is abelian and artinian.
Furthermore, the functor
$\hat{T}\colon \mrm{Mod}^{r,\hat{G}}_{/\mfS_{\infty}}\to \mrm{Rep}_{\mrm{tor}}(G)$
is exact and fully faithful, and its essential image is stable under taking a subquotient. 
\end{corollary}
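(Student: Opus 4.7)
The plan is to deduce the corollary from Theorem \ref{MThm3} by establishing the assertion made immediately before its statement: under the hypothesis $er<p-1$, every torsion $(\vphi,\hat{G})$-module of height $r$ is automatically both maximal and minimal. Once this is known, the inclusion $\Max^{r,\hat{G}}_{/\mfS_{\infty}}\hookrightarrow\mrm{Mod}^{r,\hat{G}}_{/\mfS_{\infty}}$ is an equality of full subcategories, and every conclusion of Theorem \ref{MThm3} transfers verbatim to $\mrm{Mod}^{r,\hat{G}}_{/\mfS_{\infty}}$ itself.

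To establish the key claim, I would first recall the analogous Kisin-module statement from the Caruso--Liu theory in \cite{CL1}: when $er<p-1$, the functor $T_{\mfS}$ from torsion Kisin modules of height $r$ to $\mrm{Rep}_{\mrm{tor}}(G_{\infty})$ is fully faithful and its essential image is closed under subquotients, so that every torsion Kisin module of height $r$ is maximal and minimal (the technical input being the usual control of $u^e$-torsion/denominators in this ramification range). Since maximality and minimality of a torsion $(\vphi,\hat{G})$-module are defined through the underlying torsion Kisin module (and since the $\hat{G}$-action is uniquely determined on subobjects by the Galois structure on the ambient object), the Kisin-module statement upgrades to the statement that $\Max^{r,\hat{G}}_{/\mfS_{\infty}}=\Min^{r,\hat{G}}_{/\mfS_{\infty}}=\mrm{Mod}^{r,\hat{G}}_{/\mfS_{\infty}}$.

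With this identification in hand, I apply Theorem \ref{MThm3}: the category $\Max^{r,\hat{G}}_{/\mfS_{\infty}}$, and hence $\mrm{Mod}^{r,\hat{G}}_{/\mfS_{\infty}}$, is abelian and artinian; the restriction of $\hat{T}$ to $\Max^{r,\hat{G}}_{/\mfS_{\infty}}$, which now equals $\hat{T}$ itself, is exact and fully faithful; and its essential image is stable under subquotients. This gives all assertions of the corollary.

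The main obstacle is the justification of the key claim that $er<p-1$ forces every torsion $(\vphi,\hat{G})$-module of height $r$ to be maximal. On the Kisin side this is essentially a consequence of the stability of the essential image of $T_{\mfS}$ under subquotients in the regime $er<p-1$, a nontrivial result of \cite{CL1}; lifting this to the $(\vphi,\hat{G})$-setting is mostly formal but requires verifying that the canonical maximal envelope of the underlying Kisin module inherits a compatible $\hat{G}$-action, so that the notion of maximality in the $(\vphi,\hat{G})$-category coincides with that in the Kisin category. Given the constructions of Max$^r$ developed earlier in the paper, this compatibility should be built in by design.
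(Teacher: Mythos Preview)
Your approach is correct and matches the paper's: show that under $er<p-1$ every torsion $(\vphi,\hat{G})$-module of height $r$ is maximal (and minimal), then invoke Theorem~\ref{MThm3}. One simplification: your final paragraph's worry about whether the maximal Kisin envelope inherits a compatible $\hat{G}$-action is unnecessary. The paper's argument is more direct: by \cite{CL1} (Remark after Corollary~3.2.6), when $er<p-1$ the set $F^r_{\mfS}(M)$ of Kisin lattices of height $r$ in $M$ has \emph{at most one element}; since forgetting the $\hat{G}$-structure gives an injection $F^{r,\hat{G}}_{\mfS}(\hat{M})\hookrightarrow F^r_{\mfS}(M)$, the set $F^{r,\hat{G}}_{\mfS}(\hat{M})$ also has at most one element, so any $\hat{\mfM}$ in it is trivially both the maximum and minimum --- no compatibility check needed.
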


\noindent
Such a result on torsion Breuil modules has been 
proved by X. Caruso (cf.\ \cite{Ca-1}, Th\'eor\`eme 1.0.4).\\
We hope our study will be useful to solve the question described in the start of this paper, 
(cf.\ Section 5.7).

Now we describe an organization of this paper.
In Section 2, we recall some results 
on Kisin modules and $(\vphi,\hat{G})$-modules and 
prove some fundamental properties of them which are often used in this paper. 
In Section 3, we prove the Cartier duality theorem of  $(\vphi,\hat{G})$-modules.
In Section 4, we prove Theorem \ref{MThm1}.
Finally in Section 5,
we give a theory of \'etale $(\vphi,\hat{G})$-modules 
and define maximal (minimal) objects for $(\vphi,\hat{G})$-modules, 
and prove 
Theorem \ref{MThm3}.\\

\noindent
{\bf Convention.}
For any $\mbb{Z}$-module $M$, we always use $M_n$ to denote $M/p^nM$ for a positive integer $n$
and $M_{\infty}=M\otimes_{\mbb{Z}_p}\mbb{Q}_p/\mbb{Z}_p$.
We reserve $\vphi$ to represent various Frobenius structures and 
$\vphi_M$ will denote the Frobenius on $M$. 
However, we often drop the subscript if no confusion arises.  
All representations and actions are assumed to be continuous.

\begin{acknowledgements}
The author wants to thank Shin Hattori
who gave him useful advice and comments 
throughout this paper, in particular, Section 5.6.
This work is supported by the Grant-in-Aid for Young Scientists Start-up.
\end{acknowledgements}

%%%%%%%%%%%%%%%%%%%%%%%%%%%%%%%%%%%%%%%%%%%%%%%%%%%%%%%%%%%%%%%%%%%%%%%%%%%%%%%%%%%%%%%%%%%%%%%%%%%%%%%%%%%
%%%%%%%%%%%%%%%%%%%%%%%%%%%%%%%%%%%%%%%%%%%%%%%%%%%%%%%%%%%%%%%%%%%%%%%%%%%%%%%%%%%%%%%%%%%%%%%%%%%%%%%%%%%
%                                2                         %%%%%%%%%%%%%%%%%%%%%%%%%%%%%%%%%%%%%%%%%%%%%%%%%
%%%%%%%%%%%%%%%%%%%%%%%%%%%%%%%%%%%%%%%%%%%%%%%%%%%%%%%%%%%%%%%%%%%%%%%%%%%%%%%%%%%%%%%%%%%%%%%%%%%%%%%%%%%
%%%%%%%%%%%%%%%%%%%%%%%%%%%%%%%%%%%%%%%%%%%%%%%%%%%%%%%%%%%%%%%%%%%%%%%%%%%%%%%%%%%%%%%%%%%%%%%%%%%%%%%%%%%

\section{Preliminaries}

In this section, 
we recall some notions and results which will be used throughout this paper.

\subsection{Notation}
Let $k$ be a perfect field of 
characteristic $p\ge 2$,
$W(k)$ its ring of Witt vectors, 
$K_0=W(k)[1/p]$, $K$ a finite totally 
ramified extension of $K_0$,
$\bar{K}$ a fixed algebraic closure of $K$
and $G=\mrm{Gal}(\bar K/K)$.
Throughout this paper,
we fix a uniformizer $\pi\in K$ 
and denote by $E(u)$ its 
Eisenstein polynomial over $K_0$.
Let $\mfS=W(k)[\![u]\!]$ equipped 
with a Frobenius endomorphism
$\varphi$ via $u\mapsto u^p$ and 
the natural Frobenius on $W(k)$. 

Let $R=\plim \cO_{\bar K}/p$ 
where $\cO_{\bar K}$ is 
the integer ring of $\bar K$
and the transition maps are 
given by the $p$-th power map.
By the universal property of 
the ring of Witt vectors $W(R)$ of $R$,
there exists a unique surjective projection
map $\theta\colon W(R)\to \wh{\cO}_{\bar K}$
which lifts the projection $R\to \cO_{\bar K}/p$
onto the first factor in the inverse limit,
where $\wh{\cO}_{\bar K}$ is the 
$p$-adic completion of $\cO_{\bar K}$.
For any integer $n\ge 0$,
let $\pi_n\in \bar K$ be 
a $p^n$-th root of $\pi$ such that 
$\pi^p_{n+1}=\pi_n$ and write 
\underbar{$\pi$} $=(\pi_n)_{n\ge 0}\in R$. 
Let $[$\underbar{$\pi$}$]\in W(R)$ 
be the Teichm\"uller
representative of \underbar{$\pi$}.
We embed the $W(k)$-algebra $W(k)[u]$ into $W(R)$
via the map $u\mapsto [$\underbar{$\pi$}$]$.
This embedding extends to an 
embedding $\mfS\hookrightarrow W(R)$, 
which is compatible with Frobenius endomorphisms.

Let $\cO$ be the $p$-adic completion 
of $\mfS[1/u]$, which is 
is a discrete valuation ring 
with uniformizer $p$ and residue field $k(\!(u)\!)$.
Denote by $\E$ the field of fractions of $\cO$.
The inclusion $\mfS\hookrightarrow W(R)$
extends to inclusions $\cO\hookrightarrow W(\mrm{Fr}R)$
and $\E\hookrightarrow W(\mrm{Fr}R)[1/p]$.
Here $\mrm{Fr}R$ is the field of fractions of $R$.
It is not difficult to see that $\mrm{Fr}R$ is algebraically closed.
We denote by $\E^{\mrm{ur}}$ the maximal unramified 
field extension of $\E$ in $W(\mrm{Fr}R)[1/p]$
and $\cOur$ its integer ring.
Let $\wh{\E^{\mrm{ur}}}$ be the 
$p$-adic completion of $\E^{\mrm{ur}}$ and 
$\wh{\cOur}$ its integer ring.
The ring  $\wh{\E^{\mrm{ur}}}$ 
(resp.\ $\wh{\cOur}$)
is equal to the closure of 
$\E^{\mrm{ur}}$ in $W(\mrm{Fr}R)[1/p]$
(resp. the closure of $\cOur$ in $W(\mrm{Fr}R)$). 
Put $\mfS^{\mrm{ur}}=\wh{\cOur}\cap W(R)$.
We regard all these rings as subrings of $W(\mrm{Fr}R)[1/p]$.

Let $K_{\infty}=\cup_{n\ge 0}K(\pi_n)$ and 
$G_{\infty}=\mrm{Gal}(\bar K/K_{\infty})$.
Then $G_{\infty}$ acts on $\mfS^{\mrm{ur}}$ 
and $\E^{\mrm{ur}}$ continuously and fixes the subring $\mfS\subset W(R)$.
We denote by 
$\mrm{Rep}_{\mbb{Z}_p}(G_{\infty})$ 
(resp.\ $\mrm{Rep}_{\mbb{Q}_p}(G_{\infty})$)
the category of 
continuous $\mbb{Z}_p$-linear representations 
of $G_{\infty}$
on finite $\mbb{Z}_p$-modules
(resp.\ the category of 
continuous $\mbb{Q}_p$-linear representations 
of $G_{\infty}$
on finite dimensional $\mbb{Q}_p$-vector spaces).
We denote by 
$\mrm{Rep}_{\mrm{tor}}(G_{\infty})$
(resp.\ $\mrm{Rep}_{\mrm{fr}}(G_{\infty})$)
the full subcategory of 
$\mrm{Rep}_{\mbb{Z}_p}(G_{\infty})$
consisting of 
$\mbb{Z}_p$-modules killed by some power of $p$
(resp.\ finite free $\mbb{Z}_p$-modules).  
Similarly,
we define categories 
$\mrm{Rep}_{\mbb{Z}_p}(G), \mrm{Rep}_{\mbb{Q}_p}(G_{\infty}), \mrm{Rep}_{\mrm{tor}}(G)$
and $\mrm{Rep}_{\mrm{fr}}(G)$
by replacing $G_{\infty}$ with $G$.

\subsection{\'Etale $\vphi$-modules}

In this subsection,
We recall the theory of Fontaine's \'etale  $\vphi$-modules.
For more precise information,
see \cite{Fo} A 1.2.

A finite $\cO$-module $M$ 
is called {\it \'etale}
if $M$ is equipped with 
a $\vphi$-semi-linear map 
$\vphi_M\colon M\to M$ such that 
$\vphi^{\ast}_M$ is an isomorphism, 
where $\vphi^{\ast}_M$ is 
the $\cO$-linearization
$1\otimes \vphi_M\colon \cO
\otimes_{\vphi, \cO} M\to M$
of $\vphi_M$.
A finite $\E$-vector space $M$ 
is called {\it \'etale}
if $M$ is equipped with 
a $\vphi$-semi-linear map 
$\vphi_M\colon M\to M$ and
there exists a $\vphi$-stable $\cO$-lattice $L$  of $M$
that is an \'etale $\cO$-module.
We denote by ${}'\mbf{\Phi M}_{/\cO}$ (resp.\ $\mbf{\Phi M}_{/\E}$) 
the category of finite \'etale $\cO$-modules
(resp.\ the category of finite \'etale $\E$-modules)
with the obvious morphisms.
Note that the extension $K_{\infty}/K$ is a strictly 
APF extension in the sense of \cite{Wi}
and thus $G_{\infty}$ is 
naturally isomorphic to the absolute Galois group of $k(\!(u)\!)$
by the theory of norm fields.
Combining this fact and Fontaine's theory 
in \cite{Fo}, A 1.2.6, we have that
functors
\[
\mcal{T}_{\ast}\colon {}'\mbf{\Phi M}_{/\cO}\to 
\mrm{Rep}_{\mbb{Z}_p}(G_{\infty}),\quad M\mapsto 
(\wh{\cOur}\otimes_{\cO} M)^{\vphi=1} 
\]
and 
\[
\mcal{T}_{\ast}\colon \mbf{\Phi M}_{/\E}\to 
\mrm{Rep}_{\mbb{Q}_p}(G_{\infty}),\quad M\mapsto 
(\wh{\Eur}\otimes_{\E} M)^{\vphi=1} 
\]
are equivalences of abelian categories
and there exist  natural 
$\wh{\cOur}$-linear 
isomorphisms which are compatible with $\vphi$-structures
and $G_{\infty}$-actions:
\begin{equation}
\label{Fon1}
\wh{\cOur}\otimes_{\mbb{Z}_p}\mcal{T}_{\ast}(M) 
\overset{\sim}{\longrightarrow}
\wh{\cOur}\otimes_{\cO} M\quad  \mrm{for}\ M\in {}'\mbf{\Phi M}_{/\cO}   
\end{equation}
and \begin{equation}
\label{Fon1'}
\wh{\Eur}\otimes_{\mbb{Q}_p}\mcal{T}_{\ast}(M) 
\overset{\sim}{\longrightarrow}
\wh{\Eur}\otimes_{\E} M\quad \mrm{for}\ M\in \mbf{\Phi M}_{/\E}. 
\end{equation}
\noindent
On the other hand,
define functors
\[
\mcal{M}_{\ast}\colon \mrm{Rep}_{\mbb{Z}_p}(G_{\infty})
\to
{}'\mbf{\Phi M}_{/\cO},\quad T\mapsto 
(\wh{\cOur}\otimes_{\mbb{Z}_p} T)^{G_{\infty}}
\]
and 
\[
\mcal{M}_{\ast}\colon \mrm{Rep}_{\mbb{Q}_p}(G_{\infty})
\to
\mbf{\Phi M}_{/\E},\quad T\mapsto 
(\wh{\Eur}\otimes_{\mbb{Q}_p} T)^{G_{\infty}}. 
\]
There exist natural 
$\wh{\cOur}$-linear 
isomorphisms which are compatible with $\vphi$-structures
and $G_{\infty}$-actions:
\begin{equation}
\label{Fon2}
\wh{\cOur}\otimes_{\cO}\mcal{M}_{\ast}(T) 
\overset{\sim}{\longrightarrow}
\wh{\cOur}\otimes_{\mbb{Z}_p} T\quad  \mrm{for}\ T\in \mrm{Rep}_{\mrm{Z}_p}(G_{\infty})  
\end{equation}
and \begin{equation}
\label{Fon2'}
\wh{\Eur}\otimes_{\E}\mcal{M}_{\ast}(T) 
\overset{\sim}{\longrightarrow}
\wh{\Eur}\otimes_{\mbb{Q}_p} T\quad  \mrm{for}\ T\in \mrm{Rep}_{\mbb{Q}_p}(G_{\infty}).  
\end{equation}
We denote by $\mbf{\Phi M}_{/\cO_{\infty}}$ 
(resp.\ $\mbf{\Phi M}_{/\cO}$)
the category of finite torsion \'etale $\cO$-modules 
(resp.\ the category of finite free \'etale $\cO$-modules).

\begin{proposition}
\label{Fon}
The functor $\mcal{T}_{\ast}$ induces equivalences 
of categories between $\mbf{\Phi M}_{/\cO_{\infty}}$ 
$($resp.\ $\mbf{\Phi M}_{/\cO}$, resp.\ $\mbf{\Phi M}_{/\E})$
and $\mrm{Rep}_{\mrm{tor}}(G_{\infty})$
$($resp.\ $\mrm{Rep}_{\mrm{fr}}(G_{\infty})$,
resp.\ $\mrm{Rep}_{\mbb{Q}_p}(G_{\infty}))$.
Quasi-inverse functor of $\mcal{T}_{\ast}$ is $\mcal{M}_{\ast}$. 
\end{proposition}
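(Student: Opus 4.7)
The plan is to bootstrap from the two equivalences already stated in the excerpt: $\mcal{T}_{\ast}\colon {}'\mbf{\Phi M}_{/\cO}\xrightarrow{\sim} \mrm{Rep}_{\mbb{Z}_p}(G_{\infty})$ (all finite \'etale $\cO$-modules versus all finite $\mbb{Z}_p$-representations) and $\mcal{T}_{\ast}\colon \mbf{\Phi M}_{/\E}\xrightarrow{\sim}\mrm{Rep}_{\mbb{Q}_p}(G_{\infty})$, both with quasi-inverse $\mcal{M}_{\ast}$. The $\E$-case of the proposition is literally the second of these. For the other two cases, what remains is to check that, under the first equivalence, torsion objects correspond to torsion objects and finite free objects correspond to finite free objects.

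The main tool is the comparison isomorphism (\ref{Fon1}), $\wh{\cOur}\otimes_{\mbb{Z}_p}\mcal{T}_{\ast}(M)\xrightarrow{\sim}\wh{\cOur}\otimes_{\cO}M$, together with the facts that $\wh{\cOur}$ is faithfully flat over $\cO$ and that $\cO$ is faithfully flat over $\mbb{Z}_p$ (both are $p$-adically complete discrete valuation rings with uniformizer $p$, and the ring extensions are flat with nonzero closed fibre). First I would argue that $M$ is killed by $p^n$ iff $\wh{\cOur}\otimes_{\cO}M$ is, iff $\wh{\cOur}\otimes_{\mbb{Z}_p}T$ is (via (\ref{Fon1})), iff $T=\mcal{T}_{\ast}(M)$ is killed by $p^n$. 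This immediately shows $\mcal{T}_{\ast}$ restricts to a functor $\mbf{\Phi M}_{/\cO_{\infty}}\to \mrm{Rep}_{\mrm{tor}}(G_{\infty})$. Using the analogous isomorphism (\ref{Fon2}) for $\mcal{M}_{\ast}$, the same argument run backwards shows $\mcal{M}_{\ast}$ restricts in the opposite direction, so the equivalence descends to the torsion subcategories.

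For the free case, I would use the same two isomorphisms. If $M\in \mbf{\Phi M}_{/\cO}$ is finite free over $\cO$, then $\wh{\cOur}\otimes_{\cO}M$ is finite free over $\wh{\cOur}$, hence so is $\wh{\cOur}\otimes_{\mbb{Z}_p}T$ by (\ref{Fon1}); since $\wh{\cOur}$ is faithfully flat over $\mbb{Z}_p$, this forces $T$ to be $\mbb{Z}_p$-flat, hence finite free (any finite $p$-power torsion submodule would survive the faithfully flat base change). Conversely, if $T\in \mrm{Rep}_{\mrm{fr}}(G_{\infty})$, then $\wh{\cOur}\otimes_{\cO}\mcal{M}_{\ast}(T)\cong \wh{\cOur}\otimes_{\mbb{Z}_p}T$ is free over $\wh{\cOur}$, and faithful flatness of $\wh{\cOur}/\cO$ together with the fact that $\cO$ is a DVR forces $\mcal{M}_{\ast}(T)$ to be torsion-free, hence free of finite rank. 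Thus the restricted functor $\mcal{T}_{\ast}\colon \mbf{\Phi M}_{/\cO}\to \mrm{Rep}_{\mrm{fr}}(G_{\infty})$ is an equivalence with quasi-inverse $\mcal{M}_{\ast}$.

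The only step that is not essentially formal is the verification that $\wh{\cOur}$ is faithfully flat over $\cO$ (and that $\cO$ is faithfully flat over $\mbb{Z}_p$); both are standard facts about $p$-adic completions of unramified DVR extensions, and I would simply quote them. The rest of the argument is a straightforward bookkeeping exercise once the comparison isomorphisms (\ref{Fon1}) and (\ref{Fon2}) are in hand.
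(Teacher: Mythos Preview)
Your argument is correct. The paper itself gives no proof of this proposition: it is stated as a direct consequence of Fontaine's theory (cited as \cite{Fo}, A~1.2.6) immediately after recalling the equivalences for ${}'\mbf{\Phi M}_{/\cO}$ and $\mbf{\Phi M}_{/\E}$ and the comparison isomorphisms (\ref{Fon1})--(\ref{Fon2'}). Your proposal supplies exactly the missing verification---that the equivalence ${}'\mbf{\Phi M}_{/\cO}\simeq \mrm{Rep}_{\mbb{Z}_p}(G_{\infty})$ restricts to the torsion and free subcategories---via the comparison isomorphisms and faithful flatness of $\wh{\cOur}$ over $\cO$ and over $\mbb{Z}_p$. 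This is the natural way to extract the proposition from what the paper has already recalled, and there is nothing to compare against since the paper offers no alternative argument.
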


The contravariant version of the functor $\mcal{T}_{\ast}$
is useful for integral theory.
For any $T\in \mrm{Rep}_{\mrm{tor}}(G_{\infty})$,
put
\[
\mcal{M}(T)=\mrm{Hom}_{\mbb{Z}_p[G_{\infty}]}(T,\E^{\mrm{ur}}/\cOur)\quad 
\mrm{if}\ T\ \mrm{is\ killed\ by\ some\ power\ of}\ p, 
\]
\[
\mcal{M}(T)=\mrm{Hom}_{\mbb{Z}_p[G_{\infty}]}(T,\wh{\cOur})\quad 
\mrm{if}\ T\ \mrm{is}\ \mrm{free},
\]
and for any $T\in \mrm{Rep}_{\mbb{Q}_p}(G_{\infty})$, put
\[
\mcal{M}(T)=\mrm{Hom}_{\mbb{Q}_p[G_{\infty}]}(T,\wh{\Eur}).
\]
Then we can check that $\mcal{T}(M)$ is 
the dual representation of $\mcal{T}_{\ast}(M)$.
For any $M\in \mbf{\Phi M}_{/\cO}$,
put
\[
\mcal{T}(M)=\mrm{Hom}_{\cO,\vphi}(M,\E^{\mrm{ur}}/\cOur)\quad 
\mrm{if}\ M\ \mrm{is\ killed\ by\ some\ power\ of}\ p, 
\] 
\[
\mcal{T}(M)=\mrm{Hom}_{\cO,\vphi}(M,\wh{\cOur})\quad 
\mrm{if}\ M\ \mrm{is}\ p\ \mrm{torsion\ free}, 
\]
and for any $M\in \mbf{\Phi M}_{/\E}$,
put
\[
\mcal{T}(M)=\mrm{Hom}_{\E,\vphi}(M,\wh{\Eur}). 
\] 
These formulations give us contravariant  functors $\mcal{T}$ and $\mcal{M}$ (on appropriate categories) 
such that $\mcal{M}\circ \mcal{T}\simeq \mrm{Id}, \mcal{T}\circ \mcal{M}\simeq \mrm{Id}$. 

\subsection{Kisin modules}

A {\it $\vphi$-module} ({\it over $\mfS$}) 
is a $\mfS$-module 
$\mfM$ equipped with a $\vphi$-semi-linear map 
$\vphi\colon \mfM\to \mfM$.
A $\vphi$-module is called a {\it Kisin module}.
A morphism between two $\vphi$-modules 
$(\mfM_1,\vphi_1)$ and $(\mfM_2,\vphi_2)$
is a $\mfS$-linear morphism 
$\mfM_1\to \mfM_2$ compatible 
with Frobenii $\vphi_1$ and $\vphi_2$.
Denote by $'\mrm{Mod}^r_{/\mfS}$
the category of $\vphi$-modules $\mfM$ 
{\it of height $r$}  
in the following sense; 
\begin{itemize}
\item if $r<\infty$, then $\mfM$ is of finite type 
over $\mfS$ and the cokernel of 
$\vphi^{\ast}$ is killed by $E(u)^r$,
where $\vphi^{\ast}$ is the $\mfS$-linearization 
$1\otimes \vphi\colon \mfS\otimes_{\vphi,\mfS}\mfM\to \mfM$
of $\vphi$,

\item if $r=\infty$, then $\mfM$ is of height $r'$ for some integer $0\le r'<\infty$.
In this case, $\mfM$ is called {\it of finite height}.
\end{itemize}

\noindent
Let $\mrm{Mod}^r_{/\mfS_{\infty}}$
be the full subcategory of $'\mrm{Mod}^r_{/\mfS}$
consisting of finite $\mfS$-modules $\mfM$
which satisfy the following:

\begin{itemize}
\item $\mfM$ is killed by some power of $p$,
\item $\mfM$ has a two term resolution by finite free
$\mfS$-modules, that is, there exists an exact sequence 
\[
0\to \mf{N}_1\to \mf{N}_2\to \mfM\to 0 
\]
of $\mfS$-modules where 
$\mf{N}_1$ and $\mf{N}_2$ 
are finite free $\mfS$-modules.
\end{itemize}
Let $\mrm{Mod}^{r}_{/\mfS}$
be the full subcategory of 
$'\mrm{Mod}^r_{/\mfS}$
consisting of finite free $\mfS$-modules.
There exists a useful criterion for  
an object of $'\mrm{Mod}^r_{/\mfS}$ 
whether it is an object of $\mrm{Mod}^r_{/\mfS}$ or not
(see Proposition \ref{torKi}).
To describe the criterion, we need a bit more notion.
A $\vphi$-modules $\mfM$ is called {\it $p'$-torsion free} 
if for all non-zero element $x\in \mfM$,
$\mrm{Ann}_{\mfS}(x)=0$ or $\mrm{Ann}_{\mfS}(x)=p^n\mfS$
for some integer $n$.
This is equivalent to the natural map $\mfM\to \cO\otimes_{\mfS} \mfM$
being injective.
If $\mfM$ is killed by some power of $p$,
then $\mfM$ is $p'$-torsion free if and only if $\mfM$ is $u$-torsion free.
Therefore, if $\mfM\in {}'\mrm{Mod}^r_{/\mfS}$ is killed $p$ and $p'$-etale,
$\mfM$ is finite free as a $k[\![u]\!]$-module by the structure theorem 
of finite $k[\![u]\!]$-modules.
A $\vphi$-module $\mfM$ is called {\it \'etale} if $\mfM$ is $p'$-torsion free
and $\cO\otimes_{\mfS} \mfM$ is an \'etale $\cO$-module.
Since $E(u)$ is a unit of $\cO$,
we see that, for any $\mfM\in {}'\mrm{Mod}^r_{/\mfS}$, 
$\mfM$ is \'etale if and only if $\mfM$ is $p'$-torsion free.
Any object of $\mfM\in \mrm{Mod}^r_{/\mfS}$ is clearly \'etale.

For any $\mfM\in \mrm{Mod}^r_{/\mfS_{\infty}}$,
we define a $\mbb{Z}_p[G_{\infty}]$-module  
via
\[
T_{\mfS}(\mfM)=\mrm{Hom}_{\mfS,\vphi}(\mfM,\mfS_{\infty}^{\mrm{ur}}),
\]
where a $G_{\infty}$-action on 
$T_{\mfS}(\mfM)$ is given by 
$(\sigma.g)(x)=\sigma(g(x))$ 
for $\sigma\in G_{\infty}, g\in T_{\mfS}(\mfM), x\in \mfM$.
The representation $T_{\mfS}(\mfM)$ is an object of 
$\mrm{Rep}_{\mrm{tor}}(G_{\infty})$.

\begin{proposition}[\cite{CL1}, Corollary 2.1.6]
\label{cl1}
The functor 
$T_{\mfS}\colon  \mrm{Mod}^r_{/\mfS_{\infty}}\to 
\mrm{Rep}_{\mrm{tor}}(G_{\infty})$
is exact and faithful.
\end{proposition}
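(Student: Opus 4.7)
The plan is to reduce the proposition to Fontaine's equivalence for étale $\vphi$-modules (Proposition \ref{Fon}), via the base change functor $\mfM \mapsto \cO \otimes_{\mfS} \mfM$ from $\mrm{Mod}^r_{/\mfS_{\infty}}$ to $\mbf{\Phi M}_{/\cO_{\infty}}$. The idea is to exhibit a natural isomorphism
\[
T_{\mfS}(\mfM) \;\xrightarrow{\;\sim\;}\; \mcal{T}(\cO \otimes_{\mfS} \mfM),
\]
and then deduce exactness and faithfulness from the corresponding properties on each side.

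First I would construct the comparison map. Given $g \in T_{\mfS}(\mfM) = \mrm{Hom}_{\mfS,\vphi}(\mfM,\mfSur_\infty)$, base changing along $\mfS \to \cO$ and using the inclusion $\mfSur_\infty \hookrightarrow \Eur/\cOur$ yields a $\vphi$-equivariant $\cO$-linear map $\cO \otimes_{\mfS} \mfM \to \Eur/\cOur$, hence an element of $\mcal{T}(\cO \otimes_\mfS \mfM)$. For the inverse, start with $f \in \mrm{Hom}_{\cO,\vphi}(\cO \otimes_{\mfS}\mfM, \Eur/\cOur)$ and restrict it along the injection $\mfM \hookrightarrow \cO \otimes_\mfS \mfM$, which exists because every object of $\mrm{Mod}^r_{/\mfS_\infty}$ is étale (as noted in the excerpt). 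The essential point is that $f(\mfM) \subset \mfSur_\infty$: this is the step where the finite-height hypothesis is crucial, as $\vphi$-compatibility combined with the condition that $\mrm{coker}(\vphi^*)$ is killed by $E(u)^r$ forces a uniform bound on denominators, collapsing the image into $\mfSur_\infty$. I expect this denominator control to be the main technical obstacle; it is modeled on similar integrality arguments in the free case.

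Once the natural isomorphism $T_\mfS(\mfM)\simeq \mcal{T}(\cO\otimes_\mfS\mfM)$ is established, exactness is immediate. On the one hand, base change along $\mfS \to \cO$ is exact on $\mrm{Mod}^r_{/\mfS_\infty}$: the map $\mfS \to \mfS[1/u]$ is flat as a localization, and $\mfS[1/u] \to \cO$ is the $p$-adic completion of a Noetherian ring and hence flat, so $\mfS \to \cO$ is flat overall. On the other hand, $\mcal{T}$ is an anti-equivalence of abelian categories between $\mbf{\Phi M}_{/\cO_\infty}$ and $\mrm{Rep}_{\mrm{tor}}(G_\infty)$ by Proposition \ref{Fon}, in particular exact.

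For faithfulness, suppose $\alpha\colon \mfM_1 \to \mfM_2$ is a morphism in $\mrm{Mod}^r_{/\mfS_\infty}$ with $T_\mfS(\alpha) = 0$. Via the isomorphism above this is equivalent to $\mcal{T}(\mrm{id}_\cO \otimes \alpha) = 0$, and by faithfulness of $\mcal{T}$ (Proposition \ref{Fon}) the induced map $\cO \otimes_\mfS \mfM_1 \to \cO \otimes_\mfS \mfM_2$ vanishes. Since each $\mfM_i$ injects into $\cO \otimes_\mfS \mfM_i$ by étaleness, we conclude $\alpha = 0$. The only nontrivial input in this entire argument is the identification $T_\mfS(\mfM)\simeq \mcal{T}(\cO\otimes_\mfS\mfM)$, so the rest of the proof amounts to packaging together flatness of $\mfS \to \cO$, étaleness of $\mfM$, and Fontaine's equivalence.
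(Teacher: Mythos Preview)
Your proposal is correct and follows essentially the same approach as the paper: factor $T_{\mfS}$ through the base-change $\mfM \mapsto \cO \otimes_{\mfS} \mfM$ (exact by flatness of $\mfS \to \cO$) composed with Fontaine's $\mcal{T}$ (an equivalence), using the natural isomorphism $T_{\mfS}(\mfM) \simeq \mcal{T}(\cO \otimes_{\mfS} \mfM)$. The only difference is organizational: the comparison isomorphism you spend most of your effort constructing is stated separately in the paper as Proposition~\ref{FoKi} (cited from \cite{Li1}, Corollary~2.2.2), and the paper's proof of the present proposition simply invokes it together with flatness, leaving faithfulness implicit in the citation to \cite{CL1}.
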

\begin{proof}
The exactness follows from Proposition \ref{FoKi} below and the fact that 
the functor $(\mfM\mapsto \cO\otimes_{\mfS} \mfM)$
from $\mrm{Mod}^r_{/\mfS_{\infty}}$ to $\mbf{\Phi M}_{/\cO}$ is exact
(since $\cO$ is flat over $\mfS$).
\end{proof}

Similarly, for any $\mfM\in \mrm{Mod}^r_{/\mfS}$,
we define a $\mbb{Z}_p[G_{\infty}]$-module via
\[
T_{\mfS}(\mfM)=\mrm{Hom}_{\mfS,\vphi}(\mfM,\mfS^{\mrm{ur}}).
\]
The representation $T_{\mfS}(\mfM)$ is an object of 
$\mrm{Rep}_{\mrm{fr}}(G_{\infty})$
and $\mrm{rank}_{\mbb{Z}_p}T_{\mfS}(\mfM)=\mrm{rank}_{\mfS}\mfM$.

\begin{proposition}[\cite{Ki}, Corollary 2.1.4 and Proposition 2.1.12]
\label{Ki}
The functor 
$T_{\mfS}\colon  \mrm{Mod}^r_{/\mfS}\to 
\mrm{Rep}_{\mrm{fr}}(G_{\infty})$
is exact and fully faithful.
\end{proposition}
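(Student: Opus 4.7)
The natural approach is to piggyback on Fontaine's theory (Proposition \ref{Fon}) via the base-change functor $\mfM \mapsto \cO \otimes_{\mfS} \mfM$ from $\mrm{Mod}^r_{/\mfS}$ to $\mbf{\Phi M}_{/\cO}$, paralleling the sketch given for Proposition \ref{cl1}. The pivotal step is a natural isomorphism of $G_{\infty}$-representations
\[
T_{\mfS}(\mfM) \overset{\sim}{\longrightarrow} \mcal{T}(\cO \otimes_{\mfS} \mfM),
\]
obtained by sending a $\vphi$-equivariant map $f\colon \mfM \to \mfS^{\mrm{ur}}$ to its unique $\cO$-linear extension $\cO \otimes_{\mfS} \mfM \to \wh{\cOur}$; the inverse sends a $\vphi$-equivariant $g\colon \cO \otimes_{\mfS} \mfM \to \wh{\cOur}$ to its restriction to $\mfM$, which one must check lands in $\mfS^{\mrm{ur}} = \wh{\cOur} \cap W(R)$. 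Granting the isomorphism, exactness of $T_{\mfS}$ is formal: the base change $\cO \otimes_{\mfS} -$ is exact by flatness of $\cO$ over $\mfS$, and $\mcal{T}$ is exact on $\mbf{\Phi M}_{/\cO}$ by Proposition \ref{Fon}.

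To establish the required containment $g(\mfM) \subset W(R)$, I would fix an $\mfS$-basis $e_1, \dots, e_d$ of $\mfM$ and let $A \in M_d(\mfS)$ be the matrix of $\vphi_{\mfM}$. The height $r$ hypothesis furnishes $B \in M_d(\mfS)$ with $AB = BA = E(u)^r \cdot I$, and the values $x_i := g(e_i) \in \wh{\cOur}$ satisfy the $\vphi$-semi-linear system determined by $A$. Working modulo $p$ reduces to a system over the perfect closure of $k(\!(u)\!)$ inside $\mrm{Fr}R$ whose solutions already lie in the strict henselization; lifting $p$-adically by iteratively applying $B$ (which the height condition guarantees to be meaningful) shows $x_i \in W(R)$. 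Comparing $\mbb{Z}_p$-ranks then upgrades the inclusion to an isomorphism and yields $\mrm{rank}_{\mbb{Z}_p}T_{\mfS}(\mfM)=\mrm{rank}_{\mfS}\mfM$ as a byproduct.

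For full faithfulness: via the identification together with Fontaine's quasi-inverse $\mcal{M}$, a morphism $T_{\mfS}(\mfM_2) \to T_{\mfS}(\mfM_1)$ in $\mrm{Rep}_{\mrm{fr}}(G_{\infty})$ corresponds bijectively to a $\vphi$-equivariant $\cO$-linear map $g\colon \cO \otimes_{\mfS} \mfM_1 \to \cO \otimes_{\mfS} \mfM_2$. Faithfulness of $T_{\mfS}$ is then immediate because $\mfM_i \hookrightarrow \cO \otimes_{\mfS} \mfM_i$ (the $\mfM_i$ are \'etale, being $p$-torsion-free). The content of fullness reduces to showing that such a $g$ automatically restricts to an $\mfS$-linear map $\mfM_1 \to \mfM_2$.

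I expect this saturation claim to be the main obstacle, and it is precisely the point where the height $r$ condition asserts itself to distinguish Kisin modules from arbitrary \'etale $\cO$-modules. The plan is a minimal-denominator argument: assume for contradiction that $g(x) \notin \mfM_2$ for some $x \in \mfM_1$, let $N \geq 1$ be the least integer with $u^N g(x) \in \mfM_2$, and compare $g(\vphi_{\mfM_1}(x)) = \vphi_{\mfM_2}(g(x))$ on both sides. The Frobenius multiplies $u$-denominators by $p$, whereas the inclusion $\vphi^{\ast}(\mfM_2) \supset E(u)^r \mfM_2$ permits only $u^{er}$ of cancellation; iterating $\vphi$ then either drives the denominator up without bound --- contradicting that $g(\vphi^n(x))$ is a fixed element of $\cO \otimes_{\mfS} \mfM_2$ with bounded denominator --- or forces $N = 0$ from the start. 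Executing this rigorously requires choosing compatible bases of $\mfM_1$ and $\mfM_2$ and keeping simultaneous track of the Frobenius matrices together with their height-$r$ pseudo-inverses, which is the main bookkeeping burden.
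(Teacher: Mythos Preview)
The paper does not supply its own proof of this proposition; it is imported wholesale from \cite{Ki} via the citation in the statement, so there is no in-house argument to compare against. That said, your sketch is a correct reconstruction of the standard route, and the identification $T_{\mfS}(\mfM)\simeq \mcal{T}(\cO\otimes_{\mfS}\mfM)$ you derive is precisely Proposition~\ref{FoKi} of the present paper (there cited from \cite{Li1}), so in the paper's logical order you may simply invoke it.

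One comment on the full-faithfulness step: your minimal-denominator idea is right, but your write-up makes it sound harder than it is, and the phrase ``$g(\vphi^n(x))$ is a fixed element \ldots\ with bounded denominator'' is confusing (it is not fixed; what is bounded is the denominator of every element of the finitely generated module $g(\mfM_1)$). In fact no iteration is needed. Let $N$ be minimal with $u^{N}g(\mfM_1)\subset\mfM_2$ and choose $x\in\mfM_1$ with $m:=u^{N}g(x)\in\mfM_2\smallsetminus u\mfM_2$. The height-$r$ condition gives $B\in M_d(\mfS)$ with $BA=E(u)^{r}I$; if $\vphi(m)\in u\mfM_2$ then in coordinates $E(u)^{r}\vphi(\underline{m})=B\!\cdot\!(A\vphi(\underline{m}))\in u\mfS^{d}$, which is impossible since some $m_i$ has nonzero constant term in $W(k)$ and $E(u)^{r}$ has nonzero constant term $(pc_0)^{r}$. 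Thus $\vphi(m)\notin u\mfM_2$. But $\vphi(m)=u^{pN}g(\vphi(x))$ and $g(\vphi(x))\in g(\mfM_1)\subset u^{-N}\mfM_2$, so $\vphi(m)\in u^{(p-1)N}\mfM_2$; hence $(p-1)N=0$ and $N=0$. Your heuristic ``only $u^{er}$ of cancellation is permitted'' is the mod-$p$ shadow of this; over $\mfS$ itself there is no cancellation at all, because $E(u)$ and $u$ are coprime in the UFD $\mfS$.
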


Let $\mfM\in \mrm{Mod}^r_{/\mfS_{\infty}}$ 
(resp.\ $\mfM\in \mrm{Mod}^r_{/\mfS}$).
Since $E(u)$ is a unit in $\cO$,
we see that $M=\mfM[1/u]:=\cO\otimes_{\mfS} \mfM$ 
is a finite torsion \'etale $\cO$-module
(resp.\ a finite free \'etale $\cO$-module).
Here a Frobenius structure on $M$ is given by 
$\vphi_{M}=\vphi_{\cO}\otimes \vphi_{\mfM}$. 

\begin{proposition}[\cite{Li1}, Corollary 2.2.2]
\label{FoKi}
Let
$\mfM\in \mrm{Mod}^r_{/\mfS_{\infty}}$ or 
$\mfM\in \mrm{Mod}^r_{/\mfS}$.
Then the natural map
\[
T_{\mfS}(\mfM)\to \mcal{T}(\cO\otimes_{\mfS} \mfM)
\]
is an isomorphism as $\mbb{Z}_p$-representations 
of $G_{\infty}$.
\end{proposition}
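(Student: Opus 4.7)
The plan is to construct the natural map explicitly, verify injectivity from the \'etaleness of $\mfM$, and prove surjectivity by a dévissage that reduces to the case $p\mfM = 0$. For the construction, the inclusion $\mfSur \subset \wh{\cOur}$ of subrings of $W(\mrm{Fr}R)$ induces, in the torsion case, an inclusion $\mfSur_{\infty} \hookrightarrow \wh{\cOur}_{\infty} = \Eur/\cOur$; composing a given $f \in T_{\mfS}(\mfM)$ with this and extending $\cO$-linearly to $M := \cO \otimes_{\mfS} \mfM$ produces a $\vphi$- and $G_{\infty}$-equivariant map $M \to \Eur/\cOur$, yielding the natural map $T_{\mfS}(\mfM) \to \mcal{T}(M)$. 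The free case is parallel, using $\mfSur \hookrightarrow \wh{\cOur}$ directly. For injectivity, since $\mfM$ is \'etale (hence $p'$-torsion free), the canonical map $\mfM \hookrightarrow M$ is injective, so the vanishing of the $\cO$-linear extension on $M$ forces $f$ itself to vanish on $\mfM$.

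Surjectivity is the substantive step. I would first dévissage in the torsion case: using exactness of $T_{\mfS}$ (Proposition \ref{cl1}), flatness of $\cO$ over $\mfS$, and the equivalence of Proposition \ref{Fon}, apply the five lemma to the short exact sequence $0 \to p^{n-1}\mfM \to \mfM \to \mfM/p^{n-1}\mfM \to 0$ and induct on $n$ (the $p$-length of $\mfM$) to reduce to $p\mfM = 0$. The free case passes to the torsion case by taking the inverse limit over $\mfM/p^{n}\mfM$, with which both $T_{\mfS}$ and $\mcal{T}\circ(\cO\otimes_{\mfS}-)$ are compatible.

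In the base case $p\mfM = 0$, $\mfM$ is finite free over $k[\![u]\!]$, and any $g \in \mcal{T}(M)$ takes values in the $p$-torsion subgroup of $\Eur/\cOur$, canonically identified with $\cOur/p = k(\!(u)\!)^{\mrm{sep}}$. The task becomes: show $g(\mfM) \subset \mfSur/p$, equivalently (viewing everything inside $\mrm{Fr}R$ via $u \mapsto \underline{\pi}$) that $g(\mfM) \subset R$. Fixing a basis $(e_i)$ of $\mfM$ with Frobenius matrix $A = (A_{ji}) \in M_d(k[\![u]\!])$, the $\vphi$-equivariance of $g$ translates to $x_i^p = \sum_j A_{ji} x_j$ for $x_i := g(e_i) \in \mrm{Fr}R$; taking $u$-adic valuations of both sides gives $(p-1)\min_i v(x_i) \geq 0$, forcing $v(x_i) \geq 0$ and hence $x_i \in R$. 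The main technical obstacle is less this valuation argument itself than verifying that the short exact sequences used in the dévissage preserve membership in $\mrm{Mod}^r_{/\mfS_{\infty}}$ (i.e., that $p^{n-1}\mfM$ and $\mfM/p^{n-1}\mfM$ retain the required two-term resolution), and that the various identifications of $\mfSur/p$ with a subring of $\cOur/p \subset \mrm{Fr}R$ interact correctly with Frobenius.
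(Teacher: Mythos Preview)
The paper does not prove this proposition itself; it is quoted from \cite{Li1}, Corollary~2.2.2. Your plan is essentially the standard argument and the base-case valuation computation is correct. Two points nonetheless deserve attention.

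First, a circularity: your d\'evissage step appeals to the exactness of $T_{\mfS}$ via Proposition~\ref{cl1}, but in this paper the proof of Proposition~\ref{cl1} \emph{uses} Proposition~\ref{FoKi}. To break the loop you can either establish exactness of $T_{\mfS}$ independently, or---more cleanly---skip the d\'evissage entirely. Given any $g\in\mcal{T}(M)$, the restriction $g|_{\mfM}$ has image a $\mfS$-finite, $\vphi$-stable submodule of $W(\mrm{Fr}R)_\infty$ of finite $E(u)$-height, and \cite{Fo}, Proposition~B.1.8.3 then forces $g(\mfM)\subset\mfSur_\infty$ in one stroke. This is exactly the argument the paper deploys later in the proof of Corollary~\ref{cov}, and it handles the torsion and free cases uniformly.

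Second, in your base case you assert that (inside $\mrm{Fr}R$) the condition $g(\mfM)\subset\mfSur_1$ is \emph{equivalent} to $g(\mfM)\subset R$. The forward implication is clear from $\mfSur\subset W(R)$, but the converse---that an element of $\cOur_1$ lying in $R$ already lies in $\mfSur_1$---is precisely the mod-$p$ content of Fontaine's result and is not a tautology. Your valuation inequality $(p-1)\min_i v(x_i)\ge 0$ is the heart of the matter, but the passage from ``non-negative valuation'' to ``lies in $\mfSur_1$'' needs either a direct argument about the integral closure of $k[\![u]\!]$ in $\cOur_1$, or a citation. The technical worry you flag about $p^{n-1}\mfM$ and $\mfM/p^{n-1}\mfM$ staying in $\mrm{Mod}^r_{/\mfS_\infty}$, on the other hand, is handled by Proposition~\ref{stc} and Lemma~\ref{ptor}.
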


\subsection{$(\vphi,\hat{G})$-modules}
Let $S$ be the $p$-adic completion 
of $W(k)[u,\frac{E(u)^i}{i!}]_{i\ge 0}$ and endow $S$
with the following structures:
\begin{itemize}
\item a continuous $\varphi$-semi-linear 
Frobenius $\varphi\colon S\to S$
defined by $\varphi(u)=u^p$.

\item a continuous linear derivation 
      $N\colon S\to S$ defined by $N(u)=-u$.
      
\item a decreasing filtration 
      $(\mrm{Fil}^iS)_{i\ge 0}$ in $S$. Here $\mrm{Fil}^iS$ 
      is the $p$-adic closure of the ideal generated 
      by the divided powers $\gamma_j(E(u))=\frac{E(u)^j}{j!}$ for all $j\ge i$.       

%\item a decreasing filtration $(\mrm{Fil}^i S)_{i\in \mbb{Z}_{\ge 0}}$ where 
%$\mrm{Fil}^i S$ is the $p$-adic completion of
%$\sum_{j\ge i}S\frac{E(u)^j}{j!}$.
\end{itemize}
Put $S_{K_0}=S[1/p]=K_0\otimes_{W(k)} S$.
The inclusion $W(k)[u]\hookrightarrow W(R)$ 
via the map $u\mapsto [$\underbar{$\pi$}$]$
induces inclusions 
$\mfS\hookrightarrow S\hookrightarrow A_{\mrm{cris}}$
and $S_{K_0}\hookrightarrow B^+_{\mrm{cris}}$.
We regard all these rings as subrings 
in $B^+_{\mrm{cris}}$.

Fix a choice of primitive $p^i$-root 
of unity $\zeta_{p^i}$ for $i\ge 0$
such that $\zeta^p_{p^{i+1}}=\zeta_{p^i}$.
Put \underbar{$\e$} $=(\zeta_{p^i})_{i\ge 0}\in R^{\times}$
and $t=\mrm{log}([$\underbar{$\e$}$])\in A_{\mrm{cris}}$.
Denote by $\nu\colon W(R)\to W(\bar k)$ 
a unique lift of the projection $R\to \bar k$.
Since $\nu(\mrm{Ker}(\theta))$ 
is contained in the set $pW(\bar k)$,
$\nu$ extends to a map 
$\nu\colon A_{\mrm{cris}}\to W(\bar k)$
and $\nu \colon B^+_{\mrm{cris}}\to W(\bar k)[1/p]$.
For any subring $A\subset B^+_{\mrm{cris}}$,
we put 
$I_+A=\mrm{Ker}(\nu\ \mrm{on}\  B^+_{\mrm{cris}})\cap A$.
For any integer $n\ge 0$,
let $t^{\{n\}}=t^{r(n)}\gamma_{\tilde{q}(n)}(\frac{t^{p-1}}{p})$ 
where $n=(p-1)\tilde{q}(n)+r(n)$ with $0\le r(n) <p-1$
and $\gamma_i(x)=\frac{x^i}{i!}$ is 
the standard divided power.

We define a subring $\mcal{R}_{K_0}$ of $B^+_{\mrm{cris}}$
as below:
\[
\mcal{R}_{K_0}=\{\sum^{\infty}_{i=0} f_it^{\{i\}}\mid f_i\in S_{K_0}\
\mrm{and}\ f_i\to 0\ \mrm{as}\ i\to \infty\}.
\]
Put $\wh{\mcal{R}}=\mcal{R}_{K_0}\cap W(R)$
and $I_+=I_+\wh{\mcal{R}}$.

For any field  $F$ over $\mbb{Q}_p$, set 
$F_{p^{\infty}}=\cup^{\infty}_{n\ge 0} F(\zeta_{p^n})$.
Recall $K_{\infty}=\cup_{n\ge 0}K(\pi_n)$ 
and note that 
$K_{\infty, p^{\infty}}=\cup_{n\ge 0} K(\pi_n, \zeta_{p^{\infty}})$
is the Galois closure of 
$K_{\infty}$ over $K$.
Put $H_K=\mrm{Gal}(K_{\infty,p^{\infty}}/K_{\infty}), 
H_{\infty}=\mrm{Gal}(\bar{K}/K_{\infty,p^{\infty}}),
G_{p^{\infty}}=\mrm{Gal}(K_{\infty,p^{\infty}}/K_{p^{\infty}})$ and 
$\hat{G}=\mrm{Gal}(K_{{\infty},p^{\infty}}/K)$.

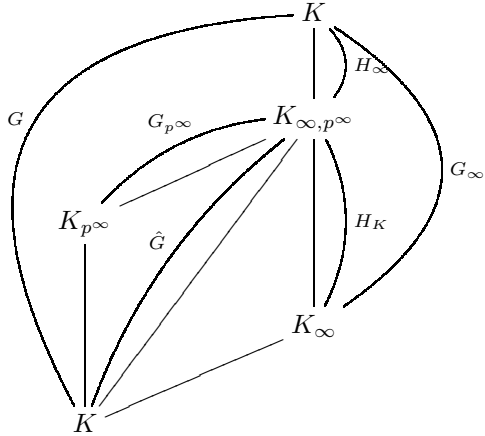
\begin{figure}[htbp]
\begin{center}
\[
\xymatrix{
%%%%%%%%%%%%%%%%%%% \bar K %%%%%%%%%%%%%%%%%%%%%%%%%%%%%%
& & & & & \bar{K} \\
%%%%%%%%%%%%%%%%%% empty %%%%%%%%%%%%%%%%%%
%& & & & & & & & \\
%& & & & & & & & \\ 
%%%%%%%%%%%%%%%%%%  K_{p^{\infty}}K_{\infty} %%%%%%%%%%%%%%%
& & & & & K_{\infty,p^{\infty}} \ar@{-}[u] \ar@{-}[u] \ar@/_1pc/@{-}[u]_{H_{\infty}}
%\ar@{-}[uu] 
\\
%%%%%%%%%%%%%%%%%% K_{p^{\infty}} %%%%%%%%%%%%%%%%%%
& & & K_{p^{\infty}} 
\ar@/^1pc/@{-}[rru] ^{G_{p^{\infty}}}
\ar@{-}[rru] 
& & & & \\
%%%%%%%%%%%%%%%%%% empty %%%%%%%%%%%%%%%%%%
%& & & & & & \\
%& & & & & & \\ 
%%%%%%%%%%%%%%%%%% K_{\infty} %%%%%%%%%%%
& & & & & 
K_{\infty} \ar@/_1pc/@{-}[uu] _{H_K}
\ar@{-}[uu]
\ar@/_4pc/@{-}[uuu] _{G_{\infty}}
% \ar@/_1pc/@{-}[ru] |{G_{\p^{\infty}}}
%\ar@{-}[ru] 
& & & & \\  
%%%%%%%%%%%%%%%%%% K %%%%%%%%%%%%%%%%%%%% 
& & & K \ar@{-}[rru]  
\ar@/^1pc/@{-}[rruuu] ^{\hat{G}}
\ar@{-}[rruuu] 
\ar@/^6pc/@{-}[rruuuu] ^{G}
\ar@{-}[uu] 
& & & & \\ 
}
\]
%%%%%%%%%%%%%%%%%%%% empty %%%%%%%%%%%%%%%%%%%%%%%%
%& & & & & & \\
%%%%%%%%%%%%%%%%%%%%% K_0 %%%%%%%%%%%%%%%%%%%%%%%%%%%%%%
%K_0 \ar@{-}[u] 
%& & & & \\ 
%}
%\]

\end{center}
\caption{Galois groups of field extensions}
\end{figure}

\begin{proposition}[\cite{Li3}, Lemma 2.2.1]
\label{hatR}
$(1)$ $\wh{\mcal{R}}$ (resp.\ $\mcal{R}_{K_0}$) 
is a $\vphi$-stable $\mfS$-algebra
as a subring in $W(R)$ (resp.\ $B^+_{\mrm{cris}}$).

\noindent
$(2)$ $\wh{\mcal{R}}$ and $I_+$ 
(resp.\ $\mcal{R}_{K_0}$ and $I_+\mcal{R}_{K_0}$) are $G$-stable.
The $G$-action on $\wh{\mcal{R}}$ and $I_+$ 
(resp.\ $\mcal{R}_{K_0}$ and $I_+\mcal{R}_{K_0}$)
factors through $\hat{G}$.

\noindent
$(3)$ There exist natural isomorphisms 
$\mcal{R}_{K_0}/I_+\mcal{R}_{K_0}\simeq K_0$ and 
$\wh{\mcal{R}}/I_+\simeq S/I_+S\simeq \mfS/I_+\mfS\simeq W(k)$.
\end{proposition}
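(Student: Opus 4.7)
The plan is to verify each part by unwinding the series expansion defining $\mcal{R}_{K_0}$ and then pulling back structures from the ambient ring $B^+_{\mrm{cris}}$ (resp.\ $W(R)$) to the intersection.

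For part (1), the key point is closure under multiplication. Using the divided-power identity $\gamma_a\gamma_b = \binom{a+b}{a}\gamma_{a+b}$ combined with $t^{\{n\}} = t^{r(n)}\gamma_{\tilde q(n)}(t^{p-1}/p)$, each product $t^{\{i\}}t^{\{j\}}$ is a $\mbb{Z}_p$-linear combination of $t^{\{k\}}$'s with $k \le i+j$. Together with the ring structure on $S_{K_0}$ and $p$-adic convergence, the Cauchy product of two series in $\mcal{R}_{K_0}$ again lies in $\mcal{R}_{K_0}$. The inclusion $\mfS \subset S_{K_0} \subset \mcal{R}_{K_0}$ (through the $i=0$ term) gives the $\mfS$-algebra structure. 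For $\vphi$-stability, $\vphi(t) = pt$ forces $\vphi(t^{\{n\}})$ to lie in the $S_{K_0}$-span of the $t^{\{m\}}$'s, and $\vphi(S_{K_0}) \subset S_{K_0}$, so $\vphi(\mcal{R}_{K_0}) \subset \mcal{R}_{K_0}$. The assertions for $\wh{\mcal{R}}$ follow because $W(R) \subset B^+_{\mrm{cris}}$ is itself a $\vphi$-stable $\mfS$-subalgebra.

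For part (2), I would first compute the $G$-action on generators. Compatible $p^n$-th roots give $g(\pi_n) = \pi_n\zeta_{p^n}^{c_n(g)}$, so $g(\underline{\pi}) = \underline{\pi}\cdot \underline{\e}^{c(g)}$ in $R$ and hence $g(u) = u[\underline{\e}]^{c(g)} = u\exp(c(g)t)$ in $B^+_{\mrm{cris}}$, while $g(t) = \chi(g)t$. Expanding $\exp(c(g)t)$ as a series in the $t^{\{n\}}$'s with coefficients tending $p$-adically to zero (which follows from a direct valuation computation using $t^n = p^{\tilde q(n)} \tilde q(n)! \, t^{\{n\}}$) and substituting into an element of $\mcal{R}_{K_0}$ keeps us inside $\mcal{R}_{K_0}$; $G$-stability of $\wh{\mcal{R}}$ is inherited from $G$-stability of $W(R)$. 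Elements of $H_\infty = \mrm{Gal}(\bar K/K_{\infty,p^{\infty}})$ fix every $\pi_n$ and every $\zeta_{p^n}$, hence act trivially on $u$ and $t$, so the action factors through $\hat{G}$. For $I_+$, the map $\nu$ is the unique lift of the $G$-equivariant projection $R \to \bar k$, so $\mrm{Ker}(\nu)$ is $G$-stable, and intersecting with $\wh{\mcal{R}}$ (resp.\ $\mcal{R}_{K_0}$) preserves this.

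For part (3), since $\nu([\underline{\e}]) = 1$ we have $\nu(t) = \log 1 = 0$, so $\nu(t^{\{n\}}) = 0$ for $n \ge 1$. Applying $\nu$ term by term to $\sum f_it^{\{i\}}$ collapses the series to $\nu(f_0)$, giving $\nu(\mcal{R}_{K_0}) = \nu(S_{K_0})$. Because $\nu(u) = 0$ and $v_p(p^i/i!) > 0$, one has $\nu(E(u)^i/i!) = a_0^i/i! \in W(k)$ (where $a_0 = E(0) \in pW(k)$ since $E$ is Eisenstein); combined with $\nu(W(k)) = W(k)$ this yields $\nu(S) = W(k)$ and $\nu(S_{K_0}) = K_0$, hence $\mcal{R}_{K_0}/I_+\mcal{R}_{K_0} \simeq K_0$. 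On the integral side, $\nu(\wh{\mcal{R}}) \subset K_0 \cap W(\bar k) = W(k)$, while $\nu(\mfS) = W(k)$ already via $u \mapsto 0$, so the injections $\mfS/I_+\mfS \hookrightarrow S/I_+S \hookrightarrow \wh{\mcal{R}}/I_+$ are all identified compatibly with $W(k)$. The main obstacle is the bookkeeping in parts (1) and (2): verifying that the product of two convergent series in the $t^{\{n\}}$ rearranges to another convergent series, and that applying $g$ term by term preserves both convergence and the integrality condition defining $\wh{\mcal{R}}$.
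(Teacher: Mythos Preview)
The paper does not give its own proof of this proposition: it is quoted verbatim as \cite{Li3}, Lemma~2.2.1, and no argument is supplied. Your proposal therefore cannot be compared against a proof in the paper; you are effectively reconstructing Liu's original argument, and your outline is along the expected lines.

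Your sketch is essentially sound. A few remarks on the places you flagged as obstacles. For part~(1), the product formula sharpens to a single term: one checks directly that $t^{\{i\}}t^{\{j\}}$ is a $\mbb{Z}_p$-multiple of $t^{\{i+j\}}$ (splitting into the two cases $r(i)+r(j)<p-1$ and $r(i)+r(j)\ge p-1$), which makes the Cauchy-product rearrangement immediate. For $\vphi$-stability, the identity $\vphi(t^{\{n\}})=p^{n}t^{\{n\}}$ follows from $\vphi(t)=pt$ and simplifies the argument. For part~(2), the step that needs the most care is not the action on $\whR$ (which is automatic once $\mcal{R}_{K_0}$ is handled) but rather that $g(u)=u[\underline{\e}]^{c(g)}$ lies in $\mcal{R}_{K_0}$; here one must actually verify that the coefficients $c(g)^{n}\,p^{\tilde q(n)}\tilde q(n)!/n!$ in the expansion $\exp(c(g)t)=\sum_n c(g)^n\frac{p^{\tilde q(n)}\tilde q(n)!}{n!}\,t^{\{n\}}$ tend to~$0$, which is the valuation estimate you allude to. Once that is done, $g(S_{K_0})\subset\mcal{R}_{K_0}$ and the rest follows. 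Part~(3) is clean as you wrote it.
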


For any Kisin module $(\mfM,\vphi_{\mfM})$ 
of height $r$,
we put $\hat{\mfM}=\wh{\mcal{R}}\otimes_{\vphi, \mfS} \mfM$
and equip $\hat{\mfM}$ with 
a Frobenius $\vphi_{\hat{\mfM}}$ by
$\vphi_{\hat{\mfM}}=
\phi_{\wh{\mcal{R}}}\otimes \vphi_{\hat{\mfM}}$.
It is known that a natural map
\[
\mfM\rightarrow \wh{\mcal{R}}\otimes_{\vphi, \mfS} \mfM=\hat{\mfM}
\]
is an injection (\cite{CL2}, Section 3.1).
By this injection, 
we regard $\mfM$ as a $\vphi(\mfS)$-stable 
submodule in $\hat{\mfM}$.

\begin{definition}
\label{Liumod}
A {\it weak $(\vphi, \hat{G})$-module} $(${\it of height} $r$$)$ 
is a triple $\hat{\mfM}=(\mfM, \vphi_{\mfM}, \hat{G})$ where
\begin{enumerate}
\item[(1)] $(\mfM, \vphi_{\mfM})$ is a 
           Kisin module (of height $r$), 

\vspace{-2mm}
           
\item[(2)] $\hat{G}$ is a $\wh{\mcal{R}}$-semi-linear
           $\hat{G}$-action on $\whR\otimes_{\vphi, \mfS} \mfM$
           which induces a continuous $G$-action on 
            $W(\mrm{Fr}R)\otimes_{\vphi, \mfS} \mfM$
           for the weak topology\footnote{
           Suppose that $\mfM$ is free as a $\mfS$-module. 
           Give $\wh{\mcal{R}}\otimes_{\vphi, \mfS} \mfM$ 
           (resp.\ $W(\mrm{Fr}R)\otimes_{\vphi, \mfS} \mfM$) the weak topology
           using any $\whR$-basis (resp.\ $W(\mrm{Fr}R)$-basis), 
           which is independent of the choice of basis.
           Then we may replace the condition (2) with the following condition (2)':\\
           (2)'  $\hat{G}$ is a continuous $\wh{\mcal{R}}$-semi-linear
           $\hat{G}$-action on $\whR\otimes_{\vphi, \mfS} \mfM$
           for the weak topology.\\
           In fact, if $G$ acts on $\whR\otimes_{\vphi, \mfS} \mfM$ continuously,
           then the $G$-action on $\hat{T}(\hat{\mfM})$ is continuous for the $p$-adic topology
           (the definition for $\hat{T}(\hat{\mfM})$
           is given before Theorem \ref{Li}).
           Since the map $\hat{\iota}$ in Lemma \ref{rel2} (4) is a topological isomorphism for 
           weak topologies on both sides, we see that the $G$-action on 
           $W(\mrm{Fr}R)\otimes_{\vphi, \mfS} \mfM$ is automatically continuous.},
           
\vspace{-2mm}           
           
\item[(3)] the $\hat{G}$-action commutes with $\vphi_{\hat{\mfM}}$,

\vspace{-2mm}

\item[(4)] $\mfM\subset \hat{\mfM}^{H_K}$.

\end{enumerate}

\noindent
A  weak $(\vphi, \hat{G})$-module $\hat{\mfM}$ is called
a {\it $(\vphi, \hat{G})$-module} if it satisfies the additional condition
\begin{enumerate}

\item[(5)] $\hat{G}$ acts on the 
           $W(k)$-module $\hat{\mfM}/I_+\hat{\mfM}$ trivially.           
\end{enumerate}

If $\mfM$ is a torsion 
(resp.\ free) Kisin module of (height $r$),
we call $\hat{\mfM}$ a 
{\it torsion} 
(resp.\ {\it free}) {\it $(\vphi, \hat{G})$-module} 
({\it of} height $r$).
By analogous way, we define the notion of a torsion (resp.\ free) weak $(\vphi, \hat{G})$-module.
If $\hat{\mfM}=(\mfM,\vphi_{\mfM},\hat{G})$ 
is a weak $(\vphi, \hat{G})$-module,
we often abuse of notations by 
denoting $\hat{\mfM}$ the underlying 
module $\wh{\mcal{R}}\otimes_{\vphi, \mfS} \mfM$.
\end{definition}

A morphism 
$f\colon (\mfM, \vphi, \hat{G})\to (\mfM', \vphi', \hat{G})$  
between two weak $(\vphi, \hat{G})$-modules is a morphism 
$f\colon (\mfM,\vphi)\to (\mfM',\vphi')$ of Kisin-modules 
such that 
$\wh{\mcal{R}}\otimes f\colon \hat{\mfM}\to \hat{\mfM}'$
is a $\hat{G}$-equivalent.
We denote by ${}_{\mrm{w}}\mrm{Mod}^{r,\hat{G}}_{/\mfS_{\infty}}$ 
(resp.\ ${}_{\mrm{w}}\mrm{Mod}^{r,\hat{G}}_{/\mfS}$, 
resp.\ $\mrm{Mod}^{r,\hat{G}}_{/\mfS_{\infty}}$,
resp.\ $\mrm{Mod}^{r,\hat{G}}_{/\mfS}$) the 
category of torsion weak  $(\vphi, \hat{G})$-modules
(
resp.\ free weak $(\vphi, \hat{G})$-modules, 
resp.\ torsion $(\vphi, \hat{G})$-modules,
resp.\ free $(\vphi, \hat{G})$-modules).
We regard $\hat{\mfM}$ as a $G$-module 
via the projection $G\twoheadrightarrow \hat{G}$. 
A sequence
$0\to \hat{\mfM}' \to \hat{\mfM} \to \hat{\mfM}''\to 0$
of (weak) $(\vphi,\hat{G})$-modules is exact if it is exact as $\mfS$-modules
and all morphisms are morphisms of (weak) $(\vphi,\hat{G})$-modules.

For a weak $(\vphi, \hat{G})$-module $\hat{\mfM}$,
we define a $\mbb{Z}_p[G]$-module as below:
\[
\hat{T}(\hat{\mfM})=\mrm{Hom}_{\wh{\mcal{R}},\vphi}(\hat{\mfM}, W(R)_{\infty})\quad 
\mrm{if}\ \mfM \ \mrm{is\ killed\ by\ some\ power\ of}\ p
\]
and 
\[
\hat{T}(\hat{\mfM})=\mrm{Hom}_{\wh{\mcal{R}},\vphi}(\hat{\mfM}, W(R))\quad 
\mrm{if}\ \mfM \ \mrm{is\ free}.
\]
Here,  $G$ 
acts on $\hat{T}(\hat{\mfM})$ by $(\sigma.f)(x)=\sigma(f(\sigma^{-1}(x)))$
for $\sigma\in G,\ f\in \hat{T}(\hat{\mfM}),\ x\in \hat{\mfM}$. 

Let $\hat{\mfM}=(\mfM, \vphi_{\mfM}, \hat{G})$ 
be a weak $(\vphi,\hat{G})$-module.
There exists a natural map 
\[
\theta\colon T_{\mfS}(\mfM)\to \hat{T}(\hat{\mfM})
\]
defined by 
\[
\theta(f)(a\otimes m)=a\vphi(f(m))\quad \mrm{for}\ 
f\in T_{\mfS}(\mfM),\ a\in \wh{\mcal{R}}, m\in \mfM,
\]
which is a $G_{\infty}$-equivalent.

Let denote by $\mrm{Rep}^{r}_{\mbb{Z}_p}(G)$ the category
of $G$-stable $\mbb{Z}_p$-lattices in semi-stable 
$p$-adic representations of $G$ with Hodge-Tate weights in $[0,r]$.

\begin{theorem}[\cite{Li3},\cite{CL2}]
\label{Li}
Let $\hat{\mfM}=(\mfM, \vphi_{\mfM}, \hat{G})$ 
be a weak $(\vphi,\hat{G})$-module.

\noindent
$(1)$ The map
$\theta\colon T_{\mfS}(\mfM)\to \hat{T}(\hat{\mfM})$
is an isomorphism of $\mbb{Z}_p[G_{\infty}]$-modules.

\noindent
$(2)$ The functor $\hat{T}$ induces an anti-equivalence 
between  $\mrm{Mod}^{r,\hat{G}}_{\mfS}$ 
and $\mrm{Rep}^{r}_{\mbb{Z}_p}(G)$.
\end{theorem}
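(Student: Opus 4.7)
Part (1) is a direct compatibility between the two period functors, while part (2) invokes the substantive content of Liu's classification.

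For (1), my plan is to exhibit an explicit inverse to $\theta$. Given $g\in \hat{T}(\hat{\mfM})$, I set $\tilde{g}(m)=g(1\otimes m)$ for $m\in\mfM$, obtaining a map $\tilde{g}\colon\mfM\to W(R)_\infty$. Using the $\whR$-semi-linearity of $g$ together with the identity $\vphi(s)\otimes m=1\otimes sm$ in $\whR\otimes_{\vphi,\mfS}\mfM$, one checks $\tilde{g}(sm)=\vphi(s)\tilde{g}(m)$ and $\tilde{g}(\vphi(m))=\vphi(\tilde{g}(m))$. Because Frobenius is injective on $W(R)/p^n$, there is at most one $f\colon\mfM\to W(R)_\infty$ with $\vphi\circ f=\tilde{g}$; existence of such an $f$ follows from the twisted semi-linearity just derived. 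The remaining point is that $f$ actually lands in $\mfS^{\mrm{ur}}_\infty$, which I would derive by base-changing $\mfM$ to $M=\mfM[1/u]$ (an \'etale $\cO$-module) and invoking Proposition \ref{FoKi}, which identifies $T_\mfS(\mfM)$ with $\mcal{T}(M)$ inside $\mrm{Hom}_{\cO,\vphi}(M,\E^{\mrm{ur}}/\cOur)$. The $G_\infty$-equivariance of both $\theta$ and its inverse is immediate since $G_\infty$ fixes $\mfS$ and acts on both sides only through the target $W(R)$.

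For (2), I would treat fully-faithfulness and essential surjectivity separately. Fully-faithfulness: given free $(\vphi,\hat{G})$-modules $\hat{\mfM}_1,\hat{\mfM}_2$ and a morphism $h\colon\hat{T}(\hat{\mfM}_2)\to\hat{T}(\hat{\mfM}_1)$ of $G$-representations, restriction to $G_\infty$ combined with part (1) and Proposition \ref{Ki} produces a unique morphism $\mfM_1\to\mfM_2$ of Kisin modules inducing $h|_{G_\infty}$. The remaining task is to verify that the base-changed map $\hat{\mfM}_1\to\hat{\mfM}_2$ is $\hat{G}$-equivariant; this follows because the $\hat{G}$-action on each $\hat{\mfM}_i$ is recovered from the $G$-action on $\hat{T}(\hat{\mfM}_i)$ via the inclusion $\hat{\mfM}_i\hookrightarrow W(\mrm{Fr}R)\otimes_{\mbb{Z}_p}\hat{T}(\hat{\mfM}_i)^\vee$ afforded by part (1) after extending scalars (compare \eqref{Fon2}), and both inclusions intertwine with $h$ by construction.

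The main obstacle is essential surjectivity. Given $T\in\mrm{Rep}^r_{\mbb{Z}_p}(G)$, Proposition \ref{Ki} produces a free Kisin module $\mfM$ of height $r$ with $T_\mfS(\mfM)\simeq T|_{G_\infty}$. To upgrade $\mfM$ to a $(\vphi,\hat{G})$-module, I would transport the full $G$-action on $W(R)\otimes_{\mbb{Z}_p}T^\vee$ through the embedding $\whR\otimes_{\vphi,\mfS}\mfM\hookrightarrow W(\mrm{Fr}R)\otimes_{\mbb{Z}_p}T^\vee$ derived from part (1). The deep content---where the real difficulty lies---is to show that this $G$-action actually stabilizes the subring $\hat{\mfM}=\whR\otimes_{\vphi,\mfS}\mfM$, descends through $G\twoheadrightarrow\hat{G}$ (using that $H_\infty$ acts trivially on $\whR$), and satisfies conditions (4) and (5) of Definition \ref{Liumod}. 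Stability of $\hat{\mfM}$ under $G$ hinges on the semi-stability hypothesis on $T$: one exploits the Breuil-module structure on $S\otimes_{\vphi,\mfS}\mfM$, the monodromy operator $N$, and the ring $\mcal{R}_{K_0}$ to write down explicit expressions for $\sigma(1\otimes m)$ that manifestly land in $\hat{\mfM}$. Conversely, every such $(\vphi,\hat{G})$-module yields a lattice in a semi-stable representation by Liu's comparison of $\hat{T}(\hat{\mfM})\otimes_{\mbb{Z}_p}B^+_{\mrm{cris}}$ with $\mcal{R}_{K_0}\otimes_{\vphi,\mfS}\mfM$, which controls the Hodge--Tate weights via the height-$r$ condition on $\mfM$.
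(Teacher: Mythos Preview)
The paper does not prove this theorem; it is quoted from \cite{Li3} and \cite{CL2} with no argument given. So there is nothing in the paper to compare against, and your sketch goes well beyond what the paper itself supplies. That said, let me comment on the sketch.

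Your argument for (1) is essentially the standard one and is correct in outline. The one soft spot is your appeal to Proposition~\ref{FoKi} to force $f$ to land in $\mfS^{\mrm{ur}}_\infty$. Proposition~\ref{FoKi} is the \emph{statement} that $T_{\mfS}(\mfM)\simeq\mcal{T}(M)$; it does not by itself tell you that an arbitrary $\vphi$-equivariant $\mfS$-map $\mfM\to W(R)_\infty$ has image in $\mfS^{\mrm{ur}}_\infty$. The clean tool is Fontaine's Proposition~B.1.8.3 in \cite{Fo}, which says exactly that a $\vphi$-stable $\mfS$-finite submodule of $W(\mrm{Fr}R)_\infty$ of finite $E(u)$-height lies in $\mfS^{\mrm{ur}}_\infty$. (The present paper invokes precisely this in the proof of Corollary~\ref{cov}.) Your route via $M=\mfM[1/u]$ can be made to work, but only after unwinding the comparison isomorphism \eqref{Fon1}; citing~\ref{FoKi} alone is circular.

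For (2), your fully-faithfulness argument is fine. For essential surjectivity you write ``Proposition~\ref{Ki} produces a free Kisin module $\mfM$ of height $r$ with $T_{\mfS}(\mfM)\simeq T|_{G_\infty}$.'' That is not what Proposition~\ref{Ki} says: it only asserts that $T_{\mfS}$ is fully faithful on $\mrm{Mod}^r_{/\mfS}$. The existence of such an $\mfM$ for $T$ the restriction of a lattice in a semi-stable representation is itself a substantial theorem of Kisin (crystalline case) and Liu \cite{Li2} (semi-stable case), and is part of the content being cited here, not something you can take for granted. Once that is in hand, your description of how to transport the $G$-action and verify conditions (4)--(5) of Definition~\ref{Liumod} via Breuil modules and the ring $\mcal{R}_{K_0}$ is an accurate summary of Liu's argument in \cite{Li3}.
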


\begin{corollary}
The functor $\hat{T}\colon {}_{\mrm{w}}\mrm{Mod}^{r,\hat{G}}_{/\mfS_{\infty}}
\to \mrm{Rep}^{r}_{\mrm{tor}}(G)$
is exact and faithful.
\end{corollary}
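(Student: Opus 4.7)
The plan is to deduce the corollary directly from Theorem \ref{Li}(1) and Proposition \ref{cl1}, using the fact that the natural comparison map $\theta$ is functorial in the weak $(\vphi,\hat{G})$-module and is an isomorphism of $\mathbb{Z}_p[G_\infty]$-modules in every degree. In other words, the plan is to transport exactness and faithfulness from the Kisin-module functor $T_{\mfS}$ across $\theta$.

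First I would spell out naturality of $\theta$. Given a morphism $f\colon \hat{\mfM}\to \hat{\mfM}'$ of weak $(\vphi,\hat{G})$-modules with underlying Kisin-module morphism $f_0\colon \mfM\to \mfM'$, I would check that the square
\[
\xymatrix{
T_{\mfS}(\mfM')\ar[r]^-{\theta}\ar[d]_{T_{\mfS}(f_0)} & \hat{T}(\hat{\mfM}')\ar[d]^{\hat{T}(f)} \\
T_{\mfS}(\mfM)\ar[r]^-{\theta} & \hat{T}(\hat{\mfM})
}
\]
commutes; this is immediate from the defining formula $\theta(g)(a\otimes m)=a\varphi(g(m))$, since the morphism $\hat{T}(f)$ is post-composition by $\whR\otimes f_0$. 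Together with Theorem \ref{Li}(1), this identifies the functor $\hat{T}$ on ${}_{\mrm{w}}\mrm{Mod}^{r,\hat{G}}_{/\mfS_\infty}$ with the composition $T_{\mfS}\circ U$, where $U$ is the forgetful functor to $\mrm{Mod}^r_{/\mfS_\infty}$, \emph{as functors valued in $\mathbb{Z}_p[G_\infty]$-modules}.

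Next I would handle exactness. A sequence $0\to \hat{\mfM}'\to \hat{\mfM}\to \hat{\mfM}''\to 0$ in ${}_{\mrm{w}}\mrm{Mod}^{r,\hat{G}}_{/\mfS_\infty}$ is by definition exact on the underlying Kisin modules, so $U$ is exact. By Proposition \ref{cl1}, $T_{\mfS}$ is exact on $\mrm{Mod}^r_{/\mfS_\infty}$, and the identification above yields an exact sequence of underlying $\mathbb{Z}_p$-modules for $\hat{T}$. Since exactness of a sequence of $G$-representations is measured on the underlying $\mathbb{Z}_p$-modules, this gives exactness of $\hat{T}$.

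Finally, for faithfulness, suppose $\hat{T}(f)=0$ for a morphism $f$ with underlying Kisin-module morphism $f_0$. By the commutative square above and Theorem \ref{Li}(1), $T_{\mfS}(f_0)=0$, hence $f_0=0$ by the faithfulness half of Proposition \ref{cl1}. Since a morphism of weak $(\vphi,\hat{G})$-modules is by definition determined by its underlying Kisin-module morphism (the $\hat{G}$-equivariance condition only constrains which $f_0$ are admissible), we conclude $f=0$. There is no real obstacle here — everything is a formal consequence of the already-proved identification $T_{\mfS}(\mfM)\simeq \hat{T}(\hat{\mfM})$ and of the Kisin-module statement — so the only thing requiring care is the naturality verification, which is a direct check from the formula defining $\theta$.
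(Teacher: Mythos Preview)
Your proof is correct and follows essentially the same approach as the paper: both reduce exactness and faithfulness of $\hat{T}$ to the corresponding properties of $T_{\mfS}$ (Proposition~\ref{cl1}) via the natural isomorphism $\theta$ of Theorem~\ref{Li}(1). The paper packages the faithfulness argument as a single commutative diagram of Hom-sets rather than your explicit square, but the content is identical.
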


\begin{proof}
The exactness of the functor $\hat{T}$ 
follows from \ref{FoKi} and Theorem \ref{Li} (1).
Since  $T_{\mfS}\colon \mrm{Mod}^r_{/\mfS_{\infty}}
\to \mrm{Rep}^{r}_{\mrm{tor}}(G_{\infty})$ is faithful, 
the faithfulness of $\hat{T}$ follows from the following commutative 
diagram:
\begin{center}
$\displaystyle \xymatrix{
\mrm{Hom}_{{}_{\mrm{w}}\mrm{Mod}^{r,\hat{G}}_{/\mfS_{\infty}}}
(\hat{\mfM},\hat{\mfM}')\ar[r] \ar@{^{(}->}[r] \ar_{\hat{T}}[d]& 
\mrm{Hom}_{\mrm{Mod}^r_{/\mfS_{\infty}}}(\mfM,\mfM') \ar@{^{(}->}[r] \ar^{T_{\mfS}\quad }[r]  & 
\mrm{Hom}_{G_{\infty}}(T_{\mfS}(\mfM'),T_{\mfS}(\mfM)) \ar^{\wr}[d]\\
\mrm{Hom}_{G}(\hat{T}(\hat{\mfM}'),\hat{T}(\hat{\mfM})) \ar@{^{(}->}[rr] &
 &
\mrm{Hom}_{G_{\infty}}(\hat{T}(\hat{\mfM}'),\hat{T}(\hat{\mfM})). 
}$
\end{center}

\end{proof}

\subsection{Some fundamental  properties}

In this subsection, 
we give some fundamental, but important, results
on Kisin modules and $(\vphi,\hat{G})$-modules. 
We start with the following proposition which plays an important role throughout this paper.

\begin{proposition}[\cite{Li1}, Proposition 2.3.2]
\label{torKi}
Let $\mfM\in {}'\mrm{Mod}^r_{/\mfS}$ be killed by $p^n$.
The following statements are equivalent:

\noindent
$(1)$ $\mfM\in \mrm{Mod}^r_{/\mfS_{\infty}}$.

\noindent
$(2)$ $\mfM$ is $u$-torsion free.

\noindent
$(3)$ $\mfM$ is \'etale.

\noindent
$(4)$ $\mfM$ has a successive extension of finite free $k[\![u]\!]$-modules in $'\mrm{Mod}^r_{/\mfS_{\infty}}$,
that is,
there exists an extension 
\[
0=\mfM_0\subset \mfM_1\subset \cdots \subset\mfM_k=\mfM
\]
in $'\mrm{Mod}^r_{/\mfS_{\infty}}$ such that 
$\mfM_i/\mfM_{i-1}\in {}'\mrm{Mod}^r_{/\mfS_{\infty}}$ and $\mfM_i/\mfM_{i-1}$
is a finite free $k[\![u]\!]$-module.

\noindent
$(5)$ $\mfM$ is a quotient of two finite free $\mfS$-modules $\mfN'$ and $\mfN''$ with 
$\mfN',\mfN''\in \mrm{Mod}^r_{/\mfS}$. 
\end{proposition}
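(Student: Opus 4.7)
My plan is to establish the equivalences by exploiting the structure of $\mfS = W(k)[\![u]\!]$ as a two-dimensional regular local ring, together with an induction on the $p$-torsion exponent. First I would dispose of the easy links: (2) $\Leftrightarrow$ (3) is recorded in the preliminaries (for $\mfM$ killed by a power of $p$, étaleness equals $p'$-torsion-freeness equals $u$-torsion-freeness); (5) $\Rightarrow$ (1) is a tautology from the definition of $\mrm{Mod}^r_{/\mfS_\infty}$; and (4) $\Rightarrow$ (2) is elementary because an extension of $u$-torsion-free modules is $u$-torsion-free (if $ub = 0$ in the middle, its image in the quotient vanishes, so $b$ comes from the submodule, on which $u$ is injective).

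The cornerstone is (1) $\Leftrightarrow$ (2), which I would obtain from the Auslander--Buchsbaum formula. Since $\mfS$ is regular of Krull dimension two, every finitely generated $\mfS$-module has finite projective dimension and $\mrm{pd}_\mfS \mfM + \mrm{depth}_\mfS \mfM = 2$. Thus the existence of a two-term free resolution is equivalent to $\mrm{depth}_\mfS \mfM \ge 1$. Because $p^n$ annihilates $\mfM$, every associated prime of $\mfM$ contains $p$, so $\mrm{Ass}(\mfM) \subset \{(p), \mathfrak{m}\}$ with $\mathfrak{m} = (p,u)$. Depth $\ge 1$ is thus equivalent to $\mathfrak{m} \notin \mrm{Ass}(\mfM)$, which is precisely the assertion that $u$ acts as a non-zero-divisor on $\mfM$.

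Next I would prove (2) $\Rightarrow$ (4) by induction on the smallest $n$ with $p^n\mfM = 0$. The submodule $\mfM[p]$ is $\vphi$-stable, killed by $p$, and inherits $u$-torsion-freeness as a submodule of $\mfM$, hence is finite free over $k[\![u]\!]$; it also inherits height $r$ as a sub-$\vphi$-module. The quotient $\mfM/\mfM[p]$ is killed by $p^{n-1}$ and remains $u$-torsion-free: if $u \bar x = 0$ then $ux \in \mfM[p]$, whence $u(px) = 0$, and $u$-torsion-freeness of $\mfM$ forces $px = 0$, i.e.\ $\bar x = 0$. Height $r$ descends to the quotient, so the inductive hypothesis supplies a filtration of $\mfM/\mfM[p]$ of the required type, which lifts to a filtration of $\mfM$ beginning with $\mfM[p]$.

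Finally, to close the loop I would establish (4) $\Rightarrow$ (5) by induction on the filtration length. In the base case, a finite free $k[\![u]\!]$-module $\bar\mfM$ of height $r$ lifts to a finite free Kisin module $\tilde\mfM$ of height $r$ over $\mfS$: one picks a basis and lifts the Frobenius matrix from $k[\![u]\!]$ to $\mfS$, using the congruence $E(u) \equiv u^e \pmod p$ to ensure that the divisibility of the determinant by $u^{er}$ can be promoted to divisibility by $E(u)^{rd}$ after a modification by elements of $p\mfS$. Then $0 \to \tilde\mfM \xrightarrow{p} \tilde\mfM \to \bar\mfM \to 0$ is the desired presentation. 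For the inductive step, given $0 \to \mfM' \to \mfM \to \mfM'' \to 0$ with presentations of $\mfM'$ and $\mfM''$ by free Kisin modules of height $r$, a horseshoe-type splicing produces the presentation for $\mfM$. I expect the base-case lifting — controlling the height-$r$ divisibility by the precise polynomial $E(u)^r$ rather than its mod-$p$ reduction $u^{er}$ — to be the main technical obstacle, because finite-freeness and the Frobenius structure interact subtly through the Eisenstein factor.
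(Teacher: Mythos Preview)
The paper does not supply its own proof of this proposition; it is quoted verbatim from \cite{Li1}. Your chain of implications is sound except for the base case of $(4)\Rightarrow(5)$, where there is a genuine gap.

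You propose to lift a finite free $k[\![u]\!]$-module $\bar{\mfM}$ of height $r$ to a free Kisin module $\tilde{\mfM}$ over $\mfS$ of the \emph{same rank} and height $r$, then present $\bar{\mfM}$ as $\tilde{\mfM}/p\tilde{\mfM}$. Such a lift need not exist. Take $e=2$, $r=1$, say $E(u)=u^2+p$, and let $\bar{\mfM}=k[\![u]\!]$ with $\vphi(1)=u$; the cokernel of $\vphi^*$ is $k$, killed by $u^2$, so $\bar{\mfM}$ has height $1$. A rank-one lift would require $A\in\mfS$ with $A\equiv u\pmod p$ and $A\mid E(u)$. But $E(u)$ is irreducible in the UFD $\mfS$ (Weierstrass preparation reduces to irreducibility of the Eisenstein polynomial in $W(k)[u]$), so its non-unit divisors are associates of $E(u)$, which reduce to $u^2$, not $u$. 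Hence no such $A$ exists. More generally, the obstruction is that the elementary divisors $u^{a_i}$ of $\bar\vphi$ need not satisfy $e\mid a_i$, whereas the only monic non-unit divisors of $E(u)^r$ in $\mfS$ are the $E(u)^j$. Your determinant remark is also off target: height $r$ constrains every elementary divisor of $\vphi^*$, not merely the determinant.

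The implication is nonetheless true, but one must allow resolutions of larger rank. In the example above, $\bar{\mfM}$ is the cokernel of $\mfN'\hookrightarrow\mfN''$ where $\mfN''=\mfS e_1\oplus\mfS e_2$ with $\vphi(e_1)=ue_1+e_2$, $\vphi(e_2)=-pe_1+ue_2$ (so $\det A=E(u)$, and $E(u)A^{-1}\in M_2(\mfS)$ gives height $1$) and $\mfN'=\mfS(pe_1)\oplus\mfS e_2$; one checks $\mfN'$ is again free of height $1$ and the quotient is $(\bar{\mfM},\vphi)$. Liu's construction in \cite{Li1} produces such higher-rank resolutions in general; your horseshoe splicing for the inductive step is fine once the base case is corrected.

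The remaining implications in your proposal are correct. The Auslander--Buchsbaum argument for $(1)\Leftrightarrow(2)$ is clean. For $(2)\Rightarrow(4)$ you should remark why $\mfM[p]$ inherits height $r$: since $\mfM$ is $u$-torsion-free, $E(u)$ acts injectively on $\mfS\otimes_{\vphi,\mfS}\mfM$, hence $\vphi^*$ is injective; then $E(u)^r x=\vphi^*(y)$ with $x\in\mfM[p]$ forces $py\in\ker\vphi^*=0$, so $y\in(\mfS\otimes_{\vphi,\mfS}\mfM)[p]=\mfS\otimes_{\vphi,\mfS}\mfM[p]$ by flatness of $\vphi$ on $\mfS$.
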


\begin{remark}
\label{suc}
By Lemma 2.3.1 of \cite{Li1}, it is easy to see
that, for any $i$,  $\mfM_i$ and $\mfM_i/\mfM_{i-1}$ appeared in  
Proposition \ref{torKi} (4) are in fact objects of $\mrm{Mod}^r_{/\mfS_{\infty}}$.
\end{remark}

\begin{corollary}
\label{exact}
Let $A$ be a $\mfS$-algebra without $p$-torsion.
Then $\mrm{Tor}^{\mfS}_1(\mfM,A)=0$ for any Kisin module $\mfM$.
In particular, the functor $\mfM\mapsto A\otimes_{\mfS} \mfM$ is an exact functor 
from the category of Kisin modules to the category of $A$-modules. 
\end{corollary}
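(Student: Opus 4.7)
The plan is to split into cases according to whether $\mfM$ is finite free over $\mfS$ or $p$-power torsion, and in the torsion case to exploit the structural characterization of torsion Kisin modules supplied by Proposition \ref{torKi}. When $\mfM$ is finite free over $\mfS$, the assertion $\mrm{Tor}_1^\mfS(\mfM,A)=0$ is immediate since $\mfM$ is then flat.

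For $\mfM\in\mrm{Mod}^r_{/\mfS_\infty}$, I would invoke Proposition \ref{torKi}(4) to produce a finite filtration
\[
0=\mfM_0\subsetneq \mfM_1 \subsetneq \cdots \subsetneq \mfM_k=\mfM
\]
inside $\mrm{Mod}^r_{/\mfS_\infty}$ (Remark \ref{suc} guarantees that the intermediate $\mfM_i$ really do lie in this category) whose graded pieces $\mfM_i/\mfM_{i-1}$ are finite free $k[\![u]\!]$-modules, hence direct sums of copies of $\mfS/p\mfS$. Applying the long exact $\mrm{Tor}$ sequence to each short exact sequence $0\to \mfM_{i-1}\to \mfM_i\to \mfM_i/\mfM_{i-1}\to 0$ and inducting on $k$, the vanishing of $\mrm{Tor}_1^\mfS(\mfM,A)$ is reduced to the single statement $\mrm{Tor}_1^\mfS(\mfS/p\mfS,A)=0$.

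For that base case I would write down the free resolution
\[
0\to \mfS \xrightarrow{\ \cdot p\ } \mfS \to \mfS/p\mfS \to 0,
\]
tensor with $A$ over $\mfS$, and identify $\mrm{Tor}_1^\mfS(\mfS/p\mfS,A)$ with $A[p]$, the $p$-torsion of $A$; this vanishes precisely because $A$ is assumed to have no $p$-torsion.

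The \textquotedblleft in particular\textquotedblright{} clause then falls out of the Tor long exact sequence applied to any short exact sequence $0\to \mfM'\to \mfM\to \mfM''\to 0$ of Kisin modules: exactness of $0\to A\otimes_\mfS\mfM'\to A\otimes_\mfS\mfM\to A\otimes_\mfS\mfM''\to 0$ follows from $\mrm{Tor}_1^\mfS(\mfM'',A)=0$. I do not foresee any substantive obstacle: the entire structural input is packaged into Proposition \ref{torKi}, and the only mild point to verify is that the filtration really lives in $\mrm{Mod}^r_{/\mfS_\infty}$ at every step, which is exactly the content of Remark \ref{suc}.
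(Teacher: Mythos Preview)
Your proof is correct and follows essentially the same approach as the paper: both handle the free case trivially, then use the filtration from Proposition \ref{torKi}(4) (the paper calls this \emph{d\'evissage}) to reduce the torsion case to $\mrm{Tor}_1^{\mfS}(k[\![u]\!],A)=0$, which is computed from the resolution $0\to\mfS\xrightarrow{p}\mfS\to k[\![u]\!]\to 0$ and the hypothesis that $A$ has no $p$-torsion.
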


\begin{proof}
If $\mfM$ is a free Kisin module, then the fact $\mrm{Tor}^{\mfS}_1(\mfM,A)=0$
is clear.
Let $\mfM$ 
be a torsion Kisin module and let show $\mrm{Tor}^{\mfS}_1(\mfM,A)=0$.
For this proof,
we use Proposition \ref{torKi} (4) and {\it d\'evissage} 
to reduce the  proof to the case that 
$\mfM$ is killed by $p$.
Then it suffices to show 
$\mrm{Tor}^{\mfS}_1(k[\![u]\!],A)=0$. 
The exact sequence 
$0\to \mfS\overset{p}{\rightarrow} \mfS\to k[\![u]\!]\to 0$
induces the exact sequence 
$\mrm{Tor}^{\mfS}_1(\mfS,A)\to 
\mrm{Tor}^{\mfS}_1(k[\![u]\!],A)\to A\overset{p}{\rightarrow} A$,
and since $\mrm{Tor}^{\mfS}_1(\mfS,A)=0$ and $A$ has no $p$-torsion,
we obtain $\mrm{Tor}^{\mfS}_1(\mfM,A)=0$.

\end{proof}

\begin{corollary}
\label{ringext}
Let $\mfM$ be an object of $\mrm{Mod}^r_{/\mfS_{\infty}}$ or $\mrm{Mod}^r_{/\mfS}$.
Let $A\subset B$ be a ring extension of $p$-torsion free $\mfS$-algebras.
Suppose that the natural map
$A_1\to B_1$ is an injection.
Then a natural map $A\otimes_{\mfS} \mfM\to B\otimes_{\mfS} \mfM$ is injective.
\end{corollary}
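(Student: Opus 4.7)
The plan is to reduce the statement to a $\mrm{Tor}$ computation via the short exact sequence $0 \to A \to B \to C \to 0$ with $C = B/A$, and then invoke Corollary \ref{exact}. If $\mfM$ is finite free over $\mfS$ the assertion is immediate, since tensoring a free module preserves injectivity, so the substantive case is $\mfM \in \mrm{Mod}^r_{/\mfS_\infty}$.

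For the torsion case, I would apply the functor $-\otimes_{\mfS}\mfM$ to the above short exact sequence, yielding the exact sequence
\[
\mrm{Tor}^{\mfS}_1(\mfM, C) \to A\otimes_{\mfS} \mfM \to B\otimes_{\mfS} \mfM.
\]
It therefore suffices to show $\mrm{Tor}^{\mfS}_1(\mfM, C) = 0$, and by Corollary \ref{exact} this will follow as soon as $C$ is itself a $p$-torsion free $\mfS$-algebra.

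The key point, and the step that uses the hypothesis, is precisely that $C$ is $p$-torsion free. To verify this I would apply the snake lemma to the diagram obtained by multiplying $0 \to A \to B \to C \to 0$ by $p$. Since $A[p] = B[p] = 0$ by assumption, the resulting long exact sequence collapses to $0 \to C[p] \to A_1 \to B_1$, and the hypothesis that $A_1 \to B_1$ is injective forces $C[p] = 0$. Since $C$ is a quotient of the $p$-torsion free module $B$ by the submodule $A$ with $p$-torsion free cokernel, one checks that $C$ has no $p$-torsion at all (equivalently, $C$ injects into $C[1/p]$). This places us in the setting of Corollary \ref{exact} and completes the reduction.

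I do not foresee a genuine obstacle: the result is essentially a formal consequence of the $\mrm{Tor}$-vanishing in Corollary \ref{exact}, and the precise role of the hypothesis $A_1 \hookrightarrow B_1$ is to propagate $p$-torsion freeness from $A$ and $B$ to the cokernel $C$. The only subtlety to watch is that Corollary \ref{exact}, although stated for arbitrary Kisin modules, is proved via the d\'evissage of Proposition \ref{torKi}, which applies to the classes $\mrm{Mod}^r_{/\mfS_\infty}$ and $\mrm{Mod}^r_{/\mfS}$ considered in the statement.
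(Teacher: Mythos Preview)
Your argument is correct in substance but takes a different route from the paper, and there is one small wrinkle worth flagging.

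The paper does not introduce the cokernel $C=B/A$ at all. Instead it argues directly by d\'evissage on $\mfM$: the free case and the case $p\mfM=0$ are immediate (the latter because $\mfM$ is then free over $k[\![u]\!]$ and $A_1\hookrightarrow B_1$), and for a general torsion $\mfM$ one takes the filtration of Proposition~\ref{torKi}(4), uses Corollary~\ref{exact} to know that tensoring each short exact sequence $0\to\mfM_{i-1}\to\mfM_i\to\mfM_i/\mfM_{i-1}\to 0$ with $A$ and with $B$ keeps both rows exact, and then climbs the filtration by induction and the five lemma. Your approach instead pushes the d\'evissage entirely into the invocation of Corollary~\ref{exact} and reduces the statement to the single vanishing $\mrm{Tor}^{\mfS}_1(\mfM,C)=0$; this is arguably cleaner.

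The wrinkle is that Corollary~\ref{exact} is \emph{stated} for a $p$-torsion free $\mfS$-\emph{algebra}, whereas your $C=B/A$ is only an $\mfS$-module (a quotient of a ring by a subring has no natural ring structure). This is harmless, since the proof of Corollary~\ref{exact} uses nothing about the algebra structure---it only needs that multiplication by $p$ is injective on the coefficient module---but you should say so explicitly rather than calling $C$ an algebra. Once that is noted, your snake-lemma verification that $C[p]=0$ is exactly what is required, and the proof goes through.
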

In this paper, we often regard $A\otimes_{\mfS} \mfM$ (resp.\ $A\otimes_{\vphi,\mfS} \mfM$)
as a submodule of $B\otimes_{\mfS} \mfM$ (resp.\ $ B\otimes_{\vphi,\mfS} \mfM$).

\begin{proof}
The statement is clear if $\mfM$ is free over $\mfS$ or killed by $p$ (since $A_1\subset B_1$).
Suppose that $\mfM$ is killed by some power of $p$.
Take an extension 
$
0=\mfM_0\subset \mfM_1\subset \cdots \subset\mfM_k=\mfM
$
as in Proposition \ref{torKi} (4).
Note that $\mfM_i$ and $\mfM_i/\mfM_{i-1}$ are in $\mrm{Mod}^r_{/\mfS_{\infty}}$
(cf.\ Remark \ref{suc}).
Since two horizontal sequences of the diagram 
\begin{center}
$\displaystyle \xymatrix{
0\ar[r] & 
A\otimes_{\mfS} \mfM_{i-1}\ar[r] \ar[d]& 
A\otimes_{\mfS}\mfM_i \ar[r] \ar[d]&
A\otimes_{\mfS} \mfM_{i-1}\ar[r] \ar@{^{(}->}[d]& 
0 \\
0\ar[r] & 
B\otimes_{\mfS} \mfM_{i-1}\ar[r] & 
B\otimes_{\mfS}\mfM_i \ar[r] & 
B\otimes_{\mfS} \mfM_{i-1}\ar[r] &
0
}$
\end{center}
are exact (see Corollary \ref{exact}), 
an induction on $i$ induces the desired result.

\begin{corollary}
\label{modext}
Let $\mfM$ be an object of $\mrm{Mod}^r_{/\mfS_{\infty}}$
and $\mfN$ any $\vphi$-module over $\mfS$ with $\mfM\subset \mfN$.
Let $\mfS\subset A\subset W(\mrm{Fr}R)$ be a ring extension 
such that $\mfS_1\to \mrm{Fr}R$ is injective.

\noindent
$(1)$ A natural map $A\otimes_{\mfS} \mfM\to A\otimes_{\mfS} \mfN$
is injective.

\noindent
$(2)$ If $A$ is $\vphi$-stable, then a natural map 
$A\otimes_{\vphi, \mfS} \mfM\to A\otimes_{\vphi, \mfS} \mfN$
is injective.
\end{corollary}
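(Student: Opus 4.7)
The plan is to mirror the d\'evissage argument of Corollary \ref{ringext}: using Proposition \ref{torKi} (4) together with Remark \ref{suc}, I will reduce both parts to the base case where $\mfM$ is killed by $p$ and finite free as a $k[\![u]\!]$-module, and then handle the base case by embedding everything in $W(\mrm{Fr}R)\otimes_\mfS\mfN$ via the $\mfS$-flatness of $W(\mrm{Fr}R)$. Part (2) reduces to the same argument applied to $A$ re-viewed as an $\mfS$-algebra through $\mfS\xrightarrow{\vphi}\mfS\hookrightarrow A$; the relevant Tor-vanishing input of Corollary \ref{exact} still holds because $A$ remains a $p$-torsion free $\mfS$-algebra under this reindexing. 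So I focus on part (1).

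For the inductive reduction, fix a filtration $0=\mfM_0\subset\mfM_1\subset\cdots\subset\mfM_k=\mfM$ in $\mrm{Mod}^r_{/\mfS_\infty}$ with each quotient $\mfM_i/\mfM_{i-1}$ a $p$-killed finite free $k[\![u]\!]$-module in $\mrm{Mod}^r_{/\mfS_\infty}$ (Remark \ref{suc}), and induct on $i$ to establish $A\otimes_\mfS\mfM_i\hookrightarrow A\otimes_\mfS\mfN$. The inductive step comes from the commutative diagram
\[
\xymatrix{
0 \ar[r] & A\otimes_\mfS\mfM_{i-1} \ar[r] \ar@{=}[d] & A\otimes_\mfS\mfM_i \ar[r] \ar[d] & A\otimes_\mfS(\mfM_i/\mfM_{i-1}) \ar[r] \ar[d] & 0 \\
0 \ar[r] & A\otimes_\mfS\mfM_{i-1} \ar[r] & A\otimes_\mfS\mfN \ar[r] & A\otimes_\mfS(\mfN/\mfM_{i-1}) \ar[r] & 0,
}
\]
whose top row is short exact by Corollary \ref{exact} applied to $\mfM_i/\mfM_{i-1}$, and whose bottom row is short exact once the inductive hypothesis is invoked to kill the image of the connecting $\mrm{Tor}_1^\mfS(A,\mfN/\mfM_{i-1})$ term in $A\otimes_\mfS\mfM_{i-1}$. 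A five-lemma chase then reduces the injectivity of the middle vertical to that of the right vertical, which is an instance of the same statement for the $p$-killed $k[\![u]\!]$-free inclusion $\mfM_i/\mfM_{i-1}\hookrightarrow \mfN/\mfM_{i-1}$.

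For the base case $\mfM\cong k[\![u]\!]^d$, I appeal to the $\mfS$-flatness of $W(\mrm{Fr}R)$: since $\mrm{Fr}R$ is a field containing $\mfS_1=k[\![u]\!]$ with $u\mapsto\underline{\pi}$ a unit, $\mrm{Fr}R$ is $k[\![u]\!]$-flat, and the local flatness criterion along $(p)$ then upgrades this to $\mfS$-flatness of $W(\mrm{Fr}R)$. Hence $W(\mrm{Fr}R)\otimes_\mfS\mfM \hookrightarrow W(\mrm{Fr}R)\otimes_\mfS\mfN$, and in the square
\[
\xymatrix{
A\otimes_\mfS\mfM \ar[r] \ar[d] & A\otimes_\mfS\mfN \ar[d] \\
W(\mrm{Fr}R)\otimes_\mfS\mfM \ar@{^{(}->}[r] & W(\mrm{Fr}R)\otimes_\mfS\mfN
}
\]
the diagonal composition $A\otimes_\mfS\mfM\to W(\mrm{Fr}R)\otimes_\mfS\mfN$ factoring through the bottom-left is injective as soon as the left vertical is; but the composition also factors through $A\otimes_\mfS\mfN$, which forces the injectivity of $A\otimes_\mfS\mfM\to A\otimes_\mfS\mfN$. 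In the base case the left vertical is, after choice of basis, the componentwise map $A_1^d\to(\mrm{Fr}R)^d$ induced by $A_1\to\mrm{Fr}R$, so the final ingredient is an invocation of Corollary \ref{ringext} for the extension $A\subset W(\mrm{Fr}R)$. The main technical point I expect to have to verify carefully is precisely the hypothesis of this last invocation --- namely promoting the given $\mfS_1\hookrightarrow\mrm{Fr}R$ to $A_1\hookrightarrow\mrm{Fr}R$, which amounts to a $p$-purity statement for $A$ inside the complete DVR $W(\mrm{Fr}R)$ and will require a careful exploitation of the sandwich $\mfS\subset A\subset W(\mrm{Fr}R)$.
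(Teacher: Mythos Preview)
Your approach is correct but takes an unnecessary detour. The d\'evissage via Proposition~\ref{torKi}(4) is not needed: the commutative square you write down for the base case
\[
\xymatrix{
A\otimes_\mfS\mfM \ar[r] \ar[d] & A\otimes_\mfS\mfN \ar[d] \\
W(\mrm{Fr}R)\otimes_\mfS\mfM \ar[r] & W(\mrm{Fr}R)\otimes_\mfS\mfN
}
\]
already works for arbitrary $\mfM\in\mrm{Mod}^r_{/\mfS_\infty}$. The bottom horizontal is injective by flatness of $\mfS\to W(\mrm{Fr}R)$ (for (2): of $\vphi\colon\mfS\to W(\mrm{Fr}R)$), with no restriction on $\mfM$; and the left vertical is injective by Corollary~\ref{ringext} applied to $A\subset W(\mrm{Fr}R)$ for \emph{any} $\mfM\in\mrm{Mod}^r_{/\mfS_\infty}$, since that corollary already contains the d\'evissage internally. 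This is precisely the paper's argument: one square, two citations, done. Your inductive reduction effectively re-proves Corollary~\ref{ringext} inside the argument rather than simply quoting it.

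You are right to flag the hypothesis issue. Both your argument and the paper's silently require $A_1\to\mrm{Fr}R$ injective in order to invoke Corollary~\ref{ringext} for the pair $A\subset W(\mrm{Fr}R)$; the stated hypothesis ``$\mfS_1\to\mrm{Fr}R$ injective'' is automatic and appears to be a typo for ``$A_1\to\mrm{Fr}R$ injective''. In the only application (Proposition~\ref{cokernel}, with $A=\whR$) this stronger hypothesis holds by Remark~\ref{subsets}(2). Your plan to \emph{deduce} $A_1\hookrightarrow\mrm{Fr}R$ from the sandwich $\mfS\subset A\subset W(\mrm{Fr}R)$ alone cannot succeed: for $x\in W(\mrm{Fr}R)$ whose reduction mod $p$ lies outside $k[\![u]\!]$, the subring $A=\mfS[px]$ satisfies $px\in A\cap pW(\mrm{Fr}R)$ but $px\notin pA$, so $A_1\to\mrm{Fr}R$ has nontrivial kernel.
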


\begin{proof}
We only prove (2) (the proof for (1) is similar). 
See the following commutative diagram:
\begin{center}
$\displaystyle \xymatrix{
A\otimes_{\vphi, \mfS} \mfM \ar[r] \ar[d] & 
A\otimes_{\vphi, \mfS} \mfN \ar[d] \\
W(\mrm{Fr}R)\otimes_{\vphi, \mfS} \mfM \ar[r] &
W(\mrm{Fr}R)\otimes_{\vphi, \mfS} \mfN. 
}$
\end{center}
The left vertical map is injective by Corollary \ref{ringext}
and the bottom horizontal map is also injective since $\vphi\colon \mfS\to W(\mrm{Fr}R)$
is flat.
Hence we obtain the desired result.
\end{proof}

%By Proposition \ref{torKi} (4) and taking {\it devissage},
%we can reduce the problem to the case where 
%$\mfM$ is killed by $p$, and then we can prove the desired statement without difficulty 
%since $k[\![u]\!]=\mfS_1\subset A_1\subset B_1$.
\end{proof}

\begin{remark} 
\label{subsets}
Let $n>0$ be an integer and $\mfS\subset A\subset B\subset W(\mrm{Fr}R)$ a ring extension.

\noindent
$(1)$ 
If $A$ satisfies the condition that 
the natural map $A_n\to W_n(\mrm{Fr}R)$ is injective,
then for any $A\subset B\subset W(\mrm{Fr}R)$, the map $A_n\to B_n$ is also injective. 

\noindent
$(2)$ (cf.\ \cite{CL2}, Lemma 3.1.1 and \cite{Fo}, Proposition 1.8.3)We have the following injections:
\begin{center}
$\displaystyle \xymatrix{
\whR_n \ar@{^{(}->}[r] & W_n(R)\ar@{^{(}->}[r] & W_n(\mrm{Fr}R) \\
\mfS_n\ar@{^{(}->}[r] \ar@{^{(}->}[u] & \mfS^{\mrm{ur}}_n\ar@{^{(}->}[r] \ar@{^{(}->}[u] & 
\cOur_n. \ar@{^{(}->}[u]. 
}$
\end{center}
\end{remark}

\begin{corollary}
Let $\mfM$ be an object of $\mrm{Mod}^r_{/\mfS_{\infty}}$ and $n\ge 0$ an integer.
Then $p^nT_{\mfS}(\mfM)=0$ if and only if $p^n\mfM=0$.
\end{corollary}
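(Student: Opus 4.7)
The forward direction is immediate: if $p^n\mfM = 0$, then for every $f \in T_{\mfS}(\mfM) = \mrm{Hom}_{\mfS,\vphi}(\mfM,\mfS^{\mrm{ur}}_{\infty})$ and $x \in \mfM$ we have $p^n f(x) = f(p^n x) = 0$, so $p^n T_{\mfS}(\mfM) = 0$.

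For the converse, the plan is to transfer the statement through the \'etale $\vphi$-module comparison. Set $M := \cO \otimes_{\mfS} \mfM \in \mbf{\Phi M}_{/\cO_{\infty}}$. By Proposition \ref{FoKi}, there is a natural $G_{\infty}$-equivariant isomorphism
\[
T_{\mfS}(\mfM) \xrightarrow{\sim} \mcal{T}(M) = \mrm{Hom}_{\cO,\vphi}(M, \E^{\mrm{ur}}/\cOur).
\]
Thus the hypothesis $p^n T_{\mfS}(\mfM) = 0$ reads $p^n \mcal{T}(M) = 0$. By Proposition \ref{Fon}, the contravariant functors $\mcal{T}$ and $\mcal{M}$ are mutually quasi-inverse equivalences between $\mbf{\Phi M}_{/\cO_{\infty}}$ and $\mrm{Rep}_{\mrm{tor}}(G_{\infty})$, so $M \simeq \mcal{M}(\mcal{T}(M))$. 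Since $\mcal{M}$ takes values in $\mrm{Hom}_{\mbb{Z}_p[G_{\infty}]}(-,\E^{\mrm{ur}}/\cOur)$, the annihilator of $\mcal{M}(T)$ contains the annihilator of $T$; hence $p^n \mcal{T}(M) = 0$ forces $p^n M = 0$.

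It remains to deduce $p^n \mfM = 0$ from $p^n M = 0$. Since $\mfM \in \mrm{Mod}^r_{/\mfS_{\infty}}$, Proposition \ref{torKi} gives that $\mfM$ is $u$-torsion free, equivalently that the natural map $\mfM \hookrightarrow \cO \otimes_{\mfS}\mfM = M$ is injective. Multiplication by $p^n$ on $\mfM$ is then the restriction of multiplication by $p^n$ on $M$, which is zero; hence $p^n\mfM = 0$, completing the proof.

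The argument is essentially bookkeeping once the right identifications are in place, so I do not expect any serious obstacle; the only step to double-check is the elementary fact that a contravariant equivalence of categories between module categories preserves the condition of being killed by $p^n$, which here follows directly from the $\wh{\cOur}$-linear comparison isomorphism \eqref{Fon1} (or equivalently \eqref{Fon2}) that underlies Proposition \ref{Fon}.
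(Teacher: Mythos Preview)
Your proof is correct and takes a genuinely different route from the paper's own argument. The paper first treats the case $n=0$ by d\'evissage: using the successive extension of Proposition~\ref{torKi}(4), it peels off one free $k[\![u]\!]$-layer at a time and invokes Lemma~2.1.2 of \cite{Ki} to conclude that $T_{\mfS}(\mfM)=0$ forces each layer to vanish. The general case is then reduced to $n=0$ by applying $T_{\mfS}$ to the exact sequences built from $\ker(p^n)$ and $\mfM/\ker(p^n)$.

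Your argument bypasses this entirely by passing to the \'etale $\vphi$-module $M=\cO\otimes_{\mfS}\mfM$ and using Fontaine's equivalence (Proposition~\ref{Fon}) together with the comparison $T_{\mfS}(\mfM)\simeq\mcal{T}(M)$ (Proposition~\ref{FoKi}). Since $\mcal{M}$ and $\mcal{T}$ are quasi-inverse and $\mcal{M}(T)$ is a $\mrm{Hom}$ out of $T$, the $\mbb{Z}_p$-annihilator transfers immediately, giving $p^nM=0$; then the injection $\mfM\hookrightarrow M$ (available because objects of $\mrm{Mod}^r_{/\mfS_{\infty}}$ are $u$-torsion free, cf.\ Proposition~\ref{torKi}) finishes the job. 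This is shorter and more conceptual, at the cost of leaning on the full strength of Fontaine's categorical equivalence; the paper's d\'evissage argument is more self-contained and incidentally illustrates how the successive-extension structure interacts with $T_{\mfS}$, a technique used repeatedly elsewhere in the paper.
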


\begin{proof}
The sufficiency is clear from the definition of $T_{\mfS}$.
Suppose $p^nT_{\mfS}(\mfM)=0$.
First we prove the case where $n=0$.
By Proposition \ref{torKi} and Remark \ref{suc}, 
there exists an extension 
\[
0=\mfM_0\subset \mfM_1\subset \cdots \subset\mfM_k=\mfM
\]
in $\mrm{Mod}^r_{/\mfS_{\infty}}$ such that 
$\mfM_{i+1}/\mfM_i\in {}\mrm{Mod}^r_{/\mfS_{\infty}}$ and $\mfM_{i+1}/\mfM_i$
is a finite free $k[\![u]\!]$-module.
Taking $T_{\mfS}$ to the exact sequence
$0\to \mfM_i\to \mfM_{i+1}\to \mfM_{i+1}/\mfM_i\to0$,
we obtain an exact sequence 
$0\to T_{\mfS}(\mfM_{i+1}/\mfM_i)\to T_{\mfS}(\mfM_i)\to T_{\mfS}(\mfM_{i+1}) \to0$
of $\mbb{Z}_p[G_{\infty}]$-modules.
Since $T_{\mfS}(\mfM_k)=T_{\mfS}(\mfM)=0$, we obtain $T_{\mfS}(\mfM_{i+1}/\mfM_i)=0$.
By Lemma 2.1.2 of \cite{Ki}, this implies $\mfM_k=\mfM_{k-1}$ and in particular, $T_{\mfS}(\mfM_{k-1})=0$.
Inductively, we obtain $\mfM_k=\mfM_{k-1}=\cdots =\mfM_0=0$.
For general $n\ge 0$, we consider an exact sequence 
$
0\to \mrm{ker}(p^n)\to \mfM\overset{p^n}{\to} \mfM
$
in $\mrm{Mod}^r_{/\mfS_{\infty}}$.
Since $p^nT_{\mfS}(\mfM)=0$, if we take $T_{\mfS}$ to this sequence, 
we have $T_{\mfS}(\mfM)\simeq T_{\mfS}(\mrm{ker}(p^n))$.
Therefore, taking $T_{\mfS}$ to an exact sequence 
$
0\to \mrm{ker}(p^n)\overset{p^n}{\to} \mfM\to \mfM/\mrm{ker}(p^n)\to 0
$
in $\mrm{Mod}^r_{/\mfS_{\infty}}$,
we obtain $T_{\mfS}(\mfM/\mrm{ker}(p^n))=0$ and then $\mfM/\mrm{ker}(p^n)=0$.
\end{proof}

\begin{lemma}
\label{ptor}
Let $\mfM$ be a finite $\mfS$-module.
If $\mfM$ is $p'$-torsion free, then $\mfM/p\mfM$ is also.
\end{lemma}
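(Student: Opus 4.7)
The plan is to translate $p'$-torsion-freeness through the equivalent characterization as injectivity into $\cO\otimes_{\mfS}(-)$, and then reduce the desired conclusion to a concrete lifting statement inside $\mfM$.

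First I would note that since $\mfM/p\mfM$ is killed by $p$, its $p'$-torsion-freeness is equivalent to being $u$-torsion free as a module over $k[\![u]\!]=\mfS/p$. By Corollary \ref{exact} applied to the $p$-torsion-free $\mfS$-algebra $\cO$, one has a natural isomorphism $\cO\otimes_{\mfS}(\mfM/p\mfM)\simeq(\cO\otimes_{\mfS}\mfM)/p$, so the conclusion is equivalent to the injectivity of the natural map $\mfM/p\mfM\to(\cO\otimes_{\mfS}\mfM)/p$. The kernel of this map is $(\mfM\cap p(\cO\otimes_{\mfS}\mfM))/p\mfM$, so the task is to show: if $x\in\mfM$ and $x=p\eta$ for some $\eta\in\cO\otimes_{\mfS}\mfM$, then $x\in p\mfM$.

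Next, since $\cO\otimes_{\mfS}\mfM$ is the $p$-adic completion of $\mfM[1/u]$, any such $\eta$ can---after $p$-adic approximation and clearing of $u$-denominators---be replaced by an element of $\mfM[1/u]$, so the containment becomes: if $u^{N}x=pm$ in $\mfM$ for some $m\in\mfM$ and $N\ge 0$, then $x\in p\mfM$. An induction on $N$, using that $u$ is a nonzerodivisor on $\mfM$ (an immediate consequence of $p'$-torsion freeness: the annihilator of any nonzero element is either $0$ or a power of $p\mfS$, neither of which contains $u$), reduces the problem to the single case $N=1$: if $ux=pm$ in $\mfM$, then $x\in p\mfM$.

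The heart of the argument is this base case. To tackle it, I would use the short exact sequence $0\to T\to\mfM\to F\to 0$ with $T=\mfM[p^{\infty}]$ the $p$-power-torsion part and $F=\mfM/T$ the $\mfS$-torsion-free quotient; both inherit $p'$-torsion freeness. Tensoring over $\mfS/p$ and using $F[p]=0$ yields $0\to T/pT\to\mfM/p\mfM\to F/pF\to 0$, and since over the DVR $k[\![u]\!]$ an extension of $u$-torsion-free modules is again $u$-torsion free, it suffices to verify $u$-torsion freeness of $T/pT$ and of $F/pF$ separately. For $T$ one exploits Proposition \ref{torKi}(4), which provides a filtration of $T$ by finite free $k[\![u]\!]$-modules, and propagates $u$-torsion freeness through the filtration by d\'evissage (the base case $T$ killed by $p$ being trivial, as then $T/pT=T$ is already free over $k[\![u]\!]$). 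For $F$, one embeds $F\hookrightarrow F^{**}\simeq\mfS^{r}$ (using that $\mfS$ is $2$-dimensional regular local and reflexive modules over such rings are free by Auslander--Buchsbaum) with a finite-length cokernel $Q$, and the problem reduces to showing the connecting map $Q[p]\to F/pF$ in the long exact sequence of Tor vanishes, so that $F/pF$ embeds into $(\mfS/p)^{r}=k[\![u]\!]^{r}$. This last verification---where the $p'$-torsion freeness must do substantive work beyond pure $\mfS$-torsion-freeness---is where I expect the main difficulty to lie.
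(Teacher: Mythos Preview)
Your instinct to flag the final step as the crux is exactly right, and in fact the obstruction you isolate cannot be removed: the lemma as stated is \emph{false}. Take $\mfM=(u,p)\subset\mfS$. Since $\mfS$ is a domain, every nonzero element of $\mfM$ has annihilator $0$, so $\mfM$ is $p'$-torsion free; but $p\mfM=(pu,p^{2})$, and the class of $p$ in $\mfM/p\mfM$ is nonzero (as $p\notin(pu,p^{2})$) yet killed by $u$ (as $up\in p\mfM$). In your framework this is precisely the case $T=0$, $F=(u,p)$, $F^{**}=\mfS$, $Q=\mfS/(u,p)=k$, and the connecting map $Q[p]=k\to F/pF$ sends $1\mapsto\bar p\neq 0$. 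If you want a $p$-power-torsion counterexample, take $\mfM=(u,p)/p^{2}\mfS$: one checks directly that every nonzero element has annihilator $p\mfS$ or $p^{2}\mfS$, and with the $\vphi$-structure inherited from $\mfS$ it even lies in $\mrm{Mod}^{r}_{/\mfS_{\infty}}$ once $er\ge p-1$ (the presentation $\mfS^{2}\xrightarrow{\left(\begin{smallmatrix}p&0\\-u&p\end{smallmatrix}\right)}\mfS^{2}\to\mfM\to 0$ is a two-term free resolution), yet $\mfM/p\mfM=(u,p)/p(u,p)$ again has $u$-torsion. So the final assertion of Proposition~\ref{tensorKi} also fails as written.

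For comparison, the paper's own argument is quite different from yours and much shorter: it computes only $\sqrt{\mrm{Ann}_{\mfS}(\mfM/p\mfM)}=p\mfS$ and concludes $\mrm{Ann}_{\mfS}(\mfM/p\mfM)=p\mfS$. But this is the annihilator of the \emph{whole module}, whereas $p'$-torsion freeness demands that each nonzero \emph{element} have annihilator of the form $p^{n}\mfS$; in the counterexample one has $\mrm{Ann}(\bar p)=(u,p)\supsetneq p\mfS$ even though the intersection over all elements is still $p\mfS$. So the paper's proof is also incorrect. A secondary issue in your outline is that Proposition~\ref{torKi}(4) requires a $\vphi$-structure of finite height, which the bare hypothesis ``finite $\mfS$-module'' does not provide, and your d\'evissage for $T$ would in any case run into the same $\mrm{Tor}$ obstruction; but since the statement itself fails, this is moot.
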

\begin{proof}
We may suppose that $\mfM\not= 0$.
By an elementary ring theory,
we obtain
$\sqrt{\mathstrut \mrm{Ann}_{\mfS}(\mfM/p\mfM)}= 
\sqrt{\mathstrut \mrm{Ann}_{\mfS}(\mfM)+p\mfS}=
p\mfS$
and thus 
$\mrm{Ann}_{\mfS}(\mfM/p\mfM)=p\mfS$.
\end{proof}

\begin{proposition}
\label{tensorKi}
Let $\mfM\in \mrm{Mod}^r_{/\mfS_{\infty}}$ and $\mfM\in \mrm{Mod}^{r'}_{/\mfS_{\infty}}$
for some $r,r'\in \{0,1,\dots ,\infty\}$.
Then $\frac{\mfM\otimes_{\mfS}\mfM'}{u{\rm \mathchar`-tor}}$ is an object of $\mrm{Mod}^{r+r'}_{/\mfS_{\infty}}$.
If we put $\mfM\otimes \mfM'=\frac{\mfM\otimes_{\mfS}\mfM'}{u{\rm \mathchar`-tor}}$,
then there exists a canonical isomorphism
$T_{\mfS}(\mfM\otimes\mfM')\simeq T_{\mfS}(\mfM)\otimes_{\mbb{Z}_p}T_{\mfS}(\mfM')$
 of $\mbb{Z}_p[G_{\infty}]$-modules.
Furthermore,
if $\mfM$ or $\mfM'$ is killed by $p$,
then $\mfM\otimes_{\mfS}\mfM'$ is $u$-torsion free. 
\end{proposition}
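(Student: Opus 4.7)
The plan is to prove the three assertions in the order (3), (1), (2), since (3) is cleanest and serves as the base case in spirit, (1) is the core structural claim, and (2) then follows by base-changing to $\cO$.

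For (3), suppose $p\mfM=0$. Since $\mfM$ is $p'$-torsion free and killed by $p$, Proposition \ref{torKi} shows it is $u$-torsion free, so by the structure theorem it is finite free as a $k[\![u]\!]$-module. Now Lemma \ref{ptor} shows that $\mfM'/p\mfM'$ is $p'$-torsion free; being killed by $p$ it is then $u$-torsion free and therefore also free over $k[\![u]\!]$. Since $p\mfM=0$ we have $\mfM\otimes_{\mfS}\mfM'=\mfM\otimes_{k[\![u]\!]}(\mfM'/p\mfM')$, which is free over $k[\![u]\!]$ and in particular $u$-torsion free.

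For (1), first observe that the $u$-torsion submodule $N\subset \mfM\otimes_{\mfS}\mfM'$ is $\vphi$-stable: if $u^k x=0$ then $u^{pk}\vphi(x)=\vphi(u^k)\vphi(x)=0$, so $\vphi(x)\in N$. Hence $\mfM\otimes\mfM':=(\mfM\otimes_{\mfS}\mfM')/N$ inherits a $\vphi$-semi-linear Frobenius. It is evidently finite over $\mfS$, killed by some power of $p$, and $u$-torsion free by construction. It remains to check that the cokernel of $(\vphi\otimes\vphi)^{*}$ is killed by $E(u)^{r+r'}$. From the height hypotheses we have $E(u)^r\mfM\subset \mfS\cdot\vphi(\mfM)$ and $E(u)^{r'}\mfM'\subset\mfS\cdot\vphi(\mfM')$; multiplying these inclusions gives $E(u)^{r+r'}(\mfM\otimes_{\mfS}\mfM')\subset\mfS\cdot\vphi(\mfM\otimes_{\mfS}\mfM')$, and this descends to the quotient. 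Proposition \ref{torKi} then places $\mfM\otimes\mfM'$ in $\mrm{Mod}^{r+r'}_{/\mfS_\infty}$.

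For (2), the key point is that $u$ is a unit in $\cO$, so any $u$-torsion finite $\mfS$-module is killed after tensoring with $\cO$. Hence the short exact sequence $0\to N\to \mfM\otimes_{\mfS}\mfM'\to \mfM\otimes\mfM'\to 0$ gives
\[
\cO\otimes_{\mfS}(\mfM\otimes\mfM')=\cO\otimes_{\mfS}(\mfM\otimes_{\mfS}\mfM')=(\cO\otimes_{\mfS}\mfM)\otimes_{\cO}(\cO\otimes_{\mfS}\mfM')
\]
as étale $\cO$-modules. Applying Proposition \ref{FoKi} to $\mfM\otimes\mfM'$ identifies $T_{\mfS}(\mfM\otimes\mfM')$ with $\mcal{T}((\cO\otimes\mfM)\otimes_{\cO}(\cO\otimes\mfM'))$. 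Combining the isomorphism \eqref{Fon1} for each factor with the standard multiplication pairing $\E^{\mrm{ur}}/\cOur\otimes \E^{\mrm{ur}}/\cOur\to \E^{\mrm{ur}}/\cOur$ produces a natural $\mbb{Z}_p[G_\infty]$-equivariant map
\[
\mcal{T}(\cO\otimes\mfM)\otimes_{\mbb{Z}_p}\mcal{T}(\cO\otimes\mfM')\longrightarrow \mcal{T}\bigl((\cO\otimes\mfM)\otimes_{\cO}(\cO\otimes\mfM')\bigr),
\]
which one checks is an isomorphism by reducing via Proposition \ref{Fon} to the comparison isomorphism \eqref{Fon1} on each side. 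The result follows upon applying Proposition \ref{FoKi} in the opposite direction.

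The main obstacle I anticipate is not a deep one but a bookkeeping issue: verifying that Fontaine's contravariant functor $\mcal{T}$ is compatible with tensor products in the torsion setting, since the coefficient ring $\E^{\mrm{ur}}/\cOur$ is not a ring object in the usual sense of a tensor category. The cleanest route is to check this on the covariant side $\mcal{T}_{\ast}$, where the tensor compatibility is immediate from \eqref{Fon1}, and then dualize. Checking $E(u)^{r+r'}$ kills the cokernel of $\vphi^{*}$ after quotienting by $u$-torsion is straightforward, but one must keep track that surjectivity of the natural map to the quotient preserves the annihilator of the cokernel.
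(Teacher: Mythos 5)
Your proposal is correct and follows essentially the same route as the paper: membership in $\mrm{Mod}^{r+r'}_{/\mfS_{\infty}}$ is checked directly via Proposition \ref{torKi} (the paper dismisses this as "not difficult"), the tensor isomorphism is obtained by inverting $u$ and invoking Proposition \ref{FoKi} together with the tensor compatibility of Fontaine's equivalence, and the last assertion comes from Lemma \ref{ptor}. Your extra care about checking the tensor compatibility on the covariant side $\mcal{T}_{\ast}$ and dualizing is a legitimate way to handle the point the paper leaves implicit.
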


\begin{proof}
To check $\frac{\mfM\otimes_{\mfS}\mfM'}{u{\rm \mathchar`-tor}}\in \mrm{Mod}^{r+r'}_{/\mfS_{\infty}}$
is not difficult.
Putting $M=\mfM[1/u]$ and $M'=\mfM'[1/u]$, 
we have $\frac{\mfM\otimes_{\mfS}\mfM'}{u{\rm \mathchar`-tor}}[1/u]\simeq M\otimes M'$.
By Proposition \ref{FoKi},
we obtain
$T_{\mfS}(\mfM\otimes \mfM')\simeq T(M\otimes_{\cO}M')\simeq T(M)\otimes_{\mbb{Z}_p}T(M')\simeq 
T_{\mfS}(\mfM)\otimes_{\mbb{Z}_p} T_{\mfS}(\mfM').$
The last assertion follows from Lemma \ref{ptor}.
\end{proof}

\begin{proposition}[Scheme-theoretic closure, \cite{Li1}, Lemma 2.3.6]
\label{stc}
Let $f\colon \mfM\to L$ be a morphism of $\vphi$-modules
over $\mfS$.
Suppose that $\mfM$ and $L$ are $p'$-torsion free and $\mfM\in {}'\mrm{Mod}^r_{/\mfS}$.
Then $\mrm{ker}(f)$ and $\mrm{im}(f)$ are \'etale and belong to $'\mrm{Mod}^r_{/\mfS}$.
In particular, 
if $\mfM\in \mrm{Mod}^r_{/\mfS_{\infty}}$,
then  $\mrm{ker}(f)$ and $\mrm{im}(f)$ are also in $\mrm{Mod}^r_{/\mfS_{\infty}}$.
\end{proposition}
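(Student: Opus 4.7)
The plan is to verify that $K := \mrm{ker}(f)$ and $N := \mrm{im}(f)$ lie in ${}'\mrm{Mod}^r_{/\mfS}$; the \'etaleness assertion then follows from the criterion recalled in the text ($p'$-torsion-free $\Leftrightarrow$ \'etale for objects of ${}'\mrm{Mod}^r_{/\mfS}$). Both $K$ and $N$ inherit $p'$-torsion-freeness as $\mfS$-submodules of $p'$-torsion-free modules ($K\subset \mfM$, $N\subset L$). Finite generation over the Noetherian ring $\mfS$ is immediate: $N$ is a quotient of $\mfM$, and $K$ is a submodule of the finitely generated $\mfM$.

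For the height condition on $N$, the surjection $\mfM\twoheadrightarrow N$ and $\vphi$-compatibility produce a commutative square whose top row $\vphi^*\mfM\twoheadrightarrow\vphi^*N$ is surjective; this exhibits $\mrm{coker}\,\vphi^*_N$ as a quotient of $\mrm{coker}\,\vphi^*_\mfM$, hence killed by $E(u)^r$. For $K$, I would apply the exact functor $\vphi^* = \mfS\otimes_{\vphi,\mfS}-$ (exactness coming from $\mfS$ being free of rank $p$ over $\vphi(\mfS)$ on $\{1,u,\ldots,u^{p-1}\}$) to the exact sequence $0\to K\to \mfM\to N\to 0$, and invoke the snake lemma on the resulting commutative diagram of Frobenius linearizations to obtain
\[
0 \to \mrm{ker}\,\vphi^*_K \to \mrm{ker}\,\vphi^*_\mfM \to \mrm{ker}\,\vphi^*_N \to \mrm{coker}\,\vphi^*_K \to \mrm{coker}\,\vphi^*_\mfM \to \mrm{coker}\,\vphi^*_N \to 0.
\]
It then suffices to prove $\mrm{ker}\,\vphi^*_N = 0$: granted this, $\mrm{coker}\,\vphi^*_K$ injects into $\mrm{coker}\,\vphi^*_\mfM$, which is killed by $E(u)^r$.

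The main obstacle is establishing $\mrm{ker}\,\vphi^*_N = 0$. The key observation is that $\cO\otimes N$ is an \'etale $\cO$-module: indeed, $\cO\otimes\mfM$ is \'etale (as $\mfM$ is), and $\cO\otimes N = \mrm{im}(\cO\otimes f)$ is the image of a $\cO$-$\vphi$-morphism from an \'etale $\cO$-module. The identity $\vphi^*_Z\circ\vphi^*g = g\circ\vphi^*_Y$ for a $\cO$-$\vphi$-morphism $g\colon Y\to Z$ with $Y$ \'etale forces $\vphi^*_{\mrm{im}(g)}$ to be injective (any $w$ in its kernel pulls back to an element of $\vphi^*\,\mrm{ker}(g)$, hence vanishes), while surjectivity of $\vphi^*_{\cO\otimes N}$ is immediate from the surjection $\cO\otimes\mfM \twoheadrightarrow \cO\otimes N$ in the corresponding commutative square, in which $\vphi^*_{\cO\otimes\mfM}$ is an isomorphism. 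Therefore $\vphi^*_{\cO\otimes N}$ is an isomorphism. Combining this with the injection $N\hookrightarrow \cO\otimes N$ ($p'$-torsion-freeness) and the canonical identification $\cO\otimes_\mfS\vphi^*N\cong \cO\otimes_{\vphi,\cO}(\cO\otimes N)$, a diagram chase reduces $\mrm{ker}\,\vphi^*_N = 0$ to the $p'$-torsion-freeness of $\vphi^*N$ over $\mfS$; the latter follows from the flatness of $\vphi_\mfS$ together with the observation that $\vphi^*(\cO\otimes N)$ carries a natural $\cO$-module structure, hence is $p'$-torsion-free over $\mfS$.

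For the final assertion: if $\mfM\in\mrm{Mod}^r_{/\mfS_\infty}$ is killed by $p^n$, then so are $K$ and $N$. In the $p$-power-torsion setting, $p'$-torsion-freeness is equivalent to $u$-torsion-freeness (as stated in the text), so Proposition \ref{torKi} places $K$ and $N$ in $\mrm{Mod}^r_{/\mfS_\infty}$.
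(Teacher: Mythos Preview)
The paper does not itself prove this proposition; it is quoted from \cite{Li1}, Lemma~2.3.6, so there is no in-paper argument to compare against. Your overall strategy---$p'$-torsion-freeness and finite generation for free, height $r$ on $N$ by the quotient argument, and the snake lemma to reduce height $r$ on $K$ to $\ker\varphi^*_N=0$---is the natural one and matches how the result is proved in \cite{Li1}.

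There is, however, a circularity in your justification that $\varphi^*_{\cO\otimes N}$ is injective. From $\varphi^*_Y(\tilde w)\in\ker(g)$ you conclude that $\tilde w$ ``pulls back to an element of $\varphi^*\ker(g)$'', but this amounts to the equality $(\varphi^*_Y)^{-1}(\ker g)=\varphi^*\ker(g)$, which is precisely the surjectivity of $\varphi^*_{\ker(g)}$; by the snake lemma over $\cO$ this is equivalent to the injectivity of $\varphi^*_{\cO\otimes N}$ you are trying to prove. A clean repair: since $\varphi_\cO(p)=p$, the functor $\varphi^*_\cO$ preserves the isomorphism type of every finite $\cO$-module, so $\varphi^*_{\cO\otimes N}\colon\varphi^*(\cO\otimes N)\to\cO\otimes N$ is a \emph{surjection} between abstractly isomorphic finitely generated modules over the Noetherian ring $\cO$. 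Composing with an abstract isomorphism gives a surjective endomorphism of a finitely generated module over a Noetherian ring, which is automatically injective; hence $\varphi^*_{\cO\otimes N}$ is an isomorphism. (Alternatively, one may simply cite that \'etale $\varphi$-modules over $\cO$ form an abelian category with kernels and cokernels computed as $\cO$-modules, \cite{Fo}~A~1.2, so any $\varphi$-stable quotient of an \'etale module is \'etale.) With this fix the remainder of your argument, including the reduction to the injection $\varphi^*_\mfS N\hookrightarrow\cO\otimes_\mfS\varphi^*_\mfS N\cong\varphi^*_\cO(\cO\otimes N)$, goes through.
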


\noindent
There exists a $(\vphi,\hat{G})$-analogue of the above proposition.

\begin{corollary}
\label{imker}
Let $\hat{\mfM}$ and $\hat{\mfM}'$ be in ${}_{\mrm{w}}\mrm{Mod}^{r,\hat{G}}_{/\mfS_{\infty}}$  
$($resp.\ $\mrm{Mod}^{r,\hat{G}}_{/\mfS_{\infty}})$. 
Let $f\colon \hat{\mfM}\to \hat{\mfM}'$ be a morphism of 
$(\vphi,\hat{G})$-modules.
Then, $\mrm{ker}(f)$ and $\mrm{im}(f)$ as $\vphi$-modules are in 
$\mrm{Mod}^{r}_{/\mfS_{\infty}}$,
a $\hat{G}$-action on $\hat{\mfM}$
gives $\mrm{ker}(f)$ a structure of 
a weak $(\vphi,\hat{G})$-module
$($resp.\ a $(\vphi,\hat{G})$-module$)$ 
and a $\hat{G}$-action on $\hat{\mfM}'$
gives a structure of 
a weak $(\vphi,\hat{G})$-module
$($resp.\ a $(\vphi,\hat{G})$-module$)$.
\end{corollary}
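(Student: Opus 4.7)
The plan is to first apply Proposition \ref{stc} to $f\colon \mfM\to \mfM'$, which directly yields $\mrm{ker}(f),\mrm{im}(f)\in \mrm{Mod}^r_{/\mfS_{\infty}}$. For the $\hat{G}$-structures I would exploit that $\whR\subset W(R)$ is $p$-torsion free, so by Corollary \ref{exact} the functor $\mfN\mapsto \whR\otimes_{\vphi,\mfS}\mfN$ is exact on Kisin modules. Applied to $0\to \mrm{ker}(f)\to \mfM\to \mrm{im}(f)\to 0$ and to the inclusion $\mrm{im}(f)\hookrightarrow \mfM'$, this identifies $\whR\otimes_{\vphi,\mfS}\mrm{ker}(f)$ with $\mrm{ker}(1\otimes f)\subset \hat{\mfM}$ and $\whR\otimes_{\vphi,\mfS}\mrm{im}(f)$ with $\mrm{im}(1\otimes f)\subset \hat{\mfM}'$. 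Since $1\otimes f\colon \hat{\mfM}\to \hat{\mfM}'$ is $\hat{G}$-equivariant by the definition of a morphism of (weak) $(\vphi,\hat{G})$-modules, both submodules are $\hat{G}$-stable, providing the required $\whR$-semilinear $\hat{G}$-actions, which commute with Frobenius by restriction.

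Conditions (2) and (4) of Definition \ref{Liumod} transfer formally. For (2), the injection $\whR\otimes_{\vphi,\mfS}\mrm{ker}(f)\hookrightarrow \hat{\mfM}$ extends to an injection $W(\mrm{Fr}R)\otimes_{\vphi,\mfS}\mrm{ker}(f)\hookrightarrow W(\mrm{Fr}R)\otimes_{\vphi,\mfS}\mfM$ by the same flatness argument applied to the $p$-torsion free $\mfS$-algebra $W(\mrm{Fr}R)$, and continuity of the $G$-action descends to the closed subspace for the weak topology; the image case is analogous. For (4), the inclusions $\mrm{ker}(f)\subset \mfM\subset \hat{\mfM}^{H_K}$ restrict to give $\mrm{ker}(f)\subset (\whR\otimes_{\vphi,\mfS}\mrm{ker}(f))^{H_K}$, and similarly for $\mrm{im}(f)$.

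The only genuinely delicate point is condition (5) in the strict $(\vphi,\hat{G})$-module case. For the image, the $\hat{G}$-equivariant surjection $\hat{\mfM}/I_+\hat{\mfM}\twoheadrightarrow (\whR\otimes_{\vphi,\mfS}\mrm{im}(f))/I_+(\whR\otimes_{\vphi,\mfS}\mrm{im}(f))$ transports triviality of the action directly. For the kernel what is needed is the identity $I_+\hat{\mfM}\cap(\whR\otimes_{\vphi,\mfS}\mrm{ker}(f)) = I_+(\whR\otimes_{\vphi,\mfS}\mrm{ker}(f))$, equivalently the vanishing of $\mrm{Tor}_1^{\whR}(\whR\otimes_{\vphi,\mfS}\mrm{im}(f),\whR/I_+)$, and this I expect to be the main technical step. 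To handle it I would choose a two-term free resolution $0\to \mfN_1\to \mfN_2\to \mrm{im}(f)\to 0$ over $\mfS$ from Proposition \ref{torKi}(5); exactness of $\whR\otimes_{\vphi,\mfS}(-)$ turns it into a two-term free resolution of $\whR\otimes_{\vphi,\mfS}\mrm{im}(f)$ over $\whR$, reducing the Tor to the kernel of the Frobenius-twisted map $\mfN_1/u\mfN_1\to \mfN_2/u\mfN_2$, i.e.\ to the equality $\mfN_1\cap u\mfN_2=u\mfN_1$. Any $x=uy'\in \mfN_1$ with $y'\in \mfN_2$ projects to a $u$-torsion class $\bar{y'}\in \mrm{im}(f)$, which must vanish since $\mrm{im}(f)\in \mrm{Mod}^r_{/\mfS_{\infty}}$ is $u$-torsion free; thus $y'\in \mfN_1$ and $x\in u\mfN_1$, completing the argument.
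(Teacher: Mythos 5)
Your proof is correct and follows the paper's argument in all essentials: Proposition \ref{stc} for the Kisin-module claims, $\hat{G}$-equivariance of $\hat{f}$ to transport the action to $\mrm{im}(f)$ and $\mrm{ker}(f)$, surjectivity modulo $I_+$ for condition (5) on the image, and injectivity modulo $I_+$ (via Tor-vanishing) for condition (5) on the kernel. The only difference is that your final Tor computation via a two-term free resolution and $u$-torsion freeness of $\mrm{im}(f)$ re-proves a special case of Corollary \ref{exact}, which the paper instead invokes directly with $A=\whR/I_+\simeq W(k)$, a $p$-torsion-free $\mfS$-algebra.
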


\begin{proof}
It is enough to prove only the case where 
$\hat{\mfM}, \hat{\mfM}'\in \mrm{Mod}^{r,\hat{G}}_{/\mfS_{\infty}}$. 
By Proposition \ref{stc},
$\mrm{ker}(f)$ and $\mrm{im}(f)$ as $\vphi$-modules are in 
$\mrm{Mod}^{r}_{/\mfS_{\infty}}$.
Consider the image of $f$.
Let $\hat{f}\colon \whR\otimes_{\vphi, \mfS} \mfM\to \whR\otimes_{\vphi, \mfS} \mfM'$ 
be a morphism induces from $f$.
Since 
$\whR\otimes_{\vphi, \mfS} \mrm{im}(f)= \hat{f}(\whR\otimes_{\vphi, \mfS} \mfM)$ 
and $\hat{f}$ is compatible with $\hat{G}$-action,
we can define $\hat{G}$-action on $\whR\otimes_{\vphi, \mfS} \mrm{im}(f)$
such that $\whR\otimes_{\vphi, \mfS} \mfM\to \whR\otimes_{\vphi, \mfS} \mrm{im}(f)$
induced from $f$ is $\hat{G}$-equivalent.
Since $\whR\otimes_{\vphi, \mfS} \mfM/I_+(\whR\otimes_{\vphi, \mfS} \mfM)
\to \whR\otimes_{\vphi, \mfS} \mrm{im}(f)/I_+(\whR\otimes_{\vphi, \mfS} \mrm{im}(f))$ 
is surjective, it is a routine work to check that 
$\wh{\mrm{im}(f)}=(\mrm{im}(f), \vphi, \hat{G})$ satisfies conditions 
to being a $(\vphi,\hat{G})$-module. 
The assertion for the kernel of $f$
follows from the fact that,
two exact sequences
$
0\to \whR\otimes_{\vphi,\mfS} \mrm{ker}(f)\to 
\whR\otimes_{\vphi,\mfS}\mfM \overset{\hat{f}}{\to} 
\whR\otimes_{\vphi,\mfS}\mrm{im}(f)\to 0
$
and 
$
0\to (\whR/I_+)\otimes_{\vphi,\mfS} \mrm{ker}(f)\to 
(\whR/I_+)\otimes_{\vphi,\mfS}\mfM \overset{\hat{f}}{\to} 
(\whR/I_+)\otimes_{\vphi,\mfS}\mrm{im}(f)\to 0
$
arising from the exact sequence $0\to \mrm{ker}(f)\to \mfM\to \mrm{im}(f)\to 0$
are exact by Corollary \ref{exact} and the fact that $\whR/I_+\simeq W(k)$ is $p$-torsion free. 
\end{proof}

\begin{corollary}
\label{subweak}
Let $0\to \hat{\mfM}'\to \hat{\mfM}\to \hat{\mfM}''\to 0$ be an exact sequence in
${}_\mrm{w}\mrm{Mod}^{r,\hat{G}}_{/\mfS_{\infty}}$.
If $\hat{\mfM}\in \mrm{Mod}^{r,\hat{G}}_{/\mfS_{\infty}}$,
then $\hat{\mfM}'$ and $\hat{\mfM}''$ are also in $\mrm{Mod}^{r,\hat{G}}_{/\mfS_{\infty}}$.
\end{corollary}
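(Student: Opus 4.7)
The plan is to observe that, since the hypothesis already provides $\hat{\mfM}'$ and $\hat{\mfM}''$ as weak $(\vphi,\hat{G})$-modules of height $r$ fitting into the exact sequence, the only property we need to verify in order to upgrade them to genuine objects of $\mrm{Mod}^{r,\hat{G}}_{/\mfS_{\infty}}$ is condition (5) of Definition \ref{Liumod}: the triviality of the $\hat{G}$-action on $\hat{\mfM}'/I_+\hat{\mfM}'$ and on $\hat{\mfM}''/I_+\hat{\mfM}''$.

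First I would treat the quotient $\hat{\mfM}''$. The surjection $\mfM\twoheadrightarrow \mfM''$ induces, after applying $\whR\otimes_{\vphi,\mfS}-$ and then reducing modulo $I_+$, a surjective $\hat{G}$-equivariant map $\hat{\mfM}/I_+\hat{\mfM}\twoheadrightarrow \hat{\mfM}''/I_+\hat{\mfM}''$. Since $\hat{G}$ acts trivially on the source by the hypothesis on $\hat{\mfM}$, it acts trivially on the target, which gives condition (5) for $\hat{\mfM}''$.

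Next I would handle the subobject $\hat{\mfM}'$. The point is to reduce to a subobject situation in which a trivial action is inherited. By Proposition \ref{hatR}(3) we identify $\whR/I_+\simeq W(k)$, so what must be shown is that the map $\hat{\mfM}'/I_+\hat{\mfM}'\to \hat{\mfM}/I_+\hat{\mfM}$ is injective; once that is proved, $\hat{\mfM}'/I_+\hat{\mfM}'$ embeds $\hat{G}$-equivariantly into a trivial $\hat{G}$-module, and condition (5) follows. The injectivity in turn amounts to the vanishing of $\mrm{Tor}_1^{\mfS}(\mfM'',W(k))$ in the Tor long exact sequence attached to $0\to \mfM'\to \mfM\to \mfM''\to 0$ (tensoring $W(k)$ with $\mfM$ through the composition $\mfS\overset{\vphi}{\to}\mfS\twoheadrightarrow \mfS/I_+\mfS\simeq W(k)$). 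Since $W(k)$ is $p$-torsion free, Corollary \ref{exact} applies and gives exactly this Tor-vanishing; this is the same kind of argument already used at the end of the proof of Corollary \ref{imker}.

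The main (and only) obstacle is the injectivity on the submodule side, and it is dissolved by Corollary \ref{exact}; everything else is routine bookkeeping of the $\hat{G}$-equivariance of the tensored sequence. Consequently $\hat{\mfM}'$ and $\hat{\mfM}''$ both satisfy all five axioms and lie in $\mrm{Mod}^{r,\hat{G}}_{/\mfS_{\infty}}$.
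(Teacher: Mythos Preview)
Your proof is correct and is exactly the argument the paper has in mind: the paper's one-line proof ``This immediately follows from Corollary \ref{exact}'' is precisely the Tor-vanishing you spell out, and indeed the same reasoning was already made explicit at the end of the proof of Corollary \ref{imker}. You have simply unpacked the details the author left to the reader.
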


\begin{proof}
This immediately follows from Corollary \ref{exact}.
\end{proof}

%%%%%%%%%%%%%%%%%%%%%%%%%%%%%%%%%%%%%%%%%%%%%%%%%%%%%%%%%%%%%%%%%%%%%%%%%%%%%%%%%%%%%%%%%%%%%%%%%%%%%%%%%%%
%%%%%%%%%%%%%%%%%%%%%%%%%%%%%%%%%%%%%%%%%%%%%%%%%%%%%%%%%%%%%%%%%%%%%%%%%%%%%%%%%%%%%%%%%%%%%%%%%%%%%%%%%%%
%                                3                         %%%%%%%%%%%%%%%%%%%%%%%%%%%%%%%%%%%%%%%%%%%%%%%%%
%%%%%%%%%%%%%%%%%%%%%%%%%%%%%%%%%%%%%%%%%%%%%%%%%%%%%%%%%%%%%%%%%%%%%%%%%%%%%%%%%%%%%%%%%%%%%%%%%%%%%%%%%%%
%%%%%%%%%%%%%%%%%%%%%%%%%%%%%%%%%%%%%%%%%%%%%%%%%%%%%%%%%%%%%%%%%%%%%%%%%%%%%%%%%%%%%%%%%%%%%%%%%%%%%%%%%%%

\section{Cartier duality for $(\varphi, \hat{G})$-modules}

In this subsection,
we give the Cartier duality on  $(\vphi,\hat{G})$-modules. 
Throughout this section, we fix an integer $r<\infty$.

\subsection{Cartier duality for Kisin modules}

In this subsection, 
we recall Liu's results on duality theorems 
for Kisin modules
(\cite{Li1}, Section 3).

\begin{example}
Let $\mfS^{\vee}=\mfS\cdot \mfrak{f}^r$ 
be the rank-$1$ free $\mfS$-module 
with $\vphi(\mfrak{f}^r)=c_0^{-r}E(u)^r\cdot \mfrak{f}^r$ where 
$pc_0$ is the constant coefficient of $E(u)$.
We denote by $\vphi^{\vee}$ this Frobenius $\vphi$.
Then $(\mfS^{\vee},\vphi^{\vee})$ is a free Kisin module of height $r$ and   
there exists an isomorphism 
$T_{\mfS}(\mfS^{\vee})\simeq \mbb{Z}_p(r)$
as $\mbb{Z}_p[G_{\infty}]$-modules
(see \cite{Li1}, Example 2.3.5).
%Let $\mfS_{\infty}=
%\mbb{Q}_p/\mbb{Z}_p\otimes_{\mbb{Z}_p} \mfS
%=\mfS[1/p]/\mfS$ and 
%$\mfS_n=\mbb{Z}_{p}/p^n\mbb{Z}_p
%\otimes_{\mbb{Z}p}\mfS=\mfS/p^n\mfS$ 
%for any integer $n\ge 0$. 
Put $\mfS^{\vee}_{\infty}
=\mbb{Q}_p/\mbb{Z}_p\otimes_{\mbb{Z}_p} \mfS^{\vee}
=\mfS_{\infty}\cdot \mfrak{f}^r$
(resp.\ $\mfS^{\vee}_{n}
=\mbb{Z}_p/p^n\mbb{Z}_p\otimes_{\mbb{Z}_p} \mfS^{\vee}
=\mfS_{n}\cdot \mfrak{f}^r$ for any integer $n\ge 0$).
The Frobenius $\vphi$ on $\mfS^{\vee}$ induces 
Frobenii $\vphi^{\vee}$ on $\mfS_{\infty}^{\vee}$ 
and $\mfS_n^{\vee}$.

Put $\E^{\vee}=\E\otimes_{\mfS} \mfS^{\vee}=\E\cdot \mfrak{f}^r$
and equip $\E^{\vee}$ with a Frobenius $\vphi^{\vee}$ 
arising from that of
$\E$ and $\mfS^{\vee}$.
Similarly, we put 
$\cOdual=\cO \cdot \mfrak{f}^r,
\cOdual_{\infty}=\cO_{\infty}\cdot \mfrak{f}^r,
\cOdual_n=\cO_n\cdot \mfrak{f}^r$
and equip them with Frobenii $\vphi^{\vee}$
which arise from that of $\E^{\vee}$. 
We define 
$\cOurdual,\ \cOurdual_{\infty}$
$\cOurdual_n$, and Frobenii $\vphi^{\vee}$ on them by the analogous way.
\end{example}

Let $\mfM$ be a  Kisin module of height $r$
and denote by $M=\cO\otimes_{\mfS} \mfM$
the corresponding \'etale $\vphi$-module.
Put 
\[
\mfM^{\vee}=\mrm{Hom}_{\mfS}(\mfM, \mfS_{\infty}),\
M^{\vee}=\mrm{Hom}_{\cO,\vphi}(M,\cO_{\infty})
\quad \mrm{if}\ \mfM\ \mrm{is\ killed\ by\ some\ power\ of}\ p
\]
and 
\[
\mfM^{\vee}=\mrm{Hom}_{\mfS}(\mfM, \mfS),\
M^{\vee}=\mrm{Hom}_{\cO,\vphi}(M, \cO)
\quad \mrm{if}\ \mfM\ \mrm{is\ free}.
\]
We then have natural pairings
\[
\langle \cdot , \cdot \rangle \colon \mfM \times \mfM^{\vee}\to
\mfS_{\infty}^{\vee},\
\langle \cdot , \cdot \rangle \colon 
M\times M^{\vee}\to \cOdual_{\infty}
\quad \mrm{if}\ \mfM\ \mrm{is\ killed\ by\ some\ power\ of}\ p
\]
and 
\[
\langle \cdot , \cdot \rangle \colon \mfM \times 
\mfM^{\vee}\to \mfS^{\vee},\
\langle \cdot , \cdot \rangle \colon 
M\times M^{\vee}\to \cOdual
\quad \mrm{if}\ \mfM\ \mrm{is\ free}.
\]
The Frobenius $\vphi_{\mfM}^{\vee}$ on $\mfM^{\vee}$ 
(resp.\  $\vphi_{M}^{\vee}$ on $M^{\vee}$ )
is defined to be 
\[
\langle \vphi_{\mfM}(x), 
\vphi_{\mfM}^{\vee}(y) \rangle = 
\vphi^{\vee}(\langle x,y \rangle) 
\quad \mrm{for}\ x\in \mfM, y\in \mfM^{\vee}.
\]
\[
(\mrm{resp.}\ 
\langle \vphi_{M}(x), \vphi_{M}^{\vee}(y) \rangle = 
\vphi^{\vee}(\langle x,y \rangle) \quad \mrm{for}\ x\in M, y\in M^{\vee}.)
\]

\begin{theorem}[\cite{Li1}]
\label{DualKi}
Let $\mfM$ be a 
Kisin module of height $r$,
$M=\cO\otimes_{\mfS} \mfM$ 
the corresponding \'etale $\vphi$-module
and 
$\langle \cdot , \cdot \rangle$
the paring as above.

\noindent
$(1)$ $(\mfM^{\vee}, \vphi_{\mfM}^{\vee})$
is a Kisin module of height $r$.
Similarly, 
$M^{\vee}$ is an \'etale $\vphi$-module.

\noindent
$(2)$ 
A natural map $\cO\otimes_{\mfS} \mfM^{\vee}\to M^{\vee}$
is an isomorphism and 
$\vphi^{\vee}_{M}=
\vphi_{\cO}\otimes \vphi^{\vee}_{\mfM}$.

\noindent
$(3)$ 
The assignment 
$\mfM \mapsto \mfM^{\vee}$
is an anti-equivalence on the category 
of torsion Kisin-modules
$($resp.\ free Kisin-modules$)$ 
and a natural map 
$\mfM\to (\mfM^{\vee})^{\vee}$
is an isomorphism.

\noindent
$(4)$ All parings $\langle \cdot , \cdot \rangle$ 
appeared in the above are perfect.

\noindent
$(5)$ Taking a dual preserves a short exact sequence of 
torsion Kisin modules
$($resp.\ free Kisin modules, resp.\ torsion \'etale $\vphi$-modules
resp.\ free \'etale $\vphi$-modules$)$. 

\end{theorem}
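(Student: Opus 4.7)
My plan is to handle the free case directly and then reduce the torsion case to it by exploiting the two-term free resolution guaranteed by Proposition \ref{torKi}(5).

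For the free case, fix a $\mfS$-basis $e_1,\dots,e_n$ of $\mfM$ and let $A\in M_n(\mfS)$ be the matrix of $\vphi_{\mfM}$. The height-$r$ condition means $\det(A)$ divides a unit times $E(u)^{rn}$. In the dual basis of $\mfM^{\vee}=\mrm{Hom}_{\mfS}(\mfM,\mfS^{\vee})$ (using $\mfS^{\vee}=\mfS\cdot \mfrak{f}^r$), the relation $\langle \vphi_{\mfM}(x),\vphi_{\mfM}^{\vee}(y)\rangle=\vphi^{\vee}(\langle x,y\rangle)$ forces the matrix of $\vphi_{\mfM}^{\vee}$ to be $c_0^{-r}E(u)^r(A^t)^{-1}$, which is well defined in $M_n(\mfS)$ since the adjugate formula gives $E(u)^r A^{-1}\in M_n(\mfS)$. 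A direct computation shows that the cokernel of $(\vphi_{\mfM}^{\vee})^{\ast}$ is annihilated by $E(u)^r$, giving (1) in the free case. Part (2) in the free case is tautological from the compatibility of the $\cO$-pairing with the $\mfS$-pairing; double duality and perfectness of the pairing (parts (3),(4)) then follow from standard duality of free modules, and exactness (part (5)) is automatic since the functor $\mrm{Hom}_{\mfS}(-,\mfS^{\vee})$ is exact on short exact sequences of free $\mfS$-modules.

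For the torsion case, choose a resolution $0\to \mf{N}_1\to \mf{N}_2\to \mfM\to 0$ with $\mf{N}_1,\mf{N}_2\in \mrm{Mod}^r_{/\mfS}$, which exists by Proposition \ref{torKi}. Since $\mfM$ is $p$-power torsion and $\mfS^{\vee}$ is $p$-torsion free, applying $\mrm{Hom}_{\mfS}(-,\mfS^{\vee}[1/p]/\mfS^{\vee})=\mrm{Hom}_{\mfS}(-,\mfS^{\vee}_{\infty})$ and using $\mrm{Hom}_{\mfS}(\mfM,\mfS^{\vee})=0=\mrm{Hom}_{\mfS}(\mfM,\mfS^{\vee}[1/p])$ yields the exact sequence
\[
0\to \mf{N}_2^{\vee}\to \mf{N}_1^{\vee}\to \mfM^{\vee}\to 0.
\]
This is a two-term free resolution of $\mfM^{\vee}$; combined with the Frobenius inherited from the free case on the outer terms (which I verify is compatible with the boundary map), Proposition \ref{torKi} gives $\mfM^{\vee}\in \mrm{Mod}^r_{/\mfS_{\infty}}$, proving (1). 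Part (2) then follows by tensoring the resolution with $\cO$ over $\mfS$ (which is exact by Corollary \ref{exact}) and comparing with the analogous resolution for the \'etale side.

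For (3) and (4), biduality and perfectness follow from the free case via the five lemma applied to the resolution. The exactness statement (5) is the most delicate point, because $\mrm{Hom}_{\mfS}(-,\mfS^{\vee}_{\infty})$ is not a priori right exact on torsion modules; however, given an exact sequence $0\to \mfM'\to \mfM\to \mfM''\to 0$ in $\mrm{Mod}^r_{/\mfS_{\infty}}$, one can choose compatible two-term resolutions and reduce exactness of the dualized sequence to the snake lemma applied to the resulting $3\times 3$ diagram of free duals, where each column is exact by the free-case duality. The main obstacle will be to check that the Frobenius structures glue correctly under these cohomological identifications; this I expect to handle by a careful bookkeeping argument tracing $\vphi$ through the connecting homomorphism in the long exact Ext sequence.
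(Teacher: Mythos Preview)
The paper does not supply its own proof of this theorem; it is stated as a citation of \cite{Li1}, Section~3, with no argument given. So there is nothing in the present paper to compare your proposal against directly.

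Your overall strategy (free case by matrices, torsion case via the two-term free resolution of Proposition~\ref{torKi}) is the standard one and is essentially what Liu does. One step in your torsion argument is miswritten, though. You say you ``apply $\mrm{Hom}_{\mfS}(-,\mfS^{\vee}_{\infty})$'' to the resolution $0\to\mfN_1\to\mfN_2\to\mfM\to 0$ and obtain $0\to\mfN_2^{\vee}\to\mfN_1^{\vee}\to\mfM^{\vee}\to 0$. But that functor places $\mfM^{\vee}=\mrm{Hom}_{\mfS}(\mfM,\mfS^{\vee}_{\infty})$ on the \emph{left}, and the outer terms become $\mrm{Hom}_{\mfS}(\mfN_i,\mfS^{\vee}_{\infty})$, which are not the free duals $\mfN_i^{\vee}=\mrm{Hom}_{\mfS}(\mfN_i,\mfS^{\vee})$. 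The correct route is to apply $\mrm{Hom}_{\mfS}(-,\mfS^{\vee})$ to the resolution, obtaining
\[
0\longrightarrow \mfN_2^{\vee}\longrightarrow \mfN_1^{\vee}\longrightarrow \mrm{Ext}^1_{\mfS}(\mfM,\mfS^{\vee})\longrightarrow 0,
\]
and then to identify $\mrm{Ext}^1_{\mfS}(\mfM,\mfS^{\vee})\simeq \mrm{Hom}_{\mfS}(\mfM,\mfS^{\vee}_{\infty})=\mfM^{\vee}$ via the long exact sequence in the second variable associated to $0\to\mfS^{\vee}\to\mfS^{\vee}[1/p]\to\mfS^{\vee}_{\infty}\to 0$, using that $\mfM$ is $p$-power torsion (so $\mrm{Hom}_{\mfS}(\mfM,\mfS^{\vee})=\mrm{Hom}_{\mfS}(\mfM,\mfS^{\vee}[1/p])=0$) and that $\mfN_1\to\mfN_2$ becomes an isomorphism after inverting $p$ (so $\mrm{Ext}^1_{\mfS}(\mfM,\mfS^{\vee}[1/p])=0$). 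With this fix the rest of your outline goes through.
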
   
\begin{remark}
The assertion (2) of the above theorem says that there exists a natural isomorphism 
$\cO\otimes_{\mfS} {\mfM}^{\vee}\simeq (\cO\otimes_{\mfS} {\mfM})^{\vee}=M^{\vee}$
which is compatible with $\vphi$-structures.
In fact, the paring 
$\langle \cdot , \cdot \rangle$
for $M$ is equal to the pairing 
which is obtained by tensoring $\cO$
to the pairing $\langle \cdot , \cdot \rangle$ for $\mfM$.    
\end{remark}

\subsection{Construction of dual objects}
Put
\[
\wh{\mfS}^{\vee}
=\wh{\mcal{R}}\otimes_{\vphi,\mfS} \mfS^{\vee}
=\wh{\mcal{R}}\otimes_{\vphi,\mfS} (\mfS\cdot \mfrak{f}^r)
=\wh{\mcal{R}}\cdot \mfrak{f}^r,
\]
\[
\wh{\mfS}_n^{\vee}
=\mbb{Z}_p/p^n\mbb{Z}_p\otimes_{\mbb{Z}_p} \wh{\mfS}^{\vee}
=\wh{\mcal{R}}\otimes_{\vphi,\mfS} \mfS_n^{\vee}
=\wh{\mcal{R}}\otimes_{\vphi,\mfS} (\mfS_n\cdot \mfrak{f}^r)
=\wh{\mcal{R}}_n\cdot  \mfrak{f}^r
\quad \mrm{for\ any\ integer}\ n\ge 0
\]
and
\[
\wh{\mfS}_{\infty}^{\vee}
=\mbb{Q}_p/\mbb{Z}_p\otimes_{\mbb{Z}_p} \wh{\mfS}^{\vee}
=\wh{\mcal{R}}\otimes_{\vphi,\mfS} \mfS_{\infty}^{\vee}
=\wh{\mcal{R}}\otimes_{\vphi,\mfS} (\mfS_{\infty}\cdot \mfrak{f}^r)
=\wh{\mcal{R}}_{\infty}\cdot  \mfrak{f}^r,
\]
and we equip them with natural Frobenii arising from those of $\wh{\mcal{R}}$ and $\mfS^{\vee}$.
By Theorem \ref{Li},
we can define a unique $\hat{G}$-action on $\wh{\mfS}^{\vee}$ such that 
$\wh{\mfS}^{\vee}$ has a structure as a $(\vphi,\hat{G})$-module 
of height $r$ and there exists an isomorphism
\begin{equation}
\label{ga}
\hat{T}(\wh{\mfS}^{\vee})\simeq \mbb{Z}_p(r)
\end{equation}
as $\mbb{Z}_p[G]$-modules.
This $\hat{G}$-action on $\wh{\mfS}^{\vee}$ induces 
$\hat{G}$-actions on $\wh{\mfS}_n^{\vee}$
and  $\wh{\mfS}_{\infty}^{\vee}$. 
Then it is not difficult to see that $\wh{\mfS}_n^{\vee}$ has a structure as a torsion 
$(\vphi,\hat{G})$-module of height $r$
and there exists an isomorphism
\begin{equation}
\label{ga2}
\hat{T}(\wh{\mfS}_n^{\vee})\simeq \mbb{Z}_p/p^n\mbb{Z}_p(r)
\end{equation}
as $\mbb{Z}_p[G]$-modules.
We may say that $\wh{\mfS}^{\vee}$ (resp.\ $\wh{\mfS}_n^{\vee}$ ) 
is a dual $(\vphi,\hat{G})$-module
of $\wh{\mfS}$ (resp.\ $\wh{\mfS}_n$ )
since (\ref{ga}) and (\ref{ga2}) hold.

\begin{remark}
If 
$K_{p^{\infty}}\cap K_{\infty}=K$ (which is automatically hold in the case $p>2$), 
then $\hat{G}$-actions on $\wh{\mfS}^{\vee},\wh{\mfS}_n^{\vee}$
and $\wh{\mfS}_{\infty}^{\vee}$  
can be written explicitly as follows
(see Example 3.2.3 of \cite{Li3}):
If $K_{p^{\infty}}\cap K_{\infty}=K$,
we have $\hat{G}=G_{p^{\infty}}\rtimes H_K$ (see Lemma 5.1.2 in \cite{Li2}). 
Fixing a topological generator $\tau\in G_{p^{\infty}}$,
we define $\hat{G}$-actions on the above three modules 
by the relation $\tau(\mfrak{f}^r)=\hat{c}^r\cdot \mfrak{f}^r$.
Here $\hat{c}=\frac{c}{\tau(c)}=
\prod^{\infty}_{n=1}\vphi^n(\frac{E(u)}{\tau(E(u))}),\ 
c=\prod^{\infty}_{n=0}\vphi^n(\frac{\vphi(c_0^{-1}E(u))}{p})$.
Example 3.2.3 of \cite{Li3} says that $c\in A_{\mrm{cris}}^{\times}$ 
and $\hat{c}\in \whR^{\times}$.
It follows from straightforward calculations  that 
$\wh{\mfS}^{\vee}$ and 
$\wh{\mfS}_n^{\vee}$ 
are $(\vphi,\hat{G})$-modules of height $r$.
\end{remark}
\begin{lemma}
\label{lemmacar}
Let $A$ be a $\mfS$-algebra 
with characteristic coprime to $p$.
Let $\mfM\in \mrm{Mod}^r_{/\mfS_{\infty}}$ 
$($resp.\ $\mfM\in \mrm{Mod}^r_{/\mfS})$.
Then there exists a natural isomorphism:
\[
A\otimes_{\vphi, \mfS} \mfM^{\vee} 
\overset{\sim}{\longrightarrow} 
\mrm{Hom}_A(A\otimes_{\vphi, \mfS} \mfM, A_{\infty})\quad 
\mrm{if}\ \mfM\ {\it is\ killed\ by\ some\ power\ of}\ p,
\]
\[
(resp.\ \quad
A\otimes_{\vphi, \mfS} \mfM^{\vee} 
\overset{\sim}{\longrightarrow} 
\mrm{Hom}_A(A\otimes_{\vphi, \mfS} \mfM, A)\quad 
{\it if}\ \mfM\ {\it is\ free}).
\]
\end{lemma}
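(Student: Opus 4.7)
\emph{Plan.} My plan is to exhibit a natural candidate map, verify it is an isomorphism in the finite free case (which handles the free half of the statement directly), and then reduce the torsion case to the free case by applying the two-term resolution built into the definition of $\mrm{Mod}^r_{/\mfS_{\infty}}$, together with a five-lemma argument.

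I would first define $\Psi_{\mfM}$ as the usual tensor--hom map
\[
\Psi_{\mfM}\colon A\otimes_{\vphi,\mfS} \mrm{Hom}_{\mfS}(\mfM, B) \longrightarrow \mrm{Hom}_A\bigl(A\otimes_{\vphi,\mfS}\mfM,\ A\otimes_{\vphi,\mfS} B\bigr),
\]
with $B=\mfS_{\infty}$ in the torsion case and $B=\mfS$ in the free case; one then composes with the canonical identifications $A\otimes_{\vphi,\mfS}\mfS\simeq A$ and $A\otimes_{\vphi,\mfS}\mfS_{\infty}\simeq A_{\infty}$ (the latter coming from $\mfS_{\infty}=\mfS\otimes_{\mbb{Z}_p}\mbb{Q}_p/\mbb{Z}_p$) to produce exactly the arrows in the statement. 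When $\mfM$ is finite free over $\mfS$ of rank $n$, both the source and the target identify canonically with $A^n$ (respectively with $A_{\infty}^n$), and $\Psi_{\mfM}$ is the identity under these identifications. This already settles the free case of the lemma.

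For the torsion case, I would invoke the very definition of $\mrm{Mod}^r_{/\mfS_{\infty}}$ (or Proposition \ref{torKi}(5)) to choose a two-term resolution $0\to \mfN_1\to \mfN_2\to \mfM\to 0$ by finite free $\mfS$-modules. Applying $\mrm{Hom}_{\mfS}(-,\mfS_{\infty})$ gives a left exact sequence
\[
0\to \mfM^{\vee}\to \mrm{Hom}_{\mfS}(\mfN_2,\mfS_{\infty})\to \mrm{Hom}_{\mfS}(\mfN_1,\mfS_{\infty})
\]
of $\vphi$-modules over $\mfS$. Splitting this into two short exact sequences (isolating the image of the last arrow) and invoking Corollary \ref{exact} to each piece, the functor $A\otimes_{\vphi,\mfS}-$ preserves left exactness, since $\mrm{Tor}_1^{\mfS}(-,A)$ vanishes on any $\vphi$-module over $\mfS$. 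Symmetrically, applying $A\otimes_{\vphi,\mfS}-$ to the resolution of $\mfM$ (again exact by Corollary \ref{exact}) and then $\mrm{Hom}_A(-,A_{\infty})$ produces a second left exact sequence. The natural transformation $\Psi$ assembles the two sequences into a commutative ladder whose vertical arrows at $\mfN_1$ and $\mfN_2$ are isomorphisms by the free case; the five lemma then forces $\Psi_{\mfM}$ to be an isomorphism.

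I expect the main technical obstacle to be the exactness verification on the dual side, namely that $A\otimes_{\vphi,\mfS}-$ preserves the left exactness of the dualized resolution. The subtlety is that the image and cokernel of the map $\mrm{Hom}_{\mfS}(\mfN_2,\mfS_{\infty})\to \mrm{Hom}_{\mfS}(\mfN_1,\mfS_{\infty})$ are not a priori objects of $\mrm{Mod}^r_{/\mfS_{\infty}}$; one must notice that they nevertheless inherit a $\vphi$-structure, hence are Kisin modules in the paper's loose sense, which is exactly the hypothesis under which Corollary \ref{exact} is available. Everything else is a routine tensor--hom manipulation.
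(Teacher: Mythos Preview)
Your overall strategy is reasonable, but the step you flag as the ``main technical obstacle'' is in fact a genuine gap. You invoke Corollary~\ref{exact} for the image and cokernel of $\mrm{Hom}_{\mfS}(\mfN_2,\mfS_{\infty})\to \mrm{Hom}_{\mfS}(\mfN_1,\mfS_{\infty})$ on the grounds that they carry a $\vphi$-structure and are therefore ``Kisin modules in the paper's loose sense.'' But the \emph{proof} of Corollary~\ref{exact} only treats objects of $\mrm{Mod}^r_{/\mfS}$ and $\mrm{Mod}^r_{/\mfS_{\infty}}$ (it relies on the d\'evissage of Proposition~\ref{torKi}(4)), and the conclusion is simply false for general $\vphi$-modules: with $A=W(k)=\mfS/u\mfS$ (which is $p$-torsion free) and $\mfM=\mfS/(p,u)$ one computes $\mrm{Tor}_1^{\mfS}(\mfM,A)=k\neq 0$ from the Koszul resolution. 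Note also that $\mrm{Hom}_{\mfS}(\mfN_i,\mfS_{\infty})\simeq \mfS_{\infty}^{n_i}$ is not finitely generated over $\mfS$, so it too lies outside the scope of that corollary.

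Your argument can be repaired, but not by the route you describe. The key observation is that the dualized sequence is actually \emph{short} exact: its cokernel is $\mrm{Ext}^1_{\mfS}(\mfM,\mfS_{\infty})$, and using $0\to\mfS\to\mfS[1/p]\to\mfS_{\infty}\to 0$ together with $\mrm{pd}_{\mfS}(\mfM)\le 1$ one sees this $\mrm{Ext}^1$ is squeezed between $\mrm{Ext}^1_{\mfS}(\mfM,\mfS[1/p])=0$ and $\mrm{Ext}^2_{\mfS}(\mfM,\mfS)=0$. Then one only needs $\mrm{Tor}_1^{\mfS}(\mfS_{\infty},A)=\varinjlim_n \mrm{Tor}_1^{\mfS}(\mfS_n,A)=0$, which follows directly from $A$ being $p$-torsion free; after that the five-lemma goes through. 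By contrast, the paper bypasses all of this: it uses the filtration of Proposition~\ref{torKi}(4) to reduce to the case $p\mfM=0$ (where $\mfM$ is free over $k[\![u]\!]$ and the isomorphism is immediate) and then climbs back up by induction via a snake-lemma diagram, staying entirely within finitely generated modules already handled elsewhere in the paper.
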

\begin{proof}
If $\mfM$ is free, the statement is clear.
If $p\mfM=0$, then we may regard $\mfM$ as a 
finite free $\mfS_1$-module 
and thus the statement is clear.
Suppose that $\mfM$ is a 
(general) torsion Kisin module
of height $r$. 
By Proposition \ref{torKi} of \cite{Li1},
there exists an extension of 
$\vphi$-modules
\[
0=\mfM_0\subset \mfM_1\subset \cdots \subset \mfM_n=\mfM 
\]
such that, for all $1\le i\le n$,
$\mfM_i/\mfM_{i-1}\in \mrm{Mod}^r_{/\mfS_{\infty}}$ and 
$\mfM_i/\mfM_{i-1}$ is a finite free $\mfS/p\mfS=k[\![u]\!]$-module.
Furthermore, we have $\mfM_i\in \mrm{Mod}^r_{/\mfS_{\infty}}$
by Lemma 2.3.1 in \cite{Li1}. 
We show that the natural map
\[
A\otimes_{\vphi, \mfS} \mfM_i^{\vee} 
\longrightarrow
\mrm{Hom}_A(A\otimes_{\vphi, \mfS} 
\mfM_i, A_{\infty}),\quad
a\otimes f\mapsto (a\otimes x\mapsto af(x))
\]
where $a\in A, f\in \mfM_i^{\vee}$ and  $x\in \mfM_i$,
is an isomorphism by induction for $i$.
For $i=0$, it is obvious. 
Suppose that the above map is an isomorphism for $i-1$. 
We have an exact sequence of $\mfS$-modules
\begin{equation}
\label{ex1}
0\to \mfM_{i-1}\to \mfM_i\to \mfM_i/\mfM_{i-1}\to 0. 
\end{equation}
By Corollary 3.1.5 of \cite{Li1}, we know that 
the sequence
\[
0\to (\mfM_i/\mfM_{i-1})^{\vee}\to 
\mfM_i^{\vee}\to \mfM_{i-1}^{\vee}\to 0. 
\]
is also an exact sequence of $\mfS$-modules.
Therefore, we have the following 
exact sequence of $A$-modules:
\begin{equation}
\label{ex2}
A\otimes_{\vphi,\mfS}(\mfM_i/\mfM_{i-1})^{\vee}\to 
A\otimes_{\vphi,\mfS}\mfM_i^{\vee}\to 
A\otimes_{\vphi,\mfS}\mfM_{i-1}^{\vee}\to 0. 
\end{equation}
On the other hand, the exact sequence  
(\ref{ex1}) induces an exact sequence of $A$-modules
\begin{equation}
\label{ex3}
0\to 
\mrm{Hom}_{A}(A\otimes_{\vphi, \mfS} \mfM_i/\mfM_{i-1}, A_{\infty})\to 
\mrm{Hom}_{A}(A\otimes_{\vphi, \mfS} \mfM_i, A_{\infty})\to 
\mrm{Hom}_{A}(A\otimes_{\vphi, \mfS} \mfM_{i-1}, A_{\infty}).
\end{equation}
Combining sequences (\ref{ex2}) and (\ref{ex3}), 
we obtain the following commutative 
diagram of A-modules:
\begin{center}
$\displaystyle \xymatrix@C=2mm@R=3mm{
 & A\otimes_{\vphi,\mfS}(\mfM_i/\mfM_{i-1})^{\vee}\ar[r] \ar[d] & 
A\otimes_{\vphi,\mfS}\mfM_i^{\vee}\ar[r] \ar[d]
& A\otimes_{\vphi,\mfS}\mfM_{i-1}^{\vee}\ar[r] \ar[d]& 0 \\
0\ar[r] & \mrm{Hom}_A(A\otimes_{\vphi, \mfS} \mfM_i/\mfM_{i-1}, A_{\infty})\ar[r] & 
\mrm{Hom}_A(A\otimes_{\vphi, \mfS} \mfM_i, A_{\infty})\ar[r]
& \mrm{Hom}_A(A\otimes_{\vphi, \mfS} \mfM_{i-1}, A_{\infty})  &  }$
\end{center}
where the two rows are exact. 
Furthermore, 
first and third columns are isomorphisms
by the induction hypothesis.
By the snake lemma, 
we obtain that the second column is an isomorphism, too.
\end{proof}

Let $\hat{\mfM}=(\mfM,\vphi_{\mfM},\hat{G})$ be a torsion 
(resp.\ free) weak $(\vphi,\hat{G})$-module of height $r$
and $(\mfM^{\vee},\vphi^{\vee}_{\mfM})$ 
the dual Kisin module of $(\mfM,\vphi_{\mfM})$.
By Lemma \ref{lemmacar},
we have isomorphisms
\begin{equation}
\label{act1}
\whR\otimes_{\vphi, \mfS} \mfM^{\vee} 
\overset{\sim}{\longrightarrow} 
\mrm{Hom}_{\wh{\mcal{R}}}(\wh{\mcal{R}}\otimes_{\vphi, \mfS} \mfM, \wh{\mfS}_{\infty}^{\vee})\quad 
\mrm{if}\ \mfM\ \mrm{is\ killed\ by\ some\ power\ of}\ p,
\end{equation}  
\begin{equation}
\label{act2}
\wh{\mcal{R}}\otimes_{\vphi, \mfS} \mfM^{\vee} 
\overset{\sim}{\longrightarrow} 
\mrm{Hom}_{\wh{\mcal{R}}}(\wh{\mcal{R}}\otimes_{\vphi, \mfS} \mfM, \wh{\mfS}^{\vee})\quad 
\mrm{if}\ \mfM\ \mrm{is\ free}.
\end{equation}  
We define a $\hat{G}$-action on
$\mrm{Hom}_{\wh{\mcal{R}}}(\wh{\mcal{R}}\otimes_{\vphi, \mfS} \mfM, \wh{\mfS}_{\infty}^{\vee})$
(resp.\ 
$\mrm{Hom}_{\wh{\mcal{R}}}(\wh{\mcal{R}}\otimes_{\vphi, \mfS} \mfM, \wh{\mfS}^{\vee})$)
by
\[
(\sigma.f)(x)=\sigma(f(\sigma^{-1}(x)))
\]
for $\sigma\in \hat{G}, x\in \wh{\mcal{R}}\otimes_{\vphi,\mfS}\mfM$ and 
$f\in \mrm{Hom}_{\wh{\mcal{R}}}(\wh{\mcal{R}}\otimes_{\vphi, \mfS} \mfM, \wh{\mfS}_{\infty}^{\vee})$
(resp.\ $f\in \mrm{Hom}_{\wh{\mcal{R}}}(\wh{\mcal{R}}\otimes_{\vphi, \mfS} \mfM, \wh{\mfS}^{\vee}))$
and equip a $\hat{G}$-action on $\wh{\mcal{R}}\otimes_{\vphi, \mfS} \mfM^{\vee}$
via an isomorphism (\ref{act1}) (resp.\ (\ref{act2})).

\begin{theorem}
\label{dual}
Let $\hat{\mfM}=(\mfM,\vphi_{\mfM},\hat{G})$ 
be a torsion (resp.\ free) weak $(\vphi,\hat{G})$-module 
of height $r$
and equip  a $\hat{G}$-action on 
$\wh{\mcal{R}}\otimes_{\vphi,\mfS} \mfM^{\vee}$ as the above.
Then the triple 
$\hat{\mfM}^{\vee}=(\mfM^{\vee}, \vphi^{\vee}_{\mfM}, \hat{G})$ 
is a torsion (resp.\ free) weak
$(\vphi,\hat{G})$-module of height $r$.
If $\hat{\mfM}$ is a $(\vphi,\hat{G})$-module of height $r$,
then $\hat{\mfM}^{\vee}$ is also.
\end{theorem}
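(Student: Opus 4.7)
The plan is to verify in turn the defining conditions (1)--(4) of a weak $(\vphi,\hat{G})$-module for $\hat{\mfM}^{\vee}=(\mfM^{\vee},\vphi_{\mfM}^{\vee},\hat{G})$, and then the extra triviality condition (5) in the $(\vphi,\hat{G})$-module case. Throughout, the key tool is the isomorphism (\ref{act1}) or (\ref{act2}), which presents the ambient module as a Hom into $\wh{\mfS}^{\vee}_{\infty}$ (resp.\ $\wh{\mfS}^{\vee}$), a coefficient module already known to carry the structure of a $(\vphi,\hat{G})$-module of height $r$. Condition (1) is immediate from Theorem \ref{DualKi}(1).

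The $\whR$-semi-linearity half of (2) is a one-line check: $(\sigma.(af))(x)=\sigma(a\cdot f(\sigma^{-1}(x)))=\sigma(a)(\sigma.f)(x)$. For condition (3), that $\hat{G}$ commutes with $\vphi_{\mfM}^{\vee}$ on $\whR\otimes_{\vphi,\mfS}\mfM^{\vee}$, I would observe that under the Hom identification $\vphi_{\mfM}^{\vee}$ is exactly the Frobenius induced from those on $\whR\otimes_{\vphi,\mfS}\mfM$ and on $\wh{\mfS}^{\vee}_{\infty}$; both commute with $\hat{G}$---the first by the assumption on $\hat{\mfM}$, the second because $\wh{\mfS}^{\vee}$ is a $(\vphi,\hat{G})$-module---so the contragredient commutes with $\vphi_{\mfM}^{\vee}$. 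The same strategy handles (5): reducing the Hom identification modulo $I_+$ via Proposition \ref{hatR}(3) presents $\hat{\mfM}^{\vee}/I_+\hat{\mfM}^{\vee}$ as a Hom over $W(k)$ from $\hat{\mfM}/I_+\hat{\mfM}$ into $\wh{\mfS}^{\vee}_{\infty}/I_+\wh{\mfS}^{\vee}_{\infty}$, and $\hat{G}$ acts trivially on both arguments by hypothesis, hence trivially on the Hom.

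For condition (4), I would trace $f\in\mfM^{\vee}$ through the Hom identification: $1\otimes f$ corresponds to $\tilde f\colon a\otimes x\mapsto a\otimes f(x)$ (for $a\in\whR$, $x\in\mfM$). For $\sigma\in H_K$, the hypothesis $\mfM\subset\hat{\mfM}^{H_K}$ gives $\sigma^{-1}(a\otimes x)=\sigma^{-1}(a)\otimes x$, while the inclusion $\mfS^{\vee}_{\infty}\subset(\wh{\mfS}^{\vee}_{\infty})^{H_K}$---a consequence of $\wh{\mfS}^{\vee}$ being a $(\vphi,\hat{G})$-module of height $r$---gives $\sigma(\sigma^{-1}(a)\otimes f(x))=a\otimes f(x)$. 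Hence $(\sigma.\tilde f)(a\otimes x)=\tilde f(a\otimes x)$ on the generating set $\whR\otimes\mfM$, and $\sigma.\tilde f=\tilde f$ by $\whR$-linearity.

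The remaining and most delicate point is the continuity half of (2). I would extend Lemma \ref{lemmacar} to $A=W(\mrm{Fr}R)$ (a $\mfS$-algebra with no $p$-torsion), yielding a natural identification $W(\mrm{Fr}R)\otimes_{\vphi,\mfS}\mfM^{\vee}\simeq\mrm{Hom}_{W(\mrm{Fr}R)}\bigl(W(\mrm{Fr}R)\otimes_{\vphi,\mfS}\mfM,\,W(\mrm{Fr}R)\otimes_{\vphi,\mfS}\mfS^{\vee}_{\infty}\bigr)$ and its free analog, and then argue that the contragredient $G$-action is continuous because the $G$-actions on source and target are (if $\sigma_n\to\sigma$, then $\sigma_n^{-1}(x)\to\sigma^{-1}(x)$, the map $f$ is $W(\mrm{Fr}R)$-linear hence continuous, and $\sigma_n$ acts continuously on the target). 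The main obstacle lies precisely here: one must verify that the weak topology on $W(\mrm{Fr}R)\otimes_{\vphi,\mfS}\mfM^{\vee}$---defined via bases in the free case, and by d\'evissage through Proposition \ref{torKi}(4) reducing to the $p\mfM=0$ case in the torsion case---matches the Hom-module topology for which the continuity of the contragredient is a formal consequence of the continuity of the constituent actions.
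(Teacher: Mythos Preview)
Your approach mirrors the paper's: verify conditions (1)--(5) of Definition~\ref{Liumod} using the Hom identification of Lemma~\ref{lemmacar}. Conditions (1), (4), and (5) are handled essentially as in the paper; for (4) your $\tilde f\colon a\otimes x\mapsto a\otimes f(x)$ is the same map as the paper's $\hat f\colon a\otimes x\mapsto a\vphi(f(x))\cdot\mfrak{f}^r$ once one unwinds the identification $\whR\otimes_{\vphi,\mfS}\mfS^{\vee}_{\infty}\simeq\whR_{\infty}\cdot\mfrak{f}^r$.

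There is, however, a genuine gap in your treatment of (3). You assert that ``$\vphi_{\mfM}^{\vee}$ is exactly the Frobenius induced from those on $\whR\otimes_{\vphi,\mfS}\mfM$ and on $\wh{\mfS}^{\vee}_{\infty}$'' and that commutativity of $\hat G$ with the latter two Frobenii forces commutativity with the induced one. But $\vphi$ is \emph{not} bijective on $\whR$, so there is no direct ``induced Frobenius'' on the Hom; what one has instead is the characterization $\vphi^{\vee}(\hat f)\circ\vphi_{\hat\mfM}=\vphi^{\vee}\circ\hat f$ (the paper's diagram~(\ref{com2})). Your formal argument then yields only that $\sigma(\vphi^{\vee}(\hat f))$ and $\vphi^{\vee}(\sigma(\hat f))$ agree after precomposition with $\vphi_{\hat\mfM}$. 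To conclude equality as maps $\whR\otimes_{\vphi,\mfS}\mfM\to\wh{\mfS}^{\vee}_{\infty}$ you must know that any $\whR$-linear map vanishing on the image of $\vphi_{\hat\mfM}$ vanishes identically; this is precisely where the paper invokes Lemma~\ref{lemm}, using that $\mfM$ has finite $E(u)$-height and that multiplication by $\vphi(E(u))$ is injective on $\whR_{\infty}$. Without this step (or an alternative such as passing to $W(\mrm{Fr}R)$ where $\vphi$ is bijective), the argument for (3) is incomplete.

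On the continuity half of (2), the paper simply declares it ``clear'', so your more detailed discussion is additional care rather than a point of divergence.
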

\begin{definition}
Let $\hat{\mfM}$  be a weak $(\vphi,\hat{G})$-module (resp.\ a $(\vphi,\hat{G})$-module). 
We call $\hat{\mfM}^{\vee}$ as Theorem \ref{dual} 
the {\it Cartier dual of $\hat{\mfM}$}.
\end{definition}

To prove Theorem \ref{dual},
we need the following easy property for $\wh{\mcal{R}}_{\infty}=\wh{\mcal{R}}[1/p]/\wh{\mcal{R}}$. 

\begin{lemma}
\label{lemm}
$(1)$ For any integer $n$, we have 
\[
\whR[1/p]\cap p^nW(\mrm{Fr}R)=\whR\cap p^nW(R)=p^n\whR.
\]
\noindent
$(2)$ 
The following properties for an $a\in \whR[1/p]$ are equivalent:

$(\mrm{i})$ If $x\in \whR[1/p]$ satisfies that $ax=0$ in $\whRi$, then $x=0$ in $\whRi$.

$(\mrm{ii})$ $a\notin p\whR$.

$(\mrm{iii})$ $a\notin pW(R)$. 

$(\mrm{iv})$ $a\notin pW(\mrm{Fr}R)$. 

\end{lemma}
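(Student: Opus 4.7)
The plan is to reduce both parts to the injectivity statement recorded in Remark \ref{subsets}: the natural maps $\whR_n \hookrightarrow W_n(R) \hookrightarrow W_n(\mrm{Fr}R)$ are injective for every $n \geq 1$. Combined with the elementary fact that $W(\mrm{Fr}R)$ is a complete discrete valuation ring with uniformizer $p$ (because $\mrm{Fr}R$ is a perfect, in fact algebraically closed, field), these ingredients should suffice. Write $v$ for the associated $p$-adic valuation on $W(\mrm{Fr}R)[1/p]$.

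For part (1), one inclusion is formal. For the other, given $a \in \whR[1/p] \cap p^n W(\mrm{Fr}R)$, I would clear denominators by writing $a = p^{-k} x$ with $x \in \whR$ and $k \geq 0$, so that $x = p^k a$ lies in $\whR \cap p^{n+k} W(\mrm{Fr}R)$; by the injectivity of $\whR_{n+k} \hookrightarrow W_{n+k}(\mrm{Fr}R)$ this immediately forces $x \in p^{n+k} \whR$, whence $a \in p^n \whR$. The equivalences (ii) $\Leftrightarrow$ (iii) $\Leftrightarrow$ (iv) of part (2) then drop out of part (1) with $n = 1$ after the same denominator-clearing trick.

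For (i) $\Leftrightarrow$ (ii), I would argue by contraposition in one direction: if $a = p a'$ with $a' \in \whR$, then $x := p^{-1}$ represents a nonzero element of $\whRi$ (one notes $p^{-1} \notin W(R)$ because $W(R)/p W(R) \simeq R \neq 0$), yet $ax = a' \in \whR$ vanishes in $\whRi$. Conversely, assuming (iv), the hypothesis $v(a) \leq 0$ together with $ax \in \whR \subset W(\mrm{Fr}R)$ yields $v(x) \geq -v(a) \geq 0$, placing $x$ in $\whR[1/p] \cap W(\mrm{Fr}R)$; the case $n = 0$ of part (1) identifies this intersection with $\whR$, so $x = 0$ in $\whRi$.

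I do not expect a serious obstacle. The only subtlety is that elements of $\whR[1/p]$ need not lie in $\whR$, but writing each such element uniformly as $p^{-k} x$ with $x \in \whR$ and appealing to the chain of injections from Remark \ref{subsets} deals with this in every step.
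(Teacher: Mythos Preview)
Your proof is correct and follows essentially the same route as the paper's. The only notable variation is in part~(1): you reduce to the injectivity $\whR_m \hookrightarrow W_m(\mrm{Fr}R)$ recorded in Remark~\ref{subsets} after clearing denominators, whereas the paper instead invokes the defining equality $\whR = \mcal{R}_{K_0} \cap W(R)$ directly, together with $W(R)[1/p] \cap p^n W(\mrm{Fr}R) = p^n W(R)$. Both are one-line arguments; yours has the mild advantage of citing only a fact already isolated in the paper, while the paper's avoids the denominator-clearing step. For part~(2) the two arguments are identical up to packaging: your use of the $p$-adic valuation $v$ on $W(\mrm{Fr}R)[1/p]$ in the step (iv)~$\Rightarrow$~(i) is exactly the paper's inclusion $\tfrac{1}{a}\whR \cap \whR[1/p] \subset W(\mrm{Fr}R)$ rewritten.
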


\begin{proof}
(1) The result follows from the relations
\[
\whR[1/p]\cap p^nW(\mrm{Fr}R)=
\whR[1/p]\cap (W(R)[1/p]\cap p^nW(\mrm{Fr}R))=
\whR[1/p]\cap p^nW(R)
\]
and 
\[
p^n\whR\subset \whR[1/p]\cap p^nW(R)\subset \whR_{K_0}\cap p^nW(R)=
p^n(\whR_{K_0}\cap W(R))=p^n\whR.
\]

\noindent
(2) The equivalence of (ii), (iii) and (iv) follows from the assertion (1).
Suppose the condition (iv) holds.
Take any $x\in \whR[1/p]$ such that $ax\in \whR$.
Then we have
\[
\frac{1}{a}\whR\cap \whR[1/p]\subset 
\frac{1}{a}W(\mrm{Fr}R)\cap W(\mrm{Fr}R)[1/p]\subset W(\mrm{Fr}R)
\]
since $a\notin pW(\mrm{Fr}R)$.
Thus we obtain
\[
x\in \frac{1}{a}\whR\cap \whR[1/p]=
\frac{1}{a}\whR\cap \whR[1/p]\cap W(\mrm{Fr}R)\subset 
\whR[1/p]\cap W(\mrm{Fr}R)=\whR,
\]
which implies the assertion (i) (the last equality follows from (1)).
Suppose the condition (ii) does not hold, that is, $a\in p\whR$.
Then $\whR[1/p]\cap \frac{1}{a}\whR\supset 
\frac{1}{p}\whR\supsetneq \whR$ and this implies that (i) does not hold.
\end{proof}

\begin{proof}[Proof of Theorem \ref{dual}]
We only prove the case where $\hat{\mfM}$ is a torsion $(\vphi,\hat{G})$-module
(the free case can be checked by almost all the same method).

We check the properties $(1)$ to $(5)$ 
of Definition \ref{Liumod} for $\hat{\mfM}^{\vee}$.
It is clear that $(1)$ and $(2)$ hold for $\hat{\mfM}^{\vee}$.
Take any $f\in \mfM^{\vee}$.
Regard $\mfM^{\vee}$ 
as a submodule of $\wh{\mcal{R}}\otimes_{\vphi, \mfS} \mfM^{\vee}$.
Then, in $\wh{\mcal{R}}\otimes_{\vphi, \mfS} \mfM^{\vee}$, 
we see that $f$ is equal to the map 
\[
\hat{f}\colon \wh{\mcal{R}}\otimes_{\vphi, \mfS} \mfM\to \wh{\mcal{R}}\cdot \mfrak{f}^r
\]
given by $a\otimes x\mapsto a\vphi(f(x))\cdot \mfrak{f}^r$ 
for $a\in \wh{\mcal{R}}$ and $x\in \mfM$.
Since $\mfM\subset (\wh{\mcal{R}}\otimes_{\vphi,\mfS} \mfM)^{H_K}$, 
we have 
\begin{align*}
(\sigma.\hat{f})(a\otimes x)&=\sigma(\hat{f}(\sigma^{-1}(a\otimes x)))
=\sigma(\hat{f}(\sigma^{-1}(a)(1\otimes x)))
=\sigma((\sigma^{-1}(a)\hat{f}(1\otimes x)))\\
&=a\sigma(\hat{f}(1\otimes x))
=a\sigma(\vphi(f(x))\cdot \mfrak{f}^r)
=a\vphi(f(x))\cdot \mfrak{f}^r
=\hat{f}(a\otimes x).
\end{align*}
for any $a\in \wh{\mcal{R}}, x\in \mfM$ and $\sigma\in H_K$.
This implies $\mfM^{\vee}\subset (\wh{\mcal{R}}\otimes_{\vphi,\mfS} \mfM^{\vee})^{H_K}$
and hence $(4)$ holds for $\hat{\mfM}^{\vee}$.
Check the property $(5)$, that is, the condition that 
$\hat{G}$ acts trivially on $\hat{\mfM}/I_+\hat{\mfM}$.
By Lemma \ref{lemmacar}, we know that 
there exists the following natural isomorphism:
\[
\wh{\mcal{R}}\otimes_{\vphi, \mfS} \mfM^{\vee}/
I_+(\wh{\mcal{R}}\otimes_{\vphi, \mfS} \mfM^{\vee}) 
\overset{\sim}{\longrightarrow} 
\mrm{Hom}_{\wh{\mcal{R}}}
(\wh{\mcal{R}}\otimes_{\vphi, \mfS} \mfM/
I_+(\wh{\mcal{R}}\otimes_{\vphi, \mfS} \mfM), 
\wh{\mfS}^{\vee}_{\infty}/I_+\wh{\mfS}^{\vee}_{\infty}),
\]
which is in fact $\hat{G}$-equivalent by the definition of $\hat{G}$-action on 
$\wh{\mcal{R}}\otimes_{\vphi, \mfS} \mfM^{\vee}$.
Since $\hat{G}$ acts on $\wh{\mcal{R}}\otimes_{\vphi, \mfS} \mfM/
I_+(\wh{\mcal{R}}\otimes_{\vphi, \mfS} \mfM)$ and 
$\wh{\mfS}^{\vee}_{\infty}/I_+\wh{\mfS}^{\vee}_{\infty}$
trivially,
we obtain the desired result.

Finally we prove the property $(3)$ for $\hat{\mfM}^{\vee}$.
First we note that, if we take any  $f\in \mfM^{\vee}=\mrm{Hom}_{\mfS}(\mfM, \mfS_{\infty})$
and regard $f$ as a map which has values in $\mfS_{\infty}^{\vee}$, 
then we have 
\begin{equation}
\label{aaa}
\vphi^{\vee}(f)\circ \vphi_{\mfM}=\vphi^{\vee}\circ f\colon \mfM\to \mfS_{\infty}^{\vee}.
\end{equation}
Recall that there exists a natural isomorphism
\[
\wh{\mcal{R}}\otimes_{\vphi, \mfS} \mfM^{\vee} 
\simeq \mrm{Hom}_{\wh{\mcal{R}}}(\wh{\mcal{R}}\otimes_{\vphi, \mfS} \mfM, \wh{\mfS}_{\infty}^{\vee})
\]
by Lemma \ref{lemmacar}. 
We equip $\mrm{Hom}_{\wh{\mcal{R}}}(\wh{\mcal{R}}\otimes_{\vphi, \mfS} \mfM, \wh{\mfS}_{\infty}^{\vee})$
with a $\vphi$-structure $\vphi^{\vee}$ 
via this isomorphism.
Then it is enough to show that 
$\sigma \vphi^{\vee}=\vphi^{\vee}\sigma$ on 
$\mrm{Hom}_{\wh{\mcal{R}}}(\wh{\mcal{R}}\otimes_{\vphi, \mfS} \mfM, \wh{\mfS}_{\infty}^{\vee})$ 
for any $\sigma\in \hat{G}$.
Take any 
$\hat{f}\in \mrm{Hom}(\wh{\mcal{R}}\otimes_{\vphi, \mfS} \mfM, \wh{\mfS}_{\infty}^{\vee})$
and consider the following diagram:
\begin{equation}
\label{com2}
\displaystyle \xymatrix{
  \wh{\mcal{R}}\otimes_{\vphi,\mfS} \mfM 
  \ar^{\vphi_{\hat{\mfM}}}[r] \ar_{\hat{f}}[d]
& \wh{\mcal{R}}\otimes_{\vphi,\mfS} \mfM 
  \ar^{\vphi^{\vee}(\hat{f})}[d]\\
  \wh{\mfS}_{\infty}^{\vee}
  \ar_{\vphi^{\vee}}[r]
& \wh{\mfS}_{\infty}^{\vee}
. }
\end{equation}
By (\ref{aaa}), 
we obtain that the diagram (\ref{com2}) is also commutative. 
To check the relation $\sigma(\vphi^{\vee}(\hat{f}))=\vphi^{\vee}(\sigma(\hat{f}))$,
it suffices to show that 
$\sigma(\vphi^{\vee}(\hat{f}))(\vphi_{\hat{\mfM}}(x))=
\vphi^{\vee}(\sigma(\hat{f}))(\vphi_{\hat{\mfM}}(x))$
for any $x\in \wh{\mcal{R}}\otimes_{\vphi, \mfS} \mfM$ since 
$\mfM$ is of finite $E(u)$-height and, for any $a\in \wh{\mcal{R}}_{\infty}$, 
$\vphi(E(u))a=0$ if and only if $a=0$ by Lemma \ref{lemm}.
By (\ref{com2}), we have
\[
\sigma(\vphi^{\vee}(\hat{f}))(\vphi_{\hat{\mfM}}(x))
=\sigma(\vphi^{\vee}(\hat{f})(\sigma^{-1}(\vphi_{\hat{\mfM}}(x))))
=\sigma(\vphi^{\vee}(\hat{f})(\vphi_{\hat{\mfM}}(\sigma^{-1}(x))))
=\sigma(\vphi^{\vee}(\hat{f}(\sigma^{-1}(x)))).
\]
By replacing $\hat{f}$ with $\sigma(\hat{f})$ in the diagram (\ref{com2}), we have 
\[
\vphi^{\vee}(\sigma(\hat{f}))(\vphi_{\hat{\mfM}}(x))
=\vphi^{\vee}(\sigma(\hat{f}))(x)
=\vphi^{\vee}(\sigma(\hat{f}(\sigma^{-1}(x))
=\sigma(\vphi^{\vee}(\hat{f}(\sigma^{-1}(x))))
\]
and this finishes the proof.
\end{proof}

\subsection{Cartier duality theorem}

Let $\hat{\mfM}$ be a weak $(\vphi,\hat{G})$-module of height $r$.
We have natural pairings 
\begin{equation}
\label{du1}
\langle \cdot , \cdot \rangle \colon (\whR\otimes_{\vphi, \mfS}\mfM) 
\times (\whR\otimes_{\vphi, \mfS}\mfM^{\vee})\to
\wh{\mfS}_{\infty}^{\vee}
\quad \mrm{if}\ \mfM\ \mrm{is\ killed\ by\ some\ power\ of}\ p
\end{equation}
and
\begin{equation}
\label{du2}
\langle \cdot , \cdot \rangle \colon (\whR\otimes_{\vphi, \mfS}\mfM) 
\times (\whR\otimes_{\vphi, \mfS}\mfM^{\vee})\to
\wh{\mfS}^{\vee}
\quad \mrm{if}\ \mfM\ \mrm{is\ free}.
\end{equation}
It is not difficult to see that these parings commute with Frobenii and $\hat{G}$-actions.

Here we describe the Cartier duality theorem for $(\vphi,\hat{G})$-modules.

\begin{theorem}[Cartier duality theorem]
\label{DualLi}
Let $\hat{\mfM}$ be a weak $(\vphi,\hat{G})$-module $($resp.\ a $(\vphi,\hat{G})$-module$)$
of height $r$.

\noindent
$(1)$ The assignment 
$\hat{\mfrak{M}}\mapsto \hat{\mfrak{M}}^{\vee}$
is an anti-equivalence on the category 
of torsion weak $(\varphi, \hat{G})$-modules
$($resp.\ free weak $(\varphi, \hat{G})$-modules,
resp.\ torsion $(\varphi, \hat{G})$-modules,
resp.\ free weak $(\varphi, \hat{G})$-modules$)$ 
and a natural map 
$\hat{\mfrak{M}}\to (\hat{\mfrak{M}}^{\vee})^{\vee}$
is an isomorphism.
 
\noindent
$(2)$ Parings $($\ref{du1}$)$ and $($\ref{du2}$)$ are perfect.

\noindent
$(3)$ Taking a dual preserves a short exact sequence  
of torsion weak $(\varphi, \hat{G})$-modules
$($resp.\ free weak $(\varphi, \hat{G})$-modules,
resp.\ torsion $(\varphi, \hat{G})$-modules,
resp.\ free weak $(\varphi, \hat{G})$-modules$)$. 
\end{theorem}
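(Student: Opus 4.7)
The plan is to bootstrap all three assertions from Liu's Cartier duality for Kisin modules (Theorem \ref{DualKi}) together with Lemma \ref{lemmacar} and the construction of the $\hat{G}$-action on $\mfM^{\vee}$ carried out in the preceding subsection. Throughout, I would treat the torsion and free cases in parallel, since the arguments are formally identical once one uses $\wh{\mfS}^{\vee}_{\infty}$ versus $\wh{\mfS}^{\vee}$ as coefficients.

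First I would establish (2). The pairing $\mfM \times \mfM^{\vee} \to \mfS_{\infty}^{\vee}$ (resp.\ $\to \mfS^{\vee}$) is perfect by Theorem \ref{DualKi}(4). Since $\vphi\colon \mfS \to \whR$ is flat, the tensored pairing (\ref{du1}) (resp.\ (\ref{du2})) is again a perfect pairing of $\whR$-modules; this is exactly the content of the isomorphisms (\ref{act1}) and (\ref{act2}) provided by Lemma \ref{lemmacar}, applied symmetrically in both variables after invoking the Kisin-module double duality $\mfM \simeq (\mfM^{\vee})^{\vee}$.

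Next I would tackle (1). Functoriality of $\hat{\mfM} \mapsto \hat{\mfM}^{\vee}$ is automatic: a morphism $f\colon \hat{\mfM} \to \hat{\mfN}$ of (weak) $(\vphi,\hat{G})$-modules dualizes to an $\mfS$-linear, Frobenius-compatible map $f^{\vee}\colon \mfN^{\vee} \to \mfM^{\vee}$ by Theorem \ref{DualKi}, and the induced map on $\whR \otimes_{\vphi,\mfS}(-)^{\vee}$ is $\hat{G}$-equivariant because the $\hat{G}$-actions on both sides are defined through (\ref{act1})/(\ref{act2}) by the same pre/post-composition rule with $\sigma$ and $\sigma^{-1}$. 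For double duality, Theorem \ref{DualKi}(3) supplies the natural $\mfS$-linear isomorphism $\mfM \xrightarrow{\sim} (\mfM^{\vee})^{\vee}$; after tensoring with $\whR$ along $\vphi$ I would verify it is $\hat{G}$-equivariant by chasing the definition of the $\hat{G}$-action on a double dual, where the two occurrences of $\sigma^{-1}$ compose to the identity. This then shows $\hat{\mfM} \mapsto \hat{\mfM}^{\vee}$ is an anti-equivalence on each of the four categories (with the property that the condition (5) in Definition \ref{Liumod} is preserved by Theorem \ref{dual}).

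For (3), given a short exact sequence $0 \to \hat{\mfM}' \to \hat{\mfM} \to \hat{\mfM}'' \to 0$ in the appropriate category, the sequence of underlying Kisin modules is exact, hence $0 \to \mfM''{}^{\vee} \to \mfM^{\vee} \to \mfM'{}^{\vee} \to 0$ is exact by Theorem \ref{DualKi}(5). Tensoring by $\whR$ along the flat map $\vphi$, or equivalently invoking the exactness of $\mrm{Hom}_{\whR}(-, \wh{\mfS}^{\vee}_{(\infty)})$ which follows from the injectivity of $\wh{\mfS}^{\vee}_{\infty}$ as a cogenerator in the relevant category and Lemma \ref{lemmacar}, yields an exact sequence of the dual $(\vphi,\hat{G})$-modules; the arrows are morphisms of $(\vphi,\hat{G})$-modules by the functoriality established in (1). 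The main technical hurdle is step (1): pinning down the naturality of the double-duality isomorphism with respect to the $\hat{G}$-action, because one must unpack simultaneously the identifications (\ref{act1}), (\ref{act2}), the formula $(\sigma.f)(x) = \sigma(f(\sigma^{-1}(x)))$, and the way $\mfM$ embeds into $\whR\otimes_{\vphi,\mfS}\mfM$. Once this equivariance is verified on generators, the remaining assertions reduce to the Kisin-module duality already recorded in Theorem \ref{DualKi}.
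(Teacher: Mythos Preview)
Your treatment of (1) and (3) matches the paper's: both reduce to Liu's Kisin-module duality (Theorem \ref{DualKi}) and then check $\hat{G}$-equivariance of the double-duality map, which the paper also dismisses as a ``straightforward calculation.'' One small caution: your claim that $\vphi\colon \mfS \to \whR$ is flat is not obviously true and is not proved in the paper; fortunately you do not actually need it, since you (correctly) invoke Lemma~\ref{lemmacar}, which establishes the Hom--tensor isomorphism by d\'evissage rather than by flatness.

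For (2) you take a genuinely different and shorter route than the paper. You read (2) literally as $\whR$-module perfectness of the pairing, and this is indeed an immediate consequence of the isomorphisms (\ref{act1})--(\ref{act2}) from Lemma~\ref{lemmacar}, together with the already-recorded compatibility of the pairing with $\vphi$ and $\hat{G}$. The paper, by contrast, declares (2) to be equivalent to Proposition~\ref{Gal}, namely $\hat{T}(\hat{\mfM}^{\vee})\simeq \hat{T}^{\vee}(\hat{\mfM})(r)$ as $G$-modules, and devotes the whole of Section~3.4 to proving that: it introduces the covariant functor $\hat{T}_{\ast}$, proves the comparison isomorphism of Proposition~\ref{prop1}, identifies $\hat{T}_{\ast}$ with the dual of $\hat{T}$ (Corollary~\ref{cov}), and then compares pairings over $W(\mrm{Fr}R)$ and $\cOur$. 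Your argument proves what the theorem literally states with much less machinery; the paper's longer route buys the Galois-representation compatibility (Proposition~\ref{Gal}), which is the real payoff of Cartier duality and is used later. If you want your write-up to serve the same downstream purposes as the paper's, you would still need to supply that Galois statement separately.
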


\begin{proof}
By Theorem \ref{DualKi} (3),
we have already known that a natural map
$\mfM\to (\mfM^{\vee})^{\vee}$
is an isomorphism as $\vphi$-modules.
Furthermore, 
straightforward calculations show that 
the map $\mfM\to (\mfM^{\vee})^{\vee}$ is compatible 
with Galois action after tensoring $\whR$.
Thus we obtain that 
$\hat{\mfrak{M}}\to (\hat{\mfrak{M}}^{\vee})^{\vee}$
is an isomorphism, and the assertion (1) follows immediately.
The assertion (3) follows from Theorem \ref{DualKi} (5). 
Consequently,
we have to show the assertion (2).
We leave the proof to the next section.
\end{proof}

\subsection{Compatibility with Galois actions}
The goal of this subsection is to prove the following
which is equivalent to Theorem \ref{DualLi} (2):
\begin{proposition}
\label{Gal}
Let $\hat{\mfM}$ be a weak $(\vphi,\hat{G})$-module.
Then we have
\begin{equation}
\hat{T}(\hat{\mfM}^{\vee})\simeq \hat{T}^{\vee}(\hat{\mfM})(r)
\end{equation}
as $\mbb{Z}_p[G]$-modules 
where $\hat{T}^{\vee}(\hat{\mfM})$ is the dual representation of 
$\hat{T}(\hat{\mfM})$ and
the symbol ``$(r)$'' is for the $r$-th Tate twist.
\end{proposition}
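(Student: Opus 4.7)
My plan is to construct a natural $G$-equivariant perfect pairing
\[
\langle \cdot, \cdot \rangle_{\hat T} \colon \hat{T}(\hat{\mfM}) \times \hat{T}(\hat{\mfM}^{\vee}) \to \hat{T}(\wh{\mfS}^{\vee}) \simeq \mbb{Z}_p(r)
\]
(with the analogous $\mbb{Z}/p^n\mbb{Z}(r)$ version when $\hat{\mfM}$ is killed by $p^n$), from which the claimed isomorphism follows. As a starting observation, the statement is already known at the $\mbb{Z}_p[G_\infty]$-module level: Theorem \ref{Li} (1) identifies $\hat{T}(\hat{\mfM}) \simeq T_\mfS(\mfM)$ and $\hat{T}(\hat{\mfM}^{\vee}) \simeq T_\mfS(\mfM^{\vee})$ as $G_\infty$-representations, while the Cartier duality for Kisin modules (Theorem \ref{DualKi}) yields a perfect $G_\infty$-equivariant pairing $T_\mfS(\mfM) \times T_\mfS(\mfM^{\vee}) \to T_\mfS(\mfS^{\vee}) \simeq \mbb{Z}_p(r)$. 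Hence the essential work is to promote $G_\infty$-equivariance to $G$-equivariance, which requires lifting the pairing to the $(\vphi, \hat{G})$-module level.

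To do this, I would start from the perfect pairing $\mathrm{pr} \colon \hat{\mfM} \otimes_{\wh{\mcal{R}}} \hat{\mfM}^{\vee} \to \wh{\mfS}^{\vee}_{\infty}$ (induced by $\langle \cdot, \cdot \rangle$ in the previous subsections), which is $\hat{G}$-equivariant and $\vphi$-compatible by the very definition of the $\hat{G}$-action on $\hat{\mfM}^{\vee}$ in Section 3.2. Given $f \in \hat{T}(\hat{\mfM}) = \mathrm{Hom}_{\wh{\mcal{R}}, \vphi}(\hat{\mfM}, W(R)_{\infty})$ and $g \in \hat{T}(\hat{\mfM}^{\vee})$, the $\wh{\mcal{R}}$-bilinear, $\vphi$-compatible map $(m, m^{\vee}) \mapsto f(m) g(m^{\vee})$ from $\hat{\mfM} \otimes_{\wh{\mcal{R}}} \hat{\mfM}^{\vee}$ to $W(R)_{\infty}$ should factor uniquely through $\mathrm{pr}$, yielding an element $\langle f, g \rangle_{\hat T} \in \mathrm{Hom}_{\wh{\mcal{R}}, \vphi}(\wh{\mfS}^{\vee}_{\infty}, W(R)_{\infty}) = \hat{T}(\wh{\mfS}^{\vee})$. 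The $G$-equivariance is then a direct verification using the $\hat{G}$-equivariance of $\mathrm{pr}$ together with the definition $(\sigma \cdot f)(x) = \sigma(f(\sigma^{-1} x))$ of the $G$-action on $\hat{T}$-spaces, while perfectness of $\langle \cdot, \cdot \rangle_{\hat T}$ can be checked after restriction to $G_\infty$, where it reduces to the already-established Kisin-module pairing.

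The main obstacle is establishing the factoring just described: that the bilinear map $(m, m^{\vee}) \mapsto f(m) g(m^{\vee})$ descends through $\mathrm{pr}$ to a $\vphi$-compatible $\wh{\mcal{R}}$-linear map $\wh{\mfS}^{\vee}_{\infty} \to W(R)_{\infty}$. I would handle this using the identification $\hat{\mfM}^{\vee} \simeq \mathrm{Hom}_{\wh{\mcal{R}}}(\hat{\mfM}, \wh{\mfS}^{\vee}_{\infty})$ from Lemma \ref{lemmacar}: once $g$ is unpacked through this identification, the factoring reduces to a formal adjunction-type manipulation, with the surjectivity of $\mathrm{pr}$ (a consequence of perfectness) supplying uniqueness. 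The care required is in tracking the $\vphi$-structure transported to the $\mathrm{Hom}$-space and in handling the free case in parallel, where $\wh{\mfS}^{\vee}$ and $W(R)$ replace their $\infty$-counterparts.
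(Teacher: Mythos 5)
There is a genuine gap at the heart of your construction: the bilinear map $B(m,m^{\vee})=f(m)\,g(m^{\vee})$ does \emph{not} factor through the module-level pairing $\mrm{pr}\colon \hat{\mfM}\otimes_{\whR}\hat{\mfM}^{\vee}\to \wh{\mfS}^{\vee}_{\infty}$ once the rank exceeds one, no matter how $g$ is unpacked via Lemma \ref{lemmacar}. Concretely, take $\mfM$ killed by $p$, free of rank $2$ with basis $e_1,e_2$ and $\vphi(e_i)=e_i$, $r=0$; then $\hat{T}(\hat{\mfM})\simeq \mbb{F}_p^2$ and one may choose $\vphi$-compatible $f,g$ with $f(e_1)=1$, $f(e_2)=0$, $g(e_1^{\vee})=g(e_2^{\vee})=1$. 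Since $\langle e_1,e_2^{\vee}\rangle=0$ but $B(e_1,e_2^{\vee})=1$, no map $\lambda$ with $\lambda\circ\mrm{pr}=B$ can exist. What the formula $(f,g)\mapsto B$ naturally produces is an element of $\hat{T}(\hat{\mfM}\otimes\hat{\mfM}^{\vee})$, and because $\hat{T}$ is contravariant the map induced by $\mrm{pr}$ goes $\hat{T}(\wh{\mfS}^{\vee})\to\hat{T}(\hat{\mfM}\otimes\hat{\mfM}^{\vee})$ — the wrong direction; to land in $\hat{T}(\wh{\mfS}^{\vee})\simeq\mbb{Z}/p^n\mbb{Z}(r)$ you would instead need to precompose $B$ with a coevaluation $\mfrak{f}^r\mapsto\sum_i e_i\otimes e_i^{\vee}$ (i.e.\ a trace-type construction), which is a different argument and requires its own justification for torsion, not-necessarily-free $\hat{\mfM}$. (There is also the lesser issue that $W(R)_{\infty}$ is not a ring, so the product $f(m)g(m^{\vee})$ already needs the identification $p^{-n}W(R)/W(R)\simeq W_n(R)$ to make sense.)

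The paper sidesteps this entirely by working with the \emph{covariant} functor $\hat{T}_{\ast}(\hat{\mfM})=(W(\mrm{Fr}R)\otimes_{\whR}\hat{\mfM})^{\vphi=1}$: Proposition \ref{prop1} and Corollary \ref{cov} identify $\hat{T}^{\vee}\simeq\hat{T}_{\ast}$, the module pairing is extended $W(\mrm{Fr}R)$-linearly and restricted to $\vphi$-invariants (where pairing two invariant elements is literally just applying the extended pairing, so no factorization problem arises), and perfectness is read off from a commutative diagram comparing with Liu's perfect pairing on $(\cOur\otimes_{\cO}M)^{\vphi=1}$. Your idea of reducing perfectness and the $G_{\infty}$-statement to the Kisin-module theory is sound, but you should either switch to the covariant formulation as the paper does, or replace the factorization step by a correctly set up evaluation/coevaluation argument.
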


First we construct a covariant functor
for the category of weak $(\vphi, \hat{G})$-modules. 
Recall that, if $\hat{\mfM}=(\mfM,\vphi_{\mfM},\hat{G})$ 
is a weak $(\vphi, \hat{G})$-module,
we often abuse of notations by 
denoting $\hat{\mfM}$ the underlying 
module $\wh{\mcal{R}}\otimes_{\vphi, \mfS} \mfM$.
\begin{proposition}
\label{prop1}
Let $\hat{\mfM}$ be a weak $(\vphi, \hat{G})$-module.
Then the natural $W(\mrm{Fr}R)$-linear map
\begin{equation}
\label{fund1}
W(\mrm{Fr}R)\otimes_{\mbb{Z}_p}
(W(\mrm{Fr}R)\otimes_{\wh{\mcal{R}}} \hat{\mfM})^{\vphi=1}
\to 
W(\mrm{Fr}R)\otimes_{\wh{\mcal{R}}} 
\hat{\mfM},\quad a\otimes x\mapsto ax, 
\end{equation}
for any  $a\in W(\mrm{Fr}R)$ and $x\in(W(\mrm{Fr}R)\otimes_{\wh{\mcal{R}}} \hat{\mfM})^{\vphi=1}$,
is an isomorphism, which is compatible 
with $\vphi$-structures and $G$-actions.
\end{proposition}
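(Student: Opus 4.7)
The plan is to reduce the statement to Fontaine's theorem on étale $\vphi$-modules (Proposition \ref{Fon}) by first stripping away the twist in the tensor product, and then exploiting the elementary fact that $W(\mrm{Fr}R)^{\vphi=1}=\mbb{Z}_p$.

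First I would identify the ambient module. Writing $M=\cO\otimes_{\mfS}\mfM$ for the étale $\vphi$-module attached to the underlying Kisin module, the inclusions $\mfS\hookrightarrow \cO\hookrightarrow W(\mrm{Fr}R)$ and the compatibility of Frobenii give
\[
W(\mrm{Fr}R)\otimes_{\whR}\hat{\mfM}=W(\mrm{Fr}R)\otimes_{\vphi,\mfS}\mfM\simeq W(\mrm{Fr}R)\otimes_{\vphi,\cO}M,
\]
and the étaleness of $M$ (the isomorphism $\vphi^{\ast}\colon \cO\otimes_{\vphi,\cO}M\xrightarrow{\sim}M$) further yields a canonical $\vphi$-equivariant identification
\[
W(\mrm{Fr}R)\otimes_{\whR}\hat{\mfM}\simeq W(\mrm{Fr}R)\otimes_{\cO}M.
\]
By Proposition \ref{Fon}, setting $T=\mcal{T}_{\ast}(M)\in \mrm{Rep}_{\mbb{Z}_p}(G_{\infty})$ (which is torsion or free according to whether $\mfM$ is), the isomorphism (\ref{Fon1}) (or its torsion version) is $\vphi$- and $G_{\infty}$-equivariant, and base-changing along $\wh{\cOur}\hookrightarrow W(\mrm{Fr}R)$ produces a $\vphi$-equivariant isomorphism
\[
W(\mrm{Fr}R)\otimes_{\cO}M\simeq W(\mrm{Fr}R)\otimes_{\mbb{Z}_p}T,
\]
on which $\vphi$ acts only through the first factor.

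Second I would take $\vphi=1$ invariants. Since $\mrm{Fr}R$ is a domain of characteristic $p$, the polynomial $X^p-X$ has only its $\mbb{F}_p$-roots in $\mrm{Fr}R$, so $(\mrm{Fr}R)^{\vphi=1}=\mbb{F}_p$. A standard dévissage using the Verschiebung short exact sequence $0\to\mrm{Fr}R\xrightarrow{V}W_{n+1}(\mrm{Fr}R)\to W_n(\mrm{Fr}R)\to 0$ then gives $W_n(\mrm{Fr}R)^{\vphi=1}=\mbb{Z}/p^n\mbb{Z}$ for every $n\ge 1$, and passing to the limit $W(\mrm{Fr}R)^{\vphi=1}=\mbb{Z}_p$. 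Applied to the (torsion or free) $\mbb{Z}_p$-module $T$ on which $\vphi$ is trivial, this yields $(W(\mrm{Fr}R)\otimes_{\mbb{Z}_p}T)^{\vphi=1}=T$, hence via the chain of isomorphisms above,
\[
(W(\mrm{Fr}R)\otimes_{\whR}\hat{\mfM})^{\vphi=1}\simeq T.
\]
With this identification, the map (\ref{fund1}) becomes the natural isomorphism $W(\mrm{Fr}R)\otimes_{\mbb{Z}_p}T\xrightarrow{\sim}W(\mrm{Fr}R)\otimes_{\cO}M$ coming from Fontaine's theory, which proves the assertion.

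The $\vphi$-compatibility is immediate by construction. For $G$-compatibility, the $\hat{G}$-action on $\hat{\mfM}$ extends to a $G$-action on $W(\mrm{Fr}R)\otimes_{\whR}\hat{\mfM}=W(\mrm{Fr}R)\otimes_{\vphi,\mfS}\mfM$ by condition (2) of Definition \ref{Liumod}, the $\vphi=1$ locus is automatically $G$-stable since $G$ commutes with $\vphi$, and the map (\ref{fund1}) is $G$-equivariant because it is $W(\mrm{Fr}R)$-linear in the first argument and tautological on the invariants. The main technical point, which is the heart of the argument, is the invariant calculation $W(\mrm{Fr}R)^{\vphi=1}=\mbb{Z}_p$ and its compatibility with the tensor identifications in both the torsion and free regimes; everything else is formal bookkeeping with the tensor products and Frobenii arising from the inclusions $\mfS\subset\cO\subset\wh{\cOur}\subset W(\mrm{Fr}R)$.
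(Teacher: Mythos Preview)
Your proof is correct and follows essentially the same approach as the paper: both identify $W(\mrm{Fr}R)\otimes_{\whR}\hat{\mfM}$ with $W(\mrm{Fr}R)\otimes_{\cO}M$ via the \'etaleness of $M$, invoke Fontaine's comparison isomorphism (\ref{Fon1}) base-changed along $\wh{\cOur}\hookrightarrow W(\mrm{Fr}R)$, and then read off the $\vphi=1$ invariants. The paper phrases the last step as $(W(\mrm{Fr}R)\otimes_{\whR}\hat{\mfM})^{\vphi=1}\simeq(\wh{\cOur}\otimes_{\cO}M)^{\vphi=1}=\mcal{T}_{\ast}(M)$ rather than computing $W_n(\mrm{Fr}R)^{\vphi=1}$ directly, but this is the same content, and your d\'evissage argument simply makes explicit the fact the paper uses silently.
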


\begin{proof}
A non-trivial assertion of this proposition is 
only the bijectivity of the map (\ref{fund1}).
First we note the following natural $\vphi$-equivariant isomorphisms:
\begin{align*}
        W(\mrm{Fr}R)\otimes_{\wh{\mcal{R}}} \hat{\mfM}
&\simeq W(\mrm{Fr}R)\otimes_{\vphi,\mfS} \mfM \\
&\simeq W(\mrm{Fr}R)\otimes_{\cO}(\cO\otimes_{\vphi,\mfS}M)\\
&\overset{1\otimes \vphi^{\ast}_M}{\longrightarrow} W(\mrm{Fr}R)\otimes_{\cO} M
\end{align*}
where $M=\cO\otimes_{\mfS} \mfM$ 
is the \'etale $\vphi$-module corresponding to $\mfM$.
Here the bijectivity of $1\otimes \vphi^{\ast}_M$, where $\vphi^{\ast}_M$ 
is the $\cO$-linearization of $\vphi_M$, 
follows from the \'etaleness of $M$. 
Combining the above isomorphisms and the relation (\ref{Fon1}),
we obtain the following natural $\vphi$-equivalent bijective  maps
\begin{equation}
\label{hosoku}
W(\mrm{Fr}R)\otimes_{\wh{\mcal{R}}} \hat{\mfM}
\overset{\sim}{\longrightarrow}  
W(\mrm{Fr}R)\otimes_{\cO} M
\overset{\sim}{\longleftarrow}
W(\mrm{Fr}R)\otimes_{\mbb{Z}_p}
(\wh{\cOur}\otimes_{\cO} M)^{\vphi=1} 
\end{equation}
and hence we obtain 
\begin{equation}
\label{is1}
(W(\mrm{Fr}R)\otimes_{\wh{\mcal{R}}} \hat{\mfM})^{\vphi=1}
\simeq
(\wh{\cOur}\otimes_{\cO} M)^{\vphi=1}. 
\end{equation}
By (\ref{hosoku}) and (\ref{is1}),
we obtain an isomorphism
\[
W(\mrm{Fr}R)\otimes_{\mbb{Z}_p}
(W(\mrm{Fr}R)\otimes_{\wh{\mcal{R}}} \hat{\mfM})^{\vphi=1}
\overset{\sim}{\longrightarrow} 
W(\mrm{Fr}R)\otimes_{\wh{\mcal{R}}} \hat{\mfM}
\]  
and the desired result follows from the fact that this isomorphism coincides with the natural map (\ref{fund1}).
\end{proof}

For any weak  $(\vphi, \hat{G})$-module $\hat{\mfM}$,
we set
\[
\hat{T}_{\ast}(\hat{\mfM})=(W(\mrm{Fr}R)\otimes_{\hat{\mcal{R}}}\hat{\mfM})^{\vphi=1}.
\]
Since the Frobenius action on 
$W(\mrm{Fr}R)\otimes_{\hat{\mcal{R}}}\hat{\mfM}$ 
commutes with $G$-action,
we see that $G$ acts on  $\hat{T}_{\ast}(\hat{\mfM})$ stable. 
We have shown in the proof of Proposition \ref{prop1} (see (\ref{is1}))
that 
\[
\hat{T}_{\ast}(\hat{\mfM})\simeq \mcal{T}_{\ast}(M)
\]
as $\mbb{Z}_p[G_{\infty}]$-modules
for $M=\cO\otimes_{\mfS} \mfM$ (the functor $\mcal{T}_{\ast}$ is defined in Section 2.2).
In particular, 
if $\hat{\mfM}$ is free and $d=\mrm{rank}_{\mfS}(\mfM)$, 
$\hat{T}_{\ast}(\hat{\mfM})$ is free of 
rank $d$ as a $\mbb{Z}_p$-module.
The association $\hat{\mfM}\mapsto \hat{T}_{\ast}(\hat{\mfM})$ 
is a covariant functor from the category of 
$(\vphi, \hat{G})$-modules of height $r$
to the category $\mrm{Rep}_{\mbb{Z}_p}(G)$ 
of finite $\mbb{Z}_p[G]$-modules.
By the exactness of the functor $\mcal{T}_{\ast}$,
the functor $\hat{T}_{\ast}$ is an exact functor.
%The following lemma implies that the functor $\hat{T}$ 
%from the category of torsion (resp.\ free)
%$(\vphi, \hat{G})$-modules of height $r$
%to the category $\mrm{Rep}^{\mrm{tor}}_{\mbb{Z}_p}(G)$ 
%(resp.\ $\mrm{Rep}^{\mrm{fr}}_{\mbb{Z}_p}(G)$) 
%of finite torsion (resp.\ free) $\mbb{Z}_p[G]$-modules
%is also exact.  

\begin{corollary}
\label{cov}
The $\mbb{Z}_p$-representation $\hat{T}_{\ast}(\hat{\mfM})$ of $G$ is the dual of 
$\hat{T}(\hat{\mfM})$, that is,
\[
\hat{T}^{\vee}(\hat{\mfM})\simeq \hat{T}_{\ast}(\hat{\mfM})
\]
as $\mbb{Z}_p[G]$-modules where
$\hat{T}^{\vee}(\hat{\mfM})$
is the dual representation of $\hat{T}(\hat{\mfM})$. 
%Here $\hat{T}^{\vee}(\hat{\mfM})=\mrm{Hom}_{\mbb{Z}_p}(\hat{T}(\hat{\mfM}), \mbb{Z}_p)$
%is the dual representation of $\hat{T}(\hat{\mfM})$.
\end{corollary}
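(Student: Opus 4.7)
The plan is to exhibit a natural $G$-equivariant perfect pairing
\[
\langle \cdot, \cdot \rangle \colon \hat{T}(\hat{\mfM}) \times \hat{T}_{\ast}(\hat{\mfM}) \longrightarrow
\begin{cases}
\mbb{Q}_p/\mbb{Z}_p & \text{if } \mfM \text{ is killed by a power of } p, \\
\mbb{Z}_p & \text{if } \mfM \text{ is free},
\end{cases}
\]
from which the desired $\mbb{Z}_p[G]$-module isomorphism follows by taking adjoints. The construction goes as follows: given $f \in \hat{T}(\hat{\mfM})$ and $x \in \hat{T}_{\ast}(\hat{\mfM}) \subseteq W(\mrm{Fr}R) \otimes_{\whR} \hat{\mfM}$, extend $f$ by $W(\mrm{Fr}R)$-linearity (using the natural $\whR$-algebra embedding $\whR \hookrightarrow W(\mrm{Fr}R)$ and the inclusions of Remark \ref{subsets}(2)) to a $\varphi$-equivariant map $\tilde f \colon W(\mrm{Fr}R) \otimes_{\whR} \hat{\mfM} \to W(\mrm{Fr}R)_\infty$ (resp.\ $W(\mrm{Fr}R)$), and set $\langle f, x \rangle = \tilde f(x)$. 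Since $\tilde f$ commutes with $\vphi$ and $\vphi(x)=x$, this value is $\vphi$-fixed, so it lies in $W(\mrm{Fr}R)_\infty^{\vphi=1} = \mbb{Q}_p/\mbb{Z}_p$ (resp.\ $W(\mrm{Fr}R)^{\vphi=1} = \mbb{Z}_p$) because $\mrm{Fr}R$ is a perfect field of characteristic $p$ whose Frobenius fixed subfield is $\mbb{F}_p$.

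For $G$-equivariance, a direct computation using the definitions of the $G$-actions on $\hat{T}(\hat{\mfM})$, $\hat{T}_{\ast}(\hat{\mfM})$ and $W(\mrm{Fr}R)$ gives $\langle \sigma \cdot f, \sigma \cdot x \rangle = \sigma(\tilde f(x))$. Since $\sigma$ fixes $\mbb{Z}_p$ and $\mbb{Q}_p/\mbb{Z}_p$ pointwise, this equals $\langle f, x \rangle$, so the pairing is $G$-equivariant with trivial target.

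It remains to show perfectness, which is a statement purely about finitely generated $\mbb{Z}_p$-modules and so may be checked after forgetting the $G$-action. Here I would reduce to Fontaine's classical duality between $\mcal{T}$ and $\mcal{T}_{\ast}$ recalled at the end of Section 2.2. Indeed, Theorem \ref{Li}(1) combined with Proposition \ref{FoKi} furnishes a $G_\infty$-equivariant identification $\hat{T}(\hat{\mfM}) \simeq T_{\mfS}(\mfM) \simeq \mcal{T}(M)$ with $M = \cO \otimes_{\mfS} \mfM$, while the chain (\ref{hosoku})--(\ref{is1}) in the proof of Proposition \ref{prop1} yields $\hat{T}_{\ast}(\hat{\mfM}) \simeq \mcal{T}_{\ast}(M)$. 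Under these identifications, the pairing defined above matches the Fontaine evaluation pairing $\mcal{T}(M) \times \mcal{T}_{\ast}(M) \to \mbb{Q}_p/\mbb{Z}_p$ (resp.\ $\mbb{Z}_p$), which is perfect by Proposition \ref{Fon}.

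The main obstacle will be verifying this last compatibility between the two pairings: ours is set up via the embedding $\whR \hookrightarrow W(\mrm{Fr}R)$, whereas Fontaine's uses $\cO \hookrightarrow \wh{\cOur} \hookrightarrow W(\mrm{Fr}R)$. This amounts to chasing the commutative diagram underlying the proof of Proposition \ref{prop1} together with the natural map $T_{\mfS}(\mfM) \to \mcal{T}(\cO \otimes_{\mfS} \mfM)$ of Proposition \ref{FoKi}; it is essentially diagrammatic bookkeeping, but must be executed carefully to ensure the two pairings really agree on the nose rather than up to a unit.
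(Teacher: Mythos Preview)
Your proposal is correct and at its core is the same evaluation-pairing argument as the paper's proof: both hinge on Proposition~\ref{prop1} (you use it via (\ref{hosoku})--(\ref{is1})), and both identify the induced map with the adjoint of the desired isomorphism. The organization differs in one respect worth noting. The paper runs the chain of isomorphisms
\[
\mrm{Hom}_{\mbb{Z}_p}(\hat{T}_{\ast}(\hat{\mfM}),\mbb{Q}_p/\mbb{Z}_p)
\;\simeq\;
\mrm{Hom}_{W(\mrm{Fr}R),\vphi}(W(\mrm{Fr}R)\otimes_{\whR}\hat{\mfM},W(\mrm{Fr}R)_\infty)
\;\simeq\;
\mrm{Hom}_{\whR,\vphi}(\hat{\mfM},W(\mrm{Fr}R)_\infty)
\]
and then must show the right-hand side equals $\hat{T}(\hat{\mfM})=\mrm{Hom}_{\whR,\vphi}(\hat{\mfM},W(R)_\infty)$, which it does via Fontaine's Proposition~B.1.8.3 (any $\vphi$-stable $\mfS$-finite submodule of $W(\mrm{Fr}R)_\infty$ of finite $E(u)$-height lies in $\mfSur_\infty$). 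Your route sidesteps this step entirely: by starting from $f\in\hat{T}(\hat{\mfM})$ and extending, you never need to prove that maps into $W(\mrm{Fr}R)_\infty$ automatically land in $W(R)_\infty$; instead you outsource perfectness to the classical Fontaine pairing $\mcal{T}(M)\times\mcal{T}_\ast(M)\to\mbb{Q}_p/\mbb{Z}_p$. The compatibility check you flag does go through cleanly once one unwinds the Frobenius twist in (\ref{hosoku}): if $f=\theta(g)$ for $g\in T_{\mfS}(\mfM)$ and $x=\sum c_i\otimes m_i\in W(\mrm{Fr}R)\otimes_{\vphi,\mfS}\mfM$, then $\tilde f(x)=\sum c_i\vphi(g(m_i))$, while the image of $x$ in $W(\mrm{Fr}R)\otimes_{\cO}M$ is $\sum c_i\otimes\vphi_{\mfM}(m_i)$ and the Fontaine pairing gives $\sum c_i g(\vphi_{\mfM}(m_i))=\sum c_i\vphi(g(m_i))$ by $\vphi$-equivariance of $g$. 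So the two pairings agree on the nose, not merely up to a unit.
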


\begin{proof}
Suppose $\hat{\mfM}$ is killed by some power of $p$.
By Proposition \ref{prop1} and 
the relation $W(\mrm{Fr}R)_{\infty}^{\vphi=1}=\mbb{Q}_p/\mbb{Z}_p$,
we have 
\begin{align*}
\mrm{Hom}_{\mbb{Z}_p}(\hat{T}_{\ast}(\hat{\mfM}),\mbb{Q}_p/\mbb{Z}_p)
&\simeq 
\mrm{Hom}_{W(\mrm{Fr}R),\vphi}
(W(\mrm{Fr}R)\otimes_{\mbb{Z}_p}
(W(\mrm{Fr}R)\otimes_{\wh{\mcal{R}}} \hat{\mfM})^{\vphi=1},
W(\mrm{Fr}R)_{\infty})\\
&\simeq
\mrm{Hom}_{W(\mrm{Fr}R),\vphi}(
W(\mrm{Fr}R)\otimes_{\wh{\mcal{R}}} \hat{\mfM},
W(\mrm{Fr}R)_{\infty})\\
&\simeq
\mrm{Hom}_{\wh{\mcal{R}},\vphi}(
\hat{\mfM},
W(\mrm{Fr}R)_{\infty})=\hat{T}(\hat{\mfM}).
\end{align*}
The last equality follows from 
the proof of Lemma 3.1.1 of \cite{Li3},
but we include
a proof here for the sake of completeness.
Take any 
$h\in \mrm{Hom}_{\wh{\mcal{R}},\vphi}(
\hat{\mfM},
W(\mrm{Fr}R)_{\infty})$.
It is enough to prove that 
$h$ has in fact values in $W(R)_{\infty}$.
Put $g=h|_{\mfM}$.
Since $g$ is a $\vphi(\mfS)$-linear morphism 
from $\mfM$ to $W(R)_{\infty}=\vphi(W(R)_{\infty})$,
there exists a $\mfS$-linear morphism 
$\mfrak{g}\colon \mfM\to W(\mrm{Fr}R)_{\infty}$ 
such that $\vphi(\mfrak{g})=g$.
Furthermore, we see that $\mfrak{g}$ is $\vphi$-equivariant.
Note that $\mfrak{g}(\mfM)\subset W(\mrm{Fr}R)_{\infty}$ 
is a $\mfS$-finite type $\vphi$-stable submodule
and of $E(u)$-height $r$.
By \cite{Fo}, Proposition B.1.8.3, we have 
$\mfrak{g}(\mfM)\subset \mfS_{\infty}^{\mrm{ur}}$. 
Since 
\[
h(a\otimes x)=a\vphi(\mfrak{g}(x))
\]
for any $a\in \wh{\mcal{R}}$ and $x\in \mfM$, we obtain that 
$h$ has values in $W(R)_{\infty}$.

The case $\hat{\mfM}$ is free, 
we obtain the desired result 
by the same proof as above 
if we replace $W(\mrm{Fr}R)_{\infty}$ 
(resp.\ $\mbb{Q}_p/\mbb{Z}_p$) with 
$W(\mrm{Fr}R)$ (resp.\ $\mbb{Z}_p$). 
\end{proof}

In the rest of this subsection, 
we prove Proposition \ref{Gal}.
We only prove the case where 
$\mfM$ is killed by $p^n$ for some integer $n\ge 1$
(we can prove the free case 
by an analogous way and the free case is 
easier than the torsion case).

First we consider natural pairings
\begin{equation}
\label{pair1}
\langle \cdot , \cdot \rangle \colon \mfM \times \mfM^{\vee}\to
\mfS_n^{\vee}
\end{equation}
and 
\begin{equation}
\label{pair2}
\langle \cdot , \cdot \rangle \colon M\times M^{\vee}\to \cOdual_n
\end{equation}
which are perfect and compatible with $\vphi$-structures.
Here $M=\cO\otimes_{\mfS} \mfM$ is the \'etale $\vphi$-module
corresponding to $\mfM$.
We can extend the pairing (\ref{pair2}) to the $\vphi$-equivalent perfect pairing  
\[
(\cOur\otimes_{\cO} M)
\times (\cOur\otimes_{\cO}M^{\vee})
\to \cOurdual_n.
\]
Since the above pairing is $\vphi$-equivariant and 
$(\cOurdual_n)^{\vphi=1}\simeq \mbb{Z}_p/p^n\mbb{Z}_p(-r)$,
we have a pairing 
\begin{equation}
\label{pair23}
(\cOur\otimes_{\cO} M)^{\vphi=1}
\times (\cOur\otimes_{\cO}M^{\vee})^{\vphi=1}
\to \mbb{Z}_p/p^n\mbb{Z}_p(-r)
\end{equation}
compatible with $G_{\infty}$-actions.
Liu showed in the proof of Lemma 3.1.2 in \cite{Li1} 
that this pairing is perfect.
By a similar way, we have the following paring
\begin{equation}
\label{pair233}
(W(\mrm{Fr}R)\otimes_{\cO} M)^{\vphi=1}
\times (W(\mrm{Fr}R)\otimes_{\cO}M^{\vee})^{\vphi=1}
\to \mbb{Z}_p/p^n\mbb{Z}_p(-r).
\end{equation} 
On the other hand,
the pairing (\ref{pair1}) induces a pairing 
\begin{equation}
\label{pair3}
(\wh{\mcal{R}}\otimes_{\vphi, \mfS}\mfM) 
\times (\wh{\mcal{R}}\otimes_{\vphi, \mfS}\mfM^{\vee})\to
\wh{\mfS}_n^{\vee}.
\end{equation}
We can extend the pairing (\ref{pair3}) to the $\vphi$-equivalent perfect pairing
\[
(W(\mrm{Fr}R)\otimes_{\wh{\mcal{R}}}(\wh{\mcal{R}}\otimes_{\vphi, \mfS}\mfM)) 
\times (W(\mrm{Fr}R)\otimes_{\wh{\mcal{R}}}(\wh{\mcal{R}}\otimes_{\vphi, \mfS}\mfM^{\vee}))\to
W(\mrm{Fr}R)\otimes_{\wh{\mcal{R}}} \wh{\mfS}_n^{\vee}.
\]
Since the above pairing is $\vphi$-equivariant and 
$(W(\mrm{Fr}R)\otimes_{\wh{\mcal{R}}} \wh{\mfS}_n^{\vee})^{\vphi=1}\simeq \mbb{Z}_p/p^n\mbb{Z}_p(-r)$,
we have a pairing 
\begin{equation}
\label{pair4}
(W(\mrm{Fr}R)\otimes_{\wh{\mcal{R}}}(\wh{\mcal{R}}\otimes_{\vphi, \mfS}\mfM))^{\vphi=1} 
\times (W(\mrm{Fr}R)\otimes_{\wh{\mcal{R}}}(\wh{\mcal{R}}\otimes_{\vphi, \mfS}\mfM^{\vee}))^{\vphi=1}\to
\mbb{Z}_p/p^n\mbb{Z}_p(-r)
\end{equation}
compatible with $G$-actions.
Since we have the natural isomorphism 
$\cOur\otimes_{\mbb{Z}_p}(\cOur\otimes_{\cO} M)^{\vphi=1}
\overset{\sim}{\longrightarrow}\cOur\otimes_{\cO} M$,
we obtain the $\vphi$-equivariant isomorphisms
\begin{equation}
\label{isom1}
W(\mrm{Fr}R)\otimes_{\wh{\mcal{R}}}\hat{\mfM} \overset{\sim}{\longrightarrow}
W(\mrm{Fr}R)\otimes_{\cO} M \overset{\sim}{\longleftarrow}
W(\mrm{Fr}R)\otimes_{\mbb{Z}_p}(\cOur\otimes_{\cO} M)^{\vphi=1}.
\end{equation}
Therefore, combining (\ref{pair23}), (\ref{pair233}), (\ref{pair4}) and (\ref{isom1}), 
we have the following diagram
\begin{center}
$\displaystyle \xymatrix{
  \! \! \!(W(\mrm{Fr}R)\otimes_{\wh{\mcal{R}}}\hat{\mfM})^{\vphi=1}\! \! \!
  \ar[d]_{\wr} 
& \! \! \! \times \! \! \!
& \! \! \!(W(\mrm{Fr}R)\otimes_{\wh{\mcal{R}}}\hat{\mfM}^{\vee})^{\vphi=1}
  \ar[d]_{\wr} \ar[r]
& \mbb{Z}_p/p^n\mbb{Z}_p(-r)
  \ar@{=}[d] \\
  \! \! \!(W(\mrm{Fr}R)\otimes_{\cO}M)^{\vphi=1}\! \! \!
& \! \! \! \times \! \! \!
& \! \! \!(W(\mrm{Fr}R)\otimes_{\cO}M^{\vee})^{\vphi=1}
  \ar[r]
& \mbb{Z}_p/p^n\mbb{Z}_p(-r) \\
  (\cOur\otimes_{\cO} M)^{\vphi=1}
  \ar[u]^{\wr}
& \! \! \! \times \! \! \!
& \! \! \!(\cOur\otimes_{\cO} M^{\vee})^{\vphi=1}
  \ar[u]^{\wr} \ar[r]
& \mbb{Z}_p/p^n\mbb{Z}_p(-r) 
\ar@{=}[u]
. }$
\end{center}
It is a straightforward calculation to check that 
the above diagram is commutative.
Since the bottom pairing is perfect, 
we see that the top pairing is also perfect.
This implies $\hat{T}_{\ast}(\hat{\mfM}^{\vee})\simeq \hat{T}_{\ast}(\hat{\mfM})(-r)$ and 
therefore, we have the desired result by Corollary \ref{cov}.

%%%%%%%%%%%%%%%%%%%%%%%%%%%%%%%%%%%%%%%%%%%%%%%%%%%%%%%%%%%%%%%%%%%%%%%%%%%%%%%%%%%%%%%%%%%%%%%%%%%%%%%%%%%
%%%%%%%%%%%%%%%%%%%%%%%%%%%%%%%%%%%%%%%%%%%%%%%%%%%%%%%%%%%%%%%%%%%%%%%%%%%%%%%%%%%%%%%%%%%%%%%%%%%%%%%%%%%
%                                4                         %%%%%%%%%%%%%%%%%%%%%%%%%%%%%%%%%%%%%%%%%%%%%%%%%
%%%%%%%%%%%%%%%%%%%%%%%%%%%%%%%%%%%%%%%%%%%%%%%%%%%%%%%%%%%%%%%%%%%%%%%%%%%%%%%%%%%%%%%%%%%%%%%%%%%%%%%%%%%
%%%%%%%%%%%%%%%%%%%%%%%%%%%%%%%%%%%%%%%%%%%%%%%%%%%%%%%%%%%%%%%%%%%%%%%%%%%%%%%%%%%%%%%%%%%%%%%%%%%%%%%%%%%

\section{Category of representations arising from torsion $(\vphi,\hat{G})$-modules} 
\subsection{Relations between $(\vphi,\hat{G})$-modules and their representations} 

Select a $\mfrak{t}\in \mfS^{\mrm{ur}}$ such that $\mfrak{t}\notin p\mfS^{\mrm{ur}}$
and $\vphi(\mfrak{t})=c_0^{-1}E(u)\mfrak{t}$ where $pc_0=E(0)$.
Such $\mfrak{t}$ is unique up to units of $\mbb{Z}_p$, see Example 2.3.5 in \cite{Li1} for details.

Let $\mfM$ be in 
$\mrm{Mod}^{r}_{/\mfS_{\infty}}$.
We construct a map $\iota_{\mfS}$ for $\mfM$ which connects 
$\mfM$ to $T_{\mfS}(\mfM)$ (cf.\ \cite{Li1}, Section 3.2). 
First observe that there exists a natural isomorphism of 
$\mbb{Z}_p[G_{\infty}]$-modules
\[
T_{\mfS}(\mfM)=\mrm{Hom}_{\mfS,\vphi}(\mfM,\mfS^{\mrm{ur}}_{\infty})\simeq
\mrm{Hom}_{\mfS^{\mrm{ur}},\vphi}(\mfS^{\mrm{ur}}\otimes_{\mfS}\mfM,\mfS^{\mrm{ur}}_{\infty})
\]
where $G_{\infty}$ acts on 
$\mrm{Hom}_{\mfS^{\mrm{ur}},\vphi}(\mfS^{\mrm{ur}}\otimes_{\mfS}\mfM,\mfS^{\mrm{ur}}_{\infty})$
by $(\sigma.f)(x)=\sigma(f(\sigma^{-1}(x)))$ for 
$\sigma\in G_{\infty},
f\in \mrm{Hom}_{\mfS^{\mrm{ur}},\vphi}(\mfS^{\mrm{ur}}\otimes_{\mfS}\mfM,\mfS^{\mrm{ur}}_{\infty}),
x\in \mfS^{\mrm{ur}}\otimes_{\mfS}\mfM$
and $G_{\infty}$ acts on $\mfM$ trivial.
Thus we can define a morphism 
$\iota'_{\mfS}\colon \mfS^{\mrm{ur}}\otimes_{\mfS} \mfM\to 
\mrm{Hom}_{\mbb{Z}_p}(T_{\mfS}(\mfM),\mfS^{\mrm{ur}}_{\infty})$
by
\[
x\mapsto (f\mapsto f(x)),\quad 
x\in \mfS^{\mrm{ur}}\otimes_{\mfS}\mfM, f\in T_{\mfS}(\mfM). 
\]
Since $T_{\mfS}(\mfM)\simeq \oplus_{i\in I}\mbb{Z}_p/p^{n_i}\mbb{Z}_p$ 
as finite $\mbb{Z}_p$-modules,
we have a natural isomorphism 
$\mrm{Hom}_{\mbb{Z}_p}(T_{\mfS}(\mfM),\mfS^{\mrm{ur}}_{\infty})\simeq 
\mfS^{\mrm{ur}}\otimes_{\mbb{Z}_p}T^{\vee}_{\mfS}(\mfM)$
where $T^{\vee}_{\mfS}(\mfM)=\mrm{Hom}_{\mbb{Z}_p}(T_{\mfS}(\mfM),\mbb{Q}_p/\mbb{Z}_p)$
is the dual representation of $T_{\mfS}(\mfM)$.
Composing this isomorphism with $\iota'_{\mfS}$, we obtain a map 
\[
\iota_{\mfS}\colon \mfS^{\mrm{ur}}\otimes_{\mfS} \mfM\to 
\mfS^{\mrm{ur}}\otimes_{\mbb{Z}_p}T^{\vee}_{\mfS}(\mfM).
\]
\noindent
For $\mfM\in \mrm{Mod}^{r}_{/\mfS}$, 
we also construct $\iota_{\mfS}\colon \mfS^{\mrm{ur}}\otimes_{\mfS} \mfM\to 
\mfS^{\mrm{ur}}\otimes_{\mbb{Z}_p}T^{\vee}_{\mfS}(\mfM)$ by the same way
except only replacing $\mfS^{\mrm{ur}}_{\infty}$ with $\mfS^{\mrm{ur}}$.

\begin{lemma}
\label{rel1}
Let $A$ be a ring with $\mfS^{\mrm{ur}}\subset A \subset W(\mrm{Fr}R)$
which yields a ring extension $A_1 \subset \mrm{Fr}R$.
Let $\mfM$ be in 
$\mrm{Mod}^{r}_{/\mfS_{\infty}}$ or
$\mrm{Mod}^{r}_{/\mfS}$.
Let $\iota_{\mfS}$ be as above.

\noindent
$(1)$ $\iota_{\mfS}$  is $G_{\infty}$-equivalent and  
$\vphi$-equivalent.
Furthermore, 
$A\otimes_{\mfSur}\iota_{\mfS}$ is injective.

\noindent
$(2)$ If $r<\infty$, 
then $\mfrak{t}^r(A\otimes_{\mbb{Z}_p} T_{\mfS}^{\vee}(\mfM))\subset 
(A\otimes_{\mfSur}\iota_{\mfS})(A\otimes_{\mfS} \mfM)$.
If $r=\infty$,
then $\mfrak{t}^{r'}(A\otimes_{\mbb{Z}_p} T_{\mfS}^{\vee}(\mfM))\subset 
(A\otimes_{\mfSur}\iota_{\mfS})(A\otimes_{\mfS} \mfM)$ for $r'>0$
such that $\mfM$ is of height $r'$.

\noindent
$(3)$ The map 
\[
W(\mrm{Fr}R)\otimes_{\mfS^{\mrm{ur}}} \iota_{\mfS}\colon 
W(\mrm{Fr}R)\otimes_{\mfS} \mfM\to 
W(\mrm{Fr}R)\otimes_{\mbb{Z}_p} T_{\mfS}^{\vee}(\mfM)
\] 
is bijective. 
\end{lemma}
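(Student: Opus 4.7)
The overall strategy is to treat (1) and (3) as essentially formal consequences of Fontaine's theory of \'etale $\vphi$-modules, and to extract (2) from a Frobenius-iteration argument built on the identity $\vphi(\mfrak{t}^r)=c_0^{-r}E(u)^r\mfrak{t}^r$ together with the $E(u)^r$-height of $\mfM$. I would prove the parts in the order (1)'s equivariance, (3), (1)'s injectivity, (2), since (1)'s injectivity is cleanest to deduce from (3).

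For the equivariance claims in (1), the $\vphi$- and $G_\infty$-compatibility of $\iota_\mfS$ are immediate from the construction: $G_\infty$ acts trivially on $\mfM$, so the adjunction $x\mapsto (f\mapsto f(x))$ is $G_\infty$-equivariant once the $G_\infty$-action on $\mrm{Hom}_{\mbb{Z}_p}(T_\mfS(\mfM),\mfS^{\mrm{ur}}_\infty)$ is defined via the contragredient formula, and Frobenius compatibility holds because the identification $T_\mfS(\mfM)\simeq \mrm{Hom}_{\mfS^{\mrm{ur}},\vphi}(\mfS^{\mrm{ur}}\otimes_\mfS\mfM,\mfS^{\mrm{ur}}_\infty)$ is $\vphi$-equivariant by definition. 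The injectivity of $A\otimes_{\mfS^{\mrm{ur}}}\iota_\mfS$ then follows by reducing to $A=W(\mrm{Fr}R)$ using Corollary \ref{ringext} (whose hypothesis is supplied by $A_1\subset \mrm{Fr}R$), which is exactly part (3).

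For (3), I would factor the map through the \'etale $\vphi$-module $M=\cO\otimes_\mfS\mfM$. By Proposition \ref{FoKi} one has $T_\mfS(\mfM)\simeq \mcal{T}(M)$ and dually $T_\mfS^\vee(\mfM)\simeq \mcal{T}_\ast(M)$ as $G_\infty$-modules. Fontaine's comparison isomorphism (\ref{Fon1}), namely $\wh{\cOur}\otimes_{\mbb{Z}_p}\mcal{T}_\ast(M)\simeq \wh{\cOur}\otimes_\cO M$, gives after base change to $W(\mrm{Fr}R)$ and the natural identification $W(\mrm{Fr}R)\otimes_\cO M \simeq W(\mrm{Fr}R)\otimes_\mfS\mfM$ exactly the bijection $W(\mrm{Fr}R)\otimes_{\mfS^{\mrm{ur}}}\iota_\mfS$. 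The verification that this Fontaine isomorphism coincides with the base change of $\iota_\mfS$ is a routine diagram chase through the construction of $\iota_\mfS$.

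The main obstacle is (2), where one has to show that $\mfrak{t}^r$ measures the discrepancy between the two $\mfS^{\mrm{ur}}$-lattices $\mfS^{\mrm{ur}}\otimes_\mfS\mfM$ and $\mfS^{\mrm{ur}}\otimes_{\mbb{Z}_p}T_\mfS^\vee(\mfM)$ inside their common extension over $W(\mrm{Fr}R)$. The plan is: given $v\in A\otimes_{\mbb{Z}_p}T_\mfS^\vee(\mfM)$, view $v$ as $\vphi$-fixed in $W(\mrm{Fr}R)\otimes_\cO M$ via (3), and use that the $E(u)^r$-height hypothesis gives $E(u)^r\mfM\subset \vphi^\ast(\mfS\otimes_{\vphi,\mfS}\mfM)$; combining with $\vphi(\mfrak{t}^r)=c_0^{-r}E(u)^r\mfrak{t}^r$, one sees inductively (applying $\vphi^{-n}$ on the Frobenius-fixed side and pushing back via $\vphi^n$) that $\mfrak{t}^r v$ already lies in $A\otimes_\mfS\mfM$: each application of Frobenius contributes an $E(u)^r$ absorbed by the height condition, and a $p$-adic limit argument yields the conclusion (in the torsion case, the analogous statement is immediate since the limit is finite). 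The case $r=\infty$ reduces at once to the finite height $r'$ case since by definition $\mfM$ is of some finite height $r'$.
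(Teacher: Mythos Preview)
Your handling of the equivariance in (1), the bijectivity in (3) via Fontaine's comparison, and the deduction of injectivity in (1) from (3) via Corollary~\ref{ringext} is sound and in places cleaner than what the paper does. The paper proceeds in the opposite order: it imports from \cite{Li1} the existence of an explicit ``inverse'' map $\iota^\vee_\mfS$ satisfying $\iota^\vee_\mfS\circ\iota_\mfS=\iota_\mfS\circ\iota^\vee_\mfS=\mfrak{t}^r$; this single relation gives (2) at once and gives injectivity in (1) after a d\'evissage to the case $p\mfM=0$ (where one uses that $\mfrak{t}\neq 0$ in the domain $A_1\subset\mrm{Fr}R$). Part (3) then falls out of (1), (2), and $\mfrak{t}\in W(\mrm{Fr}R)^\times$. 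So for (1) and (3) the two routes are genuinely different: yours avoids the d\'evissage and the black box $\iota^\vee_\mfS$, while the paper's route is more uniform and gets (2) essentially for free.

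Your sketch for (2), however, does not stand on its own. The phrase ``applying $\vphi^{-n}$ on the Frobenius-fixed side'' is ill-defined over $\mfS^{\mrm{ur}}$ or $A$ (only over $W(\mrm{Fr}R)$ is $\vphi$ bijective, and there the statement is already vacuous by your (3)); and the remark that ``in the torsion case the limit is finite'' is not justified by $p$-torsion alone, since Frobenius is $\mbb{Z}_p$-linear and does not increase $p$-adic valuation. What one actually needs is the following: for $v\in T_\mfS^\vee(\mfM)$ viewed in $\wh{\cOur}\otimes_\mfS\mfM$, the relation $\vphi(\mfrak{t}^r)=c_0^{-r}E(u)^r\mfrak{t}^r$ together with the height condition shows that the $\mfS$-span of the coordinates of $\mfrak{t}^r v$ is a $\vphi$-stable, finitely generated $\mfS$-submodule of finite $E(u)$-height inside $\wh{\cOur}$, and then one invokes Fontaine's result (\cite{Fo}, B.1.8.3) that any such submodule lies in $\mfS^{\mrm{ur}}$. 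This is exactly the substance of Liu's construction of $\iota^\vee_\mfS$ in \cite{Li1}, Theorem 3.2.2, which the paper cites rather than reproves. If you want a self-contained proof of (2), you must either carry out that Fontaine-type argument explicitly or construct $\iota^\vee_\mfS$ directly; the iteration-and-limit heuristic you wrote is not a substitute.
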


\begin{proof}
We may suppose that $r<\infty$.
The assertion that $\iota_{\mfS}$  is $G_{\infty}$-equivalent and 
$\vphi$-equivalent is a result of Theorem 3.2.2 in \cite{Li1}.
Liu showed in {\it loc}.\ {\it cit},\ that there exists a map 
$\iota^{\vee}_{\mfS}\colon \mfS^{\mrm{ur}}
\otimes_{\mbb{Z}_p} T^{\vee}_{\mfS}(\mfM)\to \mfSur\otimes_{\mfS}\mfM$
such that 
$\iota^{\vee}_{\mfS}\circ \iota_{\mfS}=\mfrak{t}^r$, in particular,
$(A\otimes_{\mfS^{\mrm{ur}}}  \iota^{\vee}_{\mfS})\circ 
(A\otimes_{\mfS^{\mrm{ur}}} \iota_{\mfS})=\mfrak{t}^r$.
Moreover, in the proof of {\it loc}.\ {\it cit},\
Liu also showed that the composite
$(\cOur\otimes_{\mfSur} \iota_{\mfS})\circ 
(\cOur\otimes_{\mfSur} \iota^{\vee}_{\mfS})\colon 
\cOur\otimes_{\mfSur} (\mfSur\otimes_{\mbb{Z}_p} T^{\vee}_{\mfS}(\mfM))
\to \cOur\otimes_{\mfSur} (\mfSur\otimes_{\mbb{Z}_p} T^{\vee}_{\mfS}(\mfM))$
is equal to the map $\cOur\otimes_{\mfSur}(\mfrak{t}^r\otimes_{\mbb{Z}_p} \mrm{Id})$.
Hence we obtain 
$\iota_{\mfS}\circ \iota^{\vee}_{\mfS}=\mfrak{t}^r$
and then the assertion (2) follows.
We show the injectivity of $A\otimes_{\mfS^{\mrm{ur}}}  \iota_{\mfS}$.
Since $(A\otimes_{\mfS^{\mrm{ur}}}  \iota^{\vee}_{\mfS})\circ 
(A\otimes_{\mfS^{\mrm{ur}}} \iota_{\mfS})=\mfrak{t}^r$,
if $\mfM$ is free over $\mfS$, we see that  $A\otimes_{\mfS^{\mrm{ur}}}  \iota_{\mfS}$ is 
injective.
Next we suppose that $\mfM$ is killed by $p$.
In this case, the proof is almost the same as the free case, 
except one need to note that 
$\mfM$ is free as a $k[\![u]\!]$-module, $\mfrak{t}\not= 0$ in $A_1$ 
(since $\mfS^{\mrm{ur}}_1\subset A_1$; see Remark \ref{subsets}) 
and $A_1$ is a domain 
(since $A_1\subset \mrm{Fr}R$).
Suppose that  $\mfM$
is killed by some power of $p$.
By Proposition \ref{torKi} (4) and Remark \ref{suc},
there exists an extension 
\[
0=\mfM_0\subset \mfM_1\subset \cdots \subset\mfM_k=\mfM
\]
in $\mrm{Mod}^r_{/\mfS_{\infty}}$ such that 
$\mfM_i,\ \mfM_{i+1}/\mfM_i\in\mrm{Mod}^r_{/\mfS_{\infty}}$ and $\mfM_{i+1}/\mfM_i$
is a finite free $k[\![u]\!]$-module.
We have a commutative diagram
\begin{center}
$\displaystyle \xymatrix{
0\ar[r] & A\otimes_{\mfS} \mfM_{i-1} \ar[r]  \ar_{A\otimes_{\mrm{\mfS^{\mrm{ur}}}}\iota_{\mfS,i-1}}[d]
& A\otimes_{\mfS} \mfM_i \ar[r] \ar_{A\otimes_{\mrm{\mfS^{\mrm{ur}}}}\iota_{\mfS,i}}[d]
& A\otimes_{\mfS} \mfM_i/\mfM_{i-1} \ar[r] \ar_{A\otimes_{\mrm{\mfS^{\mrm{ur}}}}\iota_{\mfS,i,i-1}}[d]
& 0 \\
0\ar[r] & A\otimes_{\mbb{Z}_p} T^{\vee}_{\mfS}(\mfM_{i-1}) \ar[r] & 
A\otimes_{\mbb{Z}_p} T^{\vee}_{\mfS}(\mfM_i) \ar[r]
& A\otimes_{\mbb{Z}_p} T^{\vee}_{\mfS}(\mfM_i/\mfM_{i-1}) \ar[r] & 0 }$
\end{center}
where $\iota_{\mfS,i-1}, \iota_{\mfS,i}$ and $\iota_{\mfS,i,i-1}$
are maps $\iota_{\mfS}$ for $\mfM_i, \mfM_{i-1}$ and $\mfM_i/\mfM_{i-1}$, respectively.
By corollary \ref{exact} and the exactness of $T_{\mfS}$, 
two horizontal sequences are exact.
By induction on $i$, we see that 
$A\otimes_{\mrm{\mfS^{\mrm{ur}}}}\iota_{\mfS}$ (for $\mfM$) is injective.

Finally,
if we put $A=W(\mrm{Fr}R)$, we see the bijectivity
of $W(\mrm{Fr}R)\otimes_{\mrm{\mfS^{\mrm{ur}}}}\iota_{\mfS}$
from (1), (2) and $\mfrak{t}\in W(\mrm{Fr}R)^{\times}$. 
\end{proof}

Let $\hat{\mfM}$ be in 
${}_{\mrm{w}}\mrm{Mod}^{r,\hat{G}}_{/\mfS_{\infty}}$.
We construct a map $\hat{\iota}$ for $\hat{\mfM}$ which connects 
$\hat{\mfM}$ to $\hat{T}(\hat{\mfM})$ (cf.\ \cite{Li2}, Section 3.1).
First, we recall that  we abuse of notations by denoting $\hat{\mfM}$
the underlying module $\whR\otimes_{\vphi,\mfS} \mfM$.
Observe that there exists a natural isomorphism of 
$\mbb{Z}_p[G]$-modules
\[
\hat{T}(\hat{\mfM})=\mrm{Hom}_{\whR,\vphi}(\hat{\mfM},W(R)_{\infty})
\simeq
\mrm{Hom}_{W(R),\vphi}(W(R)\otimes_{\whR}\hat{\mfM},W(R)_{\infty})
\]
where $G$ acts on 
$\mrm{Hom}_{W(R),\vphi}(W(R)\otimes_{\whR}\hat{\mfM},W(R)_{\infty})$
by $(\sigma.f)(x)=\sigma(f(\sigma^{-1}(x)))$ for 
$\sigma\in G,
f\in \mrm{Hom}_{W(R),\vphi}(W(R)\otimes_{\whR}\hat{\mfM},W(R)_{\infty}),
x\in W(R)\otimes_{\whR}\hat{\mfM}$.
Thus we can define a morphism 
$\hat{\iota}'\colon W(R)\otimes_{\whR}\hat{\mfM} \to 
\mrm{Hom}_{\mbb{Z}_p}(\hat{T}(\hat{\mfM}),W(R)_{\infty})$
by
\[
x\mapsto (f\mapsto f(x)),\quad 
x\in W(R)\otimes_{\whR}\hat{\mfM}, f\in \hat{T}(\hat{\mfM}). 
\]
Since $\hat{T}(\hat{\mfM})\simeq \oplus_{i\in I}\mbb{Z}_p/p^{n_i}\mbb{Z}_p$ 
as finite $\mbb{Z}_p$-modules,
we have a natural isomorphism 
$\mrm{Hom}_{\mbb{Z}_p}(\hat{T}(\hat{\mfM}),W(R)_{\infty})\simeq 
W(R)\otimes_{\mbb{Z}_p}\hat{T}^{\vee}(\hat{\mfM})$.
Recall that $\hat{T}^{\vee}(\hat{\mfM})=\mrm{Hom}_{\mbb{Z}_p}(\hat{T}(\hat{\mfM}),\mbb{Q}_p/\mbb{Z}_p)$
is the dual representation of $\hat{T}(\hat{\mfM})$.
Composing this isomorphism with $\hat{\iota}'$, we obtain a map 
\[
\hat{\iota}\colon W(R)\otimes_{\whR}\hat{\mfM}\to 
W(R)\otimes_{\mbb{Z}_p}\hat{T}^{\vee}(\hat{\mfM}).
\]
\noindent
For $\hat{\mfM}\in {}_{\mrm{w}}\mrm{Mod}^{r,\hat{G}}_{/\mfS}$, 
we also construct $\hat{\iota}\colon  W(R)\otimes_{\whR}\hat{\mfM}\to 
W(R)\otimes_{\mbb{Z}_p}\hat{T}^{\vee}(\hat{\mfM})$ by the same way
except only replacing $W(R)_{\infty}$ with $W(R)$.

\begin{lemma}
\label{rel2}
Let $A$ be a ring with $\mfS^{\mrm{ur}}\subset A \subset W(\mrm{Fr}R)$
which yields a ring extension $A_1 \subset \mrm{Fr}R$.
Suppose that $A$ is $\vphi_{W(\mrm{Fr}R)}$-stable.
Let $\hat{\mfM}$ be in 
${}_{\mrm{w}}\mrm{Mod}^{r,\hat{G}}_{/\mfS_{\infty}}$ 
or ${}_{\mrm{w}}\mrm{Mod}^{r,\hat{G}}_{/\mfS}$.
Let $\hat{\iota}$ be as above.

\noindent
$(1)$ $\hat{\iota}\simeq W(R)\otimes_{\vphi,\mfS^{\mrm{ur}}} \iota_{\mfS}$,
that is, the following diagram commutes:
\begin{center}
$\displaystyle \xymatrix{
W(R)\otimes_{\vphi,\mfS} \mfM  \ar^{\hat{\iota}}[rr]  
& &  
W(R)\otimes_{\mbb{Z}_p} \hat{T}^{\vee}(\hat{\mfM}) \\
W(R)\otimes_{\vphi,\mfS^{\mrm{ur}}} (\mfS^{\mrm{ur}}\otimes_{\mfS} \mfM) 
\ar^{W(R)\otimes_{\vphi,\mfS^{\mrm{ur}}} \iota_{\mfS}\quad }[rr] 
\ar^{\alpha\otimes \mrm{Id}_{\mfM}}_{\wr}[u]
& &
W(R)\otimes_{\vphi,\mfS^{\mrm{ur}}}(\mfS^{\mrm{ur}}\otimes_{\mbb{Z}_p} 
T^{\vee}_{\mfS}{\mfM}). 
\ar^{\alpha\otimes (\theta^{\vee})^{-1}}_{\wr}[u]
}$
\end{center}
Here, 
$\alpha\colon W(R)\otimes_{\vphi,\mfSur} \mfSur  \to W(R)$
is the isomorphism given by 
$\alpha(\sum_i a_i\otimes b_i)=\sum_i a_i\vphi(b_i)$ with 
$a_i\in W(R), b_i\in \mfSur$.

\noindent
$(2)$ $\hat{\iota}$  is $G$-equivalent and
$\vphi$-equivalent. 
Furthermore, 
$A\otimes_{W(R)} \hat{\iota}$ is injective.

\noindent
$(3)$ If $r<\infty$, 
then $\vphi(\mfrak{t})^r(A\otimes_{\mbb{Z}_p} \hat{T}^{\vee}(\hat{\mfM}))\subset 
(A\otimes_{W(R)} \hat{\iota})(A\otimes_{\whR} \hat{\mfM})$.
If $r=\infty$,
then $\vphi(\mfrak{t})^{r'}(A\otimes_{\mbb{Z}_p} \hat{T}^{\vee}(\hat{\mfM}))\subset 
(A\otimes_{W(R)} \hat{\iota})(A\otimes_{\whR} \hat{\mfM})$ for $r'>0$
such that $\mfM$ is of $E(u)$-height $r'$.

\noindent
$(4)$ The map 
\[
W(\mrm{Fr}R)\otimes_{W(R)} \hat{\iota}\colon 
W(\mrm{Fr}R)\otimes_{\whR} \hat{\mfM}\to 
W(\mrm{Fr}R)\otimes_{\mbb{Z}_p} \hat{T}^{\vee}(\hat{\mfM})
\] 
is bijective. 
\end{lemma}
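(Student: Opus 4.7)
The plan is to prove (1) first by unwinding all the definitions, and then to deduce (2), (3), (4) from (1) combined with the Kisin-module analogue Lemma \ref{rel1}.

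For part (1), I would trace a typical element $a\otimes(b\otimes m)\in W(R)\otimes_{\vphi,\mfSur}(\mfSur\otimes_\mfS \mfM)$ around the square. Going up-then-right sends it to $\hat\iota(a\vphi(b)\otimes m)$, the functional $\hat g\mapsto a\vphi(b)\,\hat g(1\otimes m)$ on $\hat T(\hat\mfM)$. By Theorem \ref{Li}(1), every $\hat g$ has the form $\theta(f)$ for a unique $f\in T_\mfS(\mfM)$, and by the very definition of $\theta$ one has $\hat g(1\otimes m)=\vphi(f(m))$; so this functional equals $\theta(f)\mapsto a\vphi(bf(m))$. Going right-then-up instead sends $a\otimes(b\otimes m)$ first to $a\otimes(f\mapsto bf(m))$, then via $\alpha\otimes(\theta^\vee)^{-1}$ to the functional $\theta(f)\mapsto a\vphi(bf(m))$. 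This is the same element, so the square commutes. The verification that $\hat\iota$ is $\vphi$-equivariant and $G$-equivariant is then immediate from the corresponding facts for $\iota_\mfS$ (Lemma \ref{rel1}(1)), together with the continuity/extension of the $G_\infty$-action on $\mfSur\otimes_\mfS\mfM$ to a $G$-action on $W(R)\otimes_{\vphi,\mfS}\mfM$ built into the definition of a (weak) $(\vphi,\hat G)$-module.

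For the injectivity claim in (2), note that because $A$ is $\vphi$-stable and contains $\mfSur$, Frobenius gives a ring map $\vphi\colon\mfSur\to A$ through which $A$ becomes an $\mfSur$-algebra; the hypothesis $A_1\subset\mrm{Fr}R$ (which is perfect, so $\vphi$ is bijective on residues) is preserved under this twist. Thus Lemma \ref{rel1}(1) applies in the Frobenius-twisted setup and yields injectivity of $A\otimes_{\vphi,\mfSur}\iota_\mfS$. Now in the commutative square of (1), tensor everything with $A$ over $W(R)$: the two vertical maps remain isomorphisms, the bottom becomes $A\otimes_{\vphi,\mfSur}\iota_\mfS$, so injectivity of the top arrow $A\otimes_{W(R)}\hat\iota$ follows.

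For (3), I would again transport the bound via the square. Lemma \ref{rel1}(2) gives $\mfrak t^r(A\otimes_{\mbb Z_p}T_\mfS^\vee(\mfM))\subset(A\otimes_{\mfSur}\iota_\mfS)(A\otimes_\mfS\mfM)$; applying $\vphi$ via the left- and right-hand vertical identifications (which involve the map $\alpha$, i.e., multiplication through $\vphi$) turns the factor $\mfrak t^r$ into $\vphi(\mfrak t)^r$ on the $(\vphi,\hat G)$-side, which is exactly what is asserted. The case $r=\infty$ is handled by picking any finite $r'$ with $\mfM$ of height $r'$ and running the same argument. For (4), one takes $A=W(\mrm{Fr}R)$: in that case $\vphi(\mfrak t)\in W(\mrm{Fr}R)^\times$, so (3) combined with injectivity from (2) forces surjectivity as well, giving the bijection. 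Equivalently, (4) is the direct image under $W(\mrm{Fr}R)\otimes_{\vphi,\mfSur}(-)$ of the bijection in Lemma \ref{rel1}(3).

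The main obstacle is the bookkeeping in (1): one has to be scrupulously careful in identifying $\hat T(\hat\mfM)$ with $T_\mfS(\mfM)$ via $\theta$, and in how the ``Frobenius twist'' $\alpha$ interacts with the transpose $(\theta^\vee)^{-1}$ on the dual side, because sign/placement errors there would propagate into a spurious factor of $\vphi$ in (2)--(4). Once that diagram is nailed down, the rest is essentially a mechanical transport of Lemma \ref{rel1} through the square.
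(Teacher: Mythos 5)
Your proposal is correct and follows essentially the same route as the paper: the paper establishes (1) by citing the proof of Liu's Proposition 3.1.3(2) in \cite{Li2} (whose content is exactly the element chase you carry out), and then, just as you do, deduces the injectivity in (2) by reducing to the Frobenius-twisted version of Lemma \ref{rel1}(1), and obtains (3) and (4) by transporting Lemma \ref{rel1}(2),(3) through the commutative square of (1). The only difference is one of explicitness, not of method.
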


\begin{proof}
The statement (1) follows from the proof as same as that of Proposition 3.1.3 (2) of \cite{Li2}.
To see that $A\otimes_{W(R)} \hat{\iota}$ is injective, by (1),
it is enough to check that $A\otimes_{\vphi, \mfSur} \iota_{\mfS}\colon 
A\otimes_{\vphi, \mfS}\mfM\to A\otimes_{\mbb{Z}_p} T^{\vee}_{\mfS}(\mfM)$
is injective. 
This can be checked by almost the same method as the proof of Lemma \ref{rel1} (1).
The rest statements follow from (1) and Lemma \ref{rel1}.  
\end{proof}

Let $\hat{\mfM}$ be in 
${}_{\mrm{w}}\mrm{Mod}^{r,\hat{G}}_{/\mfS_{\infty}}$ 
or ${}_{\mrm{w}}\mrm{Mod}^{r,\hat{G}}_{/\mfS}$.
Then $T_{\mfS}(\hat{\mfM})$ has a natural $G$-action via 
$\theta\colon T_{\mfS}(\mfM)\overset{\sim}{\rightarrow}\hat{T}(\hat{\mfM})$ 
(see Theorem \ref{Li}).
\begin{corollary}
Let $\hat{\mfM}$ and $\hat{\mfM}'$ be in 
${}_{\mrm{w}}\mrm{Mod}^{r,\hat{G}}_{/\mfS_{\infty}}$ 
$($resp.\ ${}_{\mrm{w}}\mrm{Mod}^{r,\hat{G}}_{/\mfS})$.
Let $f\colon \mfM'\to \mfM$ be a morphism in $\mrm{Mod}^r_{/\mfS_{\infty}}$.
If $T_{\mfS}(f)$ is $G$-equivalent,
then $f$ is in fact a morphism in  
${}_{\mrm{w}}\mrm{Mod}^{r,\hat{G}}_{/\mfS_{\infty}}$ 
$($resp.\ ${}_{\mrm{w}}\mrm{Mod}^{r,\hat{G}}_{/\mfS})$.
\end{corollary}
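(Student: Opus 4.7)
The plan is to deduce the $\hat G$-equivariance of $\hat f:=\whR\otimes_{\vphi,\mfS}f$ from the $G$-equivariance of $\hat T^{\vee}(f)$ by transporting information across Lemma \ref{rel2}.

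First, I translate the hypothesis into a statement about $\hat T$. By Theorem \ref{Li}(1) and the very definition of the $G$-action on $T_{\mfS}$ via $\theta$, the map $\theta_{\mfM}\colon T_{\mfS}(\mfM)\xrightarrow{\sim}\hat T(\hat{\mfM})$ is a $G$-equivariant isomorphism (and similarly for $\mfM'$). A direct check on the formulas $\theta(h)(a\otimes m)=a\vphi(h(m))$ and $\hat f(a\otimes m)=a\otimes f(m)$ shows the naturality square $\hat T(\hat f)\circ\theta_{\mfM}=\theta_{\mfM'}\circ T_{\mfS}(f)$. Since both $\theta$'s are $G$-equivariant isomorphisms and $T_{\mfS}(f)$ is $G$-equivariant by hypothesis, $\hat T(\hat f)$, and hence its $\mbb{Z}_p$-dual $\hat T^{\vee}(f)$, is $G$-equivariant.

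Next, I consider the diagram obtained from the naturality of $\hat\iota$ in $\hat{\mfM}$:
$$\begin{CD}
W(\mrm{Fr}R)\otimes_{\whR}\hat{\mfM}' @>\sim>> W(\mrm{Fr}R)\otimes_{\mbb{Z}_p}\hat T^{\vee}(\hat{\mfM}')\\
@V{\mrm{Id}\otimes\hat f}VV @VV{\mrm{Id}\otimes\hat T^{\vee}(f)}V\\
W(\mrm{Fr}R)\otimes_{\whR}\hat{\mfM} @>\sim>> W(\mrm{Fr}R)\otimes_{\mbb{Z}_p}\hat T^{\vee}(\hat{\mfM}).
\end{CD}$$
Commutativity is immediate from the fact that $\hat\iota$ is defined by evaluation $x\mapsto(g\mapsto g(x))$. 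By Lemma \ref{rel2}(2) the horizontals are $G$-equivariant, and by Lemma \ref{rel2}(4) they are isomorphisms. The right vertical is $G$-equivariant by the previous paragraph, so the left vertical $\mrm{Id}\otimes\hat f$ must be $G$-equivariant as well.

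Finally, by Corollary \ref{modext} the natural map $\hat{\mfM}'\hookrightarrow W(\mrm{Fr}R)\otimes_{\whR}\hat{\mfM}'$ is injective, and by condition (2) of Definition \ref{Liumod} the $\hat G$-action on $\hat{\mfM}'$ is the restriction of the $G$-action on the ambient $W(\mrm{Fr}R)$-module; the same for $\hat{\mfM}$. Restricting $\mrm{Id}\otimes\hat f$ to $\hat{\mfM}'$ therefore yields the $\hat G$-equivariance of $\hat f\colon\hat{\mfM}'\to\hat{\mfM}$, which is exactly the condition that $f$ be a morphism in ${}_{\mrm{w}}\mrm{Mod}^{r,\hat G}_{/\mfS_{\infty}}$ (resp.\ ${}_{\mrm{w}}\mrm{Mod}^{r,\hat G}_{/\mfS}$); the non-weak cases are immediate since $\mrm{Mod}^{r,\hat G}_{/\mfS_{\infty}}$ and $\mrm{Mod}^{r,\hat G}_{/\mfS}$ are full subcategories. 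The main conceptual point — and the only nontrivial step — is the first paragraph: one must verify that $\hat T(\hat f)$ makes sense and is $G$-equivariant even though $\hat f$ is not yet known to commute with $\hat G$, which is accomplished by factoring it through the $G$-equivariant map $T_{\mfS}(f)$ via the canonical $\theta$.
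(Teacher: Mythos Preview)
Your proof is correct and follows essentially the same route as the paper: transport the $G$-equivariance of $\hat T^{\vee}(f)$ across the naturality square for $\hat\iota$ (Lemma~\ref{rel2}) to conclude that the induced map on $\whR\otimes_{\vphi,\mfS}\mfM'$ commutes with $G$. The paper runs the argument over $W(R)$ using only the injectivity of $\hat\iota$ (Lemma~\ref{rel2}(2)), whereas you pass to $W(\mrm{Fr}R)$ and use bijectivity (Lemma~\ref{rel2}(4)); both are fine. One small fix: the injection $\hat{\mfM}'\hookrightarrow W(\mrm{Fr}R)\otimes_{\whR}\hat{\mfM}'$ you invoke in the last paragraph is Corollary~\ref{ringext} (change of ring, same module), not Corollary~\ref{modext} (same ring, change of module).
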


\begin{proof}
Consider the commutative diagram
$$
\xymatrix{
W(R)\otimes_{\mbb{Z}_p}\hat{T}^{\vee}(\hat{\mfM}')\ar[r] & 
W(R)\otimes_{\mbb{Z}_p}\hat{T}^{\vee}(\hat{\mfM}) \\
W(R)\otimes_{\whR}(\whR\otimes_{\vphi,\mfS}\mfM')\ar[r] \ar@{^{(}->}^{\hat{\iota}}_{\wr}[u] & 
W(R)\otimes_{\whR}(\whR\otimes_{\vphi,\mfS}\mfM)\ar@{^{(}->}^{\hat{\iota}}_{\wr}[u]
.}
$$
where the top and bottom arrows are  morphisms induced from $f$.
By our assumption of $f$ and the result that $\hat{\iota}$ is injective, 
we see that the bottom arrow commutes with $G$-action
and then we have done. 
\end{proof}

\subsection{Proof of Theorem \ref{MThm1}} 

\begin{lemma}
\label{MThm2''}
Let $0\to T'\to T\to T''\to 0$ be an exact sequence in $\mrm{Rep}_{\mrm{tor}}(G_{\infty})$.
Let $\mfM\in \mrm{Mod}^r_{/\mfS_{\infty}}$   and
$\psi$ an isomorphism 
$T_{\mfS}(\mfM)\overset{\sim}{\longrightarrow} T$
of $\mbb{Z}_p$-representations of $G_{\infty}$. 
Then there exists an exact sequence 
$0\to \mfM''\to \mfM\to \mfM'\to 0$
in $\mrm{Mod}^r_{/\mfS_{\infty}}$ 
which makes the following commutative diagram:
\begin{center}
$\displaystyle \xymatrix{
0\ar[r] & T'\ar[r] & T\ar[r] 
& T''\ar[r] & 0 \\
0\ar[r] & T_{\mfS}(\mfM')\ar[r] \ar_{\wr}[u] & 
T_{\mfS}(\mfM)\ar[r]\ar^{\psi}_{\wr}[u] 
& T_{\mfS}(\mfM'')\ar[r] \ar_{\wr}[u] & 0. }$
\end{center}
\end{lemma}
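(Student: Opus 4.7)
The plan is to lift the exact sequence of representations to an exact sequence of étale $\vphi$-modules, and then to cut it down to $\mfM$ via scheme-theoretic closure. Set $M = \cO\otimes_{\mfS}\mfM$, which is a torsion étale $\cO$-module, and recall from Proposition \ref{FoKi} the canonical identification $T_{\mfS}(\mfM)\simeq \mcal{T}(M)$. Composing with $\psi$, the sequence $0\to T'\to T\to T''\to 0$ becomes an exact sequence in $\mrm{Rep}_{\mrm{tor}}(G_{\infty})$ ending in $\mcal{T}(M)$. Applying the contravariant anti-equivalence $\mcal{T}\colon \mbf{\Phi M}_{/\cO_{\infty}}\to \mrm{Rep}_{\mrm{tor}}(G_{\infty})$ (Proposition \ref{Fon}), this lifts uniquely to an exact sequence of étale $\cO$-modules
\[
0\to M''\to M\to M'\to 0
\]
with $\mcal{T}(M'')\simeq T''$ and $\mcal{T}(M')\simeq T'$, compatibly with $\psi$.

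Next, I would define $\mfM'$ as the image of the composition $\mfM\hookrightarrow M\twoheadrightarrow M'$, and set $\mfM'' := \mrm{ker}(\mfM\to \mfM') = \mfM\cap M''$ inside $M$. Both are $\vphi$-stable $\mfS$-submodules, and by construction the sequence $0\to \mfM''\to \mfM\to \mfM'\to 0$ is exact in the category of $\vphi$-modules over $\mfS$. The essential point is to confirm $\mfM',\mfM''\in \mrm{Mod}^{r}_{/\mfS_{\infty}}$. This is handled by applying Proposition \ref{stc} to the $\vphi$-morphism $\mfM\to M'$: the source is in $\mrm{Mod}^{r}_{/\mfS_{\infty}}$, and the target $M'$ is $p'$-torsion free (being an $\cO$-module, on which $u$ acts invertibly), so both the kernel $\mfM''$ and the image $\mfM'$ lie in $\mrm{Mod}^{r}_{/\mfS_{\infty}}$.

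Finally, I would identify $T_{\mfS}(\mfM')$ and $T_{\mfS}(\mfM'')$ with the prescribed representations. By flatness of $\cO$ over $\mfS$, tensoring the exact sequence of Kisin modules yields $\cO\otimes_{\mfS}\mfM'' = M''$; for the other piece, using $\mfM[1/u] = M$ and $\mfM' = (\mfM + M'')/M''$ gives $\cO\otimes_{\mfS}\mfM' = \mfM'[1/u] = M/M'' = M'$. Invoking Proposition \ref{FoKi} once more, $T_{\mfS}(\mfM')\simeq \mcal{T}(M')\simeq T'$ and $T_{\mfS}(\mfM'')\simeq \mcal{T}(M'')\simeq T''$, and naturality of these identifications together with the compatibility of the chosen lift with $\psi$ yields commutativity of the diagram. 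The main obstacle is precisely the verification that $\mfM'$ and $\mfM''$ are genuine objects of $\mrm{Mod}^{r}_{/\mfS_{\infty}}$ (in particular, admitting two-term free resolutions), which is exactly what Proposition \ref{stc} delivers when combined with the equivalent characterizations of Proposition \ref{torKi}.
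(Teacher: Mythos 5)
Your proposal is correct and follows essentially the same route as the paper: descend the sequence to étale $\vphi$-modules via the anti-equivalence $\mcal{T}$, set $\mfM'=g(\mfM)$ and $\mfM''=\mfM\cap M''$, invoke Proposition \ref{stc} to see both lie in $\mrm{Mod}^r_{/\mfS_{\infty}}$, and identify $\cO\otimes_{\mfS}\mfM'\simeq M'$ and $\cO\otimes_{\mfS}\mfM''\simeq M''$ before applying Proposition \ref{FoKi}. The only cosmetic difference is that you verify $\cO\otimes_{\mfS}\mfM'\simeq M'$ by a direct localization computation, whereas the paper deduces surjectivity by a diagram chase and injectivity from Fontaine's B.1.4.2, then uses the snake lemma for $\mfM''$; both are valid.
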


\begin{proof}
Put $M=\cO\otimes_{\mfS} \mfM$ and 
$\Psi$ an isomorphism defined by the composite 
$\mcal{T}(M)\simeq T_{\mfS}\overset{\psi}{\to} T$.
By Proposition \ref{Fon},
there exists an exact sequence 
$0\to M''\to M\overset{g}{\to} M'\to 0$ in $\mbf{\Phi M}_{/\cO_{\infty}}$  
which makes the following commutative diagram:
\begin{center}
$\displaystyle \xymatrix{
0\ar[r] & T'\ar[r] & T\ar[r] 
& T''\ar[r] & 0 \\
0\ar[r] & \mcal{T}(M')\ar[r] \ar_{\wr}[u] & 
\mcal{T}(M)\ar[r]\ar^{\Psi}_{\wr}[u] 
& \mcal{T}(M'')\ar[r] \ar_{\wr}[u] & 0. }$
\end{center}
By abuse of notation we denote by $g$ the composite $\mfM\hookrightarrow M\overset{g}{\to} M'$.
Put $\mfM''=\mfM\cap M''$ and $\mfM'=g(\mfM)$.
Since $\mfM\in \mrm{Mod}^r_{/\mfS_{\infty}}$ and $M'$ is $p'$-torsion free,
it follows from Proposition \ref{stc} that $\mfM'$ and 
$\mfM''$ are in $\mrm{Mod}^r_{/\mfS_{\infty}}$.
The inclusion map $\mfM\hookrightarrow M$ induces an injection $\mfM''\hookrightarrow M''$
and thus we have the following commutative diagram
\begin{center}
$\displaystyle \xymatrix{
0\ar[r] & M''\ar[r] & M\ar[r] 
& M'\ar[r] & 0 \\
0\ar[r] & \cO\otimes_{\mfS}\mfM''\ar[r] \ar[u] & 
\cO\otimes_{\mfS}\mfM\ar[r]\ar@{=}[u]
& \cO\otimes_{\mfS}\mfM' \ar[r]  \ar[u] & 0 }$
\end{center}
where two horizontal sequences of \'etale $\vphi$-modules are exact.
By a diagram chasing, we see that  the map $\cO\otimes_{\mfS} \mfM'\to M'$ 
is surjective.
Since $\mfM'\subset M'$ is $\vphi$-stable and finite as a $\mfS$-module,
we know that the map $\cO\otimes_{\mfS} \mfM'\to M'$ is 
injective (cf.\ \cite{Fo}, B. 1.4.2) and thus, it is  bijective. 
By the snake lemma, we know that the left vertical arrow of the above diagram is also bijective.
Taking the functor $T$ to the above diagram, Proposition \ref{FoKi} gives  the desired result.
\end{proof}

\begin{theorem}
\label{MThm2}
Let $0\to T'\to T\to T''\to 0$ be an exact sequence of 
finite torsion $\mbb{Z}_p$-representations of $G$.
Suppose that there exist a torsion 
$(\vphi, \hat{G})$-module $\hat{\mfM}$ of height $r$
and an isomorphism 
$\psi\colon \hat{T}(\hat{\mfM})\overset{\sim}{\longrightarrow} T$
of $\mbb{Z}_p$-representations of $G$. 
Then there exists an exact sequence 
$0\to \hat{\mfM}''\to \hat{\mfM}\to \hat{\mfM}'\to 0$
in $\mrm{Mod}^{r,\hat{G}}_{/\mfS_{\infty}}$ 
which makes the following commutative diagram:
\begin{center}
$\displaystyle \xymatrix{
0\ar[r] & T'\ar[r] & T\ar[r] 
& T''\ar[r] & 0 \\
0\ar[r] & \hat{T}(\hat{\mfM}')\ar[r] \ar_{\wr}[u] & 
\hat{T}(\hat{\mfM})\ar[r]\ar^{\psi}_{\wr}[u] 
& \hat{T}(\hat{\mfM}'')\ar[r] \ar_{\wr}[u] & 0. }$
\end{center}
\end{theorem}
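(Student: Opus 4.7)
The plan is to reduce the problem to Kisin modules via Theorem \ref{Li}(1), apply Lemma \ref{MThm2''} to obtain a candidate exact sequence at the level of Kisin modules, and then lift it to $(\vphi,\hat{G})$-modules by verifying that the natural $\hat{G}$-action on $\hat{\mfM}$ stabilizes the resulting Kisin submodule after base change to $\whR$.

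First, via the canonical $G_\infty$-equivariant isomorphism $T_\mfS(\mfM) \simeq \hat{T}(\hat{\mfM})$ of Theorem \ref{Li}(1), the given $\psi$ yields a $G_\infty$-equivariant isomorphism $T_\mfS(\mfM) \overset{\sim}{\to} T$. Applying Lemma \ref{MThm2''} produces an exact sequence
\[
0 \to \mfM'' \to \mfM \to \mfM' \to 0
\]
in $\mrm{Mod}^r_{/\mfS_\infty}$ whose image under $T_\mfS$ recovers the original sequence. Tensoring with $\whR$ via $\vphi$ and invoking Corollary \ref{exact} (since $\whR$ is $p$-torsion free) produces an exact sequence of $\whR$-modules
\[
0 \to \whR \otimes_{\vphi,\mfS} \mfM'' \to \hat{\mfM} \to \whR \otimes_{\vphi,\mfS} \mfM' \to 0.
\]

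The crux of the argument is to show that the submodule $\hat{\mfN} := \whR \otimes_{\vphi,\mfS} \mfM''$ is stable under the $\hat{G}$-action on $\hat{\mfM}$. By Definition \ref{Liumod}(2), this action extends to a continuous $G$-action on $W(\mrm{Fr}R) \otimes_{\vphi,\mfS} \mfM$, and Lemma \ref{rel2}(4) supplies a $G$-equivariant bijection
\[
W(\mrm{Fr}R) \otimes_{\vphi,\mfS} \mfM \overset{\sim}{\to} W(\mrm{Fr}R) \otimes_{\mbb{Z}_p} \hat{T}^\vee(\hat{\mfM}).
\]
By the explicit definition of $\iota_\mfS$ (and hence $\hat{\iota}$ via Lemma \ref{rel2}(1)), this isomorphism is natural in $\mfM$: the inclusion $\mfM'' \hookrightarrow \mfM$ carries $W(\mrm{Fr}R) \otimes_{\vphi,\mfS} \mfM''$ bijectively onto $W(\mrm{Fr}R) \otimes_{\mbb{Z}_p} T_\mfS(\mfM'')^\vee$. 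Under the identification $T_\mfS(\mfM'') \simeq T''$ transported through $\psi$, this equals the $G$-stable submodule $W(\mrm{Fr}R) \otimes_{\mbb{Z}_p} (T'')^\vee$, since Pontryagin-dualizing the $G$-quotient $T \twoheadrightarrow T''$ yields a $G$-stable inclusion $(T'')^\vee \hookrightarrow T^\vee$. Therefore $W(\mrm{Fr}R) \otimes_{\vphi,\mfS} \mfM''$ is $G$-stable; using the injectivity of $\whR \otimes_{\vphi,\mfS} \mfM' \hookrightarrow W(\mrm{Fr}R) \otimes_{\vphi,\mfS} \mfM'$ (Corollary \ref{modext}) to identify $\hat{\mfN}$ with the intersection $\hat{\mfM} \cap (W(\mrm{Fr}R) \otimes_{\vphi,\mfS} \mfM'')$ then gives $\hat{G}$-stability of $\hat{\mfN}$ inside $\hat{\mfM}$.

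Finally, the induced $\hat{G}$-action on $\hat{\mfN}$ and the descended action on the quotient $\hat{\mfM}/\hat{\mfN} \simeq \whR \otimes_{\vphi,\mfS} \mfM'$ promote $\mfM''$ and $\mfM'$ to torsion $(\vphi,\hat{G})$-modules of height $r$. Continuity, semi-linearity, commutation with $\vphi$, and the inclusions into the $H_K$-invariants are inherited from $\hat{\mfM}$. Verifying condition (5) of Definition \ref{Liumod} for the subobject reduces to checking that $\hat{\mfN}/I_+\hat{\mfN} \to \hat{\mfM}/I_+\hat{\mfM}$ is injective; this follows by tensoring the exact sequence of Kisin modules with $\whR/I_+ \simeq W(k)$ and applying Corollary \ref{exact} once more, and for the quotient $\hat{\mfM}/\hat{\mfN}$ the triviality of the action modulo $I_+$ descends automatically. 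Commutativity of the resulting diagram under $\hat{T}$ is then forced by the functorial isomorphism $T_\mfS \simeq \hat{T}$. The technical heart of the proof is the naturality-plus-intersection step establishing $\hat{G}$-stability of $\hat{\mfN}$; once this is in hand, the remaining verifications are routine applications of the exactness properties developed in Section 2.
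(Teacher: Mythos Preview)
Your proof is correct and follows essentially the same route as the paper's: apply Lemma \ref{MThm2''} at the Kisin level, then use the $G$-equivariant comparison of Lemma \ref{rel2} together with the intersection identity $\whR\otimes_{\vphi,\mfS}\mfM''=(\whR\otimes_{\vphi,\mfS}\mfM)\cap(W(\mrm{Fr}R)\otimes_{\vphi,\mfS}\mfM'')$ to obtain $\hat{G}$-stability of the submodule, and finish via the exactness results of Section 2. The only cosmetic differences are that the paper writes out the naturality diagram for $\hat{\iota}$ explicitly and invokes Corollary \ref{subweak} for condition (5) rather than reproving it inline; note also that the injectivity $\whR\otimes_{\vphi,\mfS}\mfM'\hookrightarrow W(\mrm{Fr}R)\otimes_{\vphi,\mfS}\mfM'$ you use for the intersection step is Corollary \ref{ringext}, not Corollary \ref{modext}.
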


\begin{proof}
By a short argument shows that we may suppose 
$T=\hat{T}(\hat{\mfM})$
and $\psi$
is the identity map for $T$. 
Take 
\[
\theta\colon T_{\mfS}(\mfM)\to \hat{T}(\hat{\mfM})
\]
defined by 
\[
\theta(f)(a\otimes m)=a\vphi(f(m))\quad \mrm{for}\ 
f\in T_{\mfS}(\mfM),\ a\in \wh{\mcal{R}}, m\in \mfM,
\]
as appeared in Section 2.4, which is $G_{\infty}$-equivalent.
By Lemma \ref{MThm2''},
we have an exact sequence 
$0\to \mfM''\to \mfM\to \mfM'\to 0$
in $\mrm{Mod}^r_{/\mfS_{\infty}}$ 
which makes the following commutative diagram:
\begin{center}
$\displaystyle \xymatrix{
0\ar[r] & T'\ar[r] & T\ar[r] 
& T''\ar[r] & 0 \\
0\ar[r] & T_{\mfS}(\mfM')\ar[r] \ar_{\wr}[u] & 
T_{\mfS}(\mfM)\ar[r]\ar^{\theta}_{\wr}[u] 
& T_{\mfS}(\mfM'')\ar[r] \ar_{\wr}[u] & 0. }$
\end{center}
We want to equip $\mfM'$ and $\mfM''$ with structures of $(\vphi,\hat{G})$-modules.
Combining the  diagram with Lemma \ref{rel2},
we obtain the following diagram whose all squares commute:

\begin{tiny}
$$
\xymatrix@C=2mm@R=3mm{
%%%%%%%%%%%%%%%%%%%   1   %%%%%%%%%%%%%%%%%%%%%%%%
 & 
W(R)\otimes_{\vphi,\mfS} \mfM \ar@{^{(}->}[rr] \ar^{\hat{\iota}}[rr] & 
 & 
W(R)\otimes_{\mbb{Z}_p} T^{\vee} \ar^{\wr}_{\alpha^{-1}\otimes \theta^{\vee}}[dd]\\
%%%%%%%%%%%%%%%%%%%   2   %%%%%%%%%%%%%%%%%%%%%%%%
W(R)\otimes_{\vphi,\mfS} \mfM'' \ar@{^{(}->}[ur]  & 
 & 
W(R)\otimes_{\mbb{Z}_p} (T'')^{\vee}  \ar^{\wr}[dd] \ar@{^{(}->}[ur]
& \\
%%%%%%%%%%%%%%%%%%%   3   %%%%%%%%%%%%%%%%%%%%%%%%
&
W(R)\otimes_{\vphi,\mfSur}(\mfSur\otimes_{\mfS} \mfM) 
\ar_{\wr}^{\alpha\otimes \mrm{Id}_{\mfM}}[uu] 
%\ar@{^{(}->}'[r][rr] \ar^{W(R)\otimes \iota_{\mfS}\qquad \qquad \qquad \qquad \qquad}'[r][rr] &
\ar@{^{(}-}[r]  &
\ar^{W(R)\otimes \iota_{\mfS}\qquad \qquad}[r] &
W(R)\otimes_{\vphi,\mfSur} (\mfSur\otimes_{\mbb{Z}_p} T^{\vee}_{\mfS}(\mfM)) 
 \\
%%%%%%%%%%%%%%%%%%%   4   %%%%%%%%%%%%%%%%%%%%%%%%
W(R)\otimes_{\vphi,\mfSur}(\mfSur\otimes_{\mfS} \mfM'') \ar@{^{(}->}[rr] 
\ar^{W(R)\otimes \iota_{\mfS}}[rr]  \ar_{\wr}^{\alpha\otimes \mrm{Id}_{\mfM''}}[uu] \ar@{^{(}->}[ur]
&  
& 
W(R)\otimes_{\vphi,\mfSur} (\mfSur\otimes_{\mbb{Z}_p} T^{\vee}_{\mfS}(\mfM'')). \ar@{^{(}->}[ur] & 
}
$$
\end{tiny}
Here, 
$\alpha\colon W(R)\otimes_{\vphi,\mfSur} \mfSur  \to W(R)$
is the isomorphism given by 
$\alpha(\sum_i a_i\otimes b_i)=\sum_i a_i\vphi(b_i)$ with 
$a_i\in W(R), b_i\in \mfSur$.
Define a map $W(R)\otimes_{\vphi,\mfS}\mfM'' \to W(R)\otimes_{\mbb{Z}_p} (T'')^{\vee}$
such that all squares in the above diagram commute.
Tensoring $W(\mrm{Fr}R)$ to the ceiling, 
we obtain a diagram whose all maps are injective (cf.\ Corollary \ref{exact} and \ref{ringext}): 
\begin{center}
$\displaystyle \xymatrix{
\wh{\mfM} \ar@{^{(}->}[r] & 
W(\mrm{Fr}R)\otimes_{\whR} \hat{\mfM} \ar@{^{(}->}[r] \ar^{\iota\quad }[r]& 
W(\mrm{Fr}R)\otimes_{\mbb{Z}_p} \hat{T}^{\vee}(\hat{\mfM}) 
\\
\whR\otimes_{\vphi,\mfS}\mfM'' \ar@{^{(}->}[r] \ar@{^{(}->}[u] & 
W(\mrm{Fr}R)\otimes_{\whR} (\whR\otimes_{\vphi,\mfS}\mfM'') \ar@{^{(}->}[r] 
\ar^{\qquad \iota''}[r] \ar@{^{(}->}[u] & 
W(\mrm{Fr}R)\otimes_{\mbb{Z}_p}(T'')^{\vee} \ar@{^{(}->}[u]. 
}$
\end{center}
Moreover, the map $\iota=W(\mrm{Fr}R)\otimes_{W(R)}\hat{\iota}$ is bijective by Lemma \ref{rel2} (4), and 
the map $\iota''$ is also bijective by Lemma \ref{rel1} (3).  
Define a $G$-action on $W(\mrm{Fr}R)\otimes_{\whR} (\whR\otimes_{\vphi,\mfS}\mfM'')$
via $\iota''$. Then the injection 
$W(\mrm{Fr}R)\otimes_{\whR} (\whR\otimes_{\vphi,\mfS}\mfM'')
\hookrightarrow 
W(\mrm{Fr}R)\otimes_{\whR} \hat{\mfM}$
is automatically $G$-equivalent.
On the other hand,
see the diagram
\begin{center}
$\displaystyle \xymatrix{
0\ar[r] & \mfS\otimes_{\vphi,\mfS} \mfM''\ar[r] \ar[d] & 
\mfS\otimes_{\vphi,\mfS} \mfM\ar[r] \ar[d] & 
\mfS\otimes_{\vphi,\mfS} \mfM'\ar[r] \ar[d] & 
0 \\
0\ar[r] & \whR\otimes_{\vphi,\mfS} \mfM''\ar[r] \ar[d] & 
\whR\otimes_{\vphi,\mfS} \mfM\ar[r] \ar[d] &
\whR\otimes_{\vphi,\mfS} \mfM'\ar[r] \ar[d]
& 0\\
0\ar[r] & W(\mrm{Fr}R)\otimes_{\vphi,\mfS} \mfM''\ar[r] & 
W(\mrm{Fr}R)\otimes_{\vphi,\mfS} \mfM\ar[r]  &
W(\mrm{Fr}R)\otimes_{\vphi,\mfS} \mfM'\ar[r] & 0. }$
\end{center}
By Corollary \ref{exact} and \ref{ringext}, we see that 
all horizontal sequences are exact and all vertical arrows are injective.
Hence we may regard $\whR\otimes_{\vphi,\mfS} \mfM, \whR\otimes_{\vphi,\mfS} \mfM''$
 and $W(\mrm{Fr}R)\otimes_{\vphi,\mfS} \mfM''$
as submodules of $W(\mrm{Fr}R)\otimes_{\vphi,\mfS} \mfM=W(\mrm{Fr}R)\otimes_{\whR} \hat{\mfM}$.
In particular, 
we have 
\begin{equation}
\label{equa}
\whR\otimes_{\vphi,\mfS} \mfM''=
(\whR\otimes_{\vphi,\mfS} \mfM) \cap (W(\mrm{Fr}R)\otimes_{\vphi,\mfS} \mfM'').
\end{equation}
\noindent
Since $G$-actions on $\whR\otimes_{\vphi,\mfS} \mfM$ and $W(\mrm{Fr}R)\otimes_{\vphi,\mfS} \mfM''$
are restrictions of the $G$-action on $W(\mrm{Fr}R)\otimes_{\vphi,\mfS} \mfM=W(\mrm{Fr}R)\otimes_{\whR} \hat{\mfM}$,
the equation (\ref{equa}) gives a well-defined $G$-action on $\whR\otimes_{\vphi,\mfS} \mfM''$. 
Since the $G$-action on $\whR\otimes_{\vphi,\mfS} \mfM$ factors through $\hat{G}$,
the $G$-action on $\whR\otimes_{\vphi,\mfS} \mfM''$ also factors through $\hat{G}$.
We also define $\hat{G}$-action on $\whR\otimes_{\vphi,\mfS} \mfM'$ via a natural isomorphism
$\whR\otimes_{\vphi,\mfS} \mfM''\simeq (\whR\otimes_{\vphi,\mfS} \mfM'')/(\whR\otimes_{\vphi,\mfS} \mfM'')$.
It is not difficult to check that 
triples $\hat{\mfM}'=(\mfM',\vphi,\hat{G})$ and  $\hat{\mfM}''=(\mfM'',\vphi,\hat{G})$
are weak $(\vphi,\hat{G})$-modules.
Obviously, we have the exact sequences 
\begin{equation}
\label{desired}
0\to \hat{\mfM}''\to \hat{\mfM}\to \hat{\mfM}'\to 0
\end{equation}
of weak $(\vphi,\hat{G})$-modules.
By Corollary \ref{subweak},
we know that   $\hat{\mfM}'$ and $\hat{\mfM}''$  are  in fact $(\vphi,\hat{G})$-modules.
Now we check that the exact sequence (\ref{desired}) satisfies the desired property.
Projections $\mfM\to \mfM'$ and $\hat{\mfM}\to \hat{\mfM}'$ induce injections 
$T_{\mfS}(\mfM')\hookrightarrow T_{\mfS}(\mfM)$ of $\mbb{Z}_p[G_{\infty}]$-modules and 
$\hat{T}(\mfM')\hookrightarrow \hat{T}(\mfM)$ of $\mbb{Z}_p[G]$-modules.
Furthermore, the diagram below is commutative:
\begin{center}
$\displaystyle \xymatrix{
\hat{T}(\hat{\mfM}')  \ar@{^{(}->}[d] & 
T_{\mfS}(\mfM')\ar_{\sim}^{\theta}[l] \ar@{^{(}->}[d] \ar^{\sim}[r] & 
T' \ar@{^{(}->}[d]\\
\hat{T}(\hat{\mfM})  & 
T_{\mfS}(\mfM) \ar_{\sim}^{\theta}[l]  \ar_{\theta}^{\sim}[r] &
T. 
}$
\end{center}
This induces the commutative diagram 
\begin{center}
$\displaystyle \xymatrix{
T'  \ar@{^{(}->}[r] & 
T \ar@{=}[d]\\  
\hat{T}(\hat{\mfM}')  \ar@{^{(}->}[r] \ar_{\wr}[u] & 
\hat{T}(\hat{\mfM})=T
}$
\end{center}
and thus we see that the left vertical arrow in just the above square is $G$-equivalent.
The desired result follows from this.
\end{proof}

\begin{remark}
By using the theory of \'etale $(\vphi,\hat{G})$-modules,
we will know a canonical understanding for  the sequence $0\to \hat{\mfM}''\to \hat{\mfM}\to \hat{\mfM}'\to 0$
appeared in Theorem \ref{MThm2}, see Remark \ref{E1.1}.
\end{remark}

By Theorem \ref{MThm2}, 
the essential image of the functor $\hat{T}\colon \mrm{Mod}^{r,\hat{G}}_{/\mfS_{\infty}}\to \mrm{Rep}_{\mrm{tor}}(G)$
is stable under talking a subquotient. 
In particular, we see that the category $\mrm{Rep}^{\hat{G}}_{\mrm{tor}}(G)$ 
is also stable under taking a subquotient.
Clearly, the category $\mrm{Rep}^{\hat{G}}_{\mrm{tor}}(G)$ 
is also stable under taking a direct sum.
We show that $\mrm{Rep}^{\hat{G}}_{\mrm{tor}}(G)$  is stable under taking a dual and a tensor product.
\begin{lemma}
The full subcategory $\mrm{Rep}^{\hat{G}}_{\mrm{tor}}(G)$ of $\mrm{Rep}_{\mrm{tor}}(G)$
is stable under taking a dual.
\end{lemma}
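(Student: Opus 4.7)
The plan is to combine Cartier duality with an explicit construction of a torsion $(\vphi,\hat{G})$-module representing a negative Tate twist, and then to tensor.

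Write $T \simeq \hat{T}(\hat{\mfM})$ for some $\hat{\mfM}\in\mrm{Mod}^{r,\hat{G}}_{/\mfS_{\infty}}$ killed by $p^n$. By Theorem \ref{DualLi} and Proposition \ref{Gal}, the Cartier dual $\hat{\mfM}^{\vee}$ lies in $\mrm{Mod}^{r,\hat{G}}_{/\mfS_{\infty}}$ and satisfies $\hat{T}(\hat{\mfM}^{\vee}) \simeq T^{\vee}(r)$. Since $T^{\vee}$ is $p^n$-torsion, there is a $\mbb{Z}_p[G]$-linear isomorphism $T^{\vee}\simeq T^{\vee}(r)\otimes_{\mbb{Z}_p}\mbb{Z}_p/p^n(-r)$, so it suffices to exhibit a torsion $(\vphi,\hat{G})$-module $\hat{\mfN}$ of finite height with $\hat{T}(\hat{\mfN})\simeq\mbb{Z}_p/p^n(-r)$, and then form $\hat{\mfM}^{\vee}\otimes\hat{\mfN}$ using a tensor-product construction for $(\vphi,\hat{G})$-modules that extends Proposition \ref{tensorKi} (built by endowing the tensor of the underlying Kisin modules with the diagonal $\hat{G}$-action on $\whR\otimes_{\vphi,\mfS}(\mfM^{\vee}\otimes\mfN)$ and checking Definition \ref{Liumod}).

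The underlying Kisin module of $\hat{\mfN}$ is constructed as follows. Since $u$ is nilpotent in the finite ring $\mfS/(E(u)^r,p^n)$, one may choose $M$ large enough that $u^{(p-1)M}$ lies in $(E(u)^r,p^n)\subseteq\mfS$; write $u^{(p-1)M} \equiv E(u)^r\gamma\pmod{p^n}$ and put $\mfN := (\mfS/p^n)\cdot f$ with $\vphi_{\mfN}(f) = c_0^r\gamma\,f$. Inverting $u$ and substituting $f = u^M e$ identifies $\cO\otimes_{\mfS}\mfN$ with the étale $\vphi$-module $(\cO/p^n)\cdot e$ of Frobenius eigenvalue $c_0^r E(u)^{-r}$, which is inverse to that of $\wh{\mfS}_n^{\vee}[1/u]$; hence $\mfN$ has finite $E(u)$-height and represents $\mbb{Z}_p/p^n(-r)|_{G_{\infty}}$ by Fontaine's theory.

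The main obstacle is to upgrade $\mfN$ to a torsion $(\vphi,\hat{G})$-module $\hat{\mfN}$ whose $\hat{T}$ recovers the $G$-representation $\mbb{Z}_p/p^n(-r)$, not only its restriction to $G_{\infty}$. The plan is to transport the explicit $\hat{G}$-action on $\wh{\mfS}_n^{\vee}$ (the formula $\tau(\mfrak{f}^r) = \hat{c}^r\mfrak{f}^r$ from the remark after Theorem \ref{dual}) to $\mfN$ via the $u$-localization identification, and then to verify the axioms of Definition \ref{Liumod}; the delicate axiom is (5), that $\hat{G}$ acts trivially on $\hat{\mfN}/I_+\hat{\mfN}$, which reduces to showing that the transported cocycle lies in $1 + I_+\whR$. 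Once $\hat{\mfN}$ is available, $\hat{\mfM}^{\vee}\otimes\hat{\mfN}$ is a torsion $(\vphi,\hat{G})$-module of finite height with $\hat{T}(\hat{\mfM}^{\vee}\otimes\hat{\mfN}) \simeq T^{\vee}(r)\otimes\mbb{Z}_p/p^n(-r) \simeq T^{\vee}$, proving $T^{\vee}\in\mrm{Rep}^{\hat{G}}_{\mrm{tor}}(G)$.
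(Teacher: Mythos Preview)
Your strategy matches the paper's: Cartier duality gives $\hat{T}(\hat{\mfM}^{\vee})\simeq T^{\vee}(r)$, and one then tensors with a rank-one torsion $(\vphi,\hat{G})$-module realizing $\mbb{Z}_p/p^n(-r)$. The gap is in producing that object. You build a suitable Kisin module $\mfN$ and propose to transport the $\hat{G}$-action from $\wh{\mfS}_n^{\vee}$ through the $u$-localization, but you do not carry this out; the explicit formula $\tau(\mfrak{f}^r)=\hat{c}^r\mfrak{f}^r$ you invoke is only stated under the hypothesis $K_{p^\infty}\cap K_{\infty}=K$ (which can fail for $p=2$), and even when it is available, the change of basis $f=u^{M}e$ introduces factors of $\sigma(u)/u$, so you still owe an argument that the resulting action preserves the lattice $\whR\otimes_{\vphi,\mfS}\mfN$ inside $\cOG\otimes_{\vphi,\cO}N$ and that axiom~(5) of Definition~\ref{Liumod} holds.

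The paper bypasses all of this via periodicity of the cyclotomic character on $p^n$-torsion: choose $m>r$ divisible by the exponent of $(\mbb{Z}/p^n\mbb{Z})^{\times}$, so that $\mbb{Z}_p/p^n(-r)\simeq\mbb{Z}_p/p^n(m-r)$ as $G$-modules with $m-r>0$. Now only a \emph{positive} twist is needed, and this is already in hand: the Cartier dual $\hat{\mfS}_n(m-r)$ of the trivial module $\hat{\mfS}_n$ taken in height $m-r$ was constructed in Section~3.2 directly from Theorem~\ref{Li}, with no explicit cocycle and no restriction on $p$, and it satisfies $\hat{T}(\hat{\mfS}_n(m-r))\simeq\mbb{Z}_p/p^n(m-r)$. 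Tensoring $\hat{\mfM}^{\vee}$ with $\hat{\mfS}_n(m-r)$ (this is exactly the construction you anticipate, carried out in Lemma~\ref{tensorLi}) then gives $T^{\vee}$.
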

\begin{proof}
Let $T\in \mrm{Rep}^{\hat{G}}_{\mrm{tor}}(G)$  and 
take some  $\hat{\mfM}\in \mrm{Mod}^{r,\hat{G}}_{/\mfS_{\infty}}$ (for some $r<\infty$) 
such that $T=\hat{T}(\hat{\mfM})$. 
Take an integer $n\ge 0$ such that $\mfM$ is killed by $p^n$.
For any integer $k\ge 0$, denote by  $\hat{\mfS}_n(k)$ 
the Cartier dual of the trivial $(\vphi,\hat{G})$-module $\hat{\mfS}_n$ 
in $\mrm{Mod}^{k,\hat{G}}_{/\mfS_{\infty}}$ and by 
$\mfS_n(k)$ its underlying $\vphi$-module.
Then it can be seen immediately that 
$\mfM\otimes_{\mfS} \mfS_n(k)$ has a structure of a $(\vphi,\hat{G})$-module
of height $r+k$, and if we denote it by $\hat{\mfM}(k)$, then 
$\hat{T}(\hat{\mfM}(k))=\hat{T}(\hat{\mfM})(k)$.
Take an integer $m>r$ which is divided  by $p-1$.
Then 
\begin{align*}
T^{\vee}&=\hat{T}(\hat{\mfM}^{\vee})\otimes_{\mbb{Z}_p} \mbb{F}_p(-r)
=\hat{T}(\hat{\mfM}^{\vee})\otimes_{\mbb{Z}_p} \mbb{F}_p(m-r)\\
&=\hat{T}(\hat{\mfM}^{\vee})\otimes_{\mbb{Z}_p} \hat{T}(\hat{\mfS}_n(m-r))
=\hat{T}(\hat{\mfM}^{\vee}(m-r))
\end{align*}
and we have done.
\end{proof}

Finally we consider the assertion related with a tensor product of Theorem \ref{MThm1}.
It is enough to prove the following lemma.
\begin{lemma}
\label{tensorLi}
Let $\hat{\mfM}\in {}_{\mrm{w}}\mrm{Mod}^{r,\hat{G}}_{/\mfS_{\infty}}$ 
$($resp.\ $\hat{\mfM}\in \mrm{Mod}^{r,\hat{G}}_{/\mfS_{\infty}})$
and 
$\hat{\mfM}'\in {}_{\mrm{w}}\mrm{Mod}^{r',\hat{G}}_{/\mfS_{\infty}}$ 
$($resp.\ $\hat{\mfM}\in \mrm{Mod}^{r',\hat{G}}_{/\mfS_{\infty}})$
for some $r,r'\in \{0,1,\dots ,\infty\}$.
Then $\frac{\mfM\otimes_{\mfS}\mfM'}{u{\rm \mathchar`-tor}}$ is an object of $\mrm{Mod}^{r+r'}_{/\mfS_{\infty}}$
and has a structure of a weak $(\vphi,\hat{G})$-modules
$($resp.\ a $(\vphi,\hat{G})$-modules$)$.
If we put $\hat{\mfM}\otimes \hat{\mfM}'=\wh{\frac{\mfM\otimes_{\mfS}\mfM'}{u{\rm \mathchar`-tor}}}$,
then there exists a canonical isomorphism
$\hat{T}(\hat{\mfM}\otimes \hat{\mfM}')\simeq \hat{T}(\hat{\mfM})\otimes_{\mbb{Z}_p}\hat{T}(\hat{\mfM}')$
 of $\mbb{Z}_p[G]$-modules.
\end{lemma}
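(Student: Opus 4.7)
The plan is to split the statement into the Kisin-module part and the $\hat{G}$-part. For the former, Proposition~\ref{tensorKi} directly yields that $\mfM\otimes\mfM':=(\mfM\otimes_{\mfS}\mfM')/(u\text{-torsion})$ lies in $\mrm{Mod}^{r+r'}_{/\mfS_{\infty}}$ and that there is a natural $G_{\infty}$-equivariant isomorphism $T_{\mfS}(\mfM\otimes\mfM')\simeq T_{\mfS}(\mfM)\otimes_{\mbb{Z}_p}T_{\mfS}(\mfM')$. What remains is to equip $\whR\otimes_{\vphi,\mfS}(\mfM\otimes\mfM')$ with a $\hat{G}$-action satisfying conditions (1)--(4) (and (5) in the strong case) of Definition~\ref{Liumod}, and to upgrade the Galois isomorphism from $G_{\infty}$-equivariance to $G$-equivariance.

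For the $\hat{G}$-structure I would use the associativity/base-change identification
\[
\hat{\mfM}\otimes_{\whR}\hat{\mfM}' \;=\; (\whR\otimes_{\vphi,\mfS}\mfM)\otimes_{\whR}(\whR\otimes_{\vphi,\mfS}\mfM') \;\simeq\; \whR\otimes_{\vphi,\mfS}(\mfM\otimes_{\mfS}\mfM'),
\]
equip the left-hand side with the diagonal $\hat{G}$-action (which commutes with the tensor Frobenius by construction), and then descend along the canonical surjection
\[
q\colon \whR\otimes_{\vphi,\mfS}(\mfM\otimes_{\mfS}\mfM')\twoheadrightarrow \whR\otimes_{\vphi,\mfS}(\mfM\otimes\mfM').
\]
The main obstacle is verifying that $\ker q$ is $\hat{G}$-stable. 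My proposed trick is to compare with $W(\mrm{Fr}R)$-tensors: the Teichm\"uller element $u=[\underline{\pi}]$ is a unit in $W(\mrm{Fr}R)$ (since $\underline{\pi}\ne 0$ in the field $\mrm{Fr}R$), hence so is $\vphi(u)=u^p$, and therefore any $u^N$-torsion $\mfS$-module is killed by the base change $W(\mrm{Fr}R)\otimes_{\vphi,\mfS}(-)$. Since the $u$-torsion of $\mfM\otimes_{\mfS}\mfM'$ is finitely generated (hence $u^N$-torsion for some $N$), the map $W(\mrm{Fr}R)\otimes_{\vphi,\mfS}q$ is an isomorphism. Combined with the injection $\whR\otimes_{\vphi,\mfS}(\mfM\otimes\mfM')\hookrightarrow W(\mrm{Fr}R)\otimes_{\vphi,\mfS}(\mfM\otimes\mfM')$ of Corollary~\ref{ringext}, this identifies $\ker q$ with the kernel of the manifestly $\hat{G}$-equivariant natural map $\whR\otimes_{\vphi,\mfS}(\mfM\otimes_{\mfS}\mfM')\to W(\mrm{Fr}R)\otimes_{\vphi,\mfS}(\mfM\otimes_{\mfS}\mfM')$ (both sides carrying the diagonal action), so $\ker q$ is $\hat{G}$-stable and the action descends.

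The axioms (1)--(5) for the resulting triple are then straightforward: (1) and the height come from Proposition~\ref{tensorKi}; (3) is automatic from the tensor construction of Frobenius; (2), (4) and (5) follow from the corresponding conditions for $\hat{\mfM}$ and $\hat{\mfM}'$ via the diagonal action --- continuity by functoriality, $H_K$-fixedness of $\mfM\otimes\mfM'$ because each $1\otimes x\in\hat{\mfM}$ and $1\otimes y\in\hat{\mfM}'$ is $H_K$-fixed, and triviality of $\hat{G}$ on $(\hat{\mfM}\otimes_{\whR}\hat{\mfM}')/I_+\simeq (\hat{\mfM}/I_+\hat{\mfM})\otimes_{W(k)}(\hat{\mfM}'/I_+\hat{\mfM}')$. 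For the $G$-equivariant Galois identification I would pass to the covariant functor $\hat{T}_{\ast}$: by Proposition~\ref{prop1} applied to each factor, the base change $W(\mrm{Fr}R)\otimes_{\whR}(\hat{\mfM}\otimes_{\whR}\hat{\mfM}')$ factors as $W(\mrm{Fr}R)\otimes_{\mbb{Z}_p}(\hat{T}_{\ast}(\hat{\mfM})\otimes_{\mbb{Z}_p}\hat{T}_{\ast}(\hat{\mfM}'))$ with diagonal $G$-action; taking $\vphi=1$ and invoking the $W(\mrm{Fr}R)$-isomorphism from the descent step yields $\hat{T}_{\ast}(\hat{\mfM}\otimes\hat{\mfM}')\simeq\hat{T}_{\ast}(\hat{\mfM})\otimes_{\mbb{Z}_p}\hat{T}_{\ast}(\hat{\mfM}')$ as $\mbb{Z}_p[G]$-modules, and dualizing via Corollary~\ref{cov} produces the desired $G$-equivariant isomorphism $\hat{T}(\hat{\mfM}\otimes\hat{\mfM}')\simeq\hat{T}(\hat{\mfM})\otimes_{\mbb{Z}_p}\hat{T}(\hat{\mfM}')$.
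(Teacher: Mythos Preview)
Your proposal is correct and follows essentially the same route as the paper: you put the diagonal $\hat{G}$-action on $\whR\otimes_{\vphi,\mfS}(\mfM\otimes_{\mfS}\mfM')$, identify $\ker q$ with the kernel of the natural map to the $W(\mrm{Fr}R)$-tensor (using Corollary~\ref{ringext} and that $u$ is a unit in $W(\mrm{Fr}R)$) to see it is $\hat{G}$-stable, and then prove the $G$-equivariant isomorphism via $\hat{T}_{\ast}$ and Proposition~\ref{prop1} before dualizing with Corollary~\ref{cov}. Your verification of axioms (4) and (5) is in fact slightly more explicit than the paper's, which simply asserts they are ``not difficult to check''.
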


\begin{proof}
Since $\frac{\mfM\otimes_{\mfS}\mfM'}{u{\rm \mathchar`-tor}}$ is $u$-torsion free,
we see 
$\frac{\mfM\otimes_{\mfS}\mfM'}{u{\rm \mathchar`-tor}}\in \mrm{Mod}^{r+r'}_{/\mfS_{\infty}}$
by Proposition \ref{torKi}.
We equip a $\hat{G}$-action (resp.\ a $G$-action) on $\whR\otimes_{\vphi,\mfS}(\mfM\otimes_{\mfS}\mfM')$
(resp.\ $W(\mrm{Fr}R)\otimes_{\vphi,\mfS}(\mfM\otimes_{\mfS}\mfM')$)
via a canonical isomorphism 
$\whR\otimes_{\vphi,\mfS}(\mfM \otimes_{\mfS}\mfM')\simeq 
(\whR\otimes_{\vphi,\mfS}\mfM) \otimes_{\mfS}(\whR \otimes_{\vphi,\mfS}(\mfM'))$.
(resp.\ 
$W(\mrm{Fr}R)\otimes_{\vphi,\mfS}(\mfM \otimes_{\mfS}\mfM')\simeq 
(W(\mrm{Fr}R)\otimes_{\whR}(\whR\otimes_{\vphi,\mfS}\mfM)) 
\otimes_{\mfS}
(W(\mrm{Fr}R)\otimes_{\whR}(\whR\otimes_{\vphi,\mfS}\mfM'))$).
If we denote by $(u{\rm \mathchar`-tor})$ by the $u$-torsion part of 
$\mfM\otimes_{\mfS}\mfM'$, then we obtain an exact sequence
\[
\whR\otimes (u{\rm \mathchar`-tor}) \to \whR\otimes_{\vphi,\mfS}(\mfM \otimes_{\mfS}\mfM')
\overset{\eta}{\rightarrow} \whR\otimes_{\vphi,\mfS}(\frac{\mfM \otimes_{\mfS}\mfM'}{u{\rm \mathchar`-tor}})\to 0
\]
as $\whR$-modules.
Note that $u$ is a unit of $W(\mrm{Fr}R)$.
Since a natural map 
$\whR\otimes_{\vphi,\mfS}\frac{\mfM \otimes_{\mfS}\mfM'}{u{\rm \mathchar`-tor}}
\to W(\mrm{Fr}R)\otimes_{\vphi,\mfS}\frac{\mfM \otimes_{\mfS}\mfM'}{u{\rm \mathchar`-tor}}
=W(\mrm{Fr}R)\otimes_{\vphi,\mfS}(\mfM\otimes_{\mfS}\mfM')$
is injective (cf.\ Corollary \ref{ringext}), 
we see that the equality 
$\mrm{ker}(\eta)=\mrm{ker}
(\whR\otimes_{\vphi,\mfS}(\mfM\otimes_{\mfS}\mfM')
\to
W(\mrm{Fr}R)\otimes_{\vphi,\mfS}(\mfM\otimes_{\mfS}\mfM'))$
and thus $\mrm{ker}(\eta)$ is stable under the $\hat{G}$-action on 
$\whR\otimes_{\vphi,\mfS}(\mfM\otimes_{\mfS}\mfM')$.
Therefore,
we can equip a $\hat{G}$-action on 
$\whR\otimes_{\vphi,\mfS}\frac{\mfM \otimes_{\mfS}\mfM'}{u{\rm \mathchar`-tor}}$
via a canonical isomorphism
$\whR\otimes_{\vphi,\mfS}\frac{\mfM \otimes_{\mfS}\mfM'}{u{\rm \mathchar`-tor}}
\simeq (\whR\otimes_{\vphi,\mfS}(\mfM\otimes_{\mfS}\mfM'))/\mrm{ker}(\eta)$.
Then it is not difficult to see that 
$\frac{\mfM \otimes_{\mfS}\mfM'}{u{\rm \mathchar`-tor}}$
has a structure of a $(\vphi,\hat{G})$-module. 
Finally we prove 
$\hat{T}(\hat{\mfM}\otimes \hat{\mfM}')\simeq \hat{T}(\hat{\mfM})\otimes_{\mbb{Z}_p}\hat{T}(\hat{\mfM}')$. 
By Proposition \ref{prop1},
we obtain $\vphi$-equivalent and $G$-compatible isomorphisms 
\begin{align*}
W(\mrm{Fr}R)\otimes_{\mbb{Z}_p}(\hat{T}_{\ast}(\hat{\mfM})\otimes \hat{T}_{\ast}(\hat{\mfM}'))
&\simeq W(\mrm{Fr}R)\otimes_{\whR}(\hat{\mfM}\otimes_{\whR} \hat{\mfM}')\\
&\simeq  W(\mrm{Fr}R)\otimes_{\whR}(\whR\otimes_{\vphi,\mfS}(\mfM \otimes_{\mfS} \mfM'))\\
&\simeq W(\mrm{Fr}R)\otimes_{\whR}(\whR\otimes_{\vphi,\mfS}(\frac{\mfM \otimes_{\mfS} \mfM'}{u{\rm \mathchar`-tor}})).
\end{align*}
Seeing ``$\vphi=1$''-part of the above modules,
we have that $\hat{T}_{\ast}(\hat{\mfM})\otimes \hat{T}_{\ast}(\hat{\mfM}')\simeq 
\hat{T}_{\ast}(\hat{\mfM}\otimes \hat{\mfM})$.
Taking the dual of both sides,
we obtain the desired result.
\end{proof}

%%%%%%%%%%%%%%%%%%%%%%%%%%%%%%%%%%%%%%%%%%%%%%%%%%%%%%%%%%%%%%%%%%%%%%%%%%%%%%%%%%%%%%%%%%%%%%%%%%%%%%%%%%%
%%%%%%%%%%%%%%%%%%%%%%%%%%%%%%%%%%%%%%%%%%%%%%%%%%%%%%%%%%%%%%%%%%%%%%%%%%%%%%%%%%%%%%%%%%%%%%%%%%%%%%%%%%%
%                                5                         %%%%%%%%%%%%%%%%%%%%%%%%%%%%%%%%%%%%%%%%%%%%%%%%%
%%%%%%%%%%%%%%%%%%%%%%%%%%%%%%%%%%%%%%%%%%%%%%%%%%%%%%%%%%%%%%%%%%%%%%%%%%%%%%%%%%%%%%%%%%%%%%%%%%%%%%%%%%%
%%%%%%%%%%%%%%%%%%%%%%%%%%%%%%%%%%%%%%%%%%%%%%%%%%%%%%%%%%%%%%%%%%%%%%%%%%%%%%%%%%%%%%%%%%%%%%%%%%%%%%%%%%%

\section{Maximal objects and minimal objects}
Caruso and Liu defined maximal objects for Kisin modules 
and Breuil modules in \cite{CL1} and
they proved that the category of maximal objects can be regarded as  a full subcategory
of $\mrm{Rep}_{\mrm{tor}}(G_{\infty})$.
In this section, we discuss maximal objects for $(\vphi,\hat{G})$-modules and 
prove that the category of them can be regarded as  a full subcategory
of $\mrm{Rep}_{\mrm{tor}}(G)$.

\subsection{Maximal objects and minimal objects for Kisin modules}

In this subsection, we recall the theory of maximal (minimal) objects given in \cite{CL1}.
For $M\in \mbf{\Phi M}_{/\cO_{\infty}}$,
we denote by $F^r_{\mfS}(M)$  the (partially) ordered set (by inclusion)
of $\mfM\in \mrm{Mod}^r_{/\mfS_{\infty}}$ contained in $M$ such that $\mfM[1/u]=M$.
Then 
$F^r_{\mfS}(M)$ has a greatest element and a smallest element (cf.\ \cite{CL1}, Corollary 3.2.6).
\begin{definition}
Let $\mfM\in \mrm{Mod}^r_{/\mfS_{\infty}}$.
We denote by $\mrm{Max}^r(\mfM)$ the greatest element of 
$F^r_{\mfS}(\mfM[1/u])$.
It is endowed with a homomorphism $\iota^{\mfM}_{\mrm{max}}\colon \mfM\to \mrm{Max}^r(\mfM)$   
in $\mrm{Mod}^r_{/\mfS_{\infty}}$.
\end{definition}
Maximal objects are characterized by the following universality
(\cite{CL1}, Proposition 3.3.5):
Let $\mfM\in  \mrm{Mod}^r_{/\mfS_{\infty}}$.
The couple $(\Max^r(\mfM), \iotamax^{\mfM})$
is characterized by the following universal property:
\begin{itemize}
\item The morphism $T_{\mfS}(\iotamax^{\mfM})$ is an isomorphism.
\item For each couple $(\mfM',f)$ where $\mfM'\in \mrm{Mod}^r_{/\mfS_{\infty}}$ 
and $f\colon \mfM\to \mfM'$ becomes an isomorphism under $T_{\mfS}$,
there exists a unique map $g\colon \mfM'\to \Max^r(\mfM)$ such that 
$g\circ f=\iotamax^{\mfM}$.
\end{itemize}
This property gives rise to a functor 
$\Max^r\colon \mrm{Mod}^r_{/\mfS_{\infty}} \to \mrm{Mod}^r_{/\mfS_{\infty}}$.
If we denote by $\Max^r_{/\mfS_{\infty}}$ its essential image, 
Caruso and Liu proved that

\begin{theorem}[\cite{CL1}, Theorem 3.3.8]
The category $\Max^r_{/\mfS_{\infty}}$ is abelian.
Moreover, kernels, cokernels, images and coimages  
in the abelian category $\Max^r_{/\mfS_{\infty}}$
have explicit descriptions.  
\end{theorem}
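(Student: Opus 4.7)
The plan is to work entirely through the universal property of $\Max^r$ and transport the abelian structure from $\mrm{Rep}_{\mrm{tor}}(G_{\infty})$ via the functor $T_{\mfS}$. The argument splits naturally into two steps: (i) prove that the restriction of $T_{\mfS}$ to $\Max^r_{/\mfS_{\infty}}$ is fully faithful, and (ii) realise kernels, cokernels, images and coimages in $\Max^r_{/\mfS_{\infty}}$ as the $\Max^r$ of the corresponding naive objects in $\mrm{Mod}^r_{/\mfS_{\infty}}$.

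For the first step, faithfulness is already supplied by Proposition \ref{cl1}. For fullness, take $\mfM, \mfN \in \Max^r_{/\mfS_{\infty}}$ and a $G_{\infty}$-equivariant map $\alpha \colon T_{\mfS}(\mfN) \to T_{\mfS}(\mfM)$. Combining Proposition \ref{Fon} and Proposition \ref{FoKi} one produces an \'etale $\vphi$-module morphism $\tilde{\alpha} \colon \mfN[1/u] \to \mfM[1/u]$. The finitely generated $\vphi$-stable submodule $\tilde{\alpha}(\mfN) + \mfM \subseteq \mfM[1/u]$ is $u$-torsion free (since $u$ is a unit in $\cO$), killed by some power of $p$, and still of height $r$ because the height condition is preserved by sums and quotients of submodules of a common \'etale $\vphi$-module; Proposition \ref{torKi} then places it in $F^r_{\mfS}(\mfM[1/u])$. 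Maximality of $\mfM$ forces $\tilde{\alpha}(\mfN) + \mfM = \mfM$, so $\tilde{\alpha}$ restricts to a morphism $\mfN \to \mfM$ in $\mrm{Mod}^r_{/\mfS_{\infty}}$ that realises $\alpha$.

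For the second step, let $f \colon \mfM \to \mfN$ be a morphism in $\Max^r_{/\mfS_{\infty}}$. Proposition \ref{stc} produces $\ker(f)$ and $\mrm{im}(f)$ in $\mrm{Mod}^r_{/\mfS_{\infty}}$, and the naive quotients yielding $\mrm{coker}(f)$ and $\mrm{coim}(f)$ lie in $\mrm{Mod}^r_{/\mfS_{\infty}}$ by Proposition \ref{torKi}. I would then propose $\Max^r(\ker f)$, $\Max^r(\mrm{coker}(f))$, $\Max^r(\mrm{im}(f))$ and $\Max^r(\mrm{coim}(f))$ as the corresponding objects in $\Max^r_{/\mfS_{\infty}}$, and check the universal properties by combining the universal property of $\Max^r$ recalled from the excerpt with the full faithfulness from step one. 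Because $T_{\mfS}(\iotamax)$ is an isomorphism, applying $T_{\mfS}$ converts all four objects into the usual kernel, cokernel, image and coimage in the abelian category $\mrm{Rep}_{\mrm{tor}}(G_{\infty})$; in particular the canonical map $\mrm{coim}(f) \to \mrm{im}(f)$ becomes an isomorphism there, and full faithfulness transports it back to an isomorphism in $\Max^r_{/\mfS_{\infty}}$.

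The main obstacle is the full faithfulness in step one: the universal property of $\Max^r$ stated in the excerpt only lifts morphisms that become isomorphisms under $T_{\mfS}$, so an arbitrary $\alpha$ must instead be cornered by the pigeon-hole argument above, which requires a careful verification that $\tilde{\alpha}(\mfN) + \mfM$ satisfies every clause of Proposition \ref{torKi}, in particular that the $E(u)^r$-bound on the cokernel of $\vphi^{\ast}$ survives after forming the sum. Once this is secured, the ``explicit descriptions'' promised by the theorem are exactly the formulae kernel $= \Max^r(\ker f)$, cokernel $= \Max^r(\mrm{coker}(f))$, image $= \Max^r(\mrm{im}(f))$ and coimage $= \Max^r(\mrm{coim}(f))$.
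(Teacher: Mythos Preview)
The paper does not itself prove this statement---it is quoted from \cite{CL1}---but it does prove the closely analogous Theorem~\ref{abelian} for $(\vphi,\hat{G})$-modules, whose argument mirrors that of \cite{CL1}. Comparing against that, your proposal has one genuine gap and takes a different overall route.

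\textbf{The gap.} You assert that the naive cokernel $\mrm{coker}(f)$ lies in $\mrm{Mod}^r_{/\mfS_{\infty}}$ by Proposition~\ref{torKi}. But that proposition says an object of ${}'\mrm{Mod}^r_{/\mfS}$ lies in $\mrm{Mod}^r_{/\mfS_{\infty}}$ \emph{if and only if} it is $u$-torsion free, and $\mrm{coker}(f)$ is not $u$-torsion free in general (consider $f=u\colon \mfM\to\mfM$ when $p\mfM=0$). The correct object is $\mrm{coker}(f)/(u\text{-tor})$, and the categorical cokernel in $\Max^r_{/\mfS_{\infty}}$ is $\Max^r\bigl(\mrm{coker}(f)/(u\text{-tor})\bigr)$; compare Theorem~\ref{abelian}(2) and Proposition~\ref{cokernel}. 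Your formula $\Max^r(\mrm{coker}(f))$ is not even well-defined, since $\Max^r$ is only defined on $\mrm{Mod}^r_{/\mfS_{\infty}}$. Once you pass to $\mrm{coker}(f)/(u\text{-tor})$ the rest of your step~(ii) can be salvaged, but you will also need the left-adjointness of $\Max^r$ (the analogue of Proposition~\ref{property}(3)) to verify the universal property of the cokernel, not just full faithfulness of $T_{\mfS}$.

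\textbf{On approach.} You propose to first establish full faithfulness of $T_{\mfS}$ on $\Max^r_{/\mfS_{\infty}}$ and then transport the abelian structure from $\mrm{Rep}_{\mrm{tor}}(G_{\infty})$. The paper (and \cite{CL1}) goes in the opposite order: it verifies the universal properties of the candidate kernels and cokernels directly inside $\Max^r_{/\mfS_{\infty}}$, and only afterwards (Lemma~\ref{exff}, Corollary~\ref{exf}) deduces full faithfulness. In particular, the paper's explicit description of the kernel is sharper than yours: it shows that the naive $\ker(f)$ is \emph{already} maximal (Theorem~\ref{abelian}(1)), not merely that $\Max^r(\ker f)$ works. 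This requires a direct argument using the inclusion $\ker(f)=\mfM\cap \ker(f)[1/u]$ and the maximality of $\mfM$. Your route via full faithfulness is viable in principle, but it does not by itself yield this finer statement, and full faithfulness of a functor into an abelian category does not automatically make the source abelian---you still need to produce the objects and check their universal properties in the source.
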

\noindent
The restriction $T_{\mfS}$ on $\Max^r_{/\mfS_{\infty}}$
is exact and fully faithful (cf. \cite{CL1}, Corollary 3.3.10).
\begin{center}
$\displaystyle \xymatrix{
\mrm{Mod}^r_{/\mfS_{\infty}}  \ar^{T_{\mfS}}[rr] \ar^{\Max^r}[rd] \ar@{->>}[rd] &
 &  
\mrm{Rep}_{\mrm{tor}}(G_{\infty}) \\  
 & 
\Max^r_{/\mfS_{\infty}}.
\ar@{^{(}->}[ur]  \ar^{T_{\mfS}}[ur] & 
}$
\end{center}
The theory for minimal objects proceeds if we take a ``dual''
to the above theory.
By Proposition 5.6 of \cite{CL2}, 
if $r=\infty$,
the functor $T_{\mfS}$ is an anti-equivalence of categories:
\[
T_{\mfS}\colon \Max^{\infty}_{/\mfS_{\infty}}\overset{\sim}{\longrightarrow} 
\mrm{Rep}_{\mrm{tor}}(G_{\infty}).
\]
For more precise properties,
see Section 3 of \cite{CL1}.

\subsection{\'Etale $(\vphi,\hat{G})$-modules}
In this subsection,
we give a notion of \'etale $(\vphi,\hat{G})$-modules.
The idea in this subsection follows from that of the $(\vphi,\tau)$-theory given in \cite{Ca}.
As one of the main theorem in \cite{Ca},
we prove that the category of 
various \'etale $(\vphi,\hat{G})$-modules are
equivalent to the category of various $\mbb{Z}_p$-representations of $G$,
including the case where $p=2$.

Here\footnote{
In \cite{Ca}, rings $\cOG$ and $\EG$ are denoted by 
$\E^{\mrm{int}}_{\tau}$ and $\E_{\tau}$, respectively.},
we put $\cOG=W(\mrm{Fr}R)^{H_{\infty}}$,
which is absolutely unramified and a complete discrete valuation ring 
with perfect residue field $\mrm{Fr}R^{H_{\infty}}$.
Furthermore $\cOG$ is a closed subring of $W(\mrm{Fr}R)$ for the weak topology.
Put $\EG=\mrm{Fr}\cOG=\cOG[1/p]$.
By definition, 
$\vphi_{W(\mrm{Fr}R)[1/p]}$ is stable on  $\cOG$ and $\EG$ 
which is bijective on themselves.
Furthermore,
$\hat{G}$ acts on $\cOG$ and $\EG$ continuously.
Since an inclusion $\cO \to \cOG$ (resp.\ $\E \to \EG$) is faithfully flat, 
for any \'etale $\vphi$-module $M$ over $\cO$ (resp.\ over $\E$),
a natural map $M\to \cOG\otimes_{\cO} M$ (resp.\ $M\to \EG\otimes_{\E} M$)
is an injection. By this embedding, 
we regard $M$ as a sub $\cO$-module of $\cOG\otimes_{\cO} M$
(resp.\ a sub $\E$-module of $\EG\otimes_{\E} M$). 
Similarly, a natural map 
$M\to \cOG\otimes_{\vphi, \cO} M$ (resp.\ $M\to \EG\otimes_{\vphi, \E} M$)
is an injection and by this embedding we regard $M$ as 
a sub $\vphi(\cOG)$-module of $\cOG\otimes_{\vphi, \cO} M$
(resp.\ a sub $\vphi(\EG)$-module of $\EG\otimes_{\vphi, \E} M$).
\begin{definition}
An {\it \'etale $(\vphi,\hat{G})'$-module over $\cO$} 
(resp.\ an {\it \'etale $(\vphi,\hat{G})$-module over $\cO$}) 
is a triple ${}'\hat{M}=(M,\vphi_M, \hat{G})$ (resp.\ $\hat{M}=(M,\vphi_M, \hat{G})$) 
where 

\vspace{-2mm}
\begin{enumerate}
\item[(1)] $(M,\vphi_M)$ is an \'etale  $\vphi$-module over $\cO$, 

\vspace{-2mm}

\item[(2)] $\hat{G}$ is a continuous  $\cOG$-semi-linear 
$\hat{G}$-action on $\cOG\otimes_{\cO} M$ (resp.\ $\cOG\otimes_{\vphi, \cO} M$)
for the weak topology,

\vspace{-2mm}

\item[(3)] the $\hat{G}$-action commutes with $\vphi_{\cOG}\otimes \vphi_M$,

\vspace{-2mm}

\item[(4)] $M\subset (\cOG\otimes_{\cO} M)^{H_K}$ 
(resp.\ $M\subset (\cOG\otimes_{\vphi, \cO} M)^{H_K}$).
\end{enumerate}

\vspace{-2mm}
If $M$ is killed by some power of $p$ (resp.\ free over $\cO$),
then ${}'\hat{M}$ (resp.\ $\hat{M}$) is called a {\it torsion \'etale  $(\vphi,\hat{G})$-module}
(resp.\ a {\it free \'etale  $(\vphi,\hat{G})$-module}).
By replacing $\cO$ and $\cOG$ with $\E$ and $\EG$, respectively,
we define an {\it \'etale $(\vphi,\hat{G})'$-module over $\E$} 
and an {\it \'etale $(\vphi,\hat{G})$-module over $\E$}.
\end{definition}

Denote by $'\mbf{\Phi M}^{\hat{G}}_{/\cO_{\infty}}$ 
(resp.\ $'\mbf{\Phi M}^{\hat{G}}_{/\cO}$,\ resp.\ $'\mbf{\Phi M}^{\hat{G}}_{/\E}$)
the category of torsion \'etale $(\vphi,\hat{G})'$-modules over $\cO$ 
(resp.\ the category of free \'etale $(\vphi,\hat{G})'$-modules over $\cO$, 
resp.\  the category of \'etale $(\vphi,\hat{G})'$-modules over $\E$).
Similarly, 
we denote by 
$\mbf{\Phi M}^{\hat{G}}_{/\cO_{\infty}}$ 
(resp.\ $\mbf{\Phi M}^{\hat{G}}_{/\cO}$,\ resp.\ $\mbf{\Phi M}^{\hat{G}}_{/\E}$)
the category of torsion \'etale $(\vphi,\hat{G})$-modules over $\cO$ 
(resp.\ the category of free \'etale $(\vphi,\hat{G})$-modules over $\cO$, 
resp.\  the category of \'etale $(\vphi,\hat{G})$-modules over $\E$).
%Categories $'\mbf{\Phi M}^{\hat{G}}_{/\cO_{\infty}}, 
%{}'\mbf{\Phi M}^{\hat{G}}_{/\E}, 
%\mbf{\Phi M}^{\hat{G}}_{/\cO_{\infty}},
%\mbf{\Phi M}^{\hat{G}}_{/\E}$ 
%are Tannakian categories by a natural manner.

If ${}'\hat{M}$ is an \'etale $(\vphi,\hat{G})'$-module over $\cO$,
then $\hat{G}$ acts on $\cOG\otimes_{\vphi,\cOG}(\cOG\otimes_{\cO} M)$ 
by a natural way.  
We obtain $\hat{G}$-action on $\cOG\otimes_{\vphi, \cO} M$ via 
\[
\cOG\otimes_{\vphi,\cOG}(\cOG\otimes_{\cO} M)\simeq
\cOG\otimes_{\vphi, \cO} M,\quad a\otimes(b\otimes x)\mapsto a\vphi(b)\otimes x
\]
where $a,b\in \cOG,\ x\in M$.
This $\hat{G}$-action equips $M$ with a structure of  an \'etale $(\vphi,\hat{G})$-module over $\cO$.
Conversely, if $\hat{M}$ is an \'etale $(\vphi,\hat{G})$-module over $\cO$,
we obtain $\hat{G}$-action on $\cOG\otimes_{\cO} M$ via 
\[
\cOG\otimes_{\vphi^{-1},\cOG}(\cOG\otimes_{\vphi, \cO} M)\simeq
\cOG\otimes_{\cO} M,\quad a\otimes(b\otimes x)\mapsto a\vphi^{-1}(b)\otimes x
\]
where $a,b\in \cOG,\ x\in M$.
This $\hat{G}$-action equips $M$ with a structure of  an \'etale $(\vphi,\hat{G})'$-module over $\cO$.
Consequently,
we have canonical equivalences of categories
\begin{equation}
\label{epmod1}
'\mbf{\Phi M}^{\hat{G}}_{/\cO_{\infty}}\simeq 
\mbf{\Phi M}^{\hat{G}}_{/\cO_{\infty}},\quad 
'\mbf{\Phi M}^{\hat{G}}_{/\cO}\simeq \mbf{\Phi M}^{\hat{G}}_{/\cO}.
\end{equation}
By the same way, we obtain
\begin{equation}
\label{epmod2}
'\mbf{\Phi M}^{\hat{G}}_{/\E}\simeq \mbf{\Phi M}^{\hat{G}}_{/\E}.
\end{equation}

In the following proposition, 
$\mcal{M}$ and $\mcal{T}$ are functors defined in Section 2.2.

\begin{lemma}%[\cite{Ca}, Lemme 1.19]
\label{Calem}
$(1)$ For all finite torsion $\mbb{Z}_p$-representations $T$ of $G_{\infty}$
(resp.\ finite free $\mbb{Z}_p$-representations $T$ of $G_{\infty}$,\ resp.\ 
finite $\mbb{Q}_p$-representations $T$ of $G_{\infty}$),
a natural map 
\begin{center}
$\cOG\otimes_{\cO} \mcal{M}(T)\to \mrm{Hom}_{\mbb{Z}_p[H_{\infty}]}(T, W(\mrm{Fr}R)_{\infty})$\\
$(\mrm{resp}.\ 
\cOG\otimes_{\cO} \mcal{M}(T)\to \mrm{Hom}_{\mbb{Z}_p[H_{\infty}]}(T, W(\mrm{Fr}R)),$\\
$\mrm{resp}.\
\EG\otimes_{\E} \mcal{M}(T)\to \mrm{Hom}_{\mbb{Q}_p[H_{\infty}]}(T, W(\mrm{Fr}R)[1/p]))$
\end{center}
is an isomorphism.

\noindent
$(2)$ For all torsion \'etale $\vphi$-modules $M$ over $\cO$
(resp.\ free \'etale $\vphi$-modules $M$ over $\cO$,\ resp.\ 
\'etale $\vphi$-modules $M$ over $\E$), a natural map
\begin{center}
$\mcal{T}(M)\to \mrm{Hom}_{\cOG, \vphi}(\cOG\otimes_{\cO} M, W(\mrm{Fr}R)_{\infty})$\\
$(\mrm{resp}.\ 
\mcal{T}(M)\to \mrm{Hom}_{\cOG, \vphi}(\cOG\otimes_{\cO} M, W(\mrm{Fr}R)),$\\
$\mrm{resp}.\
\mcal{T}(M)\to \mrm{Hom}_{\EG, \vphi}(\EG\otimes_{\E} M, W(\mrm{Fr}R)[1/p]))$
\end{center}
is an isomorphism.
\end{lemma}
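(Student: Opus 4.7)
The plan is to deduce both parts from Fontaine's classical equivalence (Proposition \ref{Fon}) together with the defining identity $\cOG = W(\mrm{Fr}R)^{H_{\infty}}$. I will handle parts (2) and (1) in turn, treating the three variants (torsion, free, $\mbb{Q}_p$) in parallel.

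For part (2), tensor--Hom adjunction rewrites the target as $\mrm{Hom}_{\cO,\vphi}(M, W(\mrm{Fr}R)_{\infty})$ (and its free and $\mbb{Q}_p$ analogues). Tensoring the Fontaine isomorphism $(\ref{Fon1})$ with $W(\mrm{Fr}R)$ over $\widehat{\cOur}$ produces a $\vphi$-equivariant identification
\[
W(\mrm{Fr}R)\otimes_{\cO} M \;\simeq\; W(\mrm{Fr}R)\otimes_{\mbb{Z}_p} \mcal{T}_{\ast}(M).
\]
Under this isomorphism, $\vphi$-equivariant $\cO$-linear maps $M \to W(\mrm{Fr}R)_{\infty}$ correspond to $\mbb{Z}_p$-linear maps $\mcal{T}_{\ast}(M) \to W(\mrm{Fr}R)_{\infty}^{\vphi=1}=\mbb{Q}_p/\mbb{Z}_p$ (since the Frobenius on $\mcal{T}_{\ast}(M)$ is the identity), i.e.\ to elements of $\mcal{T}_{\ast}(M)^{\vee}=\mcal{T}(M)$. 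The free and $\mbb{Q}_p$ cases are identical, replacing $W(\mrm{Fr}R)_{\infty}$ with $W(\mrm{Fr}R)$ or $W(\mrm{Fr}R)[1/p]$ and using $(\ref{Fon1'})$ in the $\mbb{Q}_p$-case.

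For part (1), set $M=\mcal{M}(T)$ so that $\mcal{T}_{\ast}(M)\simeq T^{\vee}$ via Proposition \ref{Fon}. The Fontaine isomorphism, combined with the standard identification $W(\mrm{Fr}R)\otimes_{\mbb{Z}_p} T^{\vee}\simeq \mrm{Hom}_{\mbb{Z}_p}(T, W(\mrm{Fr}R)_{\infty})$ (valid because $T$ is finite and $\mbb{Z}/p^n\mbb{Z}$ is self-injective), yields a $G_{\infty}$-equivariant isomorphism
\[
W(\mrm{Fr}R)\otimes_{\cO} M \;\simeq\; \mrm{Hom}_{\mbb{Z}_p}(T, W(\mrm{Fr}R)_{\infty}).
\]
Taking $H_{\infty}$-invariants of the right-hand side produces $\mrm{Hom}_{\mbb{Z}_p[H_{\infty}]}(T, W(\mrm{Fr}R)_{\infty})$; the lemma then reduces to checking
\[
(W(\mrm{Fr}R)\otimes_{\cO} M)^{H_{\infty}} \;=\; \cOG\otimes_{\cO} M.
\]
For free $M$ this is immediate by choosing an $\cO$-basis, since $H_{\infty}$ acts trivially on $M$ and $W(\mrm{Fr}R)^{H_{\infty}}=\cOG$. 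For torsion $M$, since $\cO$ is a complete discrete valuation ring one has $M\simeq\bigoplus_i \cO/p^{n_i}\cO$ as an $\cO$-module, so it suffices to verify $W_n(\mrm{Fr}R)^{H_{\infty}} = \cOG/p^n$ for each $n$; this follows from the fact that Witt vectors commute with taking fixed points on a perfect field, namely $\cOG = W(\mrm{Fr}R^{H_{\infty}})$, whence $\cOG/p^n = W_n(\mrm{Fr}R^{H_{\infty}}) = W_n(\mrm{Fr}R)^{H_{\infty}}$. The $\mbb{Q}_p$-case follows by inverting $p$.

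The main subtlety will be confirming that the abstract isomorphisms produced above coincide with the explicit natural maps appearing in the lemma statement, namely $a\otimes f\mapsto (t\mapsto af(t))$ in part (1) and evaluation in part (2), rather than merely being abstract isomorphisms. This is a direct but somewhat intricate chase through the Fontaine isomorphism $(\ref{Fon1})$, which is essentially the Frobenius-linearisation of the evaluation pairing between $\mcal{T}_{\ast}(M)$ and $M$. Once this compatibility is in place, no further calculation is required.
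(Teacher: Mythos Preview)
Your proposal is correct and follows essentially the same approach as the paper: tensor the Fontaine comparison isomorphisms $(\ref{Fon1})$/$(\ref{Fon2})$ up to $W(\mrm{Fr}R)$, then take $H_{\infty}$-invariants for part (1) and $\vphi$-equivariant homs for part (2). The paper phrases (1) via $(\ref{Fon2})$ applied to $T$ followed by replacing $T$ by its dual, whereas you go directly through $(\ref{Fon1})$ with $M=\mcal{M}(T)$; these are equivalent since $\mcal{M}_{\ast}(T^{\vee})=\mcal{M}(T)$. Your treatment is in fact more detailed than the paper's, which leaves the identification $(W(\mrm{Fr}R)\otimes_{\cO} M)^{H_{\infty}}=\cOG\otimes_{\cO} M$ implicit and omits the compatibility check with the explicit natural maps.
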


%\begin{remark}
%hroughout the paper \cite{Ca}, the prime $p$ is always assumed to be odd.
%However, the same proof of the above lemma proceeds even if  $p=2$.  
%\end{remark}

\begin{proof}
We only prove the torsion case. The rest cases can be checked by a similar manner.
First we consider (1).
Taking a tensor product $W(\mrm{Fr}R)$ over $\wh{\cOur}$ to (\ref{Fon2})
and picking up a $H_{\infty}$-fixed parts, 
we obtain a natural bijective map
\begin{equation}
\label{is0}
\cOG\otimes_{\cO} (\cOur\otimes_{\mbb{Z}_p} T)^{G_{\infty}}\to 
(W(\mrm{Fr}R)\otimes_{\mbb{Z}_p} T)^{H_{\infty}}. 
\end{equation}
If we replace $T$ in (\ref{is0}) with its dual representation,
we obtain the desired result.
Using (\ref{Fon1}), we can check (2) by a similar way.

\end{proof}

We define a contravariant functor 
${}'\hat{\mcal{M}}\colon 
\mrm{Rep}_{\mrm{tor}}(G)\to 
{}'\mbf{\Phi M}^{\hat{G}}_{/\cO_{\infty}}$ 
as below:
For any $T\in \mrm{Rep}_{\mrm{tor}}(G)$,
define
\[
{}'\hat{\mcal{M}}(T)=\mcal{M}(T)=
\mrm{Hom}_{G_{\infty}}(T,\E^{\mrm{ur}}/\cOur)
\]
and we equip a $\hat{G}$-action on $\cOG\otimes_{\cO} \mcal{M}(T)$
via the isomorphism 
$\cOG\otimes_{\cO} \mcal{M}(T)\simeq \mrm{Hom}_{\mbb{Z}_p[H_{\infty}]}(T, W(\mrm{Fr}R)_{\infty})$
(cf.\ Lemma \ref{Calem} (1)).
Here $\hat{G}$ acts on the right hand side by the formula 
$(\sigma.f)(x)=\hat{\sigma}(f(\hat{\sigma}^{-1}(x)))$ for 
$\sigma\in \hat{G}$ and $\hat{\sigma}\in G$ any lift of $\sigma$, 
$f\in \mrm{Hom}_{\mbb{Z}_p[H_{\infty}]}(T, W(\mrm{Fr}R)_{\infty}),
x\in T$.

On the other hand,
we define a contravariant  functor 
${}'\hat{\mcal{T}}\colon 
'\mbf{\Phi M}^{\hat{G}}_{/\cO_{\infty}}\to \mrm{Rep}_{\mrm{tor}}(G)$ 
as below:
For any ${}'\hat{M}\in {}'\mbf{\Phi M}^{\hat{G}}_{/\cO_{\infty}}$,
define
\[
{}'\hat{\mcal{T}}({}'\hat{M})=\mcal{T}(M)=\mrm{Hom}_{\cO,\vphi}(M,\E^{\mrm{ur}}/\cOur)
\]
and we equip a $G$-action on ${}'\hat{\mcal{T}}({}'\hat{M})$
via the isomorphism 
$\mcal{T}(M)\simeq \mrm{Hom}_{\cOG, \vphi}(\cOG\otimes_{\cO} M, 
W(\mrm{Fr}R)_{\infty})$ 
(cf.\ Lemma \ref{Calem} (2)). 
Here $G$ acts on the right hand side by the formula 
$(\sigma.f)(x)=\sigma(f(\sigma^{-1}(x)))$ for 
$\sigma\in G, 
f\in \mrm{Hom}_{\cOG, \vphi}(\cOG\otimes_{\cO} M, W(\mrm{Fr}R)_{\infty}),
x\in \cOG\otimes_{\cO} M$.

We also define a contravariant functor 
${}'\hat{\mcal{M}}\colon 
\mrm{Rep}_{\mrm{fr}}(G)\to 
{}'\mbf{\Phi M}^{\hat{G}}_{/\cO}$ 
(resp.\ ${}'\hat{\mcal{M}}\colon 
\mrm{Rep}_{\mbb{Q}_p}(G)\to 
{}'\mbf{\Phi M}^{\hat{G}}_{/\E}$)
and 
${}'\hat{\mcal{T}}\colon 
'\mbf{\Phi M}^{\hat{G}}_{/\cO_{\infty}}\to \mrm{Rep}_{\mrm{fr}}(G)$ 
(resp.\ 
${}'\hat{\mcal{T}}\colon 
'\mbf{\Phi M}^{\hat{G}}_{/\E}\to \mrm{Rep}_{\mbb{Q}_p}(G)$)
by a similar manner.

Combining ${}'\hat{\mcal{T}},{}'\hat{\mcal{M}}$ with (\ref{epmod1}) or (\ref{epmod2}),
we obtain contravariant functors 
\[
\hat{\mcal{M}}\colon 
\mrm{Rep}_{\mrm{tor}}(G)\to 
\mbf{\Phi M}^{\hat{G}}_{/\cO_{\infty}},\
\hat{\mcal{M}}\colon 
\mrm{Rep}_{\mrm{fr}}(G)\to 
\mbf{\Phi M}^{\hat{G}}_{/\cO},\
\hat{\mcal{M}}\colon 
\mrm{Rep}_{\mbb{Q}_p}(G)\to 
\mbf{\Phi M}^{\hat{G}}_{/\E}
\]
and
\[
\hat{\mcal{T}}\colon 
\mbf{\Phi M}^{\hat{G}}_{/\cO_{\infty}}\to \mrm{Rep}_{\mrm{tor}}(G),\
\hat{\mcal{T}}\colon 
\mbf{\Phi M}^{\hat{G}}_{/\cO}\to \mrm{Rep}_{\mrm{fr}}(G),\
\hat{\mcal{T}}\colon 
\mbf{\Phi M}^{\hat{G}}_{/\E}\to \mrm{Rep}_{\mbb{Q}_p}(G).
\]

\begin{proposition}
\label{et-rep}
The contravariant functor $\hat{\mcal{T}}$
is an anti-equivalence of categories between
$\mbf{\Phi M}^{\hat{G}}_{/\cO_{\infty}}$ 
$($resp.\ $\mbf{\Phi M}^{\hat{G}}_{/\cO}$,
resp.\ $\mbf{\Phi M}^{\hat{G}}_{/\E}$)
and  $\mrm{Rep}_{\mrm{tor}}(G)$ 
$($resp.\ $\mrm{Rep}_{\mrm{fr}}(G)$,
resp.\  $\mrm{Rep}_{\mbb{Q}_p}(G))$).
Furthermore, $\hat{\mcal{M}}$ is a quasi-inverse of 
$\hat{\mcal{T}}$.
\end{proposition}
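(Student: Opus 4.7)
The plan is to bootstrap from Fontaine's equivalence (Proposition \ref{Fon}) between étale $\vphi$-modules and representations of $G_{\infty}$, and show that the extra $\hat{G}$-module structure on an étale $(\vphi,\hat{G})$-module corresponds exactly to an extension of the $G_{\infty}$-action on the associated representation to a $G$-action. The key inputs that make this work are the two natural isomorphisms in Lemma \ref{Calem}, which translate the $\hat{G}$-action on $\cOG\otimes_{\cO}M$ (or on $\cOG\otimes_{\cO}\mcal{M}(T)$) into a canonical $\hat{G}$-action on Hom-modules that is intrinsic to the associated Galois representation.

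First I would verify that $\hat{\mcal{T}}$ and $\hat{\mcal{M}}$ are well-defined. For $\hat{\mcal{M}}(T)$: its underlying $\vphi$-module $\mcal{M}(T)$ is étale by Proposition \ref{Fon}; the $\hat{G}$-action transported from $\mrm{Hom}_{\mbb{Z}_p[H_{\infty}]}(T,W(\mrm{Fr}R)_{\infty})$ via the isomorphism of Lemma \ref{Calem}(1) is clearly $\cOG$-semi-linear, continuous for the weak topology, and commutes with the tensor Frobenius. The axiom $\mcal{M}(T)\subset (\cOG\otimes_{\cO}\mcal{M}(T))^{H_K}$ holds because, under the isomorphism of Lemma \ref{Calem}(1), elements of $\mcal{M}(T)=\mrm{Hom}_{\mbb{Z}_p[G_{\infty}]}(T,\E^{\mrm{ur}}/\cOur)$ correspond to $H_K$-equivariant maps (since $H_K\subset G_{\infty}$), which are precisely the $H_K$-fixed vectors under the conjugation action. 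A symmetric verification works for $\hat{\mcal{T}}$, using Lemma \ref{Calem}(2); the continuity of the $G$-action on $\hat{\mcal{T}}(\hat{M})$ follows from the continuity of the $\hat{G}$-action on $\cOG\otimes_{\cO}M$ and the $G_\infty$-action already known from Fontaine's theory.

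Next I would construct the natural isomorphisms $\hat{\mcal{M}}\circ \hat{\mcal{T}}\simeq \mrm{Id}$ and $\hat{\mcal{T}}\circ \hat{\mcal{M}}\simeq \mrm{Id}$. On the level of underlying étale $\vphi$-modules and $G_{\infty}$-representations, the Fontaine isomorphisms $\mcal{M}\circ\mcal{T}\simeq\mrm{Id}$ and $\mcal{T}\circ\mcal{M}\simeq\mrm{Id}$ give the desired isomorphisms. What remains is to check that these isomorphisms intertwine the $\hat{G}$-actions (resp.\ the $G$-actions). This is essentially a diagram chase: by construction, both $\hat{G}$-actions in question are determined by the same Hom-module description via Lemma \ref{Calem}, so the comparison reduces to the naturality of the isomorphisms in Lemma \ref{Calem} in both variables. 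The free and rational analogues are formally identical, replacing $W(\mrm{Fr}R)_{\infty}$ with $W(\mrm{Fr}R)$ or $W(\mrm{Fr}R)[1/p]$.

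The main obstacle is bookkeeping: one must consistently keep track of three different $\hat{G}$-actions, namely the one on $\cOG\otimes_{\cO}M$ (part of the datum of $\hat{M}$ after passing from $'\mbf{\Phi M}^{\hat{G}}_{/\cO_{\infty}}$ via (\ref{epmod1})), the one on $\mrm{Hom}_{H_{\infty}}(T,W(\mrm{Fr}R)_{\infty})$ inherited from the $G$-action on $T$, and the one on $\mrm{Hom}_{\cOG,\vphi}(\cOG\otimes_{\cO}M,W(\mrm{Fr}R)_{\infty})$ through which the $G$-action on $\hat{\mcal{T}}(\hat{M})$ is defined. Once these are identified via Lemma \ref{Calem}, the four axioms in the definition of étale $(\vphi,\hat{G})$-module are exactly what is needed to verify that the action transported to $T$ is continuous $G$-action extending the underlying $G_{\infty}$-action; conversely, any continuous $G$-action on $T$ restricting to the $G_{\infty}$-action from Fontaine's theory transports back to a $\hat{G}$-action satisfying axioms (1)--(4). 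The $H_K$-fixed condition (4) on both sides plays the role of ensuring that the $\hat{G}$-action really descends correctly along the quotient $G\twoheadrightarrow \hat{G}$, and this is the delicate point to check.
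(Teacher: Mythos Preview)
Your proposal is correct and follows essentially the same approach as the paper: both start from Fontaine's equivalence on underlying $\vphi$-modules and $G_{\infty}$-representations, then use the Hom-module identifications of Lemma~\ref{Calem} to verify that the Fontaine unit/counit isomorphisms intertwine the $\hat{G}$-actions (resp.\ $G$-actions). The paper makes your ``diagram chase'' explicit by composing to obtain the double-dual evaluation map $\tilde{\eta}\colon x\mapsto (f\mapsto f(x))$ and checking directly that this is $\hat{G}$-equivariant, but this is exactly the computation your naturality argument encodes.
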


\begin{proof}
By Proposition \ref{Fon}, we have already known that,
for an \'etale $(\vphi,\hat{G})$-module $\hat{M}$
and a representation $T$ of $G$, 
canonical morphisms
$M\to \mcal{M}(\mcal{T}(M))$ and $T\to \mcal{T}(\mcal{M}(T))$
are isomorphisms as \'etale $\vphi$-modules and $G_{\infty}$-representations, respectively. 
It is enough to prove that the former is compatible with $\hat{G}$-action 
and the latter is $G$-equivalent.
In the following,
we only prove the torsion case; the same proof proceeds for rest cases.
It is enough to prove that functors ${}'\hat{\mcal{T}}$ and ${}'\hat{\mcal{M}}$
are inverses of each other.
Take any ${}'\hat{M}\in {}'\mbf{\Phi M}^{\hat{G}}_{/\cO_{\infty}}$.
We show a canonical isomorphism
\[
\eta\colon \cOG\otimes_{\cO} M\to \cOG\otimes_{\cO} \mcal{M}(\mcal{T}(M))
\]
is $\hat{G}$-equivalent.
By definitions of functors ${}'\hat{\mcal{T}}$ and ${}'\hat{\mcal{M}}$,
the following composition map
\begin{align*}
\cOG\otimes_{\cO} \mcal{M}(\mcal{T}(M))
&\overset{\sim}{\longrightarrow} 
\mrm{Hom}_{\mbb{Z}_p[H_{\infty}]}(\mcal{T}(M), W(\mrm{Fr}R)_{\infty})\\
&\overset{\sim}{\longrightarrow} 
\mrm{Hom}_{\mbb{Z}_p[H_{\infty}]}
(\mrm{Hom}_{\cOG, \vphi}(\cOG\otimes_{\cO} M, W(\mrm{Fr}R)_{\infty}), W(\mrm{Fr}R)_{\infty})
\end{align*}
is $\hat{G}$-equivalent. By composing this map with $\eta$,
we obtain the map
\[
\tilde{\eta}\colon \cOG\otimes_{\cO} M
\overset{\sim}{\longrightarrow}  
\mrm{Hom}_{\mbb{Z}_p[H_{\infty}]}
(\mrm{Hom}_{\cOG, \vphi}(\cOG\otimes_{\cO} M, W(\mrm{Fr}R)_{\infty}), W(\mrm{Fr}R)_{\infty})
\]
which is given by $x\mapsto (f\mapsto f(x))$ 
for $x\in \cOG\otimes_{\cO} M,\ 
f\in \mrm{Hom}_{\cOG, \vphi}(\cOG\otimes_{\cO} M, W(\mrm{Fr}R)_{\infty})$.
It is a straightforward calculation to check that $\tilde{\eta}$
is compatible with $\hat{G}$-actions, 
and thus $\eta$ is also.
Consequently,
we obtain the result that 
${}'\hat{\mcal{M}}\circ{}'\hat{\mcal{T}}\simeq \mrm{Id}$.
By a similar argument we can obtain  
${}'\hat{\mcal{T}}\circ{}'\hat{\mcal{M}}\simeq \mrm{Id}$
and this finishes the proof.

\end{proof}

\begin{remark}
By definitions of $\hat{\mcal{T}}$ and  $\hat{\mcal{M}}$ and the theory of Fontaine's
\'etale $\vphi$-modules,
we see that these functors  preserves various structures of categories.
For example, these functors are exact and 
commute with the formation of tensor products and duals.
Here the notion of the tensor product of \'etale $(\vphi,\hat{G})$-modules and 
that of dual \'etale $(\vphi,\hat{G})$-modules are defined by natural manners.
\end{remark}

\subsection{Link between Liu's $(\vphi,\hat{G})$-modules and \'etale $(\vphi,\hat{G})$-modules}

In this subsection,
we connect 
the theory of  Liu's $(\vphi,\hat{G})$-modules and 
the theory of our \'etale $(\vphi,\hat{G})$-modules.

Let $\hat{\mfM}=(\mfM,\vphi,\hat{G})$ be a $(\vphi, \hat{G})$-module, 
or a weak $(\vphi, \hat{G})$-module, 
in the sense of Definition \ref{Liumod}.
Extending a $\hat{G}$-action on $\whR \otimes_{\vphi,\mfS} \mfM$ to
$\cOG\otimes_{\whR} (\whR \otimes_{\vphi,\mfS} \mfM)$
by a natural way,
we see that $\mfM[1/u]=\cO\otimes_{\mfS} \mfM$ has a natural structure of 
an \'etale $(\vphi, \hat{G})$-module over $\cO$
(recall that $G$ acts on $W(\mrm{Fr}R)\otimes_{\vphi,\mfS} \mfM$ 
continuously for the weak topology by Definition \ref{Liumod}). 
This is the reason why a $\hat{G}$-action in the definition of an \'etale $(\vphi,\hat{G})$-module
is defined not on 
$\cOG\otimes_{\cO} M$ but on $\cOG\otimes_{\vphi, \cO} M$.
In the below, 
we denote by $\wh{\mfM[1/u]}$ the \'etale $(\vphi, \hat{G})$-module over $\cO$ obtained as the above.
Note that 
there exists a canonical isomorphism of $\mbb{Z}_p$-representations of $G$:
\[
\hat{T}(\hat{\mfM})\simeq \hat{\mcal{T}}(\wh{\mfM[1/u]}).
\]
In fact,
we have canonical isomorphisms
\begin{align*}
\hat{\mcal{T}}(\wh{\mfM[1/u]})&\simeq 
\mrm{Hom}_{\cOG,\vphi}(\cOG\otimes_{\vphi,\cO}(\mfM[1/u]), W(\mrm{Fr}R)_{\infty})\\
& \simeq \mrm{Hom}_{\whR,\vphi}(\whR\otimes_{\vphi,\mfS}\mfM, W(\mrm{Fr}R)_{\infty})\\
& \simeq \mrm{Hom}_{\whR,\vphi}(\whR\otimes_{\vphi,\mfS}\mfM, W(R)_{\infty})=\hat{T}(\hat{\mfM})
\end{align*} 
by Lemma \ref{Calem} (1) and Proposition B. 1.8.3 of \cite{Fo} (see also the proof of Corollary \ref{cov}).

In the below,
we want to use various morphisms between 
Liu's $(\vphi,\hat{G})$-modules and \'etale $(\vphi,\hat{G})$-modules.
To do this, we need to define some notions. 
Let $\mrm{Mod}(\vphi,\hat{G})$ be the category
whose objects are $\vphi$-modules $\mfM$ over $\mfS$ killed by a power of $p$ equipped with a $\cOG$-semilinear
$\hat{G}$-action on  $\cOG\otimes_{\vphi,\mfS} \mfM$.
Morphisms in $\mrm{Mod}(\vphi,\hat{G})$ are  defined by a natural manner.
Then categories 
${}_{\mrm{w}}\mrm{Mod}^{r,\hat{G}}_{/\mfS_{\infty}}$,  $\mrm{Mod}^{r,\hat{G}}_{/\mfS_{\infty}}$
and
$\mbf{\Phi M}^{\hat{G}}_{/\cO_{\infty}}$ 
can be regarded as full subcategories of $\mrm{Mod}(\vphi,\hat{G})$. 
We call a morphism $f\colon \mfM\to M$ in the category 
$\mrm{Mod}(\vphi,\hat{G})$ 
a {\it morphism of $(\vphi,\hat{G})$-modules}, 
and we often denote $f$ by $f\colon \hat{\mfM}\to \hat{M}$.

\begin{definition}
Let $\hat{\mfM}$ be in ${}_{\mrm{w}}\mrm{Mod}^{r,\hat{G}}_{/\mfS_{\infty}}$ 
or $\mrm{Mod}^{r,\hat{G}}_{/\mfS_{\infty}}$,
and $\hat{M}\in \mbf{\Phi M}^{\hat{G}}_{/\cO_{\infty}}$ equipped with 
a morphism $f\colon \hat{\mfM}\to \hat{M}$  of $(\vphi,\hat{G})$-modules. 
If $f$ is an injection as a $\mfS$-module morphism,
then $\hat{\mfM}$ can be regarded as a subobject of  $\hat{M}$
in the category $\mrm{Mod}(\vphi,\hat{G})$.
In this case,  (the image of) $\hat{\mfM}$ is called a sub $(\vphi,\hat{G})$-module of $\hat{M}$.

\end{definition}

\begin{proposition}[Analogue of scheme theoretic closure]
\label{astc}
Let $\hat{\mfM}$ be in ${}_{\mrm{w}}\mrm{Mod}^{r,\hat{G}}_{/\mfS_{\infty}}$  
$($resp.\ $\mrm{Mod}^{r,\hat{G}}_{/\mfS_{\infty}})$ and 
$\hat{M}\in \mbf{\Phi M}^{\hat{G}}_{/\cO_{\infty}}$. 
Let $f\colon \hat{\mfM}\to \hat{M}$ be a morphism of 
$(\vphi,\hat{G})$-modules.
Then, $\mrm{ker}(f)$ and $\mrm{im}(f)$ as $\vphi$-modules are in 
$\mrm{Mod}^{r}_{/\mfS_{\infty}}$.
Furthermore, the
$\hat{G}$-action on $\hat{\mfM}$
gives $\mrm{ker}(f)$ a structure of 
a weak $(\vphi,\hat{G})$-module
$($resp.\ a $(\vphi,\hat{G})$-module$)$ 
and the $\hat{G}$-action on $\hat{M}$
gives $\mrm{im}(f)$ a structure of 
a weak $(\vphi,\hat{G})$-module
$($resp.\ a $(\vphi,\hat{G})$-module$)$.

In this paper, 
we often denote $\wh{\mrm{im}(f)}$ by $f(\hat{\mfM})$ or $\wh{f(\mfM)}$.  
\end{proposition}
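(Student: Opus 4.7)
The plan is to mimic the proof of Corollary \ref{imker}, replacing the role of $\hat{\mfM}'$ there by $\hat{M}$; the essential new ingredient is to use an étale enlargement of the ambient module so that the target is still a module over a $\vphi$-stable subring of $W(\mrm{Fr}R)$. First, since the underlying $\cO$-module $M$ of $\hat{M}$ is étale (hence $p'$-torsion free), the morphism $f\colon \mfM\to M$ of $\vphi$-modules satisfies the hypotheses of Proposition \ref{stc}; this produces $\mrm{ker}(f),\mrm{im}(f)\in {}'\mrm{Mod}^r_{/\mfS}$, and, being submodules/quotients of $\mfM$ respectively of $M$, they are automatically $u$-torsion free, hence belong to $\mrm{Mod}^r_{/\mfS_{\infty}}$ by Proposition \ref{torKi}.

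Next, from $f$ one gets an induced map
\[
\tilde{f}\colon \whR\otimes_{\vphi,\mfS}\mfM\longrightarrow \cOG\otimes_{\vphi,\mfS}\mfM\xrightarrow{\;\mrm{id}\otimes f\;}\cOG\otimes_{\vphi,\cO}M,
\]
which is $\hat{G}$-equivariant by the definition of a morphism in $\mrm{Mod}(\vphi,\hat{G})$. Corollary \ref{modext} (applied with $A=\whR$ inside $\cOG\subset W(\mrm{Fr}R)$) gives the injection $\whR\otimes_{\vphi,\mfS}\mrm{im}(f)\hookrightarrow \cOG\otimes_{\vphi,\cO}M$; on the other hand, by Corollary \ref{exact} the exact sequence $0\to\mrm{ker}(f)\to\mfM\to\mrm{im}(f)\to 0$ remains exact after tensoring with $\whR$ along $\vphi$. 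Putting these together, $\whR\otimes_{\vphi,\mfS}\mrm{ker}(f)$ and $\whR\otimes_{\vphi,\mfS}\mrm{im}(f)$ are identified respectively with $\mrm{ker}(\tilde{f})$ and with the image of $\tilde{f}$ in $\cOG\otimes_{\vphi,\cO}M$; both are $\hat{G}$-stable, and so inherit $\hat{G}$-actions.

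It remains to verify axioms (1)--(4) of a weak $(\vphi,\hat{G})$-module, and axiom (5) in the non-weak case. Continuity and commutation with Frobenius are preserved because one is restricting or quotienting continuous $\vphi$-equivariant actions. The condition $\mrm{ker}(f)\subset(\whR\otimes_{\vphi,\mfS}\mrm{ker}(f))^{H_K}$ follows from $\mrm{ker}(f)\subset\mfM\subset(\whR\otimes_{\vphi,\mfS}\mfM)^{H_K}$ via the inclusion just constructed, while $\mrm{im}(f)\subset(\whR\otimes_{\vphi,\mfS}\mrm{im}(f))^{H_K}$ follows from $\mrm{im}(f)\subset M\subset(\cOG\otimes_{\vphi,\cO}M)^{H_K}$ via the $H_K$-equivariant embedding into $\cOG\otimes_{\vphi,\cO}M$. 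For the triviality of the $\hat{G}$-action modulo $I_+$ in the non-weak case, one reduces $\tilde{f}$ modulo $I_+$ and exploits that $\whR/I_+\simeq W(k)$ is $p$-torsion free to conclude that the sequence
\[
0\to(\whR/I_+)\otimes_{\vphi,\mfS}\mrm{ker}(f)\to(\whR/I_+)\otimes_{\vphi,\mfS}\mfM\to(\whR/I_+)\otimes_{\vphi,\mfS}\mrm{im}(f)\to 0
\]
is exact, exactly as in the proof of Corollary \ref{imker}, so that the triviality of the $\hat{G}$-action propagates from $\hat{\mfM}/I_+\hat{\mfM}$ to both $\mrm{ker}(f)$ and $\mrm{im}(f)$.

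The main technical obstacle is the verification of the injection $\whR\otimes_{\vphi,\mfS}\mrm{im}(f)\hookrightarrow \cOG\otimes_{\vphi,\cO}M$; everything else is a formal transcription of Corollary \ref{imker}. This injection hinges on Corollary \ref{modext} applied to the ring extension $\whR\subset \cOG$ inside $W(\mrm{Fr}R)$, which is permissible because $\cOG=W(\mrm{Fr}R)^{H_\infty}$ is $p$-torsion free and $\whR_1\hookrightarrow\cOG/p\subset\mrm{Fr}R$ (using Remark \ref{subsets}).
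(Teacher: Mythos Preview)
Your proof is correct and follows essentially the same approach as the paper, which simply says ``The proof is same as that of Corollary \ref{imker}.'' You have supplied the adaptation in full detail, in particular the use of Corollary \ref{modext} and Corollary \ref{ringext} to embed $\whR\otimes_{\vphi,\mfS}\mrm{im}(f)$ into $\cOG\otimes_{\vphi,\cO}M$, which is the only point where the argument differs from Corollary \ref{imker}.
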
  

\begin{proof}
The proof is same as that of Corollary \ref{imker}. 
\end{proof}  
  
The above proposition gives us a result on a successive extension 
for $(\vphi,\hat{G})$-modules, which is an analogue of Proposition \ref{torKi} (4).  
  
\begin{corollary}
\label{sucLi}
Let $\hat{\mfM}$ be in ${}_{\mrm{w}}\mrm{Mod}^{r,\hat{G}}_{/\mfS_{\infty}}$  
$($resp.\ $\mrm{Mod}^{r,\hat{G}}_{/\mfS_{\infty}})$.
Then there exists an extension 
\[
0=\mfM_0\subset \mfM_1\subset \cdots \subset \mfM_k=\mfM
\]
in $\mrm{Mod}^r_{/\mfS_{\infty}}$
which satisfies the following; for any $i$,

\noindent
$(\mrm{i})$ $\mfM_i/\mfM_{i-1}$ is a finite free $k[\![u]\!]$-module,

\noindent
$(\mrm{ii})$ $\mfM_i$ and $\mfM_i/\mfM_{i-1}$ have structures of 
weak $(\vphi,\hat{G})$-modules of height $r$
$($resp.\ $(\vphi,\hat{G})$-modules of height $r)$
which make a canonical exact sequence
\[
0\to \hat{\mfM}_{i-1}\to \hat{\mfM}_i\to \wh{\mfM_i/\mfM_{i-1}} \to0
\] 
in ${}_{\mrm{w}}\mrm{Mod}^{r,\hat{G}}_{/\mfS_{\infty}}$  
$($resp.\ $\mrm{Mod}^{r,\hat{G}}_{/\mfS_{\infty}})$.
\end{corollary}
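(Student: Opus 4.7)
The plan is to use the canonical $p$-power filtration. Fix $n\ge 0$ with $p^n\mfM=0$ and set $\mfM_i:=p^{n-i}\mfM$ for $0\le i\le n$. The key structural point is that after tensoring over $\vphi,\mfS$ with $\whR$, this filtration becomes the filtration by powers of $p$, which is automatically $\hat{G}$-stable because $\hat{G}$ acts $\whR$-semilinearly and fixes $p\in W(k)\subset \whR$. This is exactly what is needed to transport the $(\vphi,\hat{G})$-structure from $\hat{\mfM}$ down to each step.

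First I would verify the Kisin-module requirements. Each $\mfM_i=p^{n-i}\mfM$ is the image of the $\vphi$-module morphism $p^{n-i}\colon\mfM\to\mfM$, so by Proposition \ref{stc} it lies in $\mrm{Mod}^r_{/\mfS_{\infty}}$. Since $\mfM_{i-1}=p\mfM_i$, the successive quotient $\mfM_i/\mfM_{i-1}=\mfM_i/p\mfM_i$ is killed by $p$ and is $u$-torsion free by Lemma \ref{ptor}, hence finite free over $k[\![u]\!]$; a direct cokernel computation for $\vphi^{\ast}$ shows its height is at most $r$, and applying Proposition \ref{stc} to the projection $\mfM_i\twoheadrightarrow\mfM_i/\mfM_{i-1}$ places it in $\mrm{Mod}^r_{/\mfS_{\infty}}$.

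Next I would install and verify the $(\vphi,\hat{G})$-structure. Applying $\whR\otimes_{\vphi,\mfS}-$ to $0\to\mfM_i\to\mfM\to\mfM/\mfM_i\to 0$ preserves exactness by Corollary \ref{exact} (since $\whR$ is $p$-torsion free), giving a $\hat{G}$-equivariant injection $\whR\otimes_{\vphi,\mfS}\mfM_i\hookrightarrow\whR\otimes_{\vphi,\mfS}\mfM$ whose image is the $\hat{G}$-stable submodule $p^{n-i}(\whR\otimes_{\vphi,\mfS}\mfM)$; the $\hat{G}$-action on the right therefore descends uniquely to $\whR\otimes_{\vphi,\mfS}\mfM_i$. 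The same argument applied to $0\to\mfM_{i-1}\to\mfM_i\to\mfM_i/\mfM_{i-1}\to 0$ equips $\mfM_i/\mfM_{i-1}$ with the quotient $\hat{G}$-action. Conditions (1)--(3) of Definition \ref{Liumod} are inherited, and (4) follows because the relevant maps are $H_K$-equivariant while $\mfM\subset(\whR\otimes_{\vphi,\mfS}\mfM)^{H_K}$.

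The main obstacle is verifying condition (5) in the non-weak case, namely that $\hat{G}$ acts trivially modulo $I_+$. For this I would tensor the same short exact sequences over $\vphi,\mfS$ with $W(k)=\whR/I_+$; exactness is preserved by Corollary \ref{exact} since $W(k)$ is $p$-torsion free, yielding an injection $(\whR\otimes_{\vphi,\mfS}\mfM_i)/I_+\hookrightarrow(\whR\otimes_{\vphi,\mfS}\mfM)/I_+$ that transports triviality of the $\hat{G}$-action from the target to the source, together with a $\hat{G}$-equivariant surjection onto $(\whR\otimes_{\vphi,\mfS}(\mfM_i/\mfM_{i-1}))/I_+$ that propagates this triviality to the successive quotients. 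The canonical exactness of $0\to\hat{\mfM}_{i-1}\to\hat{\mfM}_i\to\wh{\mfM_i/\mfM_{i-1}}\to 0$ is then built into the construction, and the weak case is identical upon dropping condition (5).
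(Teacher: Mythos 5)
Your construction fails at the key step: the graded pieces of the naive $p$-power filtration $\mfM_i=p^{n-i}\mfM$ need not be $u$-torsion free, so they need not be finite free over $k[\![u]\!]$ and in general do not even lie in $\mrm{Mod}^r_{/\mfS_{\infty}}$. Concretely, take $\mfM=(u,p)\mfS_2\subset\mfS_2$ with the Frobenius inherited from $\mfS_2$; then $\vphi(\mfM)\subset\mfM$, the cokernel of $\vphi^{\ast}$ is $(u,p)/(u^p,p)\simeq uk[\![u]\!]/u^pk[\![u]\!]$, killed by $E(u)^r$ for $er\ge p-1$, and $\mfM$ is $u$-torsion free, so $\mfM\in\mrm{Mod}^r_{/\mfS_{\infty}}$ by Proposition \ref{torKi}. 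But $p\mfM=pu\,\mfS_2$, and in $\mfM/p\mfM$ the class of $p$ is nonzero (since $1\notin(u,p)\mfS$) yet killed by $u$. So $\mfM/p\mfM$ has $u$-torsion. This also shows that Lemma \ref{ptor} cannot deliver what you ask of it: its proof only identifies the radical of $\mrm{Ann}_{\mfS}(\mfM/p\mfM)$, i.e.\ the annihilator of the whole module, whereas $p'$-torsion freeness is a condition on annihilators of individual elements, and that elementwise statement is refuted by the example above. The rest of your argument (identifying the image of $\whR\otimes_{\vphi,\mfS}\mfM_i$ with the $\hat{G}$-stable submodule $p^{n-i}(\whR\otimes_{\vphi,\mfS}\mfM)$ and checking conditions (1)--(5) of Definition \ref{Liumod}) is reasonable in itself, but it is erected on a filtration whose graded pieces are wrong.

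The paper's proof avoids exactly this by saturating inside the \'etale module: it passes to $\hat{M}=\wh{\mfM[1/u]}$, forms the composite $\hat{\mfM}\to\hat{M}\overset{\mrm{pr}}{\to}\wh{M/pM}$, and applies Proposition \ref{astc} to this morphism of $(\vphi,\hat{G})$-modules. The kernel is $\mfM\cap pM$, which contains and is in general strictly larger than $p\mfM$, and the image $\mfM/(\mfM\cap pM)$ is automatically $u$-torsion free because it embeds into the \'etale module $M/pM$; the induction then runs because $p^{n-1}(\mfM\cap pM)=0$. To repair your proof, replace $p^{n-i}\mfM$ by $\mfM\cap p^{n-i}M$ (equivalently, run the kernel/image argument of Proposition \ref{astc} at each stage); after that substitution your verification of the $\hat{G}$-structure and of condition (5) modulo $I_+$ goes through essentially unchanged.
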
  
  
\begin{proof}
Putting $M=\mfM[1/u]$, we have seen that  
$\hat{M}=\wh{\mfM[1/u]}$ is an \'etale $(\vphi,\hat{G})$-module.
We see that $pM$ and $M/pM$ have structures of  \'etale $(\vphi,\hat{G})$-modules
and then there exists a natural exact sequence 
$0\to \wh{pM}\to \hat{M}\overset{\mrm{pr}}{\to} \wh{M/pM}\to 0$
of \'etale $(\vphi,\hat{G})$-modules.
We also denote by $\mrm{pr}$ a composition $\hat{\mfM}\to \hat{M} \overset{\mrm{pr}}{\to} \wh{M/pM}$
which is a morphism of $(\vphi,\hat{G})$-modules.
By Proposition \ref{astc},
we know that 
$\mfM'=\mrm{ker}(\mrm{pr|_{\mfM}})$ and $\mfM''=\mrm{pr}(\mfM)$ 
have structures of weak $(\vphi,\hat{G})$-modules of height $r$
(resp.\ $(\vphi,\hat{G})$-modules of height $r$)
and a canonical sequence 
$0\to \hat{\mfM}'\to \hat{\mfM}\to \hat{\mfM}''\to 0$
is exact in ${}_{\mrm{w}}\mrm{Mod}^{r,\hat{G}}_{/\mfS_{\infty}}$  
$($resp.\ $\mrm{Mod}^{r,\hat{G}}_{/\mfS_{\infty}})$.
Since $p^{n-1}\mfM'=0$ and $p\mfM''=0$,
we can obtain the desired extension inductively.
\end{proof}  
  
Before starting a maximal (minimal) theory,
we give one result on the ``cokernel'' of a morphism of $(\vphi,\hat{G})$-modules,
which will be used in the proof of Theorem \ref{MThm3}. 

\begin{proposition}
\label{cokernel}
Let $f\colon \hat{\mfM}\to \hat{\mfN}$ be a morphism in ${}_{\mrm{w}}\mrm{Mod}^{r,\hat{G}}_{/\mfS_{\infty}}$  
$($resp.\ $\mrm{Mod}^{r,\hat{G}}_{/\mfS_{\infty}})$. 
Denote by $\mrm{coker}(f)$ the cokernel of $f$ as a morphism of $\vphi$-modules.
Then $\frac{\mrm{coker}(f)}{u{\rm \mathchar`-tor}}$ is an object of $\mrm{Mod}^r_{/\mfS_{\infty}}$.
Furthermore, 
$\frac{\mrm{coker}(f)}{u{\rm \mathchar`-tor}}$ 
has a canonical structure of a weak $(\vphi,\hat{G})$-module 
$($resp.\ a $(\vphi,\hat{G})$-module$)$ induced from that of $\hat{\mfN}$.
\end{proposition}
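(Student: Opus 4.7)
The plan is to mimic the proof of Lemma \ref{tensorLi}: first establish the Kisin-module part, then transport the $\hat{G}$-action from $\hat{\mfN}$ to $\mrm{coker}(f)$ and then to its $u$-torsion-free quotient, and finally verify the axioms of Definition \ref{Liumod}.

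First I would show $\frac{\mrm{coker}(f)}{u{\rm\text{-}tor}}\in \mrm{Mod}^r_{/\mfS_\infty}$. By Proposition \ref{astc}, $f(\mfM)$ lies in $\mrm{Mod}^r_{/\mfS_\infty}$, so the $\vphi$-module $\mrm{coker}(f)=\mfN/f(\mfM)$ is killed by a power of $p$ and is of height $r$ (the relation $E(u)^r\mfN\subset \vphi^*(\mfN)$ passes to any $\vphi$-stable quotient). The $u$-torsion submodule is $\vphi$-stable (if $u^n x=0$ then $u^{pn}\vphi(x)=0$), so $\frac{\mrm{coker}(f)}{u{\rm\text{-}tor}}$ is a $u$-torsion free $\vphi$-module of height $r$ killed by a power of $p$. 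By Proposition \ref{torKi}, it belongs to $\mrm{Mod}^r_{/\mfS_\infty}$.

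Next I would equip $\whR\otimes_{\vphi,\mfS}\frac{\mrm{coker}(f)}{u{\rm\text{-}tor}}$ with a $\hat{G}$-action. Applying $\whR\otimes_{\vphi,\mfS}-$ to $0\to f(\mfM)\to\mfN\to\mrm{coker}(f)\to 0$ stays exact by Corollary \ref{exact}, and the submodule $\whR\otimes_{\vphi,\mfS} f(\mfM)\hookrightarrow \whR\otimes_{\vphi,\mfS}\mfN$ (Corollary \ref{ringext}) is $\hat{G}$-stable because $f$ is a morphism of $(\vphi,\hat{G})$-modules. Hence $\whR\otimes_{\vphi,\mfS}\mrm{coker}(f)$ inherits a $\hat{G}$-action from $\hat{\mfN}$. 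Now consider the surjection
\[
\eta\colon \whR\otimes_{\vphi,\mfS}\mrm{coker}(f) \twoheadrightarrow \whR\otimes_{\vphi,\mfS}\tfrac{\mrm{coker}(f)}{u{\rm\text{-}tor}}.
\]
As in Lemma \ref{tensorLi}, the target embeds into $W(\mrm{Fr}R)\otimes_{\vphi,\mfS}\frac{\mrm{coker}(f)}{u{\rm\text{-}tor}}$ by Corollary \ref{ringext}, and the latter coincides with $W(\mrm{Fr}R)\otimes_{\vphi,\mfS}\mrm{coker}(f)$ because $u\in W(\mrm{Fr}R)^\times$. Thus $\mrm{ker}(\eta)$ equals the kernel of the natural map $\whR\otimes_{\vphi,\mfS}\mrm{coker}(f) \to W(\mrm{Fr}R)\otimes_{\vphi,\mfS}\mrm{coker}(f)$, which is $\hat{G}$-stable since this map is obtained, by passing to the quotient by $f(\mfM)$, from the canonical $G$-equivariant map $\whR\otimes_{\vphi,\mfS}\mfN\to W(\mrm{Fr}R)\otimes_{\vphi,\mfS}\mfN$ underlying $\hat{\mfN}$. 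Hence a $\hat{G}$-action descends to $\whR\otimes_{\vphi,\mfS}\frac{\mrm{coker}(f)}{u{\rm\text{-}tor}}$ via $\whR\otimes_{\vphi,\mfS}\mrm{coker}(f)/\mrm{ker}(\eta)$.

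Finally I would check conditions (1)--(5) of Definition \ref{Liumod}. Condition (1) was obtained in Step 1. Condition (2) (continuity for the weak topology) and condition (3) ($\hat{G}$ commutes with $\vphi$) are inherited as both are quotients of the corresponding structures on $\hat{\mfN}$; for (2) one uses $W(\mrm{Fr}R)\otimes_{\vphi,\mfS}\frac{\mrm{coker}(f)}{u{\rm\text{-}tor}}=W(\mrm{Fr}R)\otimes_{\vphi,\mfS}\mrm{coker}(f)$ as a continuous quotient of $W(\mrm{Fr}R)\otimes_{\vphi,\mfS}\mfN$. For (4), any element of $\frac{\mrm{coker}(f)}{u{\rm\text{-}tor}}$ lifts to some $y\in\mfN$, and $1\otimes y$ is $H_K$-fixed by the corresponding property for $\hat{\mfN}$, so its image is $H_K$-fixed. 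In the non-weak case, condition (5) follows because $\frac{\mrm{coker}(f)}{u{\rm\text{-}tor}}/I_+$ is a quotient of $\hat{\mfN}/I_+\hat{\mfN}$, on which $\hat{G}$ acts trivially.

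The main obstacle is the $\hat{G}$-stability of $\mrm{ker}(\eta)$ in Step 2, where the crux is identifying this kernel with the kernel of a map to $W(\mrm{Fr}R)\otimes_{\vphi,\mfS}\mrm{coker}(f)$; everything else is a routine transport of structure from $\hat{\mfN}$.
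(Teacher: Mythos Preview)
Your argument is correct. It follows the template of Lemma \ref{tensorLi} almost verbatim, and this is a legitimate and clean route to the $\hat{G}$-stability of $\mrm{ker}(\eta)$: identifying $\mrm{ker}(\eta)$ with the kernel of the base-change map $\whR\otimes_{\vphi,\mfS}\mrm{coker}(f)\to W(\mrm{Fr}R)\otimes_{\vphi,\mfS}\mrm{coker}(f)$, which is visibly $\hat{G}$-equivariant as a quotient of the $G$-equivariant map $\whR\otimes_{\vphi,\mfS}\mfN\to W(\mrm{Fr}R)\otimes_{\vphi,\mfS}\mfN$.

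The paper's own proof takes a different, more hands-on path. Writing $C=\mrm{coker}(f)$, it chooses $n$ with $u^nC_{u\text{-}\mrm{tor}}=0$, sets $C'=u^nC$ (which is $u$-torsion free, hence in $\mrm{Mod}^{\infty}_{/\mfS_{\infty}}$), and then uses Corollary \ref{modext} together with the exact sequence $0\to C_{u\text{-}\mrm{tor}}\to C\overset{u^n}{\to}C'\to 0$ to prove the equality $\whR\otimes_{\vphi,\mfS}C_{u\text{-}\mrm{tor}}=(\whR\otimes_{\vphi,\mfS}C)_{u\text{-}\mrm{tor}}$ inside $\whR\otimes_{\vphi,\mfS}C$. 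From this it concludes that the image of $C_{u\text{-}\mrm{tor}}$ is $\hat{G}$-stable and passes to the quotient. Your approach bypasses this computation entirely: you never need to know that the kernel of $\eta$ is literally the $u$-torsion submodule of $\whR\otimes_{\vphi,\mfS}C$, only that it coincides with the kernel of a map that is manifestly $\hat{G}$-equivariant. This is shorter and conceptually the same trick already used in Lemma \ref{tensorLi}.

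One small remark on citations: for the injectivity of $\whR\otimes_{\vphi,\mfS}f(\mfM)\hookrightarrow\whR\otimes_{\vphi,\mfS}\mfN$ you should invoke Corollary \ref{modext} (2) rather than Corollary \ref{exact}, since the latter concerns $\mrm{Tor}^{\mfS}_1$ of Kisin modules in $\mrm{Mod}^r_{/\mfS_{\infty}}$ and $\mrm{coker}(f)$ need not lie there. Right exactness of the tensor product already gives you the identification $\whR\otimes_{\vphi,\mfS}\mrm{coker}(f)\cong(\whR\otimes_{\vphi,\mfS}\mfN)/\mrm{im}(\whR\otimes_{\vphi,\mfS}\mfM)$, which is all you need for the $\hat{G}$-action to descend.
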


\begin{proof}
It is enough to check the case where $f$ is a morphism in
$\mrm{Mod}^{r,\hat{G}}_{/\mfS_{\infty}}$.
Put $C=\mrm{coker}(f)$ and denote by $C_{u{\rm \mathchar`-tor}}$ the $u$-torsion part of $C$.
By Proposition \ref{torKi},
we see the fact that 
$\frac{C}{C_{u{\rm \mathchar`-tor}}}$ is an object of $\mrm{Mod}^r_{/\mfS_{\infty}}$.
Since $C$ is finitely generated as a $\mfS$-module,
there exists an integer $n>0$ such that $u^nC_{u{\rm \mathchar`-tor}}=0$.
Then $C'=u^nC$ is $u$-torsion free and thus 
$C'$ is a torsion Kisin module of finite height.
By Corollary \ref{modext},
we have that 
a natural map $\whR\otimes_{\vphi,\mfS} C'\to \whR\otimes_{\vphi,\mfS} C$
is injective.
Since the composition map
$\whR\otimes_{\vphi,\mfS} C\overset{1\otimes u^n}\to \whR\otimes_{\vphi,\mfS} C'\hookrightarrow \whR\otimes_{\vphi,\mfS} C$
is the multiplication-by-$u^{np}$ map,
if we  regard $\whR\otimes_{\vphi,\mfS} C'$
as a submodule of $\whR\otimes_{\vphi,\mfS} C$,
we obtain 
$u^{np}(\whR\otimes_{\vphi,\mfS} C)\subset \whR\otimes_{\vphi,\mfS} C'$.
Since $C'\in \mrm{Mod}^{\infty}_{/\mfS_{\infty}}$,
we know that 
$\whR\otimes_{\vphi,\mfS} C'\subset \cO\otimes_{\vphi,\mfS} C'$
and thus $\whR\otimes_{\vphi,\mfS} C'$ is $u$-torsion free.
Therefore, denoting by 
$(\whR\otimes_{\vphi,\mfS} C)_{u{\rm \mathchar`-tor}}$
the $u$-torsion part of $\whR\otimes_{\vphi,\mfS} C$,
we obtain 
\begin{equation}
\label{coktor}
u^{np}(\whR\otimes_{\vphi,\mfS} C)_{u{\rm \mathchar`-tor}}=0.
\end{equation}
The exact sequence
$0\to C_{u{\rm \mathchar`-tor}}\to C\overset{u^n}{\rightarrow} C'\to 0$
of $\mfS$-modules
induces the exact sequence
\begin{equation}
\label{cokex}
0\to \whR\otimes_{\vphi, \mfS} C_{u{\rm \mathchar`-tor}}\to \whR\otimes_{\vphi, \mfS}C
\overset{u^{np}}{\rightarrow} \whR\otimes_{\vphi, \mfS}C'\to 0 
\end{equation}
since $\mrm{Tor}^{\mfS}_1(C',\whR)=0$ (see Corollary \ref{exact}).
By (\ref{coktor}) and (\ref{cokex}),
we obtain the equality
$\whR\otimes_{\vphi,\mfS} C_{u{\rm \mathchar`-tor}}
=(\whR\otimes_{\vphi,\mfS} C)_{u{\rm \mathchar`-tor}}$
in $\whR\otimes_{\vphi, \mfS}C$.
On the other hand, 
we remark that 
$\hat{G}$-action on $\whR\otimes_{\vphi,\mfS} \mfN$ induces 
that on $\whR\otimes_{\vphi,\mfS} C$. 
Since $\hat{G}$-acts on $(\whR\otimes_{\vphi,\mfS} C)_{u{\rm \mathchar`-tor}}$ stable,
we can equip a $\hat{G}$-action on 
$\whR\otimes_{\vphi,\mfS} \frac{C}{C_{u{\rm \mathchar`-tor}}}$
by using the exact sequence  
$0\to \whR\otimes_{\vphi, \mfS} C_{u{\rm \mathchar`-tor}}\to \whR\otimes_{\vphi, \mfS}C
\to \whR\otimes_{\vphi, \mfS}\frac{C}{C_{u{\rm \mathchar`-tor}}}\to 0$.
Then it is not difficult to check that
$\frac{\mrm{coker}(f)}{u{\rm \mathchar`-tor}}=\frac{C}{C_{u{\rm \mathchar`-tor}}}$ 
is a $(\vphi,\hat{G})$-module. 
 
\end{proof}

%Finally, 
%we  give the elementary proof of Theorem \ref{MThm2}
%by using the \'etale $(\vphi,\hat{G})$-modules.

\begin{remark}
\label{E1.1}
Let $0\to T' \to T \to T'' \to 0$ and $\hat{\mfM}$ be as in Theorem \ref{MThm2}.
Admitting notions of \'etale $(\vphi,\hat{G})$-modules, 
the proof of Theorem \ref{MThm2} implies that 
the sequence $(\ast): 0\to \hat{\mfM}''\to \hat{\mfM} \overset{g}{\rightarrow} \hat{\mfM}' \to 0$
appeared in the theorem is obtained by a natural way:
let $0\to \hat{M}''\to \hat{M} \to\hat{M}' \to 0$ be a 
sequence of \'etale $(\vphi,\hat{G})$-modules corresponding to $(\ast)$.
Then $\hat{\mfM}$ is a sub $(\vphi,\hat{G})$-module of $\hat{M}$
and $\mfM'=g(\mfM)$ 
(resp.\ $\mfM''=\mfM\cap M$) has a structure of a sub $(\vphi,\hat{G})$-module of $\hat{M}'$ (resp.\ $\hat{M}''$). 
\end{remark}
%\begin{proof}[The elementary proof of Theorem \ref{MThm2}]
%Let $0\to T'\to T\to T''\to 0$ be an exact sequence of 
%finite torsion $\mbb{Z}_p$-representations of $G$ and
%take a corresponding  exact sequence 
%$
%0\to \hat{M}''\to \hat{M}\overset{g}{\to} \hat{M}'\to 0
%%$
%of \'etale $(\vphi,\hat{G})$-modules.
%Let 
%$\hat{\mfM}$ be the $(\vphi, \hat{G})$-module of height $r$
%as in the statement of Theorem \ref{MThm2}.
%Then $\hat{\mfM}$ is a sub \'etale $(\vphi,\hat{G})$-module of $\hat{M}$ and 
% $\mfM':=g(\mfM)$ (resp.\ $\mfM'':=\mfM\cap M$)
%is a Kisin module of height $r$.
%Furthermore it is not difficult to see that it has a natural structure 
%of a sub $(\vphi,\hat{G})$-module of $\hat{M}'$ (resp.\ $\hat{M}''$), and is of height $r$, 
%and then $0\to \hat{\mfM}'\to \hat{\mfM}\to \hat{\mfM}'\to 0$ 
%is the desired sequence. 
%\end{proof}

\subsection{Definitions of maximality and minimality}   

In this subsection, 
we construct maximal objects (resp.\ minimal objects) for $(\vphi,\hat{G})$-modules
by using the theory of \'etale $(\vphi,\hat{G})$-modules given in the previous section. 
Let $\hat{M}=(M,\vphi,\hat{G})\in \mbf{\Phi M}^{\hat{G}}_{/\cO_{\infty}}$
be a torsion \'etale $(\vphi, \hat{G})$-module over $\cO$.
We denote by $F^{r,\hat{G}}_{\mfS}(\hat{M})$  the (partially) ordered set (by inclusion)
of $\hat{\mfM}\in \mrm{Mod}^{r,\hat{G}}_{/\mfS_{\infty}}$ 
which is a sub $(\vphi,\hat{G})$-modules of an \'etale $(\vphi,\hat{G})$-module $M$
such that $\mfM[1/u]=M$.
Note that $\hat{\mfM}$ is a sub $(\vphi,\hat{G})$-modules of  $M$
if and only if 
a natural 
injection\footnote{A natural map $\whR\otimes_{\vphi, \mfS}\mfM\to \cOG\otimes_{\vphi, \cO} M$
is injective by Corollary \ref{ringext}.} 
$\whR\otimes_{\vphi, \mfS}\mfM\hookrightarrow \cOG\otimes_{\vphi, \cO} M$ is $\hat{G}$-equivalent.

\begin{lemma}
\label{sup}
Let $\hat{M}$ be a torsion \'etale $(\vphi,\hat{G})$-module.
Let $\hat{\mfM}_1$ and $\hat{\mfM}_2$ be in $ \mrm{Mod}^{r,\hat{G}}_{/\mfS_{\infty}}$
endowed with injections $\hat{\mfM}_1\to \hat{M}$ and 
$\hat{\mfM}_2\to \hat{M}$ of 
$(\vphi,\hat{G})$-modules. 
Then $\mfM_{12}=\mfM_1+\mfM_2$ $($resp.\ $\mfM'_{12}=\mfM_1\cap \mfM_2)$ in $M$ has a structure of 
a $(\vphi,\hat{G})$-module of height $r$.
In particular, 
the ordered set $F^{r,\hat{G}}_{\mfS}(M)$ has finite supremum and finite infimum. 
\end{lemma}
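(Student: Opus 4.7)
The plan is to proceed in two stages: first establish that the underlying $\mfS$-modules $\mfM_{12}$ and $\mfM'_{12}$ lie in $\mrm{Mod}^r_{/\mfS_{\infty}}$, and then equip them with a $(\vphi,\hat{G})$-structure by realizing the associated modules $\whR \otimes_{\vphi,\mfS} \mfM_{12}$ and $\whR \otimes_{\vphi,\mfS} \mfM'_{12}$ as, respectively, the sum and the intersection of $\hat{G}$-stable submodules of $\cOG \otimes_{\vphi,\cO} M$.

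For the first stage, I would apply Proposition \ref{stc} to the two $\mfS$-linear $\vphi$-equivariant maps $s,d\colon \mfM_1 \oplus \mfM_2 \to M$ defined by $s(x,y) = x+y$ and $d(x,y) = x-y$. Both source and target are $p'$-torsion free, and $\mfM_1 \oplus \mfM_2 \in \mrm{Mod}^r_{/\mfS_{\infty}}$, so Proposition \ref{stc} shows that $\mfM_{12} = \mrm{im}(s)$ and $\mfM'_{12} = \mrm{ker}(d)$ both belong to $\mrm{Mod}^r_{/\mfS_{\infty}}$. The conditions $\mfM_{12}[1/u] = M$ and $\mfM'_{12}[1/u] = M$ follow from exactness of the localization functor $-[1/u]$ together with $\mfM_i[1/u] = M$.

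For the second stage, consider the short exact sequence of Kisin modules
\[
0 \longrightarrow \mfM'_{12} \longrightarrow \mfM_1 \oplus \mfM_2 \overset{\sigma}{\longrightarrow} \mfM_{12} \longrightarrow 0,
\]
where $\sigma(x,y) = x+y$ (its kernel is the antidiagonal copy of $\mfM_1 \cap \mfM_2$). Applying the functor $\whR \otimes_{\vphi,\mfS} -$, which is exact on Kisin modules by Corollary \ref{exact}, and using Corollary \ref{modext} to view $\whR \otimes_{\vphi,\mfS} \mfM_1$, $\whR \otimes_{\vphi,\mfS} \mfM_2$, $\whR \otimes_{\vphi,\mfS} \mfM_{12}$, and $\whR \otimes_{\vphi,\mfS} \mfM'_{12}$ as submodules of $\cOG \otimes_{\vphi,\cO} M$, I would then identify inside the latter
\[
\whR \otimes_{\vphi,\mfS} \mfM_{12} = (\whR \otimes_{\vphi,\mfS} \mfM_1) + (\whR \otimes_{\vphi,\mfS} \mfM_2),
\]
\[
\whR \otimes_{\vphi,\mfS} \mfM'_{12} = (\whR \otimes_{\vphi,\mfS} \mfM_1) \cap (\whR \otimes_{\vphi,\mfS} \mfM_2).
\]
Both right-hand sides are $\hat{G}$-stable by the hypothesis on $\hat{\mfM}_1$ and $\hat{\mfM}_2$, so the left-hand sides inherit continuous $\wh{\mcal{R}}$-semilinear $\hat{G}$-actions. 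The remaining axioms of Definition \ref{Liumod}, namely commutation with $\vphi$ and trivial action on the quotient by $I_+$, are inherited from $\hat{\mfM}_1$ and $\hat{\mfM}_2$ via the same embeddings. The supremum/infimum statement is then immediate: any $\hat{\mfN} \in F^{r,\hat{G}}_{\mfS}(M)$ containing $\hat{\mfM}_1$ and $\hat{\mfM}_2$ must contain $\mfM_{12}$, and any such $\hat{\mfN}$ contained in both must sit inside $\mfM'_{12}$.

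The main obstacle I anticipate is the identification of the tensored intersection with the intersection of the tensored modules. The sum identification is straightforward from surjectivity of $\sigma$, but the intersection identification requires that the tensored sequence remain left-exact; this is precisely where Corollary \ref{exact} (exactness of $\whR \otimes_{\vphi,\mfS} -$ on Kisin modules, which depends on having $\mfM_{12} \in \mrm{Mod}^r_{/\mfS_{\infty}}$ from the first stage) is essential, combined with a diagram chase relating $\ker(\sigma)$ before and after tensoring to the pullback of the two injections $\whR \otimes_{\vphi,\mfS} \mfM_i \hookrightarrow \cOG \otimes_{\vphi,\cO} M$.
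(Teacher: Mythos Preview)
Your approach is essentially the same as the paper's: both use the difference map $\hat{\mfM}_1\oplus\hat{\mfM}_2\to\hat{M}$ to realize $\mfM'_{12}$ as a kernel and the sum map to realize $\mfM_{12}$ as an image, and both identify $\whR\otimes_{\vphi,\mfS}\mfM_{12}$ with $(\whR\otimes_{\vphi,\mfS}\mfM_1)+(\whR\otimes_{\vphi,\mfS}\mfM_2)$ inside $\cOG\otimes_{\vphi,\cO}M$.

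One point deserves more care. Your claim that condition~(5) of Definition~\ref{Liumod} (trivial $\hat G$-action modulo $I_+$) is ``inherited'' is straightforward for the sum $\mfM_{12}$, and the paper spells this out: writing $x=x_1+x_2$ one has $\sigma(x)-x=(\sigma(x_1)-x_1)+(\sigma(x_2)-x_2)\in I_+(\whR\otimes_{\vphi,\mfS}\mfM_1)+I_+(\whR\otimes_{\vphi,\mfS}\mfM_2)=I_+(\whR\otimes_{\vphi,\mfS}\mfM_{12})$. For the intersection $\mfM'_{12}$, however, the analogous statement $I_+(\whR\otimes\mfM_1)\cap I_+(\whR\otimes\mfM_2)\subset I_+\bigl((\whR\otimes\mfM_1)\cap(\whR\otimes\mfM_2)\bigr)$ is not obvious. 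The paper sidesteps this by invoking Proposition~\ref{astc} directly (whose proof, via Corollary~\ref{imker}, uses that $\whR/I_+\simeq W(k)$ is $p$-torsion free so that $(\whR/I_+)\otimes_{\vphi,\mfS}-$ is exact on Kisin modules by Corollary~\ref{exact}). If you want to stay with your intersection picture, you should add this step: tensor your short exact sequence with $\whR/I_+$ to see that $(\whR/I_+)\otimes_{\vphi,\mfS}\mfM'_{12}$ sits as a submodule of $(\whR/I_+)\otimes_{\vphi,\mfS}(\mfM_1\oplus\mfM_2)$, on which $\hat G$ acts trivially.
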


\begin{proof}
First we note that $\mfM_{12}$ (resp.\  $\mfM'_{12}$) is contained in $\mrm{Mod}^r_{/\mfS_{\infty}}$
and $\mfM_{12}[1/u]=M$ (resp.\ $\mfM'_{12}[1/u]=M$),
see the proof of Proposition 3.2.3 in \cite{CL1}.  
Furthermore,
$\mfM'_{12}$ is canonically isomorphic to the underlying Kisin module of 
the kernel of the morphism of $(\vphi,\hat{G})$-modules
\[
\hat{\mfM}_1\oplus \hat{\mfM}_2 \to \hat{\mfM}_1+ \hat{\mfM}_2\subset \hat{M},\quad (x,y)\mapsto x-y.  
\]  
Thus we obtain the desired result for $\mfM'_{12}$  by Proposition \ref{astc}.
Since $\hat{G}$-actions on $\whR\otimes_{\vphi, \mfS}\mfM_1$ and 
$\whR\otimes_{\vphi, \mfS}\mfM_2$ is restrictions of 
the $\hat{G}$-action on $\cOG\otimes_{\vphi, \cO} M$,
$\hat{G}$ acts on 
$\whR\otimes_{\vphi, \mfS}\mfM_{12}
=\whR\otimes_{\vphi, \mfS}\mfM_1+\whR\otimes_{\vphi, \mfS}\mfM_2
\subset \cOG\otimes_{\vphi, \cO} M$ stable.
For any $\sigma\in \hat{G}$ and $x\in \whR\otimes_{\vphi, \mfS}\mfM_{12}$,
taking $x_1\in \whR\otimes_{\vphi, \mfS}\mfM_1$ and $x_2\in \whR\otimes_{\vphi, \mfS}\mfM_2$ such that 
$x=x_1+x_2$,
we have 
$\sigma(x)-x=(\sigma(x_1)-x_1)+(\sigma(x_2)-x_2)
\in I_+(\whR\otimes_{\vphi,\mfS} \mfM_1)+I_+(\whR\otimes_{\vphi,\mfS} \mfM_2)
=I_+(\whR\otimes_{\vphi,\mfS} \mfM_{12})$ and thus 
$\hat{G}$ acts on $(\whR\otimes_{\vphi,\mfS} \mfM_{12})/I_+(\whR\otimes_{\vphi,\mfS} \mfM_{12})$
trivial.
Hence $\hat{\mfM}_{12}=(\mfM_{12},\vphi,\hat{G})$ is a $(\vphi,\hat{G})$-module
and we obtain the desired result.
\end{proof}

\begin{proposition}
\label{greatest}
$F^{r,\hat{G}}_{\mfS}(\hat{M})$ has a maximum element.
If $r<\infty$, then it also has a minimum element.
\end{proposition}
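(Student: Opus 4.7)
The plan is to combine Lemma~\ref{sup} (closure of $F^{r,\hat{G}}_{\mfS}(\hat{M})$ under finite sums and finite intersections) with the existence of a greatest and a smallest element of $F^r_{\mfS}(M)$ proved by Caruso--Liu (recalled at the start of Section~5.1), together with a noetherian/artinian argument carried out inside the fixed $\mfS$-module $\Max^r(M)$.

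For the maximum, note first that every $\hat{\mfM}\in F^{r,\hat{G}}_{\mfS}(\hat{M})$ has underlying Kisin module $\mfM\subset \Max^r(M)$, since $\hat{\mfM}\in F^{r,\hat{G}}_{\mfS}(\hat{M})\subset F^r_{\mfS}(M)$. Set
\[
\mfM_{\mrm{sum}}=\sum_{\hat{\mfM}\in F^{r,\hat{G}}_{\mfS}(\hat{M})}\mfM\ \subset\ \Max^r(M).
\]
Since $\Max^r(M)$ is a finitely generated $\mfS$-module and $\mfS$ is noetherian, $\mfM_{\mrm{sum}}$ is already equal to a finite sum $\mfM_1+\cdots+\mfM_n$ with each $\hat{\mfM}_i\in F^{r,\hat{G}}_{\mfS}(\hat{M})$. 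Iterating the ``sum'' half of Lemma~\ref{sup} then shows that $\mfM_{\mrm{sum}}$ carries a structure of sub $(\vphi,\hat G)$-module of $\hat M$ inside $F^{r,\hat{G}}_{\mfS}(\hat{M})$, and by construction it contains every element of the set, so it is the required maximum.

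For the minimum (when $r<\infty$), apply the dual Caruso--Liu result to obtain a smallest element $\Min^r(M)$ of $F^r_{\mfS}(M)$, so that $\Min^r(M)\subset \mfM\subset \Max^r(M)$ for every $\hat{\mfM}\in F^{r,\hat{G}}_{\mfS}(\hat{M})$. The quotient $\Max^r(M)/\Min^r(M)$ is finitely generated over $\mfS$, killed by a power of $p$ (it is $p$-power torsion) and killed by a power of $u$ (both $\Max^r(M)$ and $\Min^r(M)$ invert to $M$), hence is a finite-length $W(k)$-module. Its submodule lattice is therefore artinian, so the intersection $\bigcap_{\hat{\mfM}}\mfM$ (taken inside $\Max^r(M)$) is realised by a finite subfamily $\mfM_1\cap\cdots\cap \mfM_n$. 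Iterating the ``intersection'' half of Lemma~\ref{sup} puts this finite intersection in $F^{r,\hat{G}}_{\mfS}(\hat{M})$, and it is by construction the minimum.

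The main obstacle is exactly why the second half requires $r<\infty$: the Caruso--Liu smallest element $\Min^r(M)$ genuinely fails to exist in the case $r=\infty$ (one can shrink any $\mfM$ to $u^n\mfM$, each still of finite height, producing an infinite strictly descending chain in $F^{\infty}_{\mfS}(M)$), so $\Max^r(M)/\Min^r(M)$ need not be of finite length and the artinianity argument collapses. Once that asymmetry is noted, the proof is a direct reduction to Lemma~\ref{sup} via the noetherianity (resp.\ finite length) of $\Max^r(M)$ (resp.\ $\Max^r(M)/\Min^r(M)$).
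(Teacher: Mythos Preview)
Your proof is correct. Both arguments rest on Lemma~\ref{sup} together with the chain conditions on $F^r_{\mfS}(M)$ established in \cite{CL1}, so the underlying content is the same. The presentation differs slightly: the paper argues by contradiction, building an infinite strictly increasing (resp.\ decreasing) chain in $F^{r,\hat G}_{\mfS}(\hat M)\subset F^r_{\mfS}(M)$ and invoking Lemmas~3.2.4 and~3.2.5 of \cite{CL1} directly; you instead use the corollary of those lemmas (existence of $\Max^r(M)$ and $\Min^r(M)$) and then appeal to noetherianity of $\Max^r(M)$, resp.\ finite length of $\Max^r(M)/\Min^r(M)$, to reduce the full sum/intersection to a finite one. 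Your route is mildly more constructive and makes the role of $r<\infty$ in the minimum case transparent (the quotient $\Max^r(M)/\Min^r(M)$ is $u$-power torsion only because $\Min^r(M)$ exists), but both arguments are really two phrasings of the same finiteness.
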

\begin{proof}
Suppose that $F^{r,\hat{G}}_{\mfS}(\hat{M})$ does not have a maximum element.
Take any $\hat{\mfM}=\hat{\mfM}_0\in F^{r,\hat{G}}_{\mfS}(\hat{M})$.
Since $\hat{\mfM}_0$ is not maximum, 
there exists a $\hat{\mfM}'_1\in F^{r,\hat{G}}_{\mfS}(M)$ such that 
$\mfM_0\not\subset \mfM'_1$.
Put $\mfM_1=\mfM_0+\mfM'_1$ in $M$.
By Lemma \ref{sup}, $\mfM_1$ has a structure of $(\vphi,\hat{G})$-module,
we denote it by $\hat{\mfM}_1$.
We see that $\hat{\mfM}_1\in F^{r,\hat{G}}_{\mfS}(\hat{M})$ 
and $\mfM_0\subsetneq \mfM_1$.
Inductively,
we find  $\hat{\mfM}_i\in F^{r,\hat{G}}_{\mfS}(M)$ with
infinite length 
increasing sequence
\[
\mfM_0\subsetneq \mfM_1\subsetneq \mfM_2 \subsetneq \cdots
\]
in $F^{r}_{\mfS}(M)$.
However, this is a contradiction by Lemma 3.2.4 of \cite{CL1}.
The proof of  the assertion for a minimum element is the same except only 
that we use Lemma 3.2.5 of \cite{CL1}.
\end{proof}

\begin{remark}
If $F^{\infty,\hat{G}}_{\mfS}(\hat{M})$ is not empty,
then $F^{\infty,\hat{G}}_{\mfS}(\hat{M})$ does not have a minimum element.
In fact, if $\hat{\mfM}$ is an object of $F^{\infty,\hat{G}}_{\mfS}(\hat{M})$,
then we obtain the infinite decreasing sequence 
\[
\hat{\mfM}> \wh{u\mfM}> \wh{u^2\mfM}> \cdots 
\]
in $F^{\infty,\hat{G}}_{\mfS}(\hat{M})$.

\end{remark}

\begin{definition}
\label{maxdef}
Let $\hat{\mfM}\in \mrm{Mod}^{r,\hat{G}}_{/\mfS_{\infty}}$. 
We denote by $\Max^r(\hat{\mfM})$ (resp.\ $\Min^r(\hat{\mfM})$)
the maximum element (resp.\ minimum element)
of $F^{r,\hat{G}}_{\mfS}(\wh{\mfM[1/u]})$.
It is endowed with a morphism of $(\vphi,\hat{G})$-modules 
$\iotamax^{\hat{\mfM}}\colon  \hat{\mfM}\to \Max^r(\hat{\mfM})$ 
(resp.\ $\iotamin^{\hat{\mfM}}\colon  \Min(\hat{\mfM})\to \hat{\mfM}$).
We often denote by $\mMax^r(\hat{\mfM})$ (resp.\ $\mMin^r(\hat{\mfM})$) 
the underlying sub $\vphi$-module over $\mfS$ of 
$\Max^r(\hat{\mfM})$ (resp.\ $\Min^r(\hat{\mfM})$). 
We say  that $\hat{\mfM}$ is {\it maximal} (resp.\ {\it minimal}) 
if $\iotamax^{\hat{\mfM}}$ (resp.\ $\iotamin^{\hat{\mfM}}$) is an isomorphism.
\end{definition}

\subsection{Maximal objects for $(\vphi,\hat{G})$-modules}   

In this section, we prove various properties of maximal objects.

\begin{proposition}
\label{func}
Definition \ref{maxdef} gives rise to a functor 
$\Max^r\colon \mrm{Mod}^{r,\hat{G}}_{/\mfS_{\infty}}\to \mrm{Mod}^{r,\hat{G}}_{/\mfS_{\infty}}$.
\end{proposition}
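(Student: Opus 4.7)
The plan is to construct, for each morphism $f\colon \hat{\mfM}_1\to \hat{\mfM}_2$ in $\mrm{Mod}^{r,\hat{G}}_{/\mfS_{\infty}}$, a canonical morphism $\Max^r(f)\colon \Max^r(\hat{\mfM}_1)\to \Max^r(\hat{\mfM}_2)$, and then to verify compatibility with identities and composition. The guiding principle is to pass to \'etale envelopes and descend, reducing functoriality to the trivially functorial assignment $\hat{\mfM}\mapsto \wh{\mfM[1/u]}$.

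First I would observe that $f$ induces a morphism $f[1/u]\colon \wh{\mfM_1[1/u]}\to \wh{\mfM_2[1/u]}$ of torsion \'etale $(\vphi,\hat{G})$-modules (one tensors the underlying map with $\cO$ and uses the induced $\cOG$-semi-linear $\hat{G}$-action on $\cOG\otimes_{\vphi,\cO}(-)$ coming from $\whR\otimes_{\vphi,\mfS}(-)$). Viewing $\mMax^r(\hat{\mfM}_i)$ as a sub $(\vphi,\hat{G})$-module of $\wh{\mfM_i[1/u]}$ for $i=1,2$, I would restrict $f[1/u]$ to $\mMax^r(\hat{\mfM}_1)$, obtaining a morphism of $(\vphi,\hat{G})$-modules $\mMax^r(\hat{\mfM}_1)\to \wh{\mfM_2[1/u]}$. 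The main step is to show that this morphism factors through $\mMax^r(\hat{\mfM}_2)$.

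For this factorization, I would argue as follows. By the analogue of scheme-theoretic closure (Proposition \ref{astc}), the image $f[1/u](\mMax^r(\hat{\mfM}_1))$ is itself a sub $(\vphi,\hat{G})$-module of $\wh{\mfM_2[1/u]}$ lying in $\mrm{Mod}^{r,\hat{G}}_{/\mfS_{\infty}}$. Applying Lemma \ref{sup} to this image and to $\mMax^r(\hat{\mfM}_2)$, their sum inside $\wh{\mfM_2[1/u]}$ inherits the structure of a $(\vphi,\hat{G})$-module of height $r$, and its localization to $\cO$ equals $\mfM_2[1/u]$ since this already holds for $\mMax^r(\hat{\mfM}_2)$ alone. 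Hence the sum belongs to $F^{r,\hat{G}}_{\mfS}(\wh{\mfM_2[1/u]})$, so by maximality it is contained in $\mMax^r(\hat{\mfM}_2)$, forcing $f[1/u](\mMax^r(\hat{\mfM}_1))\subset \mMax^r(\hat{\mfM}_2)$ as required. Define $\Max^r(f)$ to be the induced restriction.

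Functoriality is then formal: $\Max^r(\mrm{id})=\mrm{id}$ since the identity on $\wh{\mfM[1/u]}$ restricts to the identity on $\mMax^r(\hat{\mfM})$, and for composable $f,g$ both $\Max^r(g)\circ \Max^r(f)$ and $\Max^r(g\circ f)$ are the restriction of the single \'etale-level morphism $g[1/u]\circ f[1/u]=(g\circ f)[1/u]$. The only nontrivial point is the containment $f[1/u](\mMax^r(\hat{\mfM}_1))\subset \mMax^r(\hat{\mfM}_2)$; this is the main obstacle and is precisely where Lemma \ref{sup} (closure of $F^{r,\hat{G}}_{\mfS}(\hat{M})$ under finite sums) and Proposition \ref{astc} (so that the image is a bona fide object of $\mrm{Mod}^{r,\hat{G}}_{/\mfS_{\infty}}$ rather than merely a $\mfS$-submodule of the \'etale envelope) are essential.
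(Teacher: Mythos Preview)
Your proposal is correct and follows essentially the same approach as the paper's proof: pass to the \'etale envelope via $f[1/u]$, use Proposition \ref{astc} to see that $f[1/u](\mMax^r(\hat{\mfM}_1))$ is a sub $(\vphi,\hat{G})$-module of height $r$, form its sum with $\mMax^r(\hat{\mfM}_2)$ via Lemma \ref{sup}, and invoke maximality to obtain the containment. You are more explicit than the paper in verifying the identity and composition axioms, but the core argument is the same.
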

\begin{proof}
We have to prove that 
any map $f\colon \hat{\mfM}\to \hat{\mfM}'$ induces a map
$\Max^r(\hat{\mfM})\to \Max^r(\hat{\mfM}')$.
The map $g=f[1/u]\colon \wh{\mfM[1/u]}\to \wh{\mfM'[1/u]}$ is a morphism in 
$\mbf{\Phi M}^{\hat{G}}_{/\cO_{\infty}}$.
By Corollary \ref{astc},
$g(\Max^r({\hat{\mfM}}))$ is a sub $(\vphi,\hat{G})$-module  over $\mfS$ of $\wh{\mfM'[1/u]}$.
Since $\hat{\mfM}'$ is maximal and 
$g(\Max^r({\hat{\mfM}}))+\hat{\mfM}'$ is an object of $F^{r,\hat{G}}_{\mfS}(\wh{\mfM'[1/u]})$, we see the underlying 
$\vphi$-module of $g(\Max^r({\hat{\mfM}}))$ is contained in  
$\mfM'$
and we have done.
\end{proof}

Denote by $\Max^{r,\hat{G}}_{/\mfS_{\infty}}$ the essential image of the functor
$\Max^r\colon \mrm{Mod}^{r,\hat{G}}_{/\mfS_{\infty}}\to \mrm{Mod}^{r,\hat{G}}_{/\mfS_{\infty}}$.
It is a full subcategory of $\mrm{Mod}^{r,\hat{G}}_{/\mfS_{\infty}}$.
The following two propositions can be proved by essentially the same method of \cite{CL1} 
(cf.\ Proposition 3.3.2,\ 3.3.3,\ 3.3.4 and 3.3.5) and we omit the proof. 

\begin{proposition}
\label{property}
$(1)$
The functor
$\Max^r\colon \mrm{Mod}^{r,\hat{G}}_{/\mfS_{\infty}}\to \mrm{Mod}^{r,\hat{G}}_{/\mfS_{\infty}}$ is a projection, that is, 
$\Max^r\circ \Max^r=\Max^r$.

\noindent
$(2)$ The functor 
$\Max^r\colon \mrm{Mod}^{r,\hat{G}}_{/\mfS_{\infty}}\to \mrm{Mod}^{r,\hat{G}}_{/\mfS_{\infty}}$
is left exact.

\noindent
$(3)$ The functor 
$\Max^r\colon \mrm{Mod}^{r,\hat{G}}_{/\mfS_{\infty}}\to \Max^{r,\hat{G}}_{/\mfS_{\infty}}$
is a left adjoint to the inclusion functor
$\Max^{r,\hat{G}}_{/\mfS_{\infty}}\to \mrm{Mod}^{r,\hat{G}}_{/\mfS_{\infty}}$.
\end{proposition}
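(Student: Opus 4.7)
The global strategy is to view every object in sight as a sub-$(\vphi,\hat{G})$-module of the common ambient \'etale $(\vphi,\hat{G})$-module $\hat{M}=\wh{\mfM[1/u]}$, and to mimic the Caruso--Liu argument for Kisin modules verbatim, replacing each use of ``scheme-theoretic closure'' by Proposition \ref{astc} and each use of the lattice structure on $F^r_{\mfS}(M)$ by Lemma \ref{sup} and Proposition \ref{greatest}. The essential point is that the poset $F^{r,\hat{G}}_{\mfS}(\hat{M})$ behaves formally like its non-$\hat{G}$ analogue, so the three assertions follow the same skeleton as \cite{CL1}, Propositions 3.3.2--3.3.5.

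For (1), the key remark is that by construction $\mMax^r(\hat{\mfM})[1/u]=\mfM[1/u]$, because $\Max^r(\hat{\mfM})\in F^{r,\hat{G}}_{\mfS}(\hat{M})$. Hence $F^{r,\hat{G}}_{\mfS}(\wh{\mMax^r(\hat{\mfM})[1/u]})=F^{r,\hat{G}}_{\mfS}(\hat{M})$, and the maximum of both sides is the same element, which gives $\Max^r\circ\Max^r=\Max^r$.

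For (2), I would start with an exact sequence $0\to\hat{\mfM}'\to\hat{\mfM}\overset{f}{\to}\hat{\mfM}''$ in $\mrm{Mod}^{r,\hat{G}}_{/\mfS_{\infty}}$ and invert $u$ to obtain an exact sequence $0\to M'\to M\overset{g}{\to}M''$ of \'etale $(\vphi,\hat{G})$-modules. Inside $M$ we have the inclusions $\Max^r(\hat{\mfM}')\subseteq M'\subseteq M$ and $\Max^r(\hat{\mfM})\subseteq M$, and $g$ maps $\Max^r(\hat{\mfM})$ into $M''$. Form the kernel $\hat{\mfK}$ of the induced map $\Max^r(f)\colon\Max^r(\hat{\mfM})\to\Max^r(\hat{\mfM}'')$; by Proposition \ref{astc} it is a genuine sub-$(\vphi,\hat{G})$-module of $\Max^r(\hat{\mfM})$, and its underlying \'etale module is precisely $M'$. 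Hence $\hat{\mfK}\in F^{r,\hat{G}}_{\mfS}(\hat{M}')$, so maximality forces $\hat{\mfK}\subseteq\Max^r(\hat{\mfM}')$; the reverse inclusion is functoriality (Proposition \ref{func}), and the injectivity of $\Max^r(\hat{\mfM}')\to\Max^r(\hat{\mfM})$ reduces to the $u$-torsion freeness given by Proposition \ref{torKi}.

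For (3), once (1) is in hand the adjunction is essentially formal: for $\hat{\mfN}$ maximal, one sends $g\colon\Max^r(\hat{\mfM})\to\hat{\mfN}$ to $g\circ\iotamax^{\hat{\mfM}}$, and inversely sends $f\colon\hat{\mfM}\to\hat{\mfN}$ to $\Max^r(f)$ followed by the identification $\Max^r(\hat{\mfN})=\hat{\mfN}$ furnished by (1); that these are mutually inverse is a diagram chase using functoriality. The one place where care is genuinely required, and which I anticipate as the main obstacle, is the $\hat{G}$-equivariance in the construction of $\hat{\mfK}$ in paragraph (2) and in the identification of sums and intersections of sub-$(\vphi,\hat{G})$-modules used throughout; but Proposition \ref{astc} and Lemma \ref{sup} are precisely designed to supply this, so the whole argument reduces to careful bookkeeping around the embeddings $\whR\otimes_{\vphi,\mfS}\mfN\hookrightarrow\cOG\otimes_{\vphi,\cO}N$.
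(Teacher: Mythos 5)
Your proposal is correct and follows exactly the route the paper intends: the paper omits this proof, remarking that it is ``essentially the same method'' as \cite{CL1}, Propositions 3.3.2--3.3.5, and your argument is precisely that adaptation, with Proposition \ref{astc}, Lemma \ref{sup} and the poset $F^{r,\hat{G}}_{\mfS}(\hat{M})$ standing in for their Kisin-module counterparts. The individual steps (the poset identification for (1), the identification of $\ker(\Max^r(f))$ with an element of $F^{r,\hat{G}}_{\mfS}(\hat{M}')$ for (2), and the formal adjunction for (3)) are all sound.
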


\begin{proposition}
\label{univ}
Let $\hat{\mfM}\in \mrm{Mod}^{r,\hat{G}}_{/\mfS_{\infty}}$.
Then the couple $(\Max^r(\hat{\mfM}), \iota^{\hat{\mfM}}_{\mrm{max}})$ is characterized by the 
following universal property:
\begin{itemize}
\item the morphism $\hat{T}(\iota^{\hat{\mfM}}_{\mrm{max}})$ is an isomorphism;
\item for any $\hat{\mfM}'\in  \mrm{Mod}^{r,\hat{G}}_{/\mfS_{\infty}}$ endowed with a morphism 
$f\colon \hat{\mfM}\to \hat{\mfM}'$ such that $\hat{T}(f)$ is an isomorphism,
there exists a unique $g\colon \hat{\mfM}'\to \Max^r(\hat{\mfM})$
such that $g\circ f=\iota^{\hat{\mfM}}_{\mrm{max}}$.
\end{itemize} 
\end{proposition}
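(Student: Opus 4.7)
The plan is to exploit the correspondence between torsion $(\vphi,\hat{G})$-modules and \'etale $(\vphi,\hat{G})$-modules established in Section 5.3, namely that $\hat{T}(\hat{\mfM}) \simeq \hat{\mcal{T}}(\wh{\mfM[1/u]})$, together with the anti-equivalence of Proposition~\ref{et-rep}. The first bullet is immediate: since $\iotamax^{\hat{\mfM}}\colon \hat{\mfM}\to \Max^r(\hat{\mfM})$ is by construction an inclusion of sub $(\vphi,\hat{G})$-modules of $\wh{M}$, where $M = \mfM[1/u] = \mMax^r(\hat{\mfM})[1/u]$, the induced morphism on underlying \'etale $(\vphi,\hat{G})$-modules is the identity on $M$. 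Applying $\hat{\mcal{T}}$ and using the canonical isomorphism $\hat{T}\simeq \hat{\mcal{T}}\circ(\mfM\mapsto \wh{\mfM[1/u]})$ yields the first claim.

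For the existence part of the universal property, let $f\colon \hat{\mfM}\to \hat{\mfM}'$ be a morphism with $\hat{T}(f)$ an isomorphism. Inverting $u$ produces a morphism $\phi := f[1/u]\colon \wh{\mfM[1/u]}\to \wh{\mfM'[1/u]}$ in $\mbf{\Phi M}^{\hat{G}}_{/\cO_{\infty}}$, and applying $\hat{\mcal{T}}$ to $\phi$ recovers $\hat{T}(f)$; by Proposition~\ref{et-rep}, $\phi$ is an isomorphism of \'etale $(\vphi,\hat{G})$-modules. Put $N := \phi^{-1}(\mfM')\subset M$. Then $\phi$ restricts to an isomorphism $\mfM'\overset{\sim}{\to}N$ of Kisin modules of height $r$ belonging to $\mrm{Mod}^{r}_{/\mfS_{\infty}}$, with $N[1/u]=M$. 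The $\hat{G}$-equivariance of $\phi^{-1}$ on the \'etale level transports the $(\vphi,\hat{G})$-structure of $\hat{\mfM}'$ to one making $N$ a sub $(\vphi,\hat{G})$-module of $\wh{M}$, so $N \in F^{r,\hat{G}}_{\mfS}(\wh{M})$. By the maximality of $\mMax^r(\hat{\mfM})$, we have $N \subset \mMax^r(\hat{\mfM})$. Composing $\mfM'\overset{\sim}{\to}N \hookrightarrow \mMax^r(\hat{\mfM})$ defines a morphism $g\colon \hat{\mfM}'\to \Max^r(\hat{\mfM})$ of $(\vphi,\hat{G})$-modules, and $g\circ f = \iotamax^{\hat{\mfM}}$ because $\phi^{-1}\circ f = \mrm{id}_{\mfM}$ after inverting $u$.

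For uniqueness of $g$, suppose $g,g'$ both satisfy the identity. Then $(g-g')\circ f = 0$, so after inverting $u$ the map $(g-g')[1/u]\circ \phi$ vanishes; since $\phi$ is surjective this forces $(g-g')[1/u] = 0$. Since $\mfM'$ is $p'$-torsion free (Proposition~\ref{torKi}), the localization map $\mfM'\hookrightarrow \mfM'[1/u]$ is injective, hence $g=g'$. The characterization statement then follows by the standard categorical argument: if $(\hat{\mfN},\iota')$ also satisfies the universal property, then applying universality in both directions produces mutually inverse morphisms between $\hat{\mfN}$ and $\Max^r(\hat{\mfM})$ intertwining $\iota'$ and $\iotamax^{\hat{\mfM}}$.

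The only point that requires genuine care is the verification in the existence step that $N = \phi^{-1}(\mfM')$ really belongs to $F^{r,\hat{G}}_{\mfS}(\wh{M})$, i.e.\ that the $\hat{G}$-action transported from $\mfM'$ via $\phi^{-1}$ agrees with the restriction of the $\hat{G}$-action on $\cOG\otimes_{\vphi,\cO}M$. This reduces to saying that the inclusion $\whR\otimes_{\vphi,\mfS}\mfM'\hookrightarrow \cOG\otimes_{\vphi,\cO}M'$ is $\hat{G}$-equivariant (built into the definition of $\wh{\mfM'[1/u]}$) and that $\phi^{-1}$ is $\hat{G}$-equivariant as a morphism in $\mbf{\Phi M}^{\hat{G}}_{/\cO_{\infty}}$; both are immediate, so no serious difficulty arises.
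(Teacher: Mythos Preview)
Your proof is correct and follows essentially the same route the paper has in mind: the paper omits the argument and refers to \cite{CL1}, Proposition~3.3.5, whose method is precisely to pass to \'etale $\vphi$-modules via $[1/u]$, use the equivalence with representations to see that $f[1/u]$ is an isomorphism, and then pull $\mfM'$ back along $(f[1/u])^{-1}$ to produce an element of $F^{r}_{\mfS}(M)$ dominated by the maximal object; you have simply adapted this to the $(\vphi,\hat{G})$-setting using the \'etale $(\vphi,\hat{G})$-module formalism of Section~5.2--5.3, which is exactly what the paper intends. One trivial slip: where you write ``$\phi$ restricts to an isomorphism $\mfM'\overset{\sim}{\to}N$'' you mean $\phi^{-1}$, since $\phi$ goes from $M$ to $M'$.
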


Here we are ready to prove the essential part of Theorem \ref{MThm3}.

\begin{theorem}
\label{abelian}
The category $\Max^{r,\hat{G}}_{/\mfS_{\infty}}$ is abelian.
More precisely,
if $f\colon \hat{\mfM}\to \hat{\mfM}'$ is a morphism in 
$\Max^{r,\hat{G}}_{/\mfS_{\infty}}$, then

\vspace{-2mm}
\begin{itemize}
\item[$(1)$] if we denote the kernel of $f$ as a morphism of $\vphi$-modules by $\mrm{ker}(f)$, 
then $\mrm{ker}(f)$ is an object of $\mrm{Mod}^r_{/\mfS_{\infty}}$
and has a canonical structure of a $(\vphi,\hat{G})$-module of height $r$.
If we denote it by $\wh{\mrm{ker}(f)}$, 
then it is maximal
and is the kernel of $f$ in the abelian category $\Max^{r,\hat{G}}_{/\mfS_{\infty}}$;

\vspace{-2mm}
\item[$(2)$] if we denote the cokernel of $f$ as a morphism of $\vphi$-modules by $\mrm{coker}(f)$,
then $\frac{\mrm{coker}(f)}{u{\rm \mathchar`-tor}}$ is an object of $\mrm{Mod}^r_{/\mfS_{\infty}}$
and has a canonical structure of a $(\vphi,\hat{G})$-module of height $r$.
If we denote it by $\wh{\frac{\mrm{coker}(f)}{u{\rm \mathchar`-tor}}}$, then
$\Max^r(\wh{\frac{\mrm{coker}(f)}{u{\rm \mathchar`-tor}}})$ is the cokernel of $f$ in  the abelian category
$\Max^{r,\hat{G}}_{/\mfS_{\infty}}$;
moreover, if $f$ is injective as a morphism of $\vphi$-modules, 
then $\mrm{coker}(f)$ has no $u$-torsion;

\vspace{-2mm}
\item[$(3)$] if we denote the image $($resp.\ coimage$)$ of $f$ as a morphism of $\vphi$-modules 
by $\mrm{im}(f)$ $($resp.\ $\mrm{coim}(f))$, 
then $\mrm{im}(f)$ $($resp.\ $\mrm{coim}(f))$ 
is an object of $\mrm{Mod}^r_{/\mfS_{\infty}}$
and has a canonical structure of a $(\vphi,\hat{G})$-module of height $r$.
If we denote it by $\wh{\mrm{im}(f)}$ $($resp.\ $\wh{\mrm{coim}(f)})$, 
then $\Max^r(\wh{\mrm{im}(f)})$  $($resp.\ $\Max^r(\wh{\mrm{coim}(f)}))$ 
is the image $($resp.\ coimage$)$ of $f$ in the abelian category $\Max^{r,\hat{G}}_{/\mfS_{\infty}}$.
\end{itemize}
\end{theorem}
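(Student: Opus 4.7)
The plan is to follow the Kisin-module argument of \cite{CL1}, Theorem 3.3.8, upgrading each step by supplying the $\hat{G}$-structure through Propositions~\ref{astc} and \ref{cokernel}, then invoking Lemma~\ref{sup} and the universal property of $\Max^r$ (Propositions~\ref{property} and~\ref{univ}) for the global verifications. Set $M=\mfM[1/u]$, $M'=\mfM'[1/u]$ and let $\hat{M},\hat{M}'$ be the associated \'etale $(\vphi,\hat{G})$-modules over $\cO$; the given $f$ induces a morphism $\hat{M}\to\hat{M}'$. For the kernel, let $N=\ker(M\to M')$, an \'etale $(\vphi,\hat{G})$-module. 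By Proposition~\ref{astc}, $\mrm{ker}_\vphi(f)=\mfM\cap N$ lies in $\mrm{Mod}^{r}_{/\mfS_{\infty}}$ with a canonical $(\vphi,\hat{G})$-structure $\wh{\mrm{ker}(f)}$, and flatness of $\cO$ over $\mfS$ gives $\mrm{ker}_\vphi(f)[1/u]=N$. For maximality, given any $\hat{\mfM}_1\in F^{r,\hat{G}}_{\mfS}(\hat{N})$, Lemma~\ref{sup} supplies $\mfM+\mfM_1\in F^{r,\hat{G}}_{\mfS}(\hat{M})$ (noting $(\mfM+\mfM_1)[1/u]=M+N=M$); maximality of $\hat{\mfM}$ then forces $\mfM_1\subset\mfM$, hence $\mfM_1\subset\mfM\cap N=\mrm{ker}_\vphi(f)$. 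The universal property in $\Max^{r,\hat{G}}_{/\mfS_{\infty}}$ follows since any $g\colon\hat{\mfL}\to\hat{\mfM}$ in $\Max$ with $f\circ g=0$ factors at the $\vphi$-level through $\mrm{ker}_\vphi(f)$, automatically compatibly with $\hat{G}$.

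For the cokernel, Proposition~\ref{cokernel} furnishes $\hat{\mfC}:=\wh{\mrm{coker}_\vphi(f)/u{\rm \mathchar`-tor}}$ with a $(\vphi,\hat{G})$-structure. Any $g\colon\hat{\mfM}'\to\hat{\mfN}$ in $\Max$ with $g\circ f=0$ factors as $\mfM'\twoheadrightarrow\mrm{coker}_\vphi(f)\to\mfN$ in Kisin modules; since $\mfN$ is $u$-torsion free by Proposition~\ref{torKi}, this descends to a $(\vphi,\hat{G})$-morphism $\hat{\mfC}\to\hat{\mfN}$, which Proposition~\ref{property}(3) then extends uniquely to $\Max^r(\hat{\mfC})\to\hat{\mfN}$. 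This identifies $\Max^r(\hat{\mfC})$ as the cokernel in $\Max^{r,\hat{G}}_{/\mfS_{\infty}}$. For the moreover clause, assume $f$ is $\vphi$-module-injective, so that $M\hookrightarrow M'$ and $\wh{M'/M}$ is a well-defined \'etale $(\vphi,\hat{G})$-module. The composite $\hat{\mfM}'\to\hat{M}'\twoheadrightarrow\wh{M'/M}$ is a $(\vphi,\hat{G})$-morphism whose $\vphi$-kernel is $\mfM_2:=M\cap\mfM'$; by Proposition~\ref{astc}, $\mfM_2$ carries a $(\vphi,\hat{G})$-structure and lies in $F^{r,\hat{G}}_{\mfS}(\hat{M})$. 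Maximality of $\hat{\mfM}$ then gives $\mfM_2\subset\mfM$, i.e.\ $\mfM=M\cap\mfM'$, precluding $u$-torsion in $\mrm{coker}_\vphi(f)$.

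For the image and coimage, note first that $\mfM/\mrm{ker}_\vphi(f)\simeq f(\mfM)=\mrm{im}_\vphi(f)$ canonically as $(\vphi,\hat{G})$-modules, and $\mrm{im}_\vphi(f)$ is $u$-torsion free as a submodule of $\mfM'$; applying the cokernel recipe to the inclusion $\wh{\mrm{ker}(f)}\hookrightarrow\hat{\mfM}$ therefore gives $\mrm{coim}(f)=\Max^r(\wh{\mrm{im}(f)})$. To identify $\mrm{im}(f)=\mrm{ker}(\mrm{coker}(f))$ with the same object, inspect the composite $p\colon\mfM'\twoheadrightarrow\mrm{coker}_\vphi(f)/u{\rm \mathchar`-tor}\hookrightarrow\mMax^r(\hat{\mfC})$, where the second map is an inclusion of Kisin modules because $\Max^r$ is defined as the maximum element of $F^{r,\hat{G}}_{\mfS}$. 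Its $\vphi$-kernel $\mfI$ is the preimage in $\mfM'$ of the $u$-torsion of $\mrm{coker}_\vphi(f)$, so $\mfI[1/u]=f(\mfM)[1/u]$; consequently $\wh{\mfI}$ and $\wh{\mrm{im}_\vphi(f)}$ both belong to $F^{r,\hat{G}}_{\mfS}$ of the same \'etale $(\vphi,\hat{G})$-module, yielding $\Max^r(\wh{\mfI})=\Max^r(\wh{\mrm{im}_\vphi(f)})$. By part~(1) applied to $p$, $\wh{\mfI}$ is itself maximal, whence $\mrm{im}(f)=\wh{\mfI}=\Max^r(\wh{\mrm{im}(f)})=\mrm{coim}(f)$, verifying the final abelian-category axiom. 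The genuine difficulty is concentrated in the moreover clause of~(2) and in this image analysis: both rely on a delicate use of maximality of $\hat{\mfM}$ through the auxiliary $(\vphi,\hat{G})$-modules $M\cap\mfM'$ and $\mfI$ (supplied by Proposition~\ref{astc}), together with the fact that $\iotamax$ is injective at the Kisin-module level since source and target sit inside the common ambient \'etale $(\vphi,\hat{G})$-module.
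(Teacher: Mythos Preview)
Your argument is correct and follows essentially the same strategy as the paper's own proof: both track the Kisin-module argument of \cite{CL1}, Theorem~3.3.8, and upgrade each step with a $\hat{G}$-structure, using maximality of $\hat{\mfM}$ via the ambient \'etale $(\vphi,\hat{G})$-module to control auxiliary submodules. Your treatment of the ``moreover'' clause in (2) is in fact slightly cleaner than the paper's: you obtain the $(\vphi,\hat{G})$-structure on $\mfM_2=M\cap\mfM'$ directly from Proposition~\ref{astc} applied to the composite $\hat{\mfM}'\to\wh{M'/M}$, whereas the paper constructs this structure by hand through an auxiliary quotient $C'$ and a comparison of $\whR\otimes_{\vphi,\mfS}$-modules. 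For (3), the paper argues via left exactness of $\Max^r$ (Proposition~\ref{property}(2)) applied to the sequence $0\to\Max^r(\wh{\mrm{im}(f)})\to\hat{\mfM}'\to\wh{\mrm{coker}(\Max^r(g))}\to 0$, together with an identification $\wh{\mrm{coker}(\Max^r(g))}\simeq\hat{\mfC}$; your route through the composite $p$ and part~(1) reaches the same conclusion more directly and in fact makes transparent the equality $\mfI=\mMax^r(\wh{\mrm{im}(f)})$ that underlies the paper's claimed identification.
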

\begin{proof}
(1) By Corollary \ref{imker},
we know that 
$\mrm{ker}(f)$ has a structure of 
$(\vphi,\hat{G})$-module of height $r$.
We have to show that $\wh{\mrm{ker}(f)}$ is maximal.
Consider the diagram below:
\begin{center}
$\displaystyle \xymatrix{
0\ar[r] & \mrm{ker}(f)\ar[r]\ar@{^{(}->}[d] & 
\mfM\ar[r] \ar@{^{(}->}[d] & 
\mfM' \ar@{^{(}->}[d]\\
0\ar[r] & \mMax^r(\wh{\mrm{ker}(f)})\ar[r]\ar@{^{(}->}[d] & 
\mMax^r(\wh{\mfM})\ar[r] \ar@{^{(}->}[d] & 
\mMax^r(\wh{\mfM}') \ar@{^{(}->}[d]\\
0\ar[r] & \mrm{ker}(f)[1/u]\ar[r] & 
\mfM[1/u]\ar[r] & 
\mfM'[1/u]. 
}$
\end{center}
Top and bottom horizontal sequences
are exact as $\vphi$-modules over $\mfS$.
Put $\mfM_{\mrm{max}}=\mMax^r(\wh{\mrm{ker}(f)})
+\mfM$ in $\mfM[1/u]$ and 
observe that $\mfM_{\mrm{max}}\in  
\mrm{Mod}^r_{/\mfS_{\infty}}$
and $\mfM_{\mrm{max}}$ has a structure of 
a $(\vphi,\hat{G})$-module
with injection of $\hat{G}$-modules
$\wh{\mfM_{\mrm{max}}}\hookrightarrow \wh{\mfM[1/u]}$.
Since $\mfM\subset \mfM_{\mrm{max}}\subset \mfM[1/u]$,
we have $\mfM_{\mrm{max}}[1/u]=\mfM[1/u]$ and thus
$\wh{\mfM_{\mrm{max}}}\in F^{r,\hat{G}}_{\mfS}(\wh{\mfM[1/u]})$.
Since $\hat{\mfM}$ is maximal,
we obtain $\mfM_{\mrm{max}}\subset \mfM$.
Therefore,
we have 
$\mfM_{\mrm{max}}\subset \mfM\cap \mrm{ker}(f)[1/u]=
\mrm{ker}(f)$ (where the equality $\mfM\cap \mrm{ker}(f)[1/u]=
\mrm{ker}(f)$ follows from the above diagram)
and hence $\wh{\mrm{ker}(f)}$ is maximal.

\noindent
(2) By Proposition \ref{cokernel}, we know that 
$\frac{\mrm{coker}(f)}{u\mrm{-tor}}$ has  a structure of a $(\vphi,\hat{G})$-module
of height $r$ induced from that of $\hat{\mfM}'$.
By Proposition \ref{property} (3), 
to check  $\Max^r(\wh{\frac{\mrm{coker}(f)}{u\mrm{-tor}}})$ is the cokernel
of $f$ in the category $\Max^{r,\hat{G}}_{/\mfS_{\infty}}$ is not difficult.

Next we prove the latter assertion;
suppose $f\colon \hat{\mfM}\to \hat{\mfM}'$ is 
injective as a morphism of $\vphi$-modules.
Put $C=\mrm{coker}(f)$ (as a $\mfS$-module).
The following diagram of exact sequences of $\vphi$-modules
are commutative;
\begin{center}
$\displaystyle \xymatrix{
0\ar[r] & \mfM\ar[r] \ar@{^{(}->}[d]&
\mfM' \ar[r] \ar@{^{(}->}[d]& 
C \ar[r] \ar^{g}[d] &  0\\ 
0\ar[r] & \mfM[1/u]\ar[r] &
\mfM'[1/u] \ar[r] & 
C[1/u] \ar[r] & 0.
}$
\end{center}
Put $\mfN=\mfM[1/u]\cap \mfM'$.
We claim that $\mfM=\mfN$. If we admit this claim,
we see that $g$ is injective and thus $C$ is $u$-torsion free,
which is the desired result.
Hence it suffices to prove the claim.
The inclusion $\mfM\subset \mfN$ is clear.
To prove $\mfN\subset \mfM$,
it is enough to prove that 
$\mfN$ has a structure of a $(\vphi,\hat{G})$-module 
and $\hat{\mfN}\in F^{r,\hat{G}}_{\mfS}(\wh{\mfM[1/u]})$.
By the proof of Proposition 3.3.4 of \cite{CL1},
we know that $\mfN\in 
\mrm{Mod}^r_{/\mfS_{\infty}}$.
Furthermore, 
we see that $\mfN[1/u]=\mfM[1/u]$ since
$\mfM\subset \mfN\subset \mfM[1/u]$.
If we denote by $C'$ the cokernel of the inclusion map 
$\mfN\hookrightarrow \mfM'$,
then we know that $C'[1/u]=C[1/u]$ and $\mfM'\hookrightarrow \mfM'[1/u]$
induces an injection $C'\hookrightarrow C'[1/u]$, in particular,
$C'$ is $u$-torsion free and $C'\in \mrm{Mod}^r_{/\mfS_{\infty}}$.
By Corollary \ref{ringext} and \ref{exact},
two horizontal sequences of the diagram
\begin{center}
$\displaystyle \xymatrix{
0\ar[r] & \whR\otimes_{\vphi,\mfS}\mfN\ar[r] \ar[d] & 
\whR\otimes_{\vphi,\mfS} \mfM' \ar[r] \ar[d] & 
\whR\otimes_{\vphi,\mfS} C' \ar[r] \ar[d] &
0
\\
0\ar[r] & \whR\otimes_{\vphi,\mfS}(\mfN[1/u]) \ar[r] & 
\whR\otimes_{\vphi,\mfS} (\mfM'[1/u]) \ar[r] & 
\whR\otimes_{\vphi,\mfS} (C'[1/u]) \ar[r] &
0.
}$
\end{center}
are exact as $\whR$-modules and all vertical arrows are injective.
Since $\mfN[1/u]=\mfM[1/u]$, we obtain the equality
\[
\whR\otimes_{\vphi,\mfS}\mfN = 
(\whR\otimes_{\vphi,\mfS}(\mfN[1/u]))\cap  (\whR\otimes_{\vphi,\mfS} \mfM') 
\]
in $\whR\otimes_{\vphi,\mfS} (\mfM'[1/u])$.
It is not difficult to check that 
the $\hat{G}$-action on $\whR\otimes_{\vphi,\mfS}\mfM$ extends to 
$\whR\otimes_{\vphi,\mfS}(\mfM[1/u])$, 
which coincides to the restriction of 
the $\hat{G}$-action on $\cOG\otimes_{\vphi,\mfS} (\mfM'[1/u])$.
Hence
the $\hat{G}$-action for $\cOG\otimes_{\vphi,\mfS} (\mfM'[1/u])$
is stable under $\whR\otimes_{\vphi,\mfS}\mfN$
and $\mfN$ has a structure of a weak sub $(\vphi,\hat{G})$-module of $\hat{\mfM}'$.
Since $C'\in \mrm{Mod}^r_{/\mfS_{\infty}}$,
the exact sequence 
$0\to \whR\otimes_{\vphi,\mfS}\mfN \to
\whR\otimes_{\vphi,\mfS} \mfM' \to 
\whR\otimes_{\vphi,\mfS} C' \to 0$
allows  $C'$ to have a structure of a weak $(\vphi,\hat{G})$-module.
By Corollary \ref{subweak},
we know that
$\hat{\mfN}$ is in fact a $(\vphi,\hat{G})$-module.
Therefore, maximality of $\hat{\mfM}$ implies that 
$\mfN\subset \mfM$. This proves the claim and we finish the proof of the latter assertion of (2).

\noindent
(3) 
Let $f\colon \hat{\mfM}\to \hat{\mfM}'$ be a morphism in 
$\Max^{r,\hat{G}}_{/\mfS_{\infty}}$.
Corollary \ref{imker} says that 
$\mrm{im}(f)$ has a structure of a sub $(\vphi,\hat{G})$-module of $\hat{\mfM}'$.
The map $f$ induces a map $g\colon \wh{\mrm{im}(f)}\to \hat{\mfM}'$.
It is clear that $\mrm{coker}(f)=\mrm{coker}(g)$ as $\mfS$-modules.
By (2) and Proposition \ref{cokernel}, for the map
$\Max^r(g)\colon \Max(\wh{\mrm{im}(f)})\to \hat{\mfM}'$,
$\mrm{coker}(\Max^r(g))$ (as a $\mfS$-module) is $u$-torsion free
and it has a structure of $(\vphi,\hat{G})$-modules induced from that of $\hat{\mfM}'$.
Note that there exists a canonical isomorphism 
$\wh{\mrm{coker}(\Max^r(g))}\simeq \wh{\frac{\mrm{coker}(f)}{u\mrm{-tor}}}$
as $(\vphi,\hat{G})$-modules.
See the exact sequence of $(\vphi,\hat{G})$-modules
\[
0\to \Max^r(\wh{\mrm{im}(f)})\to \hat{\mfM}'\to \wh{\mrm{coker}(\Max^r(g))}\to 0.
\]
Since the functor $\Max^r\colon \mrm{Mod}^{r,\hat{G}}_{/\mfS_{\infty}}\to 
\mrm{Mod}^{r,\hat{G}}_{/\mfS_{\infty}}$
is left exact (cf.\ Proposition \ref{property}),
we obtain the exact sequence of $(\vphi,\hat{G})$-modules
\[
0\to \Max^r(\wh{\mrm{im}(f)})\to \hat{\mfM}'\to \Max^r(\wh{\mrm{coker}(\Max^r(g))}).
\]
Combining this with the description of kernels and cokernels in the category 
$\Max^{r,\hat{G}}_{/\mfS_{\infty}}$,
we obtain the fact that $\Max^r(\wh{\mrm{im}(f)})$ is the image of $f$
 in the category 
$\Max^{r,\hat{G}}_{/\mfS_{\infty}}$.
The assertion for the coimage can be checked by a similar way.
\end{proof}

\begin{lemma}
\label{exff}
If $\alpha\colon \hat{\mfM}'\to \hat{\mfM}$ and $\beta\colon \hat{\mfM}\to \hat{\mfM}''$
two morphisms in $\Max^{r,\hat{G}}_{/\mfS_{\infty}}$ such that 
$\beta\circ \alpha=0$.
the sequence $0\to \hat{\mfM}'\to \hat{\mfM}\to \hat{\mfM}''\to 0$ is exact 
in $($the abelian category$)$
$\Max^{r,\hat{G}}_{/\mfS_{\infty}}$ if and only if 
$0\to \wh{\mfM'[1/u]}\to \wh{\mfM[1/u]}\to \wh{\mfM''[1/u]}\to 0$ 
is exact in $\mbf{\Phi M}^{\hat{G}}_{/\cO_{\infty}}$.  
Furthermore, the functor 
\[
\Max^{r,\hat{G}}_{/\mfS_{\infty}}\to 
\mbf{\Phi M}^{\hat{G}}_{/\cO_{\infty}},\quad \hat{\mfM}\mapsto \wh{\mfM[1/u]}
\]
is fully faithful.
\end{lemma}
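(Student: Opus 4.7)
\medskip

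My plan is to leverage the explicit description of kernels, cokernels and images in $\Max^{r,\hat{G}}_{/\mfS_{\infty}}$ provided by Theorem \ref{abelian} together with the flatness of $\cO$ over $\mfS$ (so that the functor $\mfM \mapsto \mfM[1/u]=\cO\otimes_{\mfS}\mfM$ is exact and commutes with kernels, cokernels and images). Throughout, recall that any $\mfN\in\mrm{Mod}^r_{/\mfS_{\infty}}$ is $u$-torsion free (Proposition \ref{torKi}) and that $\mfN\hookrightarrow\mfN[1/u]$.

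For the equivalence of exactness, I would argue as follows. If $0\to\hat{\mfM}'\to\hat{\mfM}\to\hat{\mfM}''\to 0$ is exact in $\Max^{r,\hat{G}}_{/\mfS_{\infty}}$, then by Theorem \ref{abelian} the kernel of $\beta$ agrees with $\wh{\mrm{ker}(\beta)}$ and the image of $\alpha$ agrees with $\Max^r(\wh{\mrm{im}(\alpha)})$; after inverting $u$ these both become $\mrm{ker}(\beta[1/u])$ and $\mrm{im}(\alpha[1/u])$ respectively (since $\Max^r$ does not change $[1/u]$), which gives exactness in $\mbf{\Phi M}^{\hat{G}}_{/\cO_{\infty}}$. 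Conversely, suppose the inverted sequence is exact. Injectivity of $\alpha$ follows because $\wh{\mrm{ker}(\alpha)}$ is a sub-object of $\hat{\mfM}'$ whose underlying module is $u$-torsion (as it vanishes after inverting $u$) but also $u$-torsion free, hence zero. Surjectivity of $\beta$ follows because $\mrm{coker}(\beta)$ is $u$-torsion, so $\mrm{coker}(\beta)/(u\text{-tor})=0$, and thus the cokernel $\Max^r(\wh{\mrm{coker}(\beta)/(u\text{-tor})})$ in $\Max^{r,\hat{G}}_{/\mfS_{\infty}}$ is zero. For middle exactness, both $\Max^r(\wh{\mrm{im}(\alpha)})$ and $\wh{\mrm{ker}(\beta)}$ are maximal sub-$(\vphi,\hat G)$-modules of $\hat{\mfM}$ (the first by construction, the second by Theorem \ref{abelian} (1)), and they agree after inverting $u$ by hypothesis; hence both belong to $F^{r,\hat{G}}_{\mfS}$ of the same \'etale $(\vphi,\hat G)$-module and are its maximum, so they coincide.

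For the fully faithful claim, I would proceed as follows. Given $\hat{\mfM},\hat{\mfM}'\in\Max^{r,\hat{G}}_{/\mfS_{\infty}}$ and a morphism $g\colon \wh{\mfM[1/u]}\to \wh{\mfM'[1/u]}$ in $\mbf{\Phi M}^{\hat{G}}_{/\cO_{\infty}}$, by Proposition \ref{astc} the image $g(\mfM)\subset \wh{\mfM'[1/u]}$ is a sub-$(\vphi,\hat{G})$-module of height $r$. By the analogue of Lemma \ref{sup} the sum $g(\mfM)+\mfM'$ is a sub-$(\vphi,\hat{G})$-module belonging to $F^{r,\hat{G}}_{\mfS}(\wh{\mfM'[1/u]})$, and maximality of $\hat{\mfM}'$ forces $g(\mfM)\subset \mfM'$. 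Thus $g$ restricts to a morphism $\hat{\mfM}\to\hat{\mfM}'$ in $\Max^{r,\hat{G}}_{/\mfS_{\infty}}$ whose $[1/u]$ recovers $g$; uniqueness and faithfulness follow immediately from $\mfM\hookrightarrow \mfM[1/u]$.

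The main obstacle I foresee is verifying the middle-exactness direction cleanly: one has to identify $\Max^r(\wh{\mrm{im}(\alpha)})$ with $\wh{\mrm{ker}(\beta)}$ inside $\hat{\mfM}$, and this requires the two-fold input that (i) both objects are maximal elements of the same $F^{r,\hat{G}}_{\mfS}$ set and (ii) such a maximum is unique. Once these bookkeeping facts are in place the rest is formal from Theorem \ref{abelian} and the exactness of $[1/u]$.
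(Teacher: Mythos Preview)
Your proposal is correct and follows essentially the same approach as the paper's proof. The paper's own argument is extremely brief: it observes that exactness in $\mbf{\Phi M}^{\hat{G}}_{/\cO_{\infty}}$ is equivalent to exactness of the underlying sequence in $\mbf{\Phi M}_{/\cO_{\infty}}$ (since the maps are already $\hat{G}$-equivariant), and then defers entirely to Lemma~3.3.9 of \cite{CL1}. What you have written is precisely the $\hat{G}$-decorated version of that lemma's proof, spelled out in full using Theorem~\ref{abelian}, Proposition~\ref{astc}, Lemma~\ref{sup}, and the uniqueness of the maximum in $F^{r,\hat{G}}_{\mfS}$; so your argument is more self-contained but not a genuinely different route.
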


\begin{proof}
Since $\alpha$ and $\beta$ is assumed to be $\hat{G}$-equivalent,
$0\to \wh{\mfM'[1/u]}\to \wh{\mfM[1/u]}\to \wh{\mfM''[1/u]}\to 0$ 
is exact in $\mbf{\Phi M}^{\hat{G}}_{/\cO_{\infty}}$
if and only if 
$0\to \mfM'[1/u]\to \mfM[1/u]\to \mfM''[1/u]\to 0$ 
is exact in $\mbf{\Phi M}_{/\cO_{\infty}}$.
Thus the proof is the same as that of Lemma 3.3.9 in \cite{CL1}.
\end{proof}

\begin{corollary}
\label{exf}
The functor $\hat{T}$ defined on $\Max^{r,\hat{G}}_{/\mfS_{\infty}}$ is exact and fully faithful, 
and its essential image is stable under taking a subquotient.
\end{corollary}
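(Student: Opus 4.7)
The plan is to factor the functor $\hat{T}$ on $\Max^{r,\hat{G}}_{/\mfS_{\infty}}$ as
\[
\Max^{r,\hat{G}}_{/\mfS_{\infty}}\xrightarrow{\ \Phi\ } \mbf{\Phi M}^{\hat{G}}_{/\cO_{\infty}}\xrightarrow{\ \hat{\mcal{T}}\ } \mrm{Rep}_{\mrm{tor}}(G),
\]
where $\Phi$ denotes the functor $\hat{\mfM}\mapsto \wh{\mfM[1/u]}$ studied in Lemma \ref{exff} and $\hat{\mcal{T}}$ is the contravariant functor of Proposition \ref{et-rep}. The natural isomorphism $\hat{T}(\hat{\mfM})\simeq \hat{\mcal{T}}(\wh{\mfM[1/u]})$ recalled just before Proposition \ref{astc} identifies $\hat{T}$ with $\hat{\mcal{T}}\circ \Phi$, so each of the three assertions to be proved reduces to the corresponding property for the composition.

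Exactness and full faithfulness then follow immediately. Lemma \ref{exff} asserts that $\Phi$ is fully faithful and that a three-term sequence in $\Max^{r,\hat{G}}_{/\mfS_{\infty}}$ is exact if and only if its image under $\Phi$ is exact in $\mbf{\Phi M}^{\hat{G}}_{/\cO_{\infty}}$; in particular $\Phi$ is exact. Proposition \ref{et-rep} states that $\hat{\mcal{T}}$ is an anti-equivalence of abelian categories, hence it is exact and fully faithful. Composing, I obtain that $\hat{T}$ restricted to $\Max^{r,\hat{G}}_{/\mfS_{\infty}}$ is exact and fully faithful.

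For stability of the essential image under subquotients, I would first identify the essential image of $\hat{T}$ on $\Max^{r,\hat{G}}_{/\mfS_{\infty}}$ with the essential image of $\hat{T}$ on the larger category $\mrm{Mod}^{r,\hat{G}}_{/\mfS_{\infty}}$. This agreement is immediate from Proposition \ref{univ}, since the canonical morphism $\iota_{\mrm{max}}^{\hat{\mfM}}\colon \hat{\mfM}\to \Max^r(\hat{\mfM})$ becomes an isomorphism after applying $\hat{T}$; hence every representation arising from a torsion $(\vphi,\hat{G})$-module of height $r$ already arises from a maximal one. Granting this identification, the subquotient stability is precisely the content of Theorem \ref{MThm2}, which produces, for any short exact sequence $0\to T'\to T\to T''\to 0$ of finite torsion $\mbb{Z}_p$-representations of $G$ with $T\simeq \hat{T}(\hat{\mfM})$, a lifted exact sequence $0\to \hat{\mfM}''\to \hat{\mfM}\to \hat{\mfM}'\to 0$ in $\mrm{Mod}^{r,\hat{G}}_{/\mfS_{\infty}}$. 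No real obstacle is expected in writing the proof; the only mild subtlety is this shift between $\Max^{r,\hat{G}}_{/\mfS_{\infty}}$ and $\mrm{Mod}^{r,\hat{G}}_{/\mfS_{\infty}}$ when comparing essential images, handled by Proposition \ref{univ}.
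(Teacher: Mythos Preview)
Your proposal is correct and follows essentially the same approach as the paper: both factor $\hat{T}$ through the functor $\hat{\mfM}\mapsto \wh{\mfM[1/u]}$ into $\mbf{\Phi M}^{\hat{G}}_{/\cO_{\infty}}$, invoke Lemma \ref{exff} together with the anti-equivalence of Proposition \ref{et-rep} for exactness and full faithfulness, and appeal to Theorem \ref{MThm2} for subquotient stability. Your explicit use of Proposition \ref{univ} to identify the essential images on $\Max^{r,\hat{G}}_{/\mfS_{\infty}}$ and $\mrm{Mod}^{r,\hat{G}}_{/\mfS_{\infty}}$ is a small clarifying step that the paper leaves implicit.
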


\begin{proof}
The former assertion follows from 
the commutative triangle
\begin{center}
$\displaystyle \xymatrix{
 \Max^{r,\hat{G}}_{/\mfS_{\infty}} \ar^{\hat{T}}[rr] \ar[rd] 
 & 
 & 
 \mrm{Rep}_{\mrm{tor}}(G_{\infty}) \\ 
 & \mbf{\Phi M}^{\hat{G}}_{/\cO_{\infty}} \ar^{\simeq}_{\hat{\mcal{T}}}[ru]
 &  
}$
\end{center}
where $\Max^{r,\hat{G}}_{/\mfS_{\infty}}\to 
\mbf{\Phi M}^{\hat{G}}_{/\cO_{\infty}}$ is  the functor defined by the assignment 
$\hat{\mfM}\mapsto \wh{\mfM[1/u]}$ which is exact and fully faithful (by Lemma \ref{exff}).
The latter assertion follows from Theorem \ref{MThm2}.
\end{proof}

\begin{corollary}
\label{maxex}
The functor $\Max^r\colon \mrm{Mod}^{r,\hat{G}}_{/\mfS_{\infty}}\to \Max^{r,\hat{G}}_{/\mfS_{\infty}}$
is exact.
\end{corollary}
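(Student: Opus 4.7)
The plan is to reduce the exactness of $\Max^r$ to exactness at the level of \'etale $(\vphi,\hat{G})$-modules via Lemma \ref{exff}, where the statement becomes a question of flatness.

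Left exactness of $\Max^r$ is already Proposition \ref{property}(2), so I only need to verify right exactness. Given a short exact sequence
$$0 \to \hat{\mfM}' \to \hat{\mfM} \to \hat{\mfM}'' \to 0$$
in $\mrm{Mod}^{r,\hat{G}}_{/\mfS_{\infty}}$, I apply the functor $\Max^r$ to obtain morphisms $\alpha\colon \Max^r(\hat{\mfM}') \to \Max^r(\hat{\mfM})$ and $\beta\colon \Max^r(\hat{\mfM}) \to \Max^r(\hat{\mfM}'')$ in $\Max^{r,\hat{G}}_{/\mfS_{\infty}}$, whose composition vanishes by functoriality. By Lemma \ref{exff}, verifying that $0 \to \Max^r(\hat{\mfM}') \to \Max^r(\hat{\mfM}) \to \Max^r(\hat{\mfM}'') \to 0$ is short exact in $\Max^{r,\hat{G}}_{/\mfS_{\infty}}$ is equivalent to checking exactness of the corresponding sequence in $\mbf{\Phi M}^{\hat{G}}_{/\cO_{\infty}}$ after inverting $u$.

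The crucial observation is that $\Max^r$ changes nothing after inverting $u$. Indeed, by the very definition of $F^{r,\hat{G}}_{\mfS}(\wh{\mfN[1/u]})$, every element of that set (in particular $\Max^r(\hat{\mfN})$) satisfies $\mMax^r(\hat{\mfN})[1/u] = \mfN[1/u]$, so the map $\wh{\mfN[1/u]} \to \wh{\mMax^r(\hat{\mfN})[1/u]}$ induced by $\iotamax^{\hat{\mfN}}$ is the identity. Consequently the sequence
$$0 \to \wh{\mMax^r(\hat{\mfM}')[1/u]} \to \wh{\mMax^r(\hat{\mfM})[1/u]} \to \wh{\mMax^r(\hat{\mfM}'')[1/u]} \to 0$$
is canonically identified with
$$0 \to \wh{\mfM'[1/u]} \to \wh{\mfM[1/u]} \to \wh{\mfM''[1/u]} \to 0,$$
which is obtained from the original short exact sequence by applying $\cO \otimes_{\mfS} (-)$. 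Since $\cO$ is flat over $\mfS$ (Corollary \ref{exact}), this sequence is exact, completing the argument.

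I do not expect any real obstacle, since the proof is essentially formal once (i) Lemma \ref{exff} reduces exactness in $\Max^{r,\hat{G}}_{/\mfS_{\infty}}$ to exactness of the underlying \'etale $(\vphi,\hat{G})$-modules, (ii) the defining property of $F^{r,\hat{G}}_{\mfS}$ shows that $\Max^r$ is trivial after inverting $u$, and (iii) flatness of $\cO$ over $\mfS$ is invoked. The only delicate point worth spelling out is that the $\hat{G}$-equivariance of the induced identifications holds because the $\hat{G}$-action on $\wh{\mfM[1/u]}$ extends uniquely that on $\whR \otimes_{\vphi,\mfS} \mfM$, so in particular the natural maps at the level of the maximal objects are automatically morphisms in $\mbf{\Phi M}^{\hat{G}}_{/\cO_{\infty}}$.
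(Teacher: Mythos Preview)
Your proof is correct and takes essentially the same approach as the paper, which simply writes ``This follows from Lemma \ref{exff}.'' You have spelled out the details that the paper leaves implicit: reduce to the \'etale level via Lemma \ref{exff}, observe that $\Max^r$ does nothing after inverting $u$, and invoke flatness of $\cO$ over $\mfS$. The separate appeal to Proposition \ref{property}(2) for left exactness is harmless but unnecessary, since Lemma \ref{exff} already characterizes full short exactness in $\Max^{r,\hat{G}}_{/\mfS_{\infty}}$.
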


\begin{proof}
This follows from Lemma \ref{exff}.
\end{proof}

\begin{proposition}
\label{maximalcri1}
The category $\Max^{r,\hat{G}}_{/\mfS_{\infty}}$ is stable under the extension in 
$\mrm{Mod}^{r,\hat{G}}_{/\mfS_{\infty}}$, that is,
if 
\[
0\to \hat{\mfM}'\to \hat{\mfM}\to \hat{\mfM}''\to 0
\]
is an exact sequence in $\mrm{Mod}^{r,\hat{G}}_{/\mfS_{\infty}}$ with 
$\hat{\mfM}', \hat{\mfM}''\in \Max^{r,\hat{G}}_{/\mfS_{\infty}}$,
then $\hat{\mfM}\in \Max^{r,\hat{G}}_{/\mfS_{\infty}}$.
\end{proposition}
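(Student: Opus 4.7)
The plan is to show directly that every $\hat{\mfN} \in F^{r,\hat{G}}_{\mfS}(\wh{\mfM[1/u]})$ with $\mfM \subset \mfN$ must coincide with $\mfM$, by playing off the maximality of $\hat{\mfM}'$ and $\hat{\mfM}''$ against each other and finishing with the five lemma. Since the given exact sequence is exact as a sequence of $\mfS$-modules and $\cO$ is flat over $\mfS$, inverting $u$ produces an exact sequence
\[
0 \to \mfM'[1/u] \to \mfM[1/u] \to \mfM''[1/u] \to 0
\]
of \'etale $(\vphi,\hat{G})$-modules in which $\mfN$ sits as a sub $(\vphi,\hat{G})$-module of $\wh{\mfM[1/u]}$.

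First, I would form the image $\mfN''$ of $\mfN$ under the projection $\mfM[1/u] \twoheadrightarrow \mfM''[1/u]$. The restriction of this projection to $\hat{\mfN}$ is a morphism of $(\vphi,\hat{G})$-modules landing in the \'etale $(\vphi,\hat{G})$-module $\wh{\mfM''[1/u]}$, so by Proposition \ref{astc} the image $\mfN''$ lies in $\mrm{Mod}^{r}_{/\mfS_{\infty}}$ with a canonical $(\vphi,\hat{G})$-structure. Since $\mfM \subset \mfN$ already surjects onto $\mfM''$, we have $\mfM'' \subset \mfN''$, and clearly $\mfN''[1/u] = \mfM''[1/u]$. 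Hence $\hat{\mfN}'' \in F^{r,\hat{G}}_{\mfS}(\wh{\mfM''[1/u]})$, and maximality of $\hat{\mfM}''$ forces $\mfN'' = \mfM''$.

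Next, set $\mfN_0 = \mfN \cap \mfM'[1/u]$ inside $\mfM[1/u]$; this is precisely the kernel of the morphism $\hat{\mfN} \to \hat{\mfN}''$ of $(\vphi,\hat{G})$-modules, so Proposition \ref{astc} again equips $\mfN_0$ with a $(\vphi,\hat{G})$-structure in $\mrm{Mod}^{r,\hat{G}}_{/\mfS_{\infty}}$. We have $\mfN_0[1/u] = \mfN[1/u] \cap \mfM'[1/u] = \mfM'[1/u]$ and $\mfM' \subset \mfN_0$, so $\hat{\mfN}_0 \in F^{r,\hat{G}}_{\mfS}(\wh{\mfM'[1/u]})$. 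Maximality of $\hat{\mfM}'$ then yields $\mfN_0 = \mfM'$.

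Finally, the commutative diagram of $\mfS$-modules
\[
\begin{CD}
0 @>>> \mfM' @>>> \mfM @>>> \mfM'' @>>> 0 \\
@. @| @VVV @| \\
0 @>>> \mfM' @>>> \mfN @>>> \mfM'' @>>> 0
\end{CD}
\]
has both rows exact (the bottom because $\ker(\mfN \to \mfM'') = \mfN_0 = \mfM'$ and $\mrm{im}(\mfN \to \mfM''[1/u]) = \mfN'' = \mfM''$), so the five lemma gives $\mfM = \mfN$, proving maximality of $\hat{\mfM}$. I do not anticipate a real obstacle: the only delicate point is verifying that the induced $\hat{G}$-actions make $\mfN''$ and $\mfN_0$ genuine $(\vphi,\hat{G})$-modules, but this is exactly what Proposition \ref{astc} packages, and all compatibilities are inherited because the relevant $\whR$-tensors embed into the single ambient $\hat{G}$-module $\cOG \otimes_{\vphi,\cO} \mfM[1/u]$.
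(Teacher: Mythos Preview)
Your proof is correct and is essentially the approach the paper intends: the paper defers to Proposition 3.3.13 of \cite{CL1}, whose argument for Kisin modules is precisely the image/kernel/five-lemma computation you carry out, with Proposition \ref{astc} supplying the $(\vphi,\hat{G})$-structures on $\mfN''$ and $\mfN_0$ in the present setting.
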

\begin{proof}
The proof is essentially the same as that of 
Proposition 3.3.13 in \cite{CL1}.
\end{proof}

\begin{proposition}
\label{maximalcri2}
Let $\hat{\mfM}\in \mrm{Mod}^{r,\hat{G}}_{/\mfS_{\infty}}$
and $\mrm{id}\otimes \vphi\colon \mfS\otimes_{\vphi,\mfS} \mfM\to \mfM$ the $\mfS$-linearization
of $\vphi$.
If $\mrm{coker}(\mrm{id}\otimes \vphi)$ is killed by $u^{p-2}$ then $\hat{\mfM}$
is maximal.
\end{proposition}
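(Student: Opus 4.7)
The plan is to reduce the claim to the analogous maximality criterion for torsion Kisin modules established in \cite{CL1}. Concretely, it suffices to show that under the hypothesis the underlying Kisin module $\mfM$ is the greatest element of $F^r_{\mfS}(\mfM[1/u])$; this will automatically force $\hat{\mfM}$ to be the greatest element of the subset $F^{r,\hat{G}}_{\mfS}(\wh{\mfM[1/u]})$, which is exactly the assertion that $\hat{\mfM}$ is maximal.

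To carry out the reduction, I would let $\hat{\mfN}\in F^{r,\hat{G}}_{\mfS}(\wh{\mfM[1/u]})$ with $\hat{\mfM}\subseteq \hat{\mfN}$. Forgetting the $\hat{G}$-action, the Kisin module $\mfN$ lies in $F^r_{\mfS}(\mfM[1/u])$ and contains $\mfM$. Assuming $\mfM$ is maximal as a Kisin module, this forces $\mfM=\mfN$ as $\vphi$-modules over $\mfS$. Now the $\hat{G}$-actions on $\whR\otimes_{\vphi,\mfS}\mfM$ and on $\whR\otimes_{\vphi,\mfS}\mfN$ are both restrictions of the $\hat{G}$-action on $\cOG\otimes_{\vphi,\cO}\mfM[1/u]$ inherited from the \'etale $(\vphi,\hat{G})$-module $\wh{\mfM[1/u]}$ (this uses the formalism of Section 5.3, in particular that any sub $(\vphi,\hat{G})$-module in $F^{r,\hat{G}}_{\mfS}(\wh{\mfM[1/u]})$ is determined by its underlying Kisin module together with the ambient \'etale $(\vphi,\hat{G})$-structure). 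Hence the equality $\mfM=\mfN$ upgrades to $\hat{\mfM}=\hat{\mfN}$, as required.

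It remains to prove the Kisin module statement: if $\mrm{coker}(\mrm{id}\otimes \vphi_{\mfM})$ is killed by $u^{p-2}$, then $\mfM$ is maximal in $F^r_{\mfS}(\mfM[1/u])$. This is the Kisin-module analogue of the proposition, and is precisely the content of the corresponding maximality criterion in \cite{CL1} (in the language of the theory of maximal objects for torsion Kisin modules recalled in Section 5.1).

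The main obstacle is exactly this second step. The reduction above is essentially formal, but tracking where the numerical bound $p-2$ enters is delicate: it arises from the interplay between $\vphi(u)=u^p$ and the height condition on any hypothetical enlargement $\mfN\supsetneq \mfM$ inside $\mfM[1/u]$. Roughly speaking, an element $x\in \mfN\setminus \mfM$ with $ux\in \mfM$ satisfies $u^p\vphi(x)=\vphi(ux)\in \vphi(\mfM)$, and combining this with the bound $u^{p-2}\mfM\subset \mfS\langle\vphi(\mfM)\rangle$ supplied by the hypothesis yields a contradiction in $\mfN/\mfM$; this is the kind of computation that underlies the cited result of \cite{CL1}.
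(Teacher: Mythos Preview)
Your approach is correct and takes a genuinely different route from the paper. The paper proceeds by d\'evissage at the $(\vphi,\hat G)$-level: it invokes Corollary~\ref{sucLi} to obtain a filtration of $\hat\mfM$ with $p$-killed $(\vphi,\hat G)$-subquotients, then uses Proposition~\ref{maximalcri1} (stability of maximality under extension) to reduce to the case $p\mfM=0$, and only then appeals to the computation of \cite{CL1}, Lemma~3.3.14. Your argument bypasses this d\'evissage entirely by observing that the forgetful map $F^{r,\hat G}_{\mfS}(\wh{\mfM[1/u]})\to F^r_{\mfS}(\mfM[1/u])$ is an injective, order-preserving map of posets (since the $\hat G$-action on any element of $F^{r,\hat G}_{\mfS}$ is the restriction of the ambient action on $\cOG\otimes_{\vphi,\cO}\mfM[1/u]$, hence determined by the underlying Kisin submodule). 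Thus Kisin-module maximality of $\mfM$ immediately gives $(\vphi,\hat G)$-maximality of $\hat\mfM$, and you can cite the Kisin-module criterion from \cite{CL1} directly. Your route is shorter and more conceptual; the paper's route has the virtue of making the $(\vphi,\hat G)$-theory self-contained and of re-proving, rather than quoting, the result of \cite{CL1}. One small caveat: \cite{CL1} states Lemma~3.3.14 for $p$-killed Kisin modules, so strictly speaking you still need the Kisin-level d\'evissage (their Proposition~3.3.13 together with Proposition~2.3.2) to obtain the general torsion case you invoke; this is routine but worth mentioning.
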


\begin{proof}
By Corollary \ref{sucLi} and Proposition \ref{maximalcri1},
we can reduce the proof to the case 
where $p\mfM=0$, and then
the proof is essentially the same as that of 
Lemma 3.3.14 in \cite{CL1}.
\end{proof}

\begin{remark}
\label{gen}
All results in this subsection hold even if we replace ``$(\vphi,\hat{G})$-modules'' with 
``weak $(\vphi,\hat{G})$-modules'' 
(e.g. the existence of maximal objects for weak $(\vphi,\hat{G})$-modules).
Proofs are easier than that for ``$(\vphi,\hat{G})$-modules'' since we may omit ``modulo $I_+$'' arguments.  
\end{remark}

\subsection{Minimal objects for $(\vphi,\hat{G})$-modules} 

{\it Throughout this subsection, 
we always assume that $r<\infty$}.
Here we study minimal objects of $(\vphi,\hat{G})$-modules.
Many arguments in this subsection are very similar to those of the maximal case and of \cite{CL1}.
\begin{proposition}
\label{func2}
Definition \ref{maxdef} gives rise to a functor 
$\Min^r\colon \mrm{Mod}^{r,\hat{G}}_{/\mfS_{\infty}}\to \mrm{Mod}^{r,\hat{G}}_{/\mfS_{\infty}}$.
\end{proposition}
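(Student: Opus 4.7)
The plan is to derive $\Min^r$ from $\Max^r$ via Cartier duality (Theorem \ref{DualLi}), which is at our disposal throughout this subsection since $r<\infty$. A direct analogue of the proof of Proposition \ref{func} --- attempting to show that $f^{-1}(\mMin^r(\hat{\mfM}'))$ lies in $F^{r,\hat{G}}_{\mfS}(\wh{\mfM[1/u]})$ --- runs into difficulty: a snake-lemma computation on Frobenius linearizations applied to the extension $0\to\ker(f)\to f^{-1}(\mMin^r(\hat{\mfM}'))\to f(f^{-1}(\mMin^r(\hat{\mfM}')))\to 0$ appears to yield only height $2r$ rather than $r$, so the duality detour is preferable.

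The key claim I will establish is that Cartier duality induces an order-reversing bijection between the posets $F^{r,\hat{G}}_{\mfS}(\hat{M})$ and $F^{r,\hat{G}}_{\mfS}(\hat{M}^{\vee})$. Given $\hat{\mfN}\in F^{r,\hat{G}}_{\mfS}(\hat{M})$, Theorem \ref{dual} produces a $(\vphi,\hat{G})$-module $\hat{\mfN}^{\vee}$ of height $r$. By Theorem \ref{DualKi}(2), inverting $u$ on $\mfN^{\vee}$ recovers $M^{\vee}$, the underlying \'etale $\vphi$-module of $\hat{M}^{\vee}$; the $\hat{G}$-equivariance of this identification is a routine bookkeeping check from the construction of the dual $\hat{G}$-action in Section 3.2, and makes $\hat{\mfN}^{\vee}$ canonically a sub $(\vphi,\hat{G})$-module of $\hat{M}^{\vee}$, so $\hat{\mfN}^{\vee}\in F^{r,\hat{G}}_{\mfS}(\hat{M}^{\vee})$. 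An inclusion $\hat{\mfN}_1\hookrightarrow \hat{\mfN}_2$ in $F^{r,\hat{G}}_{\mfS}(\hat{M})$ dualizes by restriction to a morphism $\hat{\mfN}_2^{\vee}\to \hat{\mfN}_1^{\vee}$ which becomes the identity on $M^{\vee}$ after inverting $u$, and is therefore injective, giving an inclusion of sub $(\vphi,\hat{G})$-modules in $F^{r,\hat{G}}_{\mfS}(\hat{M}^{\vee})$. The involutivity of Cartier duality (Theorem \ref{DualLi}(1)) supplies bijectivity. Since this order-reversing bijection exchanges maximum and minimum elements, I obtain a canonical natural isomorphism $\mMin^r(\hat{\mfM})^{\vee}\cong \mMax^r(\hat{\mfM}^{\vee})$, and dualizing once more yields $\Min^r(\hat{\mfM})\cong (\Max^r(\hat{\mfM}^{\vee}))^{\vee}$.

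With this identification in hand, functoriality of $\Min^r$ is automatic. For a morphism $f\colon \hat{\mfM}\to \hat{\mfM}'$ in $\mrm{Mod}^{r,\hat{G}}_{/\mfS_{\infty}}$, Theorem \ref{DualLi} provides $f^{\vee}\colon \hat{\mfM}'^{\vee}\to \hat{\mfM}^{\vee}$; Proposition \ref{func} then yields $\Max^r(f^{\vee})\colon \Max^r(\hat{\mfM}'^{\vee})\to \Max^r(\hat{\mfM}^{\vee})$; and I define $\Min^r(f)$ to be $(\Max^r(f^{\vee}))^{\vee}\colon (\Max^r(\hat{\mfM}^{\vee}))^{\vee}\to (\Max^r(\hat{\mfM}'^{\vee}))^{\vee}$ transported along the isomorphism $\Min^r\cong (\Max^r((-)^{\vee}))^{\vee}$ of the previous paragraph. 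Compatibility with composition and identities follows formally from the corresponding properties of $(-)^{\vee}$ and $\Max^r$.

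The main obstacle is therefore concentrated in the middle paragraph: verifying that Cartier duality preserves the subset $F^{r,\hat{G}}_{\mfS}$ of height-$r$ sub $(\vphi,\hat{G})$-modules with full localization at $u$, and reverses the inclusion ordering. This amounts to the compatibility of Cartier duality with localization at $u$ (essentially Theorem \ref{DualKi}(2) plus its Galois-equivariant refinement via the explicit $\hat{G}$-action on $\hat{\mfM}^{\vee}$ constructed in Section 3.2) and with the sub-object structure in $\mrm{Mod}^{r,\hat{G}}_{/\mfS_{\infty}}$, which requires no more than unwinding the definitions.
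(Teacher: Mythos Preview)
Your duality approach is correct and gives a clean proof. The order-reversing bijection $F^{r,\hat{G}}_{\mfS}(\hat{M})\leftrightarrow F^{r,\hat{G}}_{\mfS}(\hat{M}^{\vee})$ you describe is sound, and the identification $\Min^r(\hat{\mfM})\cong(\Max^r(\hat{\mfM}^{\vee}))^{\vee}$ reduces functoriality of $\Min^r$ to that of $\Max^r$ and of $(\,\cdot\,)^{\vee}$. Indeed, the paper records exactly this identification later as Corollary~\ref{maxmin}, but deduces it from the universal property of $\Min^r$ (Proposition~\ref{univ2}), which in turn presupposes Proposition~\ref{func2}. Your argument reverses the logical order: you establish the duality isomorphism first at the level of posets, and get functoriality for free.

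The paper, by contrast, gives a direct argument that does not invoke Cartier duality at all. It is \emph{not} the preimage construction you dismiss; instead, given $f\colon\hat{\mfM}\to\hat{\mfN}$, the paper forms the kernel $\hat{\mfK}$ of the map $\Max^r(\hat{\mfM})\oplus\Min^r(\hat{\mfN})\to\Max^r(\hat{\mfN})$, $(x,y)\mapsto g(x)-y$, where $g=f[1/u]$. This $\hat{\mfK}$ is a $(\vphi,\hat{G})$-module of height $r$ (as a kernel between such objects, via Corollary~\ref{imker}), and the first projection identifies $\mfK[1/u]$ with $M$, so $\hat{\mfK}\in F^{r,\hat{G}}_{\mfS}(\hat{M})$. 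Minimality then forces $\mMin^r(\hat{\mfM})\subset\mfK$, and unwinding gives $g(\mMin^r(\hat{\mfM}))\subset\mMin^r(\hat{\mfN})$. This graph trick sidesteps the height-$2r$ problem you encountered with a naive preimage, because the height-$r$ bound is inherited from the ambient direct sum rather than assembled from an extension.

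What each buys: your route is shorter once duality is in place and makes Corollary~\ref{maxmin} immediate rather than a consequence; the paper's route is self-contained (independent of Section~3) and mirrors the Kisin-module argument of \cite{CL1}, Proposition~3.4.2. The one point you should not leave as ``routine bookkeeping'' without a sentence of justification is the compatibility $\wh{\mfN^{\vee}[1/u]}\cong(\wh{\mfN[1/u]})^{\vee}$ as \'etale $(\vphi,\hat{G})$-modules: the dual of an \'etale $(\vphi,\hat{G})$-module is only mentioned in passing in the paper (the remark after Proposition~\ref{et-rep}), so you need to spell out that the $\hat{G}$-action on $\cO_{\hat{G}}\otimes_{\vphi,\cO}M^{\vee}$ is the one induced from $\mrm{Hom}_{\cO_{\hat{G}}}(\cO_{\hat{G}}\otimes_{\vphi,\cO}M,\cO_{\hat{G},\infty}\cdot\mfrak{f}^r)$, parallel to~(\ref{act1}), after which the compatibility is a direct comparison.
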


\begin{proof}
We have to show that 
any morphism $f\colon \hat{\mfM}\to \hat{\mfN}$ in $\mrm{Mod}^{r,\hat{G}}_{/\mfS_{\infty}}$
embeds $\min^r(\hat{\mfM})$
into $\min^r(\hat{\mfN})$. 
Put $\hat{M}=\wh{\mfM[1/u]}$ and 
$\hat{N}=\wh{\mfN[1/u]}$.
Denote by $g=f[1/u]\colon \hat{M}\to \hat{N}$ the morphism induced from $f$.
Then $g$ induces $\Max^r(f)\colon \Max^r(\mfM)\to \Max^r(\mfN)$, we also denote it by $g$.
We know that 
the kernel $\mfK$ of the map
\[
h\colon \Max^r(\hat{\mfM})\oplus \Min^r(\hat{\mfN})\to \Max^r(\hat{\mfN}),\quad (x,y)\mapsto g(x)-y 
\] 
has a structure as a $(\vphi,\hat{G})$-module $\hat{\mfK}$ of height $r$.
Note that the composition map $\hat{\mfK}\to \Max^r(\hat{\mfM})\oplus \Min^r(\hat{\mfN})\to \Max^r(\hat{\mfM})$
is an isomorphism, where the first arrow is the natural embedding and the second arrow is a first projection.
In particular, we obtain an isomorphism $\eta\colon \wh{\mfK[1/u]}\overset{\sim}{\longrightarrow} \hat{M}$.
If we identify $\wh{\mfK[1/u]}$ and $\hat{M}$ via $\eta$,
then $\hat{\mfK}$ is contained in $F^{r,\hat{G}}_{\mfS}(\hat{M})$
and thus $\min^r(\mfM)\subset \mfK$.
Taking any element $x=(x,y)$ of $\min^r(\mfM)\subset \mfK$,
we have $h(x,y)=0$ and thus $g(x)=y\in \min^r(\hat{\mfN})$. This finishes the proof.
\end{proof}

Denote by $\Min^{r,\hat{G}}_{/\mfS_{\infty}}$
the essential image of the functor 
$\Min^r\colon \mrm{Mod}^{r,\hat{G}}_{/\mfS_{\infty}}\to \mrm{Mod}^{r,\hat{G}}_{/\mfS_{\infty}}$.
The following can be checked by the same way as that of Proposition 3.4.6 of \cite{CL1}.

\begin{proposition}
\label{univ2}
Let $\hat{\mfM}\in \mrm{Mod}^{r,\hat{G}}_{/\mfS_{\infty}}$.
Then the couple $(\Min^r(\hat{\mfM}), \iota^{\hat{\mfM}}_{\mrm{min}})$ is characterized by the 
following universal property:
\begin{itemize}
\item the morphism $\hat{T}(\iota^{\hat{\mfM}}_{\mrm{min}})$ is an isomorphism;
\item for any $\hat{\mfM}'\in  \mrm{Mod}^{r,\hat{G}}_{/\mfS_{\infty}}$ endowed with a morphism 
$f\colon \hat{\mfM}'\to \hat{\mfM}$ such that $\hat{T}(f)$ is an isomorphism,
there exists a unique $g\colon \Min^r(\hat{\mfM})\to \hat{\mfM}'$
such that $f\circ g=\iota^{\hat{\mfM}}_{\mrm{min}}$.
\end{itemize} 
\end{proposition}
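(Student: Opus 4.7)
The plan is to mirror the proof of the maximal case (Proposition \ref{univ}) verbatim, transferring the problem to the category $\mbf{\Phi M}^{\hat{G}}_{/\cO_\infty}$ of \'etale $(\vphi,\hat{G})$-modules via the exact fully faithful functor $\hat{\mfM} \mapsto \wh{\mfM[1/u]}$ of Lemma \ref{exff}, and then invoking the construction of $\Min^r(\hat{\mfM})$ as the minimum element of the ordered set $F^{r,\hat{G}}_{\mfS}(\wh{\mfM[1/u]})$.

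First I would verify property (1). By definition $\iotamin^{\hat{\mfM}}$ is the inclusion of the minimum element of $F^{r,\hat{G}}_{\mfS}(\wh{\mfM[1/u]})$, so $\mMin^r(\hat{\mfM})[1/u] = \mfM[1/u]$ as sub-$(\vphi,\hat{G})$-modules of $\wh{\mfM[1/u]}$, which means $\iotamin^{\hat{\mfM}}[1/u]$ is the identity morphism in $\mbf{\Phi M}^{\hat{G}}_{/\cO_\infty}$. Combined with the canonical isomorphism $\hat{T}(\hat{\mfN}) \simeq \hat{\mcal{T}}(\wh{\mfN[1/u]})$ established in Section 5.3 and the anti-equivalence of Proposition \ref{et-rep}, this forces $\hat{T}(\iotamin^{\hat{\mfM}})$ to be an isomorphism.

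For property (2), given $f\colon \hat{\mfM}' \to \hat{\mfM}$ with $\hat{T}(f)$ an isomorphism, the same dictionary tells me that $f[1/u]\colon \wh{\mfM'[1/u]} \to \wh{\mfM[1/u]}$ is an isomorphism of \'etale $(\vphi,\hat{G})$-modules. Using $f[1/u]$ to identify the two sides, $\hat{\mfM}'$ becomes a sub-$(\vphi,\hat{G})$-module of $\wh{\mfM[1/u]}$ with $\mfM'[1/u] = \mfM[1/u]$, hence an element of $F^{r,\hat{G}}_{\mfS}(\wh{\mfM[1/u]})$. Minimality of $\Min^r(\hat{\mfM})$ in this ordered set produces a canonical inclusion $\Min^r(\hat{\mfM}) \hookrightarrow \hat{\mfM}'$ inside $\wh{\mfM[1/u]}$, which is the required map $g$; the identity $f \circ g = \iotamin^{\hat{\mfM}}$ holds by construction because everything takes place inside $\wh{\mfM[1/u]}$.

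Uniqueness of $g$ is formal: two candidates $g_1, g_2$ satisfying $f \circ g_i = \iotamin^{\hat{\mfM}}$ agree after applying $[1/u]$ (since $f[1/u]$ is an isomorphism), and the natural map $\mMin^r(\hat{\mfM}) \to \mMin^r(\hat{\mfM})[1/u]$ is injective. I do not expect any serious obstacle; the only point requiring a line of justification is that the identification placing $\hat{\mfM}'$ inside $F^{r,\hat{G}}_{\mfS}(\wh{\mfM[1/u]})$ is compatible with the $\hat{G}$-action, which is automatic because $f$ is by assumption a morphism of $(\vphi,\hat{G})$-modules, hence $\hat{G}$-equivariant after base change to $\cOG$.
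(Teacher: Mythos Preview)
Your argument is correct and is precisely the natural approach. The paper itself omits the proof, referring to Proposition~3.4.6 of \cite{CL1}; your sketch is the $(\vphi,\hat{G})$-analogue of that argument, using the Section~5.3 identification $\hat{T}(\hat{\mfN})\simeq\hat{\mcal{T}}(\wh{\mfN[1/u]})$ together with Proposition~\ref{et-rep} to reduce to the ordered set $F^{r,\hat{G}}_{\mfS}(\wh{\mfM[1/u]})$. One small remark: you invoke Lemma~\ref{exff}, which is stated only for $\Max^{r,\hat{G}}_{/\mfS_\infty}$, but you do not actually need it---the compatibility $\hat{T}\simeq\hat{\mcal{T}}\circ(-)[1/u]$ from Section~5.3 and the anti-equivalence of Proposition~\ref{et-rep} already suffice to conclude that $\hat{T}(f)$ being an isomorphism forces $f[1/u]$ to be one.
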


\noindent
Since the couple $(\Max^r(\hat{\mfM}^{\vee})^{\vee}, (\iota^{\hat{\mfM}^{\vee}}_{\mrm{max}})^{\vee})$
satisfies the universality appeared in Proposition \ref{univ2}, we obtain

\begin{corollary}
\label{maxmin}
For $\hat{\mfM}\in \mrm{Mod}^{r,\hat{G}}_{/\mfS_{\infty}}$,
we have natural isomorphisms
\[
\Min^r(\hat{\mfM}^{\vee})\simeq \Max^r(\hat{\mfM})^{\vee}\ \ \mrm{and}\ \   
\Max^r(\hat{\mfM}^{\vee})\simeq \Min^r(\hat{\mfM})^{\vee}.
\]
In particular,
duality permutes subcategories 
$\Max^{r,\hat{G}}_{/\mfS_{\infty}}$ and $\Min^{r,\hat{G}}_{/\mfS_{\infty}}$.
\end{corollary}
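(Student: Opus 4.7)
The plan is to verify directly that the couple $(\Max^r(\hat{\mfM}^{\vee})^{\vee}, (\iotamax^{\hat{\mfM}^{\vee}})^{\vee})$ satisfies the universal property characterizing $\Min^r(\hat{\mfM})$ in Proposition \ref{univ2}; once this is done, the first isomorphism follows from the uniqueness of a universal object, the second by substituting $\hat{\mfM}^{\vee}$ for $\hat{\mfM}$ and invoking the involutivity of Cartier duality (Theorem \ref{DualLi} (1)), and the final assertion about subcategories is an immediate consequence. The two main ingredients are the Cartier duality theorem (Theorem \ref{DualLi}), which provides an exact anti-equivalence $\hat{\mfM}\mapsto \hat{\mfM}^{\vee}$, and Proposition \ref{Gal}, which identifies $\hat{T}(\hat{\mfN}^{\vee})$ with $\hat{T}^{\vee}(\hat{\mfN})(r)$, so that $\hat{T}(f)$ is an isomorphism if and only if $\hat{T}(f^{\vee})$ is.

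First I would check condition (i) of Proposition \ref{univ2}, namely that $\hat{T}((\iotamax^{\hat{\mfM}^{\vee}})^{\vee})$ is an isomorphism. By the maximal universal property (Proposition \ref{univ}) applied to $\hat{\mfM}^{\vee}$, the morphism $\hat{T}(\iotamax^{\hat{\mfM}^{\vee}})$ is an isomorphism; applying $\hat{T}$ to the dual and using Proposition \ref{Gal} together with exactness of $\hat{T}_{\ast}$ yields that $\hat{T}((\iotamax^{\hat{\mfM}^{\vee}})^{\vee})$ is also an isomorphism (up to a harmless Tate twist that cancels on both sides). Next I would check condition (ii): given any morphism $f\colon \hat{\mfM}'\to \hat{\mfM}$ in $\mrm{Mod}^{r,\hat{G}}_{/\mfS_{\infty}}$ with $\hat{T}(f)$ an isomorphism, dualize to obtain $f^{\vee}\colon \hat{\mfM}^{\vee}\to (\hat{\mfM}')^{\vee}$, for which $\hat{T}(f^{\vee})$ is still an isomorphism. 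The maximal universal property of $\Max^r(\hat{\mfM}^{\vee})$ then produces a unique $h\colon (\hat{\mfM}')^{\vee}\to \Max^r(\hat{\mfM}^{\vee})$ with $h\circ f^{\vee}=\iotamax^{\hat{\mfM}^{\vee}}$; dualizing once more and using that $(\hat{\mfM}')^{\vee\vee}\simeq \hat{\mfM}'$ canonically gives the required factorization $f\circ h^{\vee}=(\iotamax^{\hat{\mfM}^{\vee}})^{\vee}$. Uniqueness of $h^{\vee}$ is transported from the uniqueness of $h$ via the anti-equivalence of Theorem \ref{DualLi} (1).

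The main obstacle, if any, is purely bookkeeping: one must verify that the canonical identifications $(\hat{\mfN}^{\vee})^{\vee}\simeq \hat{\mfN}$ from Theorem \ref{DualLi} are compatible with the pairings \eqref{du1} and with the formation of $\iotamax$, so that the two successive dualizations cancel as morphisms of $(\vphi,\hat{G})$-modules. This is routine since Cartier duality is an exact contravariant functor of categories and $\Max^r$ is defined via a universal property in $\mrm{Mod}^{r,\hat{G}}_{/\mfS_{\infty}}$. Finally, for the last assertion, if $\hat{\mfM}\in \Max^{r,\hat{G}}_{/\mfS_{\infty}}$ then $\iotamax^{\hat{\mfM}}$ is an isomorphism, and applying the first isomorphism of the corollary to $\hat{\mfM}^{\vee}$ (and using that $\hat{\mfM}^{\vee\vee}\simeq \hat{\mfM}$) shows $\Min^r(\hat{\mfM}^{\vee})\simeq \Max^r(\hat{\mfM}^{\vee\vee})^{\vee}\simeq \hat{\mfM}^{\vee}$, so $\hat{\mfM}^{\vee}\in \Min^{r,\hat{G}}_{/\mfS_{\infty}}$; the converse direction is symmetric.
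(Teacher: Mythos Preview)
Your proposal is correct and follows exactly the paper's approach: the paper's proof consists of the single observation that the couple $(\Max^r(\hat{\mfM}^{\vee})^{\vee}, (\iotamax^{\hat{\mfM}^{\vee}})^{\vee})$ satisfies the universal property of Proposition \ref{univ2}, which is precisely what you spell out in detail. The only (harmless) slip is in your labeling of which displayed isomorphism comes out first---the universal property directly yields $\Min^r(\hat{\mfM})\simeq \Max^r(\hat{\mfM}^{\vee})^{\vee}$, i.e.\ the second isomorphism after dualizing, and substitution then gives the first.
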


The following proposition can be proved by essentially the same method of \cite{CL1} 
(cf.\ Proposition 3.4.3,\ 3.4.8, Lemma 3.4.4 and Corollary 3.4.5) and we omit the proof.

\begin{proposition}
\label{property2}
$(1)$
The functor
$\Min^r\colon \mrm{Mod}^{r,\hat{G}}_{/\mfS_{\infty}}\to \mrm{Mod}^{r,\hat{G}}_{/\mfS_{\infty}}$ is a projection, that is, 
$\Min^r\circ \Min^r=\Min^r$.

\noindent
$(2)$ Let $f\colon \hat{\mfM}\to \hat{\mfN}$ be a morphism in 
$\mrm{Mod}^{r,\hat{G}}_{/\mfS_{\infty}}$.
Then $f(\Min^r(\hat{\mfM}))=\Min^r(\wh{f(\mfM)})$.
$($For some notations, see Proposition \ref{astc}.$)$

\noindent
$(3)$ Let $f\colon \hat{\mfM}\to \hat{\mfN}$ be a morphism in 
$\mrm{Mod}^{r,\hat{G}}_{/\mfS_{\infty}}$.
If $f$ is surjective $($resp.\ injective$)$ as a $\mfS$-module morphism,
then $\Min^r(f)$ is also.

\noindent
$(4)$ The functor 
$\Min^r\colon \mrm{Mod}^{r,\hat{G}}_{/\mfS_{\infty}}\to \Min^{r,\hat{G}}_{/\mfS_{\infty}}$
is a right adjoint to the inclusion functor
$\Min^{r,\hat{G}}_{/\mfS_{\infty}}\to \mrm{Mod}^{r,\hat{G}}_{/\mfS_{\infty}}$.
\end{proposition}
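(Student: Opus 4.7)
The strategy is to mirror the maximal theory (Proposition \ref{property}, Theorem \ref{abelian}) line by line, with the universal property in Proposition \ref{univ2} playing the role of Proposition \ref{univ}, and with Cartier duality (Corollary \ref{maxmin}) invoked whenever it shortens an argument by transporting a known statement about $\Max^r$ to one about $\Min^r$. Since all relevant structural preliminaries have been established (Lemma \ref{sup}, Proposition \ref{astc}, Proposition \ref{cokernel}, Corollary \ref{exact}, Corollary \ref{ringext}, and the full--faithfulness of $\hat{T}$ on $\Max^{r,\hat{G}}_{/\mfS_{\infty}}$ in Corollary \ref{exf}), the proofs of Proposition 3.4.3, Lemma 3.4.4, Corollary 3.4.5, and Proposition 3.4.8 of \cite{CL1} transfer to the $(\vphi,\hat{G})$-setting.

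For (1) and (4) the arguments are essentially formal. For (1), first show that a minimal object $\hat{\mfN}$ together with the identity satisfies the universal property of $\Min^r(\hat{\mfN})$: the morphism $\hat{T}(\mrm{id})$ is trivially an isomorphism, and any $f\colon \hat{\mfM}'\to \hat{\mfN}$ with $\hat{T}(f)$ an isomorphism admits a unique section built from the induced map $\Min^r(\hat{\mfM}')\to \Min^r(\hat{\mfN})=\hat{\mfN}$, which is an isomorphism because $\hat{T}$ is faithful on $\Min^{r,\hat{G}}_{/\mfS_{\infty}}$ (dual of Corollary \ref{exf} via Corollary \ref{maxmin}). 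Applied to $\hat{\mfN}=\Min^r(\hat{\mfM})$ this yields idempotency. For (4), the adjoint correspondence sends $g\colon \hat{\mfN}\to \hat{\mfM}$ (with $\hat{\mfN}$ minimal) to the unique $\tilde g\colon \hat{\mfN}=\Min^r(\hat{\mfN})\to \Min^r(\hat{\mfM})$ produced by functoriality (Proposition \ref{func2}) and (1); the inverse simply post-composes with $\iotamin^{\hat{\mfM}}$, and uniqueness is immediate from Proposition \ref{univ2}.

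Assertion (3) follows from (2): if $f$ is surjective as a morphism of $\mfS$-modules then $\wh{f(\mfM)}=\hat{\mfN}$, whence (2) forces $\Min^r(f)(\Min^r(\hat{\mfM}))=\Min^r(\hat{\mfN})$ and $\Min^r(f)$ is surjective; if $f$ is injective then (2) shows that $\Min^r(f)$ is the restriction of the injection $f$ to the sub $(\vphi,\hat{G})$-module $\Min^r(\hat{\mfM})\subset \hat{\mfM}$, hence itself injective.

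The main obstacle is (2). First reduce to the surjective case $f\colon \hat{\mfM}\twoheadrightarrow \hat{\mfN}$ by replacing the target by $\wh{f(\mfM)}$ (which is itself a sub $(\vphi,\hat{G})$-module by Proposition \ref{astc}). The inclusion $\Min^r(\hat{\mfN})\subset f(\Min^r(\hat{\mfM}))$ then comes from the universal property applied to the composite $\Min^r(\hat{\mfM})\hookrightarrow \hat{\mfM}\twoheadrightarrow \hat{\mfN}$, which induces an isomorphism on $\hat{T}$ and therefore factors $\iotamin^{\hat{\mfN}}$ uniquely through it. The reverse inclusion $f(\Min^r(\hat{\mfM}))\subset \Min^r(\hat{\mfN})$ is the delicate step: one pulls $\Min^r(\hat{\mfN})$ back along $f$ and intersects with $\mfM$, then shows by means of Lemma \ref{sup}, Proposition \ref{astc}, and the flatness of $\whR$ over $\mfS$ (Corollary \ref{exact}, Corollary \ref{ringext}) that this pull-back carries a natural $(\vphi,\hat{G})$-structure of finite $E(u)$-height and lies in $F^{r,\hat{G}}_{\mfS}(\wh{\mfM[1/u]})$, forcing it to contain $\Min^r(\hat{\mfM})$ by minimality. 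An alternative route, which avoids the pull-back construction, is to Cartier-dualize the image/coimage analysis of Theorem \ref{abelian}(3) and Proposition \ref{property} via Corollary \ref{maxmin}; either path parallels Lemma 3.4.4 of \cite{CL1}.
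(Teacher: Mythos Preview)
The paper omits the proof entirely, simply pointing to Propositions 3.4.3, 3.4.8, Lemma 3.4.4 and Corollary 3.4.5 of \cite{CL1}; your overall strategy of transporting those arguments into the $(\vphi,\hat{G})$-setting, with Cartier duality as a shortcut, is exactly what the paper intends.

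There is, however, a genuine error in your treatment of (2). After reducing to the surjective case $f\colon \hat{\mfM}\twoheadrightarrow \hat{\mfN}$, you claim that the composite $\Min^r(\hat{\mfM})\hookrightarrow \hat{\mfM}\twoheadrightarrow \hat{\mfN}$ induces an isomorphism on $\hat{T}$. This is false: applying the contravariant $\hat{T}$ gives
\[
\hat{T}(\hat{\mfN})\hookrightarrow \hat{T}(\hat{\mfM})\overset{\sim}{\longrightarrow}\hat{T}(\Min^r(\hat{\mfM})),
\]
which is merely injective unless $f$ itself is an isomorphism. Hence the universal property (Proposition \ref{univ2}) does not apply to this composite. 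The correct object to feed into the universal property is the image $\wh{f(\mMin^r(\hat{\mfM}))}$ (a $(\vphi,\hat{G})$-module by Proposition \ref{astc}) together with its inclusion into $\hat{\mfN}$: since $f(\mMin^r(\hat{\mfM}))[1/u]=f(\mfM)[1/u]=\mfN[1/u]$, that inclusion does become an isomorphism under $\hat{T}$, and Proposition \ref{univ2} then yields $\Min^r(\hat{\mfN})\subset f(\Min^r(\hat{\mfM}))$. Equivalently and more directly: $\wh{f(\mMin^r(\hat{\mfM}))}\in F^{r,\hat{G}}_{\mfS}(\wh{\mfN[1/u]})$, so it contains the minimum element by definition.

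Incidentally, you have the relative difficulty of the two inclusions reversed. The containment $f(\Min^r(\hat{\mfM}))\subset \Min^r(\hat{\mfN})$ that you label ``the delicate step'' and propose to prove by a pull-back is precisely the content of Proposition \ref{func2} (functoriality of $\Min^r$); the pull-back construction you describe is essentially a repetition of that proof. The other inclusion is the one requiring the observation above.
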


\begin{theorem}
\label{abelian2}
The category $\Min^{r,\hat{G}}_{/\mfS_{\infty}}$ is abelian.
More precisely,
if $f\colon \hat{\mfM}\to \hat{\mfM}'$ is a morphism in 
$\Min^{r,\hat{G}}_{/\mfS_{\infty}}$, then

\vspace{-2mm}
\begin{itemize}
\item[$(1)$] if we denote the kernel of $f$ as a morphism of $\vphi$-modules by $\mrm{ker}(f)$, 
then $\mrm{ker}(f)$ is an object of $\mrm{Mod}^r_{/\mfS_{\infty}}$
and has a canonical structure of a $(\vphi,\hat{G})$-module of height $r$.
If we denote it by $\wh{\mrm{ker}(f)}$, 
then $\Min^r(\wh{\mrm{ker}(f)})$ is the kernel of $f$ in the abelian category $\Min^{r,\hat{G}}_{/\mfS_{\infty}}$;

\vspace{-2mm}
\item[$(2)$] if we denote the cokernel of $f$ as a morphism of $\vphi$-modules by $\mrm{coker}(f)$,
then $\frac{\mrm{coker}(f)}{u{\rm \mathchar`-tor}}$ is an object of $\mrm{Mod}^r_{/\mfS_{\infty}}$
and has a canonical structure of a $(\vphi,\hat{G})$-module of height $r$.
If we denote it by $\wh{\frac{\mrm{coker}(f)}{u{\rm \mathchar`-tor}}}$, then
it is minimal
and is the cokernel of $f$ in  the abelian category
$\Min^{r,\hat{G}}_{/\mfS_{\infty}}$;

\vspace{-2mm}
\item[$(3)$] if we denote the image $($resp.\ coimage$)$ of $f$ as a morphism of $\vphi$-modules 
by $\mrm{im}(f)$ $($resp.\ $\mrm{coim}(f))$, 
then $\mrm{im}(f)$ $($resp.\ $\mrm{coim}(f))$ 
is an object of $\mrm{Mod}^r_{/\mfS_{\infty}}$
and has a canonical structure of a $(\vphi,\hat{G})$-module of height $r$.
If we denote it by $\wh{\mrm{im}(f)}$ $($resp.\ $\wh{\mrm{coim}(f)})$, 
then it is minimal
and is the image $($resp.\ coimage$)$ of $f$ in the abelian category $\Min^{r,\hat{G}}_{/\mfS_{\infty}}$.
\end{itemize}
\end{theorem}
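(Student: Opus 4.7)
The plan is to deduce Theorem \ref{abelian2} from the corresponding maximal statement (Theorem \ref{abelian}) via Cartier duality. By Corollary \ref{maxmin}, the Cartier dual functor $\hat{\mfM}\mapsto\hat{\mfM}^{\vee}$ restricts to an anti-equivalence between $\Max^{r,\hat{G}}_{/\mfS_{\infty}}$ and $\Min^{r,\hat{G}}_{/\mfS_{\infty}}$, and by Theorem \ref{DualLi} (1), (3) it is exact. Since $\Max^{r,\hat{G}}_{/\mfS_{\infty}}$ is abelian by Theorem \ref{abelian}, it follows immediately that $\Min^{r,\hat{G}}_{/\mfS_{\infty}}$ is abelian, and under the anti-equivalence, kernels (resp.\ cokernels) in $\Min^{r,\hat{G}}_{/\mfS_{\infty}}$ correspond to cokernels (resp.\ kernels) in $\Max^{r,\hat{G}}_{/\mfS_{\infty}}$.

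For the explicit descriptions in (1)--(3), let $f\colon\hat{\mfM}\to\hat{\mfM}'$ be a morphism in $\Min^{r,\hat{G}}_{/\mfS_{\infty}}$, with dual $f^{\vee}\colon(\hat{\mfM}')^{\vee}\to\hat{\mfM}^{\vee}$ in $\Max^{r,\hat{G}}_{/\mfS_{\infty}}$. By Theorem \ref{abelian} (1), (2), the kernel of $f^{\vee}$ is the already-maximal $\wh{\mrm{ker}(f^{\vee})}$, while its cokernel is $\Max^r(\wh{\frac{\mrm{coker}(f^{\vee})}{u\text{-tor}}})$. Dualizing and using $\Max^r(\hat{\mfN})^{\vee}\simeq\Min^r(\hat{\mfN}^{\vee})$ from Corollary \ref{maxmin}, the cokernel of $f$ in $\Min^{r,\hat{G}}_{/\mfS_{\infty}}$ is $\wh{\mrm{ker}(f^{\vee})}^{\vee}$, which is automatically minimal as the dual of a maximal object, and the kernel of $f$ is $\Min^r\bigl((\wh{\frac{\mrm{coker}(f^{\vee})}{u\text{-tor}}})^{\vee}\bigr)$. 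Matching these with the asserted descriptions requires the natural isomorphisms of $(\vphi,\hat{G})$-modules
\[
\wh{\mrm{ker}(f^{\vee})}^{\vee}\simeq\wh{\tfrac{\mrm{coker}(f)}{u\text{-tor}}}\quad\text{and}\quad\left(\wh{\tfrac{\mrm{coker}(f^{\vee})}{u\text{-tor}}}\right)^{\vee}\simeq\wh{\mrm{ker}(f)}.
\]

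To establish these, I factor $f$ at the Kisin-module level as $\mfM\twoheadrightarrow\mrm{im}(f)\hookrightarrow\overline{\mrm{im}(f)}\hookrightarrow\mfM'$, where $\overline{\mrm{im}(f)}$ denotes the saturation of $\mrm{im}(f)$ in $\mfM'$, so that $\mfM'/\overline{\mrm{im}(f)}=\mrm{coker}(f)/u\text{-tor}$. Applying Theorem \ref{DualKi} (5) to the short exact sequences $0\to\mrm{ker}(f)\to\mfM\to\mrm{im}(f)\to 0$ and $0\to\overline{\mrm{im}(f)}\to\mfM'\to\mrm{coker}(f)/u\text{-tor}\to 0$ (both living in $\mrm{Mod}^r_{/\mfS_{\infty}}$), and using the $u$-torsion freeness of $\mfS_{\infty}$ to show that $\overline{\mrm{im}(f)}^{\vee}\hookrightarrow\mrm{im}(f)^{\vee}$ has only $u$-torsion cokernel, will yield the Kisin-level identifications $\mrm{ker}(f^{\vee})\cong(\mrm{coker}(f)/u\text{-tor})^{\vee}$ and $\mrm{coker}(f^{\vee})/u\text{-tor}\cong\mrm{ker}(f)^{\vee}$.

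The main obstacle will be verifying that the $\hat{G}$-structure induced on $\mrm{coker}(f)/u\text{-tor}$ and $\mrm{ker}(f)$ via these duality identifications coincides with the natural structure coming from Proposition \ref{cokernel} and Corollary \ref{imker}; this should follow from the $G$-equivariance built into the Cartier duality construction of Theorem \ref{dual}, but tracking the $\hat{G}$-action through the factorization and saturation step requires careful bookkeeping of the $\whR$-semilinear actions. Once these identifications are established, the minimality of $\wh{\mrm{coker}(f)/u\text{-tor}}$ asserted in (2) is automatic, being the Cartier dual of the maximal object $\wh{\mrm{ker}(f^{\vee})}$. For the image and coimage statements in (3), I would combine the descriptions in (1) and (2) with the abelian-category identity $\mrm{im}(f)=\mrm{ker}(\mrm{coker}(f))=\mrm{coker}(\mrm{ker}(f))$, or alternatively dualize Theorem \ref{abelian} (3) via Corollary \ref{maxmin} together with Proposition \ref{property2} (3), to conclude that $\wh{\mrm{im}(f)}$ and $\wh{\mrm{coim}(f)}$ carry canonical $(\vphi,\hat{G})$-structures and are both already minimal.
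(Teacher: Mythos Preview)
Your approach via Cartier duality is correct and genuinely different from the paper's proof. The paper argues directly from properties of the $\Min^r$ functor: for (1) it invokes only that $\Min^r$ is right adjoint to the inclusion (Proposition \ref{property2} (4)), which immediately forces $\Min^r(\wh{\mrm{ker}(f)})$ to be the categorical kernel; for (2) it uses the image-compatibility $g(\Min^r(\hat{\mfM}'))=\Min^r(\wh{g(\mfM')})$ of Proposition \ref{property2} (2) applied to the projection $g\colon\hat{\mfM}'\twoheadrightarrow\wh{\mrm{coker}(f)/u\text{-tor}}$, which gives minimality of the target in one line; and (3) is then assembled from (1), (2) and another application of Proposition \ref{property2} (2), (3). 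No duality is used at all.

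Your route is more conceptual in that it reduces everything to the already-established maximal case, but it trades the paper's short adjointness/image arguments for the bookkeeping you flag: verifying the Kisin-level identifications $\mrm{ker}(f^{\vee})\cong(\mrm{coker}(f)/u\text{-tor})^{\vee}$ and $\mrm{coker}(f^{\vee})/u\text{-tor}\cong\mrm{ker}(f)^{\vee}$ via the saturation step, and then checking $\hat{G}$-compatibility of these identifications. Those checks are routine (the saturation cokernel is $u$-torsion, $\mfS_{\infty}$ is $u$-torsion free so $\mrm{Hom}$ into it kills $u$-torsion, and the $\hat{G}$-actions are all restrictions of the single action on $\cOG\otimes_{\vphi,\cO}M$), so the argument closes, but the paper's direct proof is noticeably shorter. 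One small advantage of your approach: the minimality of $\wh{\mrm{coker}(f)/u\text{-tor}}$ drops out for free as the dual of a maximal object, whereas the paper has to compute it.
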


\begin{proof}
(1) Since the functor $\Min^r$ is right adjoint (Proposition \ref{property2} (4)),
we see the desired result.

\noindent
(2) Put $C=\frac{\mrm{coker}(f)}{u{\rm \mathchar`-tor}}$.
Recall that $C$ is an object of $\mrm{Mod}^r_{/\mfS_{\infty}}$
and has a canonical structure of a $(\vphi,\hat{G})$-module of height $r$
(Proposition \ref{cokernel}).
If we denote by $g$ a natural projection
$\hat{\mfM}'\to \hat{C}$,
by Proposition \ref{property} (3), we have
\[
\hat{C}=g(\hat{\mfM}')=g(\Min^r(\hat{\mfM}'))
=\Min^r(g(\hat{\mfM}'))=\Min^r(\hat{C})
\]
and thus $\hat{C}$ is minimal.

\noindent
(3) Let $g\colon \hat{C}\to \hat{\mfM}'$ be as in the proof of (2).
By (1) and (2),
we see that 
the image of $f$ in the category $\Min^{r,\hat{G}}_{/\mfS_{\infty}}$
is $\Min^r(\wh{\mrm{ker}(g)})$.
Let $\mfM_g$ be the underlying Kisin module of  $\Min^r(\wh{\mrm{ker}(g)})$.
Then $\mfM_g$ is an inverse image of 
the $u$-torsion part of $\mrm{coler}(f)$ with respect to
a natural projection $\mfM'\to \mrm{coler}(f)$.  
Since $\mfM_g$ is finitely generated as a $\mfS$-module,
there exists a positive integer $N$ such that 
$u^N\mfM_g\subset \mrm{im}(f)$.
Hence we obtain the equation $\mfM_g[1/u]=\mrm{im}(f)[1/u]$.
Consequently, by Proposition \ref{property2} (3), we have
\[
\Min^r(\wh{\mrm{ker}(g)})=\Min^r(\hat{\mfM}_g)=\Min^r(f(\hat{\mfM}))
=f(\Min^r(\hat{\mfM}))=f(\hat{\mfM})=\wh{\mrm{im}(f)}
\] 
and thus $\wh{\mrm{im}(f)}$ is minimal.
The proof for coimage is similar and hence we omit it.
\end{proof}

Proofs for the following three results are similar to those of the maximal case.

\begin{lemma}
\label{exff2}
If $\alpha\colon \hat{\mfM}'\to \hat{\mfM}$ and $\beta\colon \hat{\mfM}\to \hat{\mfM}''$
two morphisms in $\Min^{r,\hat{G}}_{/\mfS_{\infty}}$ such that 
$\beta\circ \alpha=0$.
the sequence $0\to \hat{\mfM}'\to \hat{\mfM}\to \hat{\mfM}''\to 0$ is exact 
in $($the abelian category$)$
$\Min^{r,\hat{G}}_{/\mfS_{\infty}}$ if and only if 
$0\to \wh{\mfM'[1/u]}\to \wh{\mfM[1/u]}\to \wh{\mfM''[1/u]}\to 0$ 
is exact in $\mbf{\Phi M}^{\hat{G}}_{/\cO_{\infty}}$.  
Furthermore, the functor 
\[
\Min^{r,\hat{G}}_{/\mfS_{\infty}}\to 
\mbf{\Phi M}^{\hat{G}}_{/\cO_{\infty}},\quad \hat{\mfM}\mapsto \wh{\mfM[1/u]}
\]
is fully faithful.
\end{lemma}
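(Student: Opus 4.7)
The plan is to follow the argument of Lemma \ref{exff} line by line, with the descriptions of kernels, images and cokernels in the abelian category $\Min^{r,\hat{G}}_{/\mfS_{\infty}}$ from Theorem \ref{abelian2} playing the role of their counterparts from Theorem \ref{abelian}. The relevant formal properties of the inversion functor $\hat{\mfM}\mapsto \wh{\mfM[1/u]}$ are: it is exact on underlying $\vphi$-modules (since $\cO$ is flat over $\mfS$, cf.\ Corollary \ref{exact}), it kills $u$-torsion, and by construction it preserves the $\hat{G}$-action; moreover $\Min^r(\hat{\mfN})[1/u]$ is canonically $\wh{\mfN[1/u]}$ for every $\hat{\mfN}\in \mrm{Mod}^{r,\hat{G}}_{/\mfS_{\infty}}$, since minimalization does not change the generic fibre.

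For the exactness equivalence, Theorem \ref{abelian2} identifies the kernel of $\beta$ in $\Min^{r,\hat{G}}_{/\mfS_{\infty}}$ as $\Min^r(\wh{\mrm{ker}(\beta)})$, the image of $\alpha$ as $\wh{\mrm{im}(\alpha)}$, and the cokernel of $\alpha$ as $\wh{\mrm{coker}(\alpha)/u\mathrm{-tor}}$. Inverting $u$ and using the properties recalled above, these three objects produce respectively $\mrm{ker}(\beta[1/u])$, $\mrm{im}(\alpha[1/u])$ and $\mrm{coker}(\alpha[1/u])$. Since $\alpha,\beta$ are $\hat{G}$-equivariant, exactness in $\mbf{\Phi M}^{\hat{G}}_{/\cO_{\infty}}$ coincides with exactness of the underlying \'etale $\vphi$-module sequence, so the two exactness conditions translate into one another. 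Faithfulness of the inversion functor is then immediate: every $\hat{\mfM}\in \mrm{Mod}^{r,\hat{G}}_{/\mfS_{\infty}}$ is $u$-torsion free by Proposition \ref{torKi}, whence $\mfM\hookrightarrow \mfM[1/u]$ and any two morphisms agreeing after $[1/u]$ must coincide.

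For fullness, given a morphism $g\colon \wh{\mfM[1/u]}\to \wh{\mfN[1/u]}$ in $\mbf{\Phi M}^{\hat{G}}_{/\cO_{\infty}}$, I will form a graph-pullback. Consider the $(\vphi,\hat{G})$-morphism $h\colon \hat{\mfM}\oplus \hat{\mfN}\to \wh{\mfN[1/u]}$ defined by $(x,y)\mapsto g(x)-y$, viewed in the category $\mrm{Mod}(\vphi,\hat{G})$. By Proposition \ref{astc} (scheme-theoretic closure for $(\vphi,\hat{G})$-modules), its kernel $\hat{\mfP}$ lies in $\mrm{Mod}^{r,\hat{G}}_{/\mfS_{\infty}}$; moreover $\mfP[1/u]$ is exactly the graph $\Gamma_g\subset \wh{\mfM[1/u]}\oplus \wh{\mfN[1/u]}$, which is carried isomorphically onto $\wh{\mfM[1/u]}$ by the first projection. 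Consequently the first projection $\pi_1\colon \hat{\mfP}\to \hat{\mfM}$ becomes an isomorphism after inverting $u$, and hence $\hat{T}(\pi_1)$ is an isomorphism by Proposition \ref{FoKi} and Theorem \ref{Li}(1). Since $\hat{\mfM}$ is minimal, the universal property in Proposition \ref{univ2} supplies a unique section $s\colon \hat{\mfM}\to \hat{\mfP}$ with $\pi_1\circ s=\mrm{id}$; the composite $\pi_2\circ s\colon \hat{\mfM}\to \hat{\mfN}$ is then the desired lift of $g$, which one verifies after applying $[1/u]$ (on the graph $\Gamma_g$ one has $\pi_2=g\circ \pi_1$).

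The main piece of bookkeeping, and the only step that is not entirely formal, is identifying $\mfP[1/u]$ with $\Gamma_g$ exactly (rather than with some larger sub-$(\vphi,\hat{G})$-module containing it): this rests on the exactness of inversion and the canonical identification of $(\mfM\oplus\mfN)[1/u]$ with $\wh{\mfM[1/u]}\oplus \wh{\mfN[1/u]}$. Once this is in place, the verification that $\pi_2\circ s$ recovers $g$ after inversion follows diagrammatically, and full faithfulness is established.
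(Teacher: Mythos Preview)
Your argument is correct and follows the same line as the paper: the paper simply states that the proof is ``similar to the maximal case'' (Lemma~\ref{exff}), which in turn defers to Lemma~3.3.9 of \cite{CL1}; what you have written is precisely a spelling-out of that argument in the minimal setting, using the descriptions from Theorem~\ref{abelian2} and the universal property of Proposition~\ref{univ2}. The graph-pullback construction you use for fullness is exactly the device already employed in the proof of Proposition~\ref{func2}, so your approach matches the paper's method.
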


\begin{corollary}
\label{exf2}
The functor $\hat{T}$ defined on $\Min^{r,\hat{G}}_{/\mfS_{\infty}}$ is exact and fully faithful,
and its essential image is stable under taking a subquotient.
\end{corollary}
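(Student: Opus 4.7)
The plan is to mirror the proof of Corollary \ref{exf} step by step, replacing maximal objects with minimal ones. The key observation is that Lemma \ref{exff2} already does the bulk of the work: it tells us that a sequence in $\Min^{r,\hat{G}}_{/\mfS_{\infty}}$ is exact precisely when the associated sequence of \'etale $(\vphi,\hat{G})$-modules obtained by inverting $u$ is exact, and that the functor $\hat{\mfM}\mapsto \wh{\mfM[1/u]}$ from $\Min^{r,\hat{G}}_{/\mfS_{\infty}}$ to $\mbf{\Phi M}^{\hat{G}}_{/\cO_{\infty}}$ is fully faithful.

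For exactness and full faithfulness of $\hat{T}$, I would consider the commutative triangle
\[
\xymatrix{
 \Min^{r,\hat{G}}_{/\mfS_{\infty}} \ar^{\hat{T}}[rr] \ar[rd]
 &
 &
 \mrm{Rep}_{\mrm{tor}}(G) \\
 & \mbf{\Phi M}^{\hat{G}}_{/\cO_{\infty}} \ar^{\simeq}_{\hat{\mcal{T}}}[ru]
 &
}
\]
whose commutativity is the identification $\hat{T}(\hat{\mfM})\simeq \hat{\mcal{T}}(\wh{\mfM[1/u]})$ recorded at the beginning of \S5.3. The diagonal arrow is exact and fully faithful by Lemma \ref{exff2}, while $\hat{\mcal{T}}$ is an anti-equivalence of categories by Proposition \ref{et-rep}; composing these two properties yields exactness and full faithfulness of the restriction of $\hat{T}$ to $\Min^{r,\hat{G}}_{/\mfS_{\infty}}$.

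For stability of the essential image under subquotients, let $\hat{\mfM}\in \Min^{r,\hat{G}}_{/\mfS_{\infty}}$ and suppose $T'$ is a subquotient of $\hat{T}(\hat{\mfM})$ in $\mrm{Rep}_{\mrm{tor}}(G)$. By Theorem \ref{MThm2} applied (twice, for a subobject then a quotient) to $\hat{\mfM}$, there exists $\hat{\mfN}\in \mrm{Mod}^{r,\hat{G}}_{/\mfS_{\infty}}$ with $\hat{T}(\hat{\mfN})\simeq T'$ as $\mbb{Z}_p[G]$-modules. Now apply the functor $\Min^r$: by the universal property of Proposition \ref{univ2}, $\hat{T}(\iota^{\hat{\mfN}}_{\mrm{min}})$ is an isomorphism, so $\hat{T}(\Min^r(\hat{\mfN}))\simeq \hat{T}(\hat{\mfN})\simeq T'$, and $\Min^r(\hat{\mfN})\in \Min^{r,\hat{G}}_{/\mfS_{\infty}}$ by construction. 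Hence $T'$ lies in the essential image of $\hat{T}$ restricted to $\Min^{r,\hat{G}}_{/\mfS_{\infty}}$.

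No serious obstacle is expected here, since the machinery (Lemma \ref{exff2}, Proposition \ref{et-rep}, Theorem \ref{MThm2}, and the universal property in Proposition \ref{univ2}) has already been set up; the corollary is essentially a formal consequence. The only point requiring a modicum of care is ensuring that the identification $\hat{T}\simeq \hat{\mcal{T}}\circ (\hat{\mfM}\mapsto \wh{\mfM[1/u]})$ is natural with respect to the full $G$-action (not just $G_{\infty}$), but this is exactly what is verified in \S5.3 just before the minimal theory begins.
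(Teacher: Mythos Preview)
Your proposal is correct and follows essentially the same approach as the paper, which simply notes that the proof is similar to that of Corollary~\ref{exf}. You have spelled out the implicit step that, after invoking Theorem~\ref{MThm2} to obtain some $\hat{\mfN}\in\mrm{Mod}^{r,\hat{G}}_{/\mfS_{\infty}}$, one applies $\Min^r$ and uses that $\hat{T}(\iota^{\hat{\mfN}}_{\mrm{min}})$ is an isomorphism to land in $\Min^{r,\hat{G}}_{/\mfS_{\infty}}$; this is exactly the minimal analogue of what the paper leaves implicit in the maximal case.
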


\begin{corollary}
\label{maxex2}
The functor $\Min^r\colon \mrm{Mod}^{r,\hat{G}}_{/\mfS_{\infty}}\to \Min^{r,\hat{G}}_{/\mfS_{\infty}}$
is exact.
\end{corollary}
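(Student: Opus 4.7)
The plan is to deduce this exactness from Lemma \ref{exff2}, mirroring the one-line proof of Corollary \ref{maxex} (which uses Lemma \ref{exff}). The key bridge is that $\Min^r$ does not alter the associated \'etale $(\vphi,\hat{G})$-module.

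First I would observe that for every $\hat{\mfM}\in\mrm{Mod}^{r,\hat{G}}_{/\mfS_{\infty}}$, the canonical morphism $\iotamin^{\hat{\mfM}}\colon\Min^r(\hat{\mfM})\to\hat{\mfM}$ induces an isomorphism
\[
\wh{\mMin^r(\mfM)[1/u]}\overset{\sim}{\longrightarrow}\wh{\mfM[1/u]}
\]
in $\mbf{\Phi M}^{\hat{G}}_{/\cO_{\infty}}$. This is built into the construction: by definition, $\Min^r(\hat{\mfM})$ belongs to $F^{r,\hat{G}}_{\mfS}(\wh{\mfM[1/u]})$, so $\mMin^r(\mfM)[1/u]=\mfM[1/u]$ as \'etale $\vphi$-modules, and the $\hat{G}$-structures coincide via the $\hat{G}$-equivariant inclusion $\whR\otimes_{\vphi,\mfS}\mMin^r(\mfM)\hookrightarrow\cOG\otimes_{\vphi,\cO}\mfM[1/u]$. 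Equivalently, $\hat{T}(\iotamin^{\hat{\mfM}})$ is an isomorphism by Proposition \ref{univ2}, and the anti-equivalence $\hat{\mcal{T}}$ of Proposition \ref{et-rep} transports this back to the displayed isomorphism.

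Now let $0\to\hat{\mfM}'\to\hat{\mfM}\to\hat{\mfM}''\to 0$ be exact in $\mrm{Mod}^{r,\hat{G}}_{/\mfS_{\infty}}$. Since $\cO$ is flat over $\mfS$ and the $\hat{G}$-action on $\cOG\otimes_{\vphi,\cO}(-)$ is functorially extended from that on $\whR\otimes_{\vphi,\mfS}(-)$, the localized sequence
\[
0\to\wh{\mfM'[1/u]}\to\wh{\mfM[1/u]}\to\wh{\mfM''[1/u]}\to 0
\]
is exact in $\mbf{\Phi M}^{\hat{G}}_{/\cO_{\infty}}$. By the preceding paragraph this is identified, term by term and compatibly with the arrows, with the result of applying $\Min^r$ (functorial by Proposition \ref{func2}) and then inverting $u$; the composite of the induced maps on $\Min^r$ is zero by functoriality applied to the composition $\hat{\mfM}'\to\hat{\mfM}''$. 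Lemma \ref{exff2} then converts this \'etale exactness into the desired exactness of $0\to\Min^r(\hat{\mfM}')\to\Min^r(\hat{\mfM})\to\Min^r(\hat{\mfM}'')\to 0$ in the abelian category $\Min^{r,\hat{G}}_{/\mfS_{\infty}}$.

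No genuine obstacle is expected: the argument is purely formal and strictly parallel to the proof of Corollary \ref{maxex}, with all nontrivial content already absorbed into Lemma \ref{exff2} and into the universal characterization of $\Min^r$ given in Proposition \ref{univ2}.
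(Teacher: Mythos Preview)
Your proposal is correct and takes essentially the same approach as the paper, which simply declares the proof ``similar to the maximal case'' (Corollary \ref{maxex}), i.e., a direct application of Lemma \ref{exff2}. You have merely spelled out the implicit steps---that $\Min^r$ preserves $\wh{\mfM[1/u]}$, that localizing the original short exact sequence at $u$ gives an exact sequence in $\mbf{\Phi M}^{\hat{G}}_{/\cO_{\infty}}$, and that Lemma \ref{exff2} then transports this back---which is exactly what the one-line reference intends.
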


Put $e=[K:K_0]$, the absolute ramification index of $K$.
If $er<p-1$, then $F^r_{\mfS}(\hat{M})$ contains at most one element (cf.\ \cite{CL1}, Remark just after Corollary 3.2.6)
and hence all torsion $(\vphi,\hat{G})$-modules of height $r$ are automatically maximal and minimal. Therefore, we obtain
\begin{corollary}
\label{er}
Suppose $er<p-1$. Then 
$\Max^{r,\hat{G}}_{/\mfS_{\infty}}=\mrm{Mod}^{r,\hat{G}}_{/\mfS_{\infty}}
=\Min^{r,\hat{G}}_{/\mfS_{\infty}}$.
In particular,
the category $\mrm{Mod}^{r,\hat{G}}_{/\mfS_{\infty}}$ is abelian 
and the functor
$\hat{T}\colon \mrm{Mod}^{r,\hat{G}}_{/\mfS_{\infty}}\to \mrm{Rep}_{\mrm{tor}}(G)$
is exact and fully faithful, 
and its essential image is stable under taking a subquotient. 
\end{corollary}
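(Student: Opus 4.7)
The plan is to leverage the fact, essentially already observed in the excerpt just before the statement, that the hypothesis $er<p-1$ forces the set $F^r_{\mfS}(\hat{M})$ to be a singleton for every torsion \'etale $(\vphi,\hat{G})$-module $\hat{M}$. Concretely, for any $\hat{\mfM}\in \mrm{Mod}^{r,\hat{G}}_{/\mfS_{\infty}}$, both $\Max^r(\hat{\mfM})$ and $\Min^r(\hat{\mfM})$ lie in $F^{r,\hat{G}}_{\mfS}(\wh{\mfM[1/u]})\subset F^{r}_{\mfS}(\mfM[1/u])$, so if the latter has at most one element, the natural maps $\iotamax^{\hat{\mfM}}$ and $\iotamin^{\hat{\mfM}}$ are forced to be the identity. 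Hence $\hat{\mfM}$ is simultaneously maximal and minimal, giving the desired equality of categories.

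First I would quote the cited remark after Corollary 3.2.6 of \cite{CL1}: when $er<p-1$, for any torsion \'etale $\vphi$-module $M$ over $\cO$ the ordered set $F^r_{\mfS}(M)$ contains at most one element. Since $F^{r,\hat{G}}_{\mfS}(\hat{M})$ is a subset of $F^r_{\mfS}(M)$, the same is true for it. Applied to $M=\mfM[1/u]$ for a given $\hat{\mfM}\in \mrm{Mod}^{r,\hat{G}}_{/\mfS_{\infty}}$, this forces $\mMax^r(\hat{\mfM})=\mfM=\mMin^r(\hat{\mfM})$ as submodules of $\mfM[1/u]$, and the $\hat{G}$-structures agree by construction. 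This yields
\[
\Max^{r,\hat{G}}_{/\mfS_{\infty}}=\mrm{Mod}^{r,\hat{G}}_{/\mfS_{\infty}}=\Min^{r,\hat{G}}_{/\mfS_{\infty}}.
\]

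With the equality of categories established, the ``in particular'' statement follows by transport from Theorem \ref{abelian} (which gives that $\Max^{r,\hat{G}}_{/\mfS_{\infty}}$ is abelian, with explicit descriptions of kernels and cokernels, the latter via $\Max^r$ of the naive cokernel --- which is now trivialized since every object is already maximal) and Corollary \ref{exf} (exactness, full faithfulness of $\hat{T}$ on $\Max^{r,\hat{G}}_{/\mfS_{\infty}}$, and stability of its essential image under subquotients). No new argument is required beyond identifying each category with $\mrm{Mod}^{r,\hat{G}}_{/\mfS_{\infty}}$ under the hypothesis.

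There is essentially no obstacle to this proof; the entire content lies in invoking the singleton property of $F^r_{\mfS}(M)$ under $er<p-1$ from \cite{CL1}, and then the statement is a mechanical consequence of Theorem \ref{abelian} and Corollary \ref{exf}. The only point requiring a brief check is that the induced $\hat{G}$-action on $\mMax^r(\hat{\mfM})$ really does agree with that on $\hat{\mfM}$ when the underlying Kisin modules coincide, but this is immediate from the construction of $\Max^r(\hat{\mfM})$ as an element of $F^{r,\hat{G}}_{\mfS}(\wh{\mfM[1/u]})$ whose $\hat{G}$-action is inherited from the ambient \'etale $(\vphi,\hat{G})$-module.
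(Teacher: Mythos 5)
Your proposal is correct and follows exactly the paper's route: the hypothesis $er<p-1$ makes $F^r_{\mfS}(M)$ a singleton (via the remark after Corollary 3.2.6 of \cite{CL1}), so every torsion $(\vphi,\hat{G})$-module of height $r$ is both maximal and minimal, and the remaining assertions are transported from Theorem \ref{abelian} and Corollary \ref{exf}. No discrepancy to report.
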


\begin{remark}
Similar to Remark \ref{gen},
all results in this subsection hold even if we replace ``$(\vphi,\hat{G})$-modules'' with 
``weak $(\vphi,\hat{G})$-modules''. 
\end{remark}

%\subsection{Difference between $F^r_{\mfS}(M)$ and $F^{r,\hat{G}}_{\mfS}(\hat{M})$}

%Let $\hat{M}$ be a torsion \'etale $(\vphi,\hat{G})$-module.
%Then we want to know the difference between the set  
%$F^r_{\mfS}(M)$ and the set of underlying Kisin modules of the objects of  
%$F^{r,\hat{G}}_{\mfS}(\wh{\mfM[1/u]})$.

\subsection{Some remarks}

First the reader should be careful that
{\it there are no new results in this subsection}.

\subsubsection{Connection with a lifting problem}

Let $r\in \{0,1,2,\dots, \infty\}$.
Let  $\mrm{Rep}^{\mrm{st},r}_{\mrm{fr}}(G)$ be the category of lattices 
inside semi-stable $p$-adic representations with Hodge-Tate weights in $[0,r]$.
Let  
$\mrm{Rep}^{\mrm{st},r}_{\mrm{tor}}(G)$ 
be the category of torsion $\mbb{Z}_p$-representations $T$ 
such that there exists lattices 
$\Lambda_1, \Lambda_2\in \mrm{Rep}^{\mrm{st},r}_{\mrm{fr}}(G)$  satisfying 
$\Lambda_1\subset \Lambda_2$ and $T\simeq \Lambda_2/\Lambda_1$.
The pair $\Lambda_1\subset \Lambda_2$ is called a {\it lift of $T$}.
We are interested in the following question:
%\begin{question}
%\label{lift}
%Let $r\in \{0.1,2,\dots, \infty\}$.
%Does any $T\in \mrm{Rep}_{\mrm{tor}}(G)$ 
%in fact contained in $\mrm{Rep}^{\mrm{st},r}_{\mrm{tor}}(G)$?
%\end{question}

\begin{question}
\label{lift}
For any $T\in \mrm{Rep}_{\mrm{tor}}(G)$, does there exists an integer $r\ge 0$
such that $T\in \mrm{Rep}^{\mrm{st},r}_{\mrm{tor}}(G)$?
\end{question}

\noindent
If $T$ is a tamely ramified $\mbb{F}_p$-representation, 
then Caruso and Liu proved that 
the question has an affirmative answer 
(cf.\ \cite{CL2}, Theorem 5.7).
If we fix the choice of $r<\infty$, they also proved that 
Question \ref{lift} has a non-affirmative answer,
which follows from a result on ramification bounds of torsion representations
(cf.\ \cite{CL2}, Theorem 5.4).
%Hence Question \ref{lift} is effective only for $r=\infty$.

We connect  Question \ref{lift} to our results in this paper. 
Recall that $\mrm{Rep}^{\hat{G}}_{\mrm{tor}}(G)$
is the essential image of 
$\hat{T}\colon \mrm{Mod}^{\infty,\hat{G}}_{/\mfS_{\infty}}
\to \mrm{Rep}_{\mrm{tor}}(G)$,
which is  an abelian full subcategory
of $\mrm{Rep}_{\mrm{tor}}(G)$.
For simplicity, put
$\mrm{Rep}^{\mrm{st}}_{\mrm{tor}}(G)=\mrm{Rep}^{\mrm{st},\infty}_{\mrm{tor}}(G)$. 
Then the inclusions
\[
\mrm{Rep}^{\mrm{st}}_{\mrm{tor}}(G)\subset \mrm{Rep}^{\hat{G}}_{\mrm{tor}}(G) 
\subset \mrm{Rep}_{\mrm{tor}}(G)
\]
are known (cf.\ \cite{CL2}, Theorem 3.1.3). 
Thus 
Question \ref{lift} has an affirmative answer
if and only if  
$\mrm{Rep}^{\mrm{st}}_{\mrm{tor}}(G)=\mrm{Rep}^{\hat{G}}_{\mrm{tor}}(G)$ and 
$\mrm{Rep}^{\hat{G}}_{\mrm{tor}}(G)=\mrm{Rep}_{\mrm{tor}}(G)$.
On the other hand, we have seen the following commutative diagram  between categories:
\begin{center}
$\displaystyle \xymatrix{
\mrm{Mod}^{\infty,\hat{G}}_{/\mfS_{\infty}} \ar^{\Max^{\infty}}[r] \ar_{\mrm{forgetful}}[d]&
\Max^{\infty,\hat{G}}_{/\mfS_{\infty}} \ar@{^{(}->}[r] \ar_{\hat{T} }[r] & 
\mrm{Rep}_{\mrm{tor}}(G) \ar^{\mrm{restriction}}[d]\\
\mrm{Mod}^{\infty}_{/\mfS_{\infty}} \ar^{\Max^{\infty}}[r]&
\Max^{\infty}_{/\mfS_{\infty}}  \ar^{\sim\quad }_{T_{\mfS}\quad }[r] & 
\mrm{Rep}_{\mrm{tor}}(G_{\infty}). 
}$
\end{center}
Here, the equivalence between categories $\Max^{\infty}_{/\mfS_{\infty}}$ and $\mrm{Rep}_{\mrm{tor}}(G_{\infty})$
in the above diagram is proved in Proposition 5.6 of \cite{CL2}. 
Since the essential image of 
$\hat{T}\colon \Max^{\infty,\hat{G}}_{/\mfS_{\infty}}
\hookrightarrow \mrm{Rep}_{\mrm{tor}}(G)$
is $\mrm{Rep}^{\hat{G}}_{\mrm{tor}}(G)$,
it seems natural to suggest 
\begin{question}
Is the functor 
$\hat{T}\colon \Max^{\infty,\hat{G}}_{/\mfS_{\infty}} \hookrightarrow \mrm{Rep}_{\mrm{tor}}(G)$
essentially surjective, that is, an equivalence of categories?
This is equivalent to say that, 
for any $\hat{M}\in \mbf{\Phi M}^{\hat{G}}_{/\cO_{\infty}}$,
does there exist a sub $(\vphi,\hat{G})$-module $\hat{\mfM}$,
of finite height, of $\hat{M}$
such that $\mfM[1/u]=M$? 
\end{question}
\noindent
If this has an affirmative answer, then we obtain
$\mrm{Rep}^{\hat{G}}_{\mrm{tor}}(G) 
=\mrm{Rep}_{\mrm{tor}}(G)$.
In particular,
we obtain an equivalence of abelian categories
$\Max^{\infty,\hat{G}}_{/\mfS_{\infty}}
\simeq \mrm{Rep}_{\mrm{tor}}(G)$, 
which implies that 
maximal objects of torsion $(\vphi,\hat{G})$-modules completely 
classify torsion $p$-adic representations of $G$.
On the other hands, we ask following questions:
\begin{question}
\label{reso}
Does any torsion $(\vphi,\hat{G})$-module have a resolution of free $(\vphi,\hat{G})$-modules?
\end{question}
\begin{question}
\label{clex}
Is the category $\mrm{Rep}^{\hat{G}}_{\mrm{tor}}(G)$ closed under extensions in $\mrm{Rep}_{\mrm{tor}}(G)$?
\end{question}
\noindent
Theorem \ref{MThm2} might be related with Question \ref{clex}.
If one of these questions has an affirmative answer, then we obtain
$\mrm{Rep}^{\mrm{st}}_{\mrm{tor}}(G) 
=\mrm{Rep}^{\hat{G}}_{\mrm{tor}}(G)$.

\subsubsection{Connection with torsion Breuil modules}

If we can prove the explicit relation between the categories of torsion Breuil modules 
and the category of torsion $(\vphi,\hat{G})$-modules, then our main result in this paper will give a partial answer 
of Question 2 of \cite{CL1}.

\end{document}